\documentclass[11pt,oneside,reqno]{amsart}
\usepackage[T1]{fontenc}
\usepackage[utf8]{inputenc}
\usepackage[a4paper,margin=27mm]{geometry}
\usepackage{amsmath,amssymb,lmodern,microtype}
\usepackage{enumitem}
\usepackage{graphicx,pgfplots}
\usepackage{todonotes}

\usepgfplotslibrary{groupplots}
\pgfplotsset{compat=1.18}
\definecolor{phaseblue}{RGB}{0,92,145}
\definecolor{phaseorange}{RGB}{174,85,0}
\usepackage{hyperref}
\numberwithin{equation}{section}
\newtheorem{theorem}{Theorem}[section]
\newtheorem{proposition}[theorem]{Proposition}
\newtheorem{corollary}[theorem]{Corollary}
\newtheorem{lemma}[theorem]{Lemma}
\theoremstyle{remark}
\newtheorem{remark}[theorem]{Remark}
\newcommand{\R}{\mathbb R}
\newcommand{\dd}{\,\mathrm d}

\definecolor{myred}{rgb}{0.84,0.07,0.14}

\newcommand{\N}{\mathbb N}

\def\dd{\,{\rm d}}

\newcommand{\D}{\mathcal D_N}

\newcommand{\hu}{\widehat u}
\newcommand{\hv}{\widehat v}
\newcommand{\tw}{\widetilde w}
\newcommand{\tmu}{\widetilde\mu}
\newcommand{\tnu}{\widetilde\nu}
\newcommand{\tU}{\widetilde U}
\newcommand{\tV}{\widetilde V}
\newcommand{\tdelta}{\widetilde\delta}

\newcommand{\cA}{{\mathcal A}}

\newcommand{\cD}{{\mathcal D}}
\newcommand{\cE}{{\mathcal E}}
\newcommand{\cF}{{\mathcal F}}

\newcommand{\cH}{{\mathcal H}}

\newcommand{\cL}{{\mathcal L}}
\newcommand{\cM}{{\mathcal M}}

\newcommand{\cP}{{\mathcal P}}

\newcommand{\cR}{{\mathcal R}}
\newcommand{\cS}{{\mathcal S}}

\newcommand{\cW}{{\mathcal W}}

\title[Normalised Hamiltonian Elliptic Systems]{Normalised Hamiltonian Elliptic Systems:\\ a Gagliardo-Nirenberg inequality for bilinear mass}
\author[]{Daniele Cassani\textsuperscript{*}}
\thanks{\textsuperscript{*}Corresponding author: Daniele Cassani (daniele.cassani@uninsubria.it).}
\address[Daniele Cassani]{Dipartimento di Scienza e Alta Tecnologia, Universit\`{a} degli Studi dell'Insubria and RISM-Riemann International School of Mathematics, Villa Toeplitz, Via G.B. Vico, 46 - 21100 Varese, Italy}
\email{daniele.cassani@uninsubria.it}

\author[]{Giulio Romani}%
\address[Giulio Romani]{Universit\`{a} degli Studi di Udine, Via delle Scienze 206 - 33100 Udine, Italy}
\email{giulio.romani@uniud.it}

\date{\today}
\subjclass[2020]{35J50, 35J47, 35B33, 35A23, 47J20}
\keywords{Systems of Non-Linear Schr\"odinger equations, normalised solutions, critical hyperbola, Trudinger--Moser inequality, variational methods}
\hypersetup{pdftitle={Normalised Hamiltonian Elliptic Systems: a Gagliardo-Nirenberg inequality for bilinear mass},
 pdfauthor={Daniele Cassani and Giulio Romani}}

\begin{document}
	
\raggedbottom
\maketitle

\begin{abstract}
	We study Hamiltonian elliptic systems under a prescribed bilinear mass constraint, $\int_{\R^N}uv=a>0$. We establish a Gagliardo–Nirenberg inequality involving the crossed gradient pairing and the bilinear mass on scaling-invariant classes with componentwise bounds. This inequality identifies the critical curve $\frac1p+\frac1q=\frac{N}{N+2}$, which also governs the variational behaviour of the associated energy functional. For the corresponding fixed-frequency system with power nonlinearities, we prove uniqueness of positive solutions up to common translations and nondegeneracy throughout the Sobolev-subcritical range. Finally, exploiting the exact scaling laws, we classify positive solutions satisfying the prescribed mass constraint and identify the unique critical bilinear mass.
	In dimension two, we also develop a bilinear exponential
	Gagliardo--Nirenberg estimate with a sharp gradient threshold and a
	truncated scalar refinement with the optimal quartic leading coefficient.
	Building on variational ideas from Cassani and Tarsi
	\cite{CassaniTarsi2015}, we combine a mass-normalising quotient,
	reduction over the full negative fibres and compactness below the
	concentration threshold to obtain least-energy positive radial
	solutions for pure exponential nonlinearities at every prescribed
	mass below the cubic limiting mass. Local perturbative branches and a small-frequency
	exponential branch yield explicit mass-response formulas.
\end{abstract}

\section{Introduction}
For $N\ge2$, we study positive solutions of the Hamiltonian system
\begin{equation}\label{system}
	\begin{cases}
		-\Delta u+\lambda u=|v|^{p-2}v&\text{in }\R^N,\\
		-\Delta v+\lambda v=|u|^{q-2}u&\text{in }\R^N,\\
		\displaystyle\int_{\R^N}uv=a\,,
	\end{cases}
\end{equation}
with a common frequency $\lambda>0$, powers $p,q>2$ and a new normalisation which is the prescribed
bilinear mass $a>0$. In the planar case $N=2$, we also discuss its exponential counterpart. Two features distinguish these problems from scalar equations with prescribed mass: first, the quadratic energy is indefinite, and moreover our constraint measures an interaction between the components rather than their separate sizes.
Main goals are to identify interpolation inequalities adapted to
these quantities and to classify the positive normalised solutions from analytical and variational point of view.

If $p=q=m$ and $u=v$, \eqref{system} reduces to the scalar equation
\begin{equation}\label{scalar}
	-\Delta w+\lambda w=|w|^{m-2}w\,,\qquad w\in H^1(\R^N)\,,\qquad \int w^2=a\,.
\end{equation}
Here and below an integral without a domain is over $\R^N$, and mass
means the $L^2$-norm squared. We write $2^*=\frac{2N}{N-2}$ for $N\ge3$ and $2^*=\infty$ for $N=2$. Equation \eqref{scalar} is obtained by investigating standing waves of the form $e^{i\lambda t}w(x)$ for the focusing nonlinear Schr\"odinger equation
\begin{equation}\label{scalar_evolution}
	i\partial_t\varphi=-\Delta\varphi-|\varphi|^{p-2}\varphi\,.
\end{equation}
At fixed frequency, the mass is an unknown of the problem and solutions are critical points of the scalar \textit{action functional}
$$I_\lambda^{\rm sc}(w)=\frac12\int|\nabla w|^2+\frac\lambda2\int w^2-\frac1m\int|w|^m\,.$$
This problem has a long history and, for $2<m<2^*$, variational and concentration-compactness methods produce positive profiles; radial symmetry and uniqueness identify a single centred profile $Q_m$ at frequency $\lambda=1$ \cite{Lions1,Lions2,Kwong,Cazenave}.

The alternative is to prescribe the mass and let the frequency arise as a Lagrange multiplier. This approach has its root in the fact that the $L^2$-mass is conserved by the evolution in \eqref{scalar_evolution}. The classical Gagliardo--Nirenberg inequality \cite{N,FFRS}
\begin{equation}\label{scalarGN}
	\|w\|_m^m\le C_{N,m}\|\nabla w\|_2^{N(m-2)/2}\|w\|_2^{m-N(m-2)/2}
\end{equation}
then compares the nonlinear energy with the Dirichlet energy on the
mass sphere. The exponent $m=2+\frac4N$ marks their balance under mass-preserving dilations. Since the nonlinearity is homogeneous, all solutions $w_\lambda$ can be derived from a unique positive centred scalar profile $Q_m$, which is the unique solution of the unit-frequency equation
\begin{equation*}
	-\Delta w+w=|w|^{p-2}w\,,
\end{equation*}
see \cite{Kwong}. In fact, for $\lambda>0$,
\begin{equation*}
	w_\lambda(x):=\lambda^\frac1{m-2}Q_m(\sqrt\lambda x)\,,\qquad\int w_\lambda^2=\lambda^{\frac2{m-2}-\frac N2}\|Q_m\|_2^2\,.
\end{equation*}
If $m\neq2+\frac4N$ each mass selects one frequency, whereas at criticality the scaling leaves the mass invariant, hence all frequencies carry the same mass. This implies that for $m\in(2,2^*)\setminus\{2+\frac4N\}$, for every prescribed mass $a>0$ there exists a positive normalised solution, 
while, in the \textit{mass-critical} case $m=2+\frac4N$, 
the mass of a positive solution is necessarily equal to the mass of $Q_m$, and normalised solutions exist only for this distinguished value of the mass, \cite{Weinstein}.

Even the variational structure of the problem is influenced by the mass-critical exponent. In fact, solutions correspond to critical points of the \textit{energy functional}
$$J^{\rm sc}(w)=\frac12\int|\nabla w|^2-\frac1m\int|w|^m$$
constrained to the $L^2-$sphere $\Sigma_a^{\rm sc}=\{w\in H^1(\R^N)\,|\,\int w^2=a\}$. By means of \eqref{scalarGN}, $J^{\rm sc}$ changes geometry on $\Sigma_a^{\rm sc}$ according to the mutual position of $m$ and $2+\frac4N$. If $m<2+\frac4N$ (mass-subcritical regime), $J$ is bounded below and critical points are obtained by minimisation arguments. 
Above it (mass-supercritical regime), $J$ is unbounded below and more sophisticated min-max techniques, which make use of dilation fibres and Poho\v zaev constraints,
provide a variational route to solutions \cite{Jeanjean,BS,BScorrection}.

This scalar variational viewpoint also underlies the orbital-stability
method of Cazenave and Lions \cite{CazenaveLions1982} and the connection
between constrained coercivity and Lyapunov stability developed by
Weinstein \cite{Weinstein1986}. At the critical mass, the ground state
also organises concentration: critical-mass blow-up and continuation
were studied by Merle \cite{Merle1992}, and near-ground-state blow-up
dynamics by Merle and Rapha\"el \cite{MerleRaphael2005}. These results
explain the broader significance of identifying an interpolation
constant, a critical mass and the linearised kernel. Our Hamiltonian
system \eqref{system} demands for their counterparts in an indefinite variational
setting.

\vskip0.2truecm
The normalised viewpoint is closely connected with wave localisation.
In nonlinear optics, diffraction competes with self-focusing, and a
stationary transverse profile describes a beam propagating without
changing shape. Its squared $L^2$ norm represents optical power, while
the phase frequency represents a propagation constant. The planar cubic
ground state $Q_4$ is the normalised Townes profile associated with
critical self-trapping \cite{ChiaoGarmireTownes1964}, and its mass also
determines the sharp quartic GN constant \cite{Weinstein}. Thus a
functional-analytic best constant identifies a physically meaningful
power scale. A similar interpretation arises for multicomponent Bose--Einstein condensates, where coupled Gross--Pitaevskii equations describe interacting matter waves. The component masses represent particle numbers, while the frequency parameters correspond to chemical potentials. In this setting, the interaction between components can lead to localised coupled states, and their existence and properties depend crucially on the nature and strength of the coupling \cite{KevrekidisFrantzeskakis2016}.
For planar Bose fluids, Cassani, Tavares, and Zhang
\cite{CassaniTavaresZhang2020} study weakly coupled systems on bounded
domains with critical exponential self-interaction and coupling terms.
They obtain positive vector ground states in suitable cooperative
regimes and positive vector solutions for weak competition. Their
model gives a concrete precedent for exponential interactions with a
cubic small-amplitude limit, within a gradient-system setting.

\vskip0.2truecm
These applications motivate a common mathematical question: which
coherent profiles are compatible with a prescribed conserved quantity?
The crossed system \eqref{system} studied here has its own Hamiltonian structure and
charge, distinct from the usual particle-number constraints of
Gross--Pitaevskii models. Its standing-wave interpretation is supplied
by the following evolution system
\begin{equation}\label{crossflow}
	\begin{cases}
		i\partial_t\psi=-\Delta\psi-|\varphi|^{p-2}\varphi\,,\\
		i\partial_t\varphi=-\Delta\varphi-|\psi|^{q-2}\psi\,.
	\end{cases}
\end{equation}
For smooth decaying complex fields $z=(\psi,\varphi)$, define
\[
 \begin{aligned}
 \mathcal H(z)&=\operatorname{Re}\int\nabla\psi\cdot\overline{\nabla\varphi}
       -\frac1q\int|\psi|^q-\frac1p\int|\varphi|^p\,,\qquad\mathcal Q(z)&=\operatorname{Re}\int\psi\overline\varphi\,.
 \end{aligned}
\]
Setting $S(\psi,\varphi)=(\varphi,\psi)$ and taking gradients with
respect to the real $L^2$ pairing, \eqref{crossflow} becomes
\[
 i\partial_tz=S\nabla\mathcal H(z)\,,\qquad S\nabla\mathcal Q(z)=z\,.
\]
Consequently $\mathcal Q$ generates common phase rotations for this
Hamiltonian structure: it is the charge associated with its gauge
symmetry. The ansatz $z=e^{i\lambda t}(u,v)$ gives
$\mathcal H'(u,v)+\lambda\mathcal Q'(u,v)=0$, hence \eqref{system}
when $\mathcal Q=a$.
For smooth sufficiently decaying solutions, integration by parts gives
\[
 \frac{\dd}{\dd t}\operatorname{Re}\int\psi\overline\varphi
 =\operatorname{Re}\left(i\int|\varphi|^p-i\int|\psi|^q\right)=0\,.
\]
Thus the normalised elliptic problem selects standing-wave profiles with prescribed bilinear charge.

The change of variables $z_\pm=(\psi\pm\varphi)/\sqrt2$ makes the
meaning and the limitation of this charge explicit:
\[
 \mathcal Q(z)=\frac12\bigl(\|z_+\|_2^2-\|z_-\|_2^2\bigr)\,.
\]
It fixes the difference between the intensities of the symmetric and
antisymmetric modes, leaving their sum uncontrolled. This explains why
the scalar mass-sphere theory cannot simply be applied componentwise,
and why the component control in our bilinear interpolation inequality
is essential. The direct application developed here is the construction
and classification of standing waves for \eqref{crossflow} at fixed
interaction charge.

\vskip0.2truecm
At fixed frequency, Hamiltonian elliptic systems have a developed
existence theory \cite{FY,Sirakov,dF,BMR,BMT}. Their natural exponent
range is bounded by the Sobolev critical hyperbola of Mitidieri
\cite{Mitidieri1993,Mitidieri},
\begin{equation}\label{critical_curve}
 \frac1p+\frac1q=\frac{N-2}{N}\,.
\end{equation}
One main difficulty is that, although the problem possesses a variational structure, the \textit{action functional} associated to \eqref{system}
\begin{equation*}
	I_\lambda(u,v)=\int\nabla u\cdot\nabla v+\lambda\int uv-\frac1q\int|u|^q-\frac1p\int|v|^p
\end{equation*}
is strongly indefinite. A least-action positive profile minimises $I_\lambda$ among positive solutions at that frequency $\lambda$. Dual formulations handle the indefiniteness without minimising the original action on the entire function space \cite{BMT}.

Only few works deal with Hamiltonian systems of the kind \eqref{system} under
a mass-type constraint. Mass-constrained gradient systems with separate component masses lead to separate frequency multipliers \cite{BartschJeanjean,BJS,GJ,CZZ}. Looking for critical points of the \textit{energy functional}
\begin{equation*}
	J(u,v)=\int\nabla u\cdot\nabla v-\frac1q\int|u|^q-\frac1p\int|v|^p
\end{equation*}
under a single total $L^2$ constraint in \eqref{system} produces a different Hamiltonian systems. Indeed, if
\begin{equation}\label{total_mass}
	M(u,v)=\frac12\int(u^2+v^2)\,,
\end{equation}
then $J'+\lambda M'=0$ gives
\begin{equation}\label{totalmassEL}
 -\Delta u+\lambda v=|v|^{p-2}v\,,\qquad
 -\Delta v+\lambda u=|u|^{q-2}u\,.
\end{equation}
In contrast, denoting by 
\begin{equation}\label{bilinear_mass}
	B(u,v)=\int uv
\end{equation}
our \textit{bilinear mass constraint}, $J'+\lambda B'=0$ gives the diagonal frequency terms in
\eqref{system}. The familiar scalar rule, ``an $L^2$ constraint produces
a frequency term'', therefore requires care for a crossed energy:
the derivative of the constraint determines which component carries
the multiplier. Remark~\ref{quadraticconstraint} records this argument
for every quadratic constraint with constant coefficients.
This computation is relevant to \cite{MaiaMiyagaki},
whose stated problem combines diagonal frequency terms with a total
$L^2$ constraint: constraining the crossed energy $J$ by that total
mass yields \eqref{totalmassEL}, a different, meaningful normalisation problem, not its displayed diagonal-frequency system. 
The crossed-multiplier formulation appears explicitly in
\cite[system (1), constraint (2)]{ZhangYuZhang} and is consistent with
its total-mass constraint. Likewise, separate masses are appropriate
for the gradient systems cited above. 
We stress once more that our choice of bilinear mass keeps the frequency terms diagonal and the coupling in the nonlinearities.

\vskip0.2truecm
Throughout the paper we assume
\begin{equation}\label{range}
	N\geq2\,,\qquad p,q>2\,,\qquad\frac1p+\frac1q>\frac{N-2}{N}\,.
\end{equation}
In particular for $N=2$ every $p,q>2$ is admissible. By working in products of suitable conjugated Sobolev spaces, \eqref{bilinear_mass} yields to the definition of a new \textit{bilinear-mass critical hyperbola}, which extends the concept of critical mass to the Hamiltonian setting of \eqref{system} from both analytical and variational point of view. This new curve is given by
\begin{equation}\label{criticalcurve}
	\gamma_N(p,q):=\frac1p+\frac1q-\frac{N}{N+2}=0\,.
\end{equation}
It lies strictly inside the Sobolev subcritical region and passes through the scalar mass-critical exponent $p=q=2+\frac4N$, see Figure \ref{figcriticalcurves}. 

\begin{figure}[htbp]
	\centering
	\resizebox{0.55\linewidth}{!}{%
		\begin{tikzpicture}
			\begin{axis}[width=9.4cm,height=9.4cm,axis equal image,
				xmin=2,xmax=12,ymin=2,ymax=12,
				xlabel={$p$},ylabel={$q$},xtick={2,4,6,8,10,12},ytick={2,4,6,8,10,12},
				tick label style={font=\small},label style={font=\small},
				axis lines=box,clip=true]
				\path[fill=phaseorange!33] (axis cs:2,2) rectangle (axis cs:12,12);
				\addplot[draw=none,fill=phaseblue!33,domain=2:10,samples=150] {5*x/(3*x-5)} \closedcycle;
				\addplot[draw=none,fill=white!13,domain=4:12,samples=150] {3*x/(x-3)} -- (axis cs:12,12) -- (axis cs:4,12) --cycle;
				\addplot[black,densely dashed,thick,domain=4:12,samples=150] {3*x/(x-3)};
				\addplot[phaseblue,very thick,domain=2:10,samples=150] {5*x/(3*x-5)};
				\addplot[black!55,dotted,thick,domain=2:12] {x};
				\node[align=center,font=\small,fill=white!13,inner sep=2pt] at (axis cs:8,10.5) {Outside the\\Sobolev range};
				\addplot[only marks,mark=*,mark size=2pt,black] coordinates {(3.333333,3.333333)};
				\draw[->,black!70] (axis cs:4.6,3.55)--(axis cs:3.43,3.36);
				\node[anchor=west,font=\footnotesize] at (axis cs:4.6,3.6) {$p=q=2+\frac4N$};
			\end{axis}
		\end{tikzpicture}
	}
	\caption{Sobolev and bilinear-mass critical curves in the $(p,q)$ plane. The blue and the orange region correspond to the bilinear-mass subcritical and supercritical regimes, respectively. The solid line represents the bilinear-mass critical curve $\gamma_N(p,q)=0$, whereas the dashed line is the Sobolev critical hyperbola. The picture is quantitatively drawn in the case $N=3$. 
	The diagonal corresponds to the scalar problem.}
	\label{figcriticalcurves}
\end{figure}

As first main result, we show that, for Sobolev subcritical exponents which lie outside this curve, the constrained system \eqref{system} always admits a positive solution for any value of the mass, while, if $\gamma_N(p,q)=0$, positive solutions can be found only for a specific value of the bilinear mass. Such solutions can be obtained by suitable scaling from positive solutions of the unit-frequency system
\begin{equation}\label{system1}
	\begin{cases}
		-\Delta U+U=|V|^{p-2}V\quad\mbox{in}\ \,\R^N\,,\\
		-\Delta V+V=|U|^{q-2}U\quad\mbox{in}\ \,\R^N\,.
	\end{cases}
\end{equation}
Let $\mathcal S$ denote the set of positive classical solutions of \eqref{system1}, which decay at
infinity. Lemma \ref{profileregularity} identifies this class with the positive
$H^1(\R^N)^2$ weak solutions and establishes their exponential decay. Fixed-frequency ground-state existence in the entire range \eqref{range}, including the plane, is classical, see \cite[Theorems 1.8--1.9 and Lemma 3.5]{BMR} and
\cite[Section 6.2, Theorem 6.4]{BMT}, hence in the next result, the existence and dual-attainment assertions for \eqref{system1} are not claimed as new.

For the sake of a compact notation, set
\begin{equation*}
	\sigma:=\frac1p+\frac1q\in\left(\frac{N-2}N,1\right)
\end{equation*}
because we work in the Sobolev subcritical regime and $p,q>2$, and
\begin{equation}\label{parameters}
	D:=pq-p-q=pq(1-\sigma)>0\quad\mbox{and}\quad\gamma:=\frac{\sigma}{1-\sigma}-\frac N2=\frac{N+2}{2(1-\sigma)}\gamma_N(p,q)\,.
\end{equation}
Set also the conjugate exponents $p'=\frac p{p-1}$, $q'=\frac q{q-1}$, and
$\mathcal R=(-\Delta+1)^{-1}$. The dual constant is
\begin{equation}\label{Cpq}
	\mathfrak C_{p,q}:=\sup_{\|f\|_{q'}=\|g\|_{p'}=1}\int f\,\mathcal Rg
\end{equation}
and the least-action is denoted by
\begin{equation}\label{c_+}
	c_+:=\inf_{(U,V)\in\mathcal S}I_1(U,V)\,.
\end{equation}

\begin{theorem}[Existence, scaling and classification]\label{introexistencescaling}
	Suppose $p$, $q$, and $N$ satisfy \eqref{range}.
	For every $(U,V)\in\mathcal S$ and $\lambda>0$, the pair
	\begin{equation}\label{introfrequencyscaling}
		u_\lambda(x)=\lambda^{p/D}U(\sqrt\lambda x),\qquad
		v_\lambda(x)=\lambda^{q/D}V(\sqrt\lambda x)
	\end{equation}
	solves the two equations in \eqref{system} and has mass
	\begin{equation}\label{masslaw}
		B(u_\lambda,v_\lambda)=\lambda^\gamma B(U,V).
	\end{equation}
	Conversely, every positive-frequency solution in the decay class
	$\mathcal S$ is obtained uniquely from a unit-frequency profile
	in this way, at its given frequency.
	If $\gamma_N(p,q)\ne0$, every $a>0$ is attained, with frequency
	\begin{equation}\label{frequencychoice}
		\lambda=\left(\frac a{B(U,V)}\right)^\frac1\gamma\,.
	\end{equation}
	Moreover, every $a>0$ admits exactly one positive normalised solution triple
	$(\lambda,u,v)$ modulo translation.
	
	If $\gamma_N(p,q)=0$, the entire positive mass spectrum consists
	of the single value
	\begin{equation}\label{introcriticalmass}
	 a_*=c_+=\frac{2}{N+2}\mathfrak C_{p,q}^{-\frac{N+2}2}.
	\end{equation}
	At this mass, all positive solutions in $\mathcal S$ are precisely
	the frequency-scaled copies of the unique unit-frequency profile.
	The constant $\mathfrak C_{p,q}$ is finite, positive and attained
	by a nonnegative radially decreasing pair. It yields a positive
	least-action profile in $\mathcal S$.
\end{theorem}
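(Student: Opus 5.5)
The plan is to treat Theorem~\ref{introexistencescaling} as three separate pieces: an explicit scaling computation, a Nehari--Poho\v zaev identity, and the dual variational characterisation of the least action.

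\emph{Scaling and converse.} For $(U,V)\in\mathcal S$ I will plug \eqref{introfrequencyscaling} into \eqref{system}. The first equation is satisfied exactly when $\frac pD+1=\frac{q(p-1)}D$, which is $p+D=pq-q$, i.e.\ the definition of $D$; the second equation is symmetric. The change of variables $y=\sqrt\lambda x$ gives $B(u_\lambda,v_\lambda)=\lambda^{(p+q)/D-N/2}B(U,V)$. Since $(p+q)/D=\sigma/(1-\sigma)$, this is \eqref{masslaw}.

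For the converse, given a positive decaying solution $(u,v)$ at frequency $\lambda$, I set $U(y)=\lambda^{-p/D}u(y/\sqrt\lambda)$ and $V(y)=\lambda^{-q/D}v(y/\sqrt\lambda)$. The same exponent identity shows $(U,V)$ solves \eqref{system1}. Uniqueness of the profile at a given frequency holds because the map is invertible.

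Lemma~\ref{profileregularity} provides the regularity and exponential decay that justify every integration below. When $\gamma\ne0$, the map $\lambda\mapsto\lambda^\gamma B(U,V)$ is a bijection of $(0,\infty)$ provided $B(U,V)>0$, which is automatic for positive pairs, and this yields \eqref{frequencychoice}. The statement ``exactly one triple modulo translation'' additionally needs uniqueness of the positive solution of \eqref{system1} up to translation. I will invoke the paper's uniqueness result for the Sobolev-subcritical range for this; it is the one non-elementary input in this part.

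\emph{Identities and the critical mass.} Write $G=\int\nabla U\cdot\nabla V$, $B=\int UV$ and $T=\int|V|^p$. Testing the first equation with $V$ and the second with $U$ gives $G+B=\int|V|^p=\int|U|^q=T$. Multiplying the first equation by $x\cdot\nabla V$, the second by $x\cdot\nabla U$, adding and integrating by parts (with decay controlling the boundary terms) gives $(N-2)G+NB=N\sigma T$. Eliminating $G$ yields
\[
B=\tfrac12(N\sigma-N+2)T,\qquad I_1(U,V)=G+B-\sigma T=(1-\sigma)T .
\]
At $\sigma=\frac N{N+2}$ both quantities equal $\frac2{N+2}T$, so $B(U,V)=I_1(U,V)$ for every $(U,V)\in\mathcal S$. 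By \eqref{masslaw} with $\gamma=0$, the attainable masses are then exactly $\{I_1(U,V):(U,V)\in\mathcal S\}$. Uniqueness of the profile collapses this set to $\{c_+\}$, and every solution is a frequency rescaling of that profile.

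\emph{Dual constant.} The remaining identity is $c_+=\frac2{N+2}\mathfrak C_{p,q}^{-(N+2)/2}$. I plan to use the dual formulation of \cite{BMR,BMT}. Setting $f=|U|^{q-2}U$ and $g=|V|^{p-2}V$, the system reads $U=\mathcal Rg$, $V=\mathcal Rf$. Critical points of the dual functional
\[
\Phi(f,g)=\tfrac1{q'}\|f\|_{q'}^{q'}+\tfrac1{p'}\|g\|_{p'}^{p'}-\int f\,\mathcal Rg
\]
correspond to solutions, and least action is the mountain-pass/Nehari level of $\Phi$. Restricting $\Phi$ to the two-parameter family $(sf,tg)$ with $\|f\|_{q'}=\|g\|_{p'}=1$ and optimising in $s,t$ reduces the level to a power of $\bigl(\int f\mathcal Rg\bigr)^{-1}$. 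The exponent is $\frac{1}{1-\sigma}$ up to a constant depending on $p,q$, and this becomes $\frac{N+2}2$ with prefactor $\frac2{N+2}$ when $\sigma=\frac N{N+2}$. Taking the supremum gives $c_+$ in terms of $\mathfrak C_{p,q}$.

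Finiteness and positivity of $\mathfrak C_{p,q}$ follow from Hardy--Littlewood--Sobolev/Young bounds on the Bessel kernel, using $\sigma>\frac{N-2}N$. Attainment by a nonnegative radially decreasing pair follows from Riesz rearrangement, since $\mathcal R$ has a positive radial decreasing kernel, together with compactness of radial maximising sequences (Strauss-type decay, excluding vanishing). The maximiser, rescaled through the Lagrange multiplier equations, produces the least-action profile.

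\emph{Main obstacle.} The hardest step is this last one: the bookkeeping of the two independent scalings $s,t$ in the dual problem and the matching of the constant exactly, together with the compactness argument for maximisers in $N=2$, where the kernel has only a logarithmic singularity. Beyond that, the classification statements rest entirely on the uniqueness theorem for \eqref{system1}.
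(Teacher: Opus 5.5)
Your proposal is correct and follows essentially the paper's proof: the scaling exponents and mass law, the Nehari--Poho\v zaev identities giving $B(U,V)=(\gamma+1)I_1(U,V)$ (hence $B=I_1$ on the critical curve), uniqueness from Theorem~\ref{thm:main}, and attainment of $\mathfrak C_{p,q}$ by Riesz rearrangement plus radial compactness are exactly what the paper uses. One small caveat: the unique critical point of $(s,t)\mapsto\Phi(sf,tg)$ is a saddle, not an optimum, so the identification $c_+=(1-\sigma)\mathfrak C_{p,q}^{-1/(1-\sigma)}$ is cleanest done as in the paper, by testing the quotient in \eqref{Cpq} with $(U^{q-1},V^{p-1})$ for an arbitrary solution (this gives $\int U^q\ge\mathfrak C_{p,q}^{-1/(1-\sigma)}$) and observing equality for the profile built from the maximiser; also, $N=2$ needs no special care, since $1/p'-1/q=1-\sigma<2/N$ still gives the local compact embedding $W^{2,p'}\hookrightarrow L^q$.
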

The uniqueness parts of Theorem \ref{introexistencescaling} are based on a new global uniqueness result below, that we establish for the unit-frequency system \eqref{system1}. In particular, in Theorem \ref{introexistencescaling} it permits to identify the entire positive mass spectrum, not only its least value.

Although uniqueness of positive radial solutions for the scalar equation \eqref{scalar} has been proved by Kwong more than thirty years ago in \cite{Kwong}, the same issue for the Hamiltonian system \eqref{system1} was never fully answered. We are aware only of results in balls of $\R^N$ in the sublinear range $p\in(1,2)$ by \cite{CCST}, and for the one-dimensional version of \eqref{system1} in \cite{ACS}, where uniqueness is found for all $p,q>1$. However, as the authors emphasise, \textit{dimension $N=1$ is a critical assumption} therein. We also point out that in \cite{MC} the authors claim to having proved uniqueness for \eqref{system1} for $N\geq2$ and exponents $2<p,q<2^*$, but their proof has a gap.

Note that the symmetry result in \cite{BuscaSirakov} provides a common centre for every
element of $\mathcal S$. We call a pair \textit{centred} when this centre is the origin. The next theorem classifies all positive profiles throughout the full Sobolev-subcritical regime.

\begin{theorem}[Global uniqueness and nondegeneracy]\label{thm:main}
	Under \eqref{range}, every positive $H^1(\R^N)^2$ weak solution of
	\eqref{system1} is smooth and exponentially decaying.
	\begin{enumerate}
		\item There is exactly
		one centred positive weak solution, and every positive weak solution is a common translate of this pair.
		\item At any positive weak solution $(U,V)$, the linearised system
		\begin{equation}\label{eq:linearized}
			\begin{cases}
				(-\Delta+1)\phi=(p-1)V^{p-2}\psi\,,\\
				(-\Delta+1)\psi=(q-1)U^{q-2}\phi
			\end{cases}
		\end{equation}
		has, in each of $L^\infty(\R^N)^2$, $L^2(\R^N)^2$,
		$H^1(\R^N)^2$ and $H^2(\R^N)^2$, precisely the kernel
		\begin{equation*}
			\operatorname{span}\{(\partial_{x_j}U,\partial_{x_j}V):1\le j\le N\}\,.
		\end{equation*}
	\end{enumerate} 
\end{theorem}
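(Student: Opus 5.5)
We plan to reduce everything to the radial ODE system and run a shooting/continuation argument in the exponents, closing it with the nondegeneracy statement. Regularity and decay are taken from Lemma~\ref{profileregularity}. By \cite{BuscaSirakov} every positive solution is radially symmetric and strictly decreasing about a common centre. It therefore suffices to treat centred pairs $(U(r),V(r))$ solving
\[
U''+\tfrac{N-1}{r}U'-U+V^{p-1}=0\,,\qquad V''+\tfrac{N-1}{r}V'-V+U^{q-1}=0\,,
\]
with $U'(0)=V'(0)=0$ and decay at infinity.

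We first prove nondegeneracy (part 2) at every positive solution. Using $L^\infty$ bounds and the exponential decay of $V^{p-2},U^{q-2}$, every kernel element in $L^\infty$, $L^2$ or $H^1$ is bootstrapped into $H^2$, so the four kernels coincide. Next we expand a kernel element $(\phi,\psi)$ in spherical harmonics; the radial operators $L_k$ act on the coefficients $(\phi_k,\psi_k)$.
\begin{itemize}
\item For $k=1$, $(U',V')$ is a solution of $L_1$ with a fixed sign. A Sturm/Wronskian comparison for cooperative systems (the coupling coefficients $(p-1)V^{p-2},(q-1)U^{q-2}$ are positive) shows that every decaying solution of $L_1$ is a multiple of it.
\item For $k\ge2$, the larger potential $k(k+N-2)/r^2$ combined with the same positivity argument excludes nontrivial decaying solutions.
\end{itemize}

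The radial case $k=0$ is the heart of the matter. Differentiating along the scaling family \eqref{introfrequencyscaling} at $\lambda=1$ gives an explicit radial function
\[
(\Phi,\Psi)=\Bigl(\tfrac pD U+\tfrac12 rU',\ \tfrac qD V+\tfrac12 rV'\Bigr)
\]
satisfying $L_0(\Phi,\Psi)=-(U,V)$. Our plan is to follow Kwong/McLeod: show that a decaying radial kernel element $(\phi_0,\psi_0)$ and the positive solution $(U,V)$ have incompatible sign-change structure. Concretely, we would establish
\begin{itemize}
\item that $\phi_0$ and $\psi_0$ change sign exactly once, at essentially the same radius;
\item a Wronskian-type identity pairing $(\phi_0,\psi_0)$ with the crossed data $(\psi,\phi)$-type test functions $(U,V)$ and $(\Phi,\Psi)$;
\item a contradiction from this identity, using that $p,q>2$ and the subcriticality \eqref{range}.
\end{itemize}

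We expect this step to be the main obstacle, since Sturm comparison is much weaker for systems than for scalar equations. The first attempt will use the dual (Legendre) formulation with $\mathcal R=(-\Delta+1)^{-1}$, which rewrites the problem as a positive-kernel integral operator. Nodal counting can then be carried out via Krein--Rutman and a monotonicity argument for the second eigenfunction.

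Uniqueness (part 1) then follows by continuation. Consider the set $\Lambda$ of admissible $(p,q)$ in \eqref{range} for which the centred solution is unique. It contains the diagonal $p=q$: there, subtracting the equations shows $U-V$ satisfies a linear equation with a sign-definite potential, which forces $U=V$, and Kwong's theorem \cite{Kwong} then applies. By nondegeneracy in the radial $H^2$ space, the implicit function theorem continues each solution locally and uniquely. A priori bounds and exponential decay that are uniform on compact subsets of \eqref{range} (blow-up analysis plus Liouville theorems for subcritical Lane--Emden systems) give compactness of radial solution sets. Together these show that the number of centred solutions is locally constant along any path in \eqref{range}. Since \eqref{range} is connected and meets the diagonal, exactly one centred solution exists everywhere. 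Translating back gives the classification.
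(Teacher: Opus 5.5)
\textbf{Gap: the radial sector.} Your outer architecture is the paper's. You take regularity and common-centre symmetry from Lemma~\ref{profileregularity} and \cite{BuscaSirakov}. You reduce by spherical harmonics, comparing the degree-one sector with the one-signed pair $(U',V')$ and excluding $\ell\ge2$ by the larger centrifugal term. You get uniqueness by continuation from the diagonal, using Liouville-based blow-up bounds, local invertibility and Kwong's theorem. The genuine gap is the radial sector $\ell=0$, on which both parts of the theorem depend; there your text is a programme, not an argument.

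For the system there is no oscillation theorem forcing a radial kernel element to change sign exactly once, still less with both components vanishing at a common radius. In the dual formulation you invoke, a radial kernel element gives an eigenfunction $\phi_0/U$ of $T=PQ$ (Lemma~\ref{sp:weighted-operators}). Its eigenvalue is $((p-1)(q-1))^{-1}$ and it has zero $\mu$-mean. Krein--Rutman only identifies the top eigenvalue $1$ with the constants. A monotone \emph{second} eigenfunction tells you nothing about $\phi_0/U$ if $((p-1)(q-1))^{-1}$ is a lower eigenvalue $\theta_j$ with $j\ge3$.

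Even granting one node per component, the McLeod-type step does not close. Pair with $(F,G)=L_0\bigl(a(U,V)+b(\Phi,\Psi)\bigr)$ in $\int(\phi_0G+\psi_0F)r^{N-1}\dd r=0$. You have one free ratio $a/b$ but two sign changes to match, those of $\phi_0$ and of $\psi_0$. So the integrand has no definite sign in general.

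\textbf{The missing idea.} No nodal information about the kernel element is needed. It suffices to prove the strict gap $\theta_2<((p-1)(q-1))^{-1}$, since every eigenvalue of $T$ on mean-zero functions is at most $\theta_2$ (Proposition~\ref{sp:strict-gap}). The paper obtains this from three ingredients absent from your plan.
\begin{enumerate}
\item The radial Bessel kernel satisfies the determinant inequality \eqref{sp:TP2}. Hence $P$ and $Q$ preserve nonincreasing functions, and a $\theta_2$-eigenfunction $h$ and $k=Qh$ can be taken nonincreasing with zero mean.
\item The identity $r^{N-1}(U'V-UV')=\int_0^r(U^q-V^p)s^{N-1}\dd s$ and a minimum argument give the strict cumulative comparison of Lemma~\ref{cumulative}. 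This turns monotonicity plus zero mean into $\int h\dd\rho<0$ and $\int k\dd\rho<0$.
\item The test pair is not the frequency derivative $(\Phi,\Psi)$ but the generator of the $L^q\times L^p$-preserving dilation, $(U\xi,V\eta)=(\tfrac{N}{q}U+rU',\tfrac{N}{p}V+rV')$. Since $\int_0^R\xi\dd\mu=\frac{1}{q}R^NU(R)^q>0$, one gets $\langle\xi,h\rangle_{L^2(\mu)}>0$. The identities $\xi-(p-1)P\eta=\delta-2\mathcal RU/U$ and $\eta-(q-1)Q\xi=\delta-2\mathcal RV/V$, paired with $h$ and $k$, give $\langle\xi,h\rangle>(p-1)\langle\eta,k\rangle>(p-1)(q-1)\theta_2\langle\xi,h\rangle$.
\end{enumerate}
Without a replacement for this step, your continuation argument has no nondegeneracy to propagate.

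\textbf{A smaller point in the compactness step.} Blow-up bounds alone do not give uniform decay. You also need $U_j(0),V_j(0)\ge1$ to prevent vanishing of the limit. You also need an energy argument excluding the constant state $(1,1)$ as the limit at infinity.
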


The theorem concerns all positive profiles, without an a priori action
bound or a proximity assumption. We dedicate Section \ref{secuniqueness} to its complete proof, whose strategy is adapted from a similar result in the context of critical Lane--Emden systems \cite{CassaniXuZhang}. The proof starts with nondegeneracy. One first establishes a strict comparison of the cumulative integrals of $U^q,V^p$ and $UV$.
The radial Green operator preserves monotonicity, hence its second
eigenfunction can be chosen nonincreasing. Combining these
facts with a dilation identity gives a strict radial spectral gap.
Spherical-harmonic comparison leaves only the translations.
Once nondegeneracy is established, compactness of all centred profiles and continuation from the diagonal then give global uniqueness. 

\vskip0.2truecm
Combining Theorems \ref{introexistencescaling} and \ref{thm:main} one finds the complete characterisation of normalised solutions off bilinear criticality. Further results on frequency and energy of those positive normalised solutions will be given in Section \ref{Section_normalised}, see Proposition \ref{introleastenergy}.

\vskip0.2truecm

As already described, the mass law \eqref{masslaw} identifies the critical curve \eqref{criticalcurve}. 
A further question is whether this same curve separates different geometries of the constrained
energy functional $J$. In the scalar problem, the Gagliardo-Nirenberg inequality \eqref{scalarGN} supplies precisely
this connection: on a fixed mass sphere it compares the nonlinear
potential with a power of the Dirichlet energy, explaining the passage
from an energy $J^{\rm sc}$ which is bounded below to a critical mass threshold and then to
a min-max geometry \cite{Weinstein,Jeanjean}. This variational information
is essential, especially for problems involving nonlinearities which are not pure powers, and for which a prescribed mass
can no longer be obtained simply by rescaling a fixed-frequency
solution. The purpose of our new bilinear Gagliardo-Nirenberg type inequality which we are now introducing, is to recover that
comparison in the quantities actually occurring in $J$ and $B$.

A main obstruction is that for a pair $(u,v)$ of positive solutions, neither $B(u,v)$ nor
$$A(u,v):=\int\nabla u\cdot\nabla v$$
controls the corresponding component
norms: reciprocal changes of amplitude preserve both pairings, see Proposition \ref{obstruction} below.
The vector-valued GN inequalities for gradient systems already used in the literature, exemplified by \cite{MZ}, use sums of component masses and Dirichlet energies. 
Here one needs an estimate in the crossed
pairings themselves, together with conditions making that estimate
possible.

Our new bilinear GN-type inequality is formulated on a scaling-invariant admissible class, with the exponents of \(A\) and \(B\) uniquely determined by scaling. The analytic foundation is provided by scalar interpolation inequalities, while its structure, specific for system \eqref{system}, emerges from the bilinear formulation and the resulting constrained energy geometry.
Let us describe this inequality in details. Choose conjugate exponents $r,s>1$ as in Lemma \ref{setting} below, so that
$W^{1,r}(\R^N)\hookrightarrow L^q(\R^N)$ and $W^{1,s}(\R^N)\hookrightarrow L^p(\R^N)$, and set
$X=W^{1,r}(\R^N)\times W^{1,s}(\R^N)$ and
\begin{equation}\label{Sigma_a}
	\Sigma_a=\{(u,v)\in X:B(u,v)=a\}\,.
\end{equation}
For $(u,v)\in X$ for which $A(u,v)>0$ and $B(u,v)>0$, let $(\widehat u,\widehat v)$ be the unique pair obtained by the transformation
\[
 T_{\mu,t}(u,v)=\bigl(\mu^{1/q}u(t\,\cdot)\,,\mu^{1/p}v(t\,\cdot)\bigr),\qquad \mu,t>0\,,
\]
which satisfies $A(\widehat u,\widehat v)=B(\widehat u,\widehat v)=1$.
For $K>0$, the admissible class $\mathcal M_K$ consists of these 
pairs for which
\begin{equation}\label{MK}
	\max\{\|\widehat u\|_r,\|\nabla\widehat u\|_r,\|\widehat v\|_s,\|\nabla\widehat v\|_s\}\le K\,.
\end{equation}
Thus the component bounds are imposed after bilinear normalisation; moreover, they are preserved by both parameters of $T_{\mu,t}$.

\begin{theorem}[Bilinear Gagliardo--Nirenberg inequality]\label{introinterpolation}
	Under \eqref{range}, for all sufficiently large $K$, the set $\mathcal M_K$ is nonempty,
	invariant under every $T_{\mu,t}$ and common translation, and
	intersects every $\Sigma_a$, $a>0$. On this class,
	\begin{equation}\label{introGN}
		\frac1q\int|u|^q+\frac1p\int|v|^p\le C_K\left(\int\nabla u\cdot\nabla v\right)^\alpha\left(\int uv\right)^\beta,
	\end{equation}
	where
	\begin{equation}\label{alphabeta}
		\alpha:=\frac N2\left(\left(\frac1p+\frac1q\right)^{-1}-1\right)\,,\qquad\beta:=\left(\frac1p+\frac1q\right)^{-1}-\alpha\,,
	\end{equation}
	and one may take
	\[
	 C_K=\frac{C_{N,r,q}^qK^q}{q}+\frac{C_{N,s,p}^pK^p}{p}\,,
	\]
	with the scalar interpolation constants of \eqref{GNrs}.
\end{theorem}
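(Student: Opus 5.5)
The plan is to reduce everything to the normalised pair $(\hu,\hv)$ and to compute exactly how $A$, $B$ and the nonlinear terms transform under $T_{\mu,t}$. For $(u_1,v_1)=T_{\mu,t}(u,v)$ the change of variables $y=tx$ gives
\[
A(u_1,v_1)=\mu^{\sigma}t^{2-N}A(u,v),\qquad B(u_1,v_1)=\mu^{\sigma}t^{-N}B(u,v),
\]
\[
\int|u_1|^q=\mu t^{-N}\int|u|^q,\qquad \int|v_1|^p=\mu t^{-N}\int|v|^p .
\]
Hence, given $A(u,v),B(u,v)>0$, the conditions $A=B=1$ determine $t^2=B/A$ and then $\mu^\sigma=t^NB^{-1}$ uniquely. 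This gives existence and uniqueness of $(\hu,\hv)$. It also shows that $\widehat{T_{\mu,t}(u,v)}=\widehat{(u,v)}$, because the normalised pair of a whole $T$-orbit is the same: the group $\{T_{\mu,t}\}$ acts freely and transitively on $\{(A,B)\in(0,\infty)^2\}$ along each orbit. Common translations commute with $T$ and leave every norm in \eqref{MK} unchanged. So $\mathcal M_K$ is invariant under both operations.

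For nonemptiness, I will fix any smooth compactly supported pair, for example $u=v=\varphi\ge0$ nontrivial, so that $A,B>0$. I then take $K$ larger than the four norms of its normalised pair. Since $\lambda\mapsto B(T_{\mu,t}(u,v))$ ranges over all of $(0,\infty)$ (vary $\mu$ with $t=1$), the orbit meets every $\Sigma_a$.

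For the inequality, I apply the scalar interpolation estimate \eqref{GNrs} to $(\hu,\hv)$. This reads $\|\hu\|_q\le C_{N,r,q}\|\nabla\hu\|_r^{\theta}\|\hu\|_r^{1-\theta}\le C_{N,r,q}K$, and the analogous bound for $\hv$ in $L^p$ via $W^{1,s}$. It gives
\[
\frac1q\int|\hu|^q+\frac1p\int|\hv|^p\le C_K=C_K A(\hu,\hv)^\alpha B(\hu,\hv)^\beta .
\]
Next I transport this back to $(u,v)=T_{\mu,t}^{-1}(\hu,\hv)$. The left side scales as $\mu t^{-N}$ and the right side as $\mu^{\sigma(\alpha+\beta)}t^{(2-N)\alpha-N\beta}$. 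Matching the $\mu$ powers forces $\alpha+\beta=\sigma^{-1}$. Matching the $t$ powers forces $-N=-N(\alpha+\beta)+2\alpha$, that is $\alpha=\frac N2(\sigma^{-1}-1)$. These are exactly \eqref{alphabeta}, so the inequality is invariant and holds on all of $\mathcal M_K$.

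The main obstacle is bookkeeping rather than analysis. I have to check that Lemma \ref{setting} indeed provides conjugate $r,s$ with the embeddings into $L^q$ and $L^p$ under \eqref{range}, and that \eqref{GNrs} is stated in the form with constants $C_{N,r,q}$ and $C_{N,s,p}$. I also have to verify that $\alpha,\beta>0$ so that the right-hand side makes sense. Here $\alpha>0$ holds since $\sigma<1$. The condition $\beta>0$ amounts to $\sigma^{-1}(1-\frac N2)+\frac N2>0$, which holds for $N=2$ and for $N\ge3$ is equivalent to $\sigma>\frac{N-2}N$, the Sobolev-subcritical condition. Finally, $K$ only needs to be large enough to contain one test orbit.
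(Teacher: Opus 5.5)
Your proposal is correct and follows the paper's proof: you normalise to $A=B=1$, apply \eqref{GNrs} to the normalised pair using the bounds in \eqref{MK}, and undo the normalisation through the scaling laws. The exponents are forced by homogeneity exactly as in the paper's remarks before its proof, and the orbit $T_{\mu,1}$ reaches every $\Sigma_a$. The only slip is cosmetic: translations do not literally commute with $T_{\mu,t}$ (writing $\tau_y$ for translation by $y$, one has $T_{\mu,t}\circ\tau_y=\tau_{y/t}\circ T_{\mu,t}$), but the normalised pair of a translate is a translate of the normalised pair, which is all the invariance argument needs.
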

Note that, if $p=q=m$ and $u=v=w$, one retrieves the classical scalar GN inequality \eqref{scalarGN}. 
In Proposition~\ref{equivalent} we also establish on the same
invariant class, the equivalence between our bilinear Gagliardo-Nirenberg inequality \eqref{introGN} and the inhomogeneous inequality
$$\int\nabla u\cdot\nabla v+\int uv\geq c_K\left(\frac1q\int|u|^q+\frac1p\int|v|^p\right)^\sigma$$
for some $c_K>0$.


The critical curve now has an energy interpretation as well as a
frequency-scaling interpretation. If $\gamma_N(p,q)=0$ one has $\alpha=1$ and $\beta=2/N$, that is, the nonlinear energy
is bounded by a mass-dependent multiple of the crossed Dirichlet
energy. On either side of the critical curve, the comparison is with a sublinear or a
superlinear power of $A$. The scalar limit, this critical balance,
and invariance under both amplitude and spatial scaling are the
three features that determine the form of \eqref{introGN}. The next
theorem makes its geometric consequences precise:

Set $\cM_K^a=\cM_K\cap\Sigma_a$, henceforth called ``admissible mass slice'', and denote 
\begin{equation*}
	F(u,v):=\frac1p\int|u|^p+\frac1q\int|u|^q\,.
\end{equation*}
Define also $C_K^*\in(0,C_K]$ as the best constant of the inequality \eqref{introGN}, namely
\begin{equation}\label{C_K*}
	C_K^*=\sup_{(u,v)\in\mathcal M_K}\frac{F(u,v)}{A(u,v)^\alpha B(u,v)^\beta}\,.
\end{equation}
The mass-preserving dilation and its exact energy law are
\begin{equation*}
	S_t(u,v)=\bigl(t^{N/(q\sigma)}u(t\,\cdot), t^{N/(p\sigma)}v(t\,\cdot)\bigr),
\end{equation*}
\begin{equation*}
	J(S_t(u,v))=t^2A(u,v)-t^{2\alpha}F(u,v)\,.
\end{equation*}
Hence, we call \textit{fibre} of $(u,v)$ the set
\begin{equation}\label{fiber_set}
	\{S_t(u,v):t>0\}.
\end{equation}
These fibres give the following precise analogue of the scalar
energy trichotomy, on the stated admissible sets.

\begin{theorem}[Energy trichotomy]\label{introenergytrichotomy}
	Fix $a>0$ and let $K>0$ such that $\mathcal M_K\ne\emptyset$.
	\begin{enumerate}
	\item If $\gamma_N(p,q)>0$, equivalently $0<\alpha<1$, then
	\begin{equation}\label{subinf}
		\inf_{\mathcal M_K^a}J=-(1-\alpha)\alpha^{\alpha/(1-\alpha)}(C_K^*a^\beta)^{1/(1-\alpha)}\in(-\infty,0)\,.
	\end{equation}
	Moreover $J(u,v)\to+\infty$ along any sequence in $\mathcal M_K^a$
	with $A(u,v)\to+\infty$.
	\item If $\gamma_N(p,q)=0$, equivalently $\alpha=1$, $\beta=2/N$, define
	\begin{equation}\label{threshold}
		a_K=(C_K^*)^{-\frac N2}\,.
	\end{equation}
	The infimum of $J$ on $\cM_K^a$ is $0$ for $a\le a_K$ and
	$-\infty$ for $a>a_K$. For $a<a_K$ every element in $\cM_K^a$ has
	strictly positive energy, so that the infimum is not attained. At $a=a_K$, zero energy is attained if and only if the best $C_K^*$ constant is attained.
	\item If $\gamma_N(p,q)<0$, equivalently $\alpha>1$, then
	$\inf_{\mathcal M_K^a}J=-\infty$. Moreover, there exists $\rho>0$, such that
	\begin{equation}\label{barrier}
		J(u,v)\geq\tfrac12A(u,v)\quad\text{whenever }(u,v)\in\cM_K^a\quad\mbox{and}\quad 0<A(u,v)\leq\rho\,.
	\end{equation}
	For each $(u,v)\in\cM_K^a$, its fibre set has a unique critical point, a strict global maximum at
	$$t_* =\left(\frac{A(u,v)}{\alpha F(u,v)}\right)^{1/(2\alpha-2)}.$$
	\end{enumerate}
\end{theorem}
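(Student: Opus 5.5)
The argument runs entirely along the fibres \eqref{fiber_set}. I will use the Bilinear Gagliardo--Nirenberg inequality (Theorem \ref{introinterpolation}) with the best constant $C_K^*$ of \eqref{C_K*}, and the two scaling facts stated before the theorem: $S_t$ preserves $B$, and
\[
J(S_t(u,v))=t^2A-t^{2\alpha}F .
\]
First I check that $S_t$ is a special case of $T_{\mu,t}$. With $\mu=t^{N/\sigma}$, the $\mu$-exponents $\mu^{1/q}=t^{N/(q\sigma)}$ and $\mu^{1/p}=t^{N/(p\sigma)}$ match those of $S_t$. Hence $S_t$ maps $\mathcal M_K^a$ into itself. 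Along a fibre $A(S_t)=t^2A$ and $F(S_t)=t^{2\alpha}F$, where the second identity is forced by $F\le C_K^*A^\alpha B^\beta$ being scale-consistent. For every $(u,v)\in\mathcal M_K^a$ (where $A>0$) this gives
\[
J\ge A-C_K^*a^\beta A^\alpha=:g(A).
\]
Moreover $F>0$ whenever $(u,v)\neq0$. Each case of the theorem then reduces to one-variable calculus of $g$, combined with a sharpness argument along fibres.

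\emph{Case $0<\alpha<1$.} The lower bound is $\inf_{s>0}\bigl(s-C_K^*a^\beta s^\alpha\bigr)$. Its minimiser is $s_0=(\alpha C_K^*a^\beta)^{1/(1-\alpha)}$, and the minimum value equals the right-hand side of \eqref{subinf}. Since $g(s)\to+\infty$ as $s\to\infty$, the coercivity claim follows immediately.

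For the reverse inequality I take an almost-optimiser $(u,v)\in\mathcal M_K$ with $F/(A^\alpha B^\beta)>C_K^*-\varepsilon$ and move it to the slice $\Sigma_a$ using the amplitude part of $T_{\mu,1}$. The ratio is invariant under this rescaling, because $\alpha,\beta$ are fixed by scaling. Minimising $t\mapsto t^2A-t^{2\alpha}F$ over $t>0$ gives exactly
\[
-(1-\alpha)\alpha^{\alpha/(1-\alpha)}\bigl(F/A^\alpha\bigr)^{1/(1-\alpha)}.
\]
Letting $\varepsilon\to0$ yields equality in \eqref{subinf}. The value is negative and finite.

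\emph{Case $\alpha=1$, $\beta=2/N$.} Here
\[
J\ge A\bigl(1-C_K^*a^{2/N}\bigr),
\]
which is $\ge0$ for $a\le a_K$ and strictly positive for $a<a_K$, since $A>0$. Along a fibre $J(S_t)=t^2(A-F)\to0$ as $t\to0$, so the infimum is $0$ and it is not attained when $a<a_K$. For $a>a_K$ I pick a normalised near-optimiser on $\Sigma_a$ with $F/A>C_K^*a^{2/N}-\varepsilon>1$. Then $J(S_t)=t^2(A-F)\to-\infty$.

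At $a=a_K$ we have $J=0$ iff $A=F$, i.e. iff $F/(A B^{2/N})=a_K^{-2/N}=C_K^*$. So zero energy is attained iff $C_K^*$ is attained; for the converse direction I rescale a maximiser to mass $a_K$.

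\emph{Case $\alpha>1$.} Unboundedness below is immediate from $J(S_t)=t^2A-t^{2\alpha}F\to-\infty$ as $t\to\infty$, because $F>0$. For the barrier \eqref{barrier} I choose $\rho$ with $C_K^*a^\beta\rho^{\alpha-1}\le\tfrac12$. Then for $A\le\rho$,
\[
J\ge A-C_K^*a^\beta A^{\alpha-1}A\ge\tfrac12A .
\]
The function $h(t)=t^2A-t^{2\alpha}F$ has $h'(t)=2tA-2\alpha t^{2\alpha-1}F$, which vanishes on $t>0$ only at $t_*=(A/(\alpha F))^{1/(2\alpha-2)}$. Since $h'>0$ before $t_*$ and $h'<0$ after, $t_*$ is a strict global maximum.

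The main obstacle I expect is the sharpness step. The near-extremals of $C_K^*$ must be realisable on the prescribed slice $\mathcal M_K^a$, and must remain in $\mathcal M_K$ after rescaling. This rests on invariance of $\mathcal M_K$ under $T_{\mu,t}$ (Theorem \ref{introinterpolation}) together with invariance of the quotient $F/(A^\alpha B^\beta)$ under $T_{\mu,t}$. I would verify the latter carefully from the definitions of $\alpha$ and $\beta$ in \eqref{alphabeta}, since the exact $a$-dependence in \eqref{subinf} and \eqref{threshold} depends on it.
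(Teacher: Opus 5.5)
Your proposal is correct and follows essentially the same route as the paper. Both use the lower bound $J\ge A-C_K^*a^\beta A^\alpha$ from the bilinear GN inequality, one-variable calculus on $t\mapsto t^2A-t^{2\alpha}F$, and sharpness via near-optimisers of $C_K^*$ moved to $\Sigma_a$ by $T_{\mu,1}$ (the paper spells out this last step in the attainment discussion of Section~\ref{Section_attainment}). The invariance you flag as the main obstacle is exactly \eqref{Tlaws} together with \eqref{hatinvariance}; note also that $F(S_t(u,v))=t^{2\alpha}F(u,v)$ follows directly from \eqref{Tlaws} with $\mu=t^{N/\sigma}$, not from any scale-consistency of the inequality.
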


\begin{figure}[tbp]
	\centering
	\resizebox{\linewidth}{!}{%
		\begin{tikzpicture}
			\begin{groupplot}[group style={group size=3 by 1,horizontal sep=1.05cm},
				width=5.1cm,height=5.5cm,xmin=0,xmax=1.5,
				axis lines=middle,
				ticks=none,
				xlabel={$t$},ylabel={$J(S_t(u,v))$},
				xlabel style={at={(axis description cs:1.02,.42)},anchor=west},
				ylabel style={at={(axis description cs:0,1.03)},anchor=south},
				tick label style={font=\footnotesize},label style={font=\small},
				title style={font=\small,align=center,yshift=14pt},
				xtick={.5,1,1.5},samples=160,domain=0:1.5,clip=false]
				\nextgroupplot[title={Subcritical regime\\ $0<\alpha<1$},ymin=-.35,ymax=.85,ytick={-.25,.5},
				xlabel style={at={(axis description cs:1.02,.292)},anchor=west}]
				\addplot[phaseblue,very thick] {x^2-x};
				\addplot[only marks,mark=*,mark size=1.8pt,phaseblue] coordinates {(.5,-.25)};
				\nextgroupplot[title={Critical regime \\$\alpha=1$},ymin=-1,ymax=1,ytick={-.8,.8},xtick={1,1.5},
				xlabel style={at={(axis description cs:1.02,.5)},anchor=west}]
				\addplot[phaseblue,very thick] {.4*x^2};
				\addplot[black,densely dashed,thick] {0};
				\addplot[phaseorange,very thick,dashdotted] {-.4*x^2};
				\node[font=\footnotesize,anchor=east] at (axis cs:1.35,.77) {$A>F$};
				\node[font=\footnotesize,anchor=south] at (axis cs:1.35,.035) {$A=F$};
				\node[font=\footnotesize,text=phaseorange,anchor=east] at (axis cs:1.35,-.77) {$A<F$};
				\nextgroupplot[title={Supercritical regime\\$\alpha>1$},ymin=-1.2,ymax=.4,ytick={-1,.25},
				xlabel style={at={(axis description cs:1.02,.75)},anchor=west}]
				\addplot[phaseblue,very thick] {x^2-x^3};
				\addplot[only marks,mark=*,mark size=1.8pt,phaseblue] coordinates {(.6666667,.1481481)};
			\end{groupplot}
		\end{tikzpicture}
	}
	\caption{Qualitative behaviour of the energy on mass-preserving fibres $J(S_t(u,v))=A(u,v)t^2-F(u,v)t^{2\alpha}$, for $t>0$.
	}
	\label{figenergyfibres}
\end{figure}

The proof is given in Section~\ref{secgeometry}. 
Figure~\ref{figenergyfibres}
illustrates the different behaviour of the energy on the fibre sets. 
\vskip0.2truecm
Next, we discuss the attainment of the best constant $C_K^*$ in \eqref{C_K*} and its dependence on the parameter $K>0$.
\begin{theorem}[Restricted optimisers and attainment]\label{introattainment}
	For every $a>0$, $\inf_{\Sigma_a}J=-\infty$.
	For any choice of admissible $N,p,q,r,s$, every optimiser of $C_K^*$ has at
	least one normalised component norm in \eqref{MK} equal to $K$. 
	Moreover, for sufficiently large $K$, there exists constants $c,C>0$ such that
	\begin{equation}\label{CK_bounds}
		cK^{\max\{p,q\}}\le C_K^*\le CK^{\max\{p,q\}}\,.
	\end{equation}
	On the critical curve, $a_K$ consequently tends to zero as $K\to+\infty$.
	
	If $2<p,q<2^*$ and $r=s=2$, the minimal nonempty class is
	$\mathcal M_1$. Its best constant $C_1^*$ is attained and equals
	\begin{equation}\label{C1formula}
	 C_1^*=\max_{\|w\|_2=\|\nabla w\|_2=1}\left(\frac1q\int|w|^q+\frac1p\int|w|^p\right).
	\end{equation}
	A maximiser can be chosen positive, radial and nonincreasing, and solves
	\begin{equation}\label{mixedoptimizer}
		-\eta\Delta w+\nu w=w^{q-1}+w^{p-1}
	\end{equation}
	for some $\eta,\nu>0$. Thus the subcritical infimum on $\mathcal M_1^a$ and zero energy at
	its critical threshold $a_1$ are attained. No positive pair in
	$\mathcal M_1$ solves \eqref{system} when $p\ne q$; instead, when
	$p=q=2+\frac4N$, one has $a_1=\|Q_{2+4/N}\|_2^2=a_*$.
	In this Hilbert setting the constants are continuous from the right
	at the minimal bound: for $1\le K\le2$,
	\begin{equation}\label{nearminimalestimate}
		0\le C_K^*-C_1^*\le C\sqrt{K^2-1}\,,
	\end{equation}
	with $C$ depending only on $N,p,q$.
\end{theorem}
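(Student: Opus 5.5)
The plan follows the items of Theorem~\ref{introattainment} in order, using the transformation $T_{\mu,t}$, the fibres $S_t$ and Theorem~\ref{introinterpolation} throughout.

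\emph{Unboundedness on $\Sigma_a$.} We exploit the obstruction of Proposition~\ref{obstruction}: reciprocal amplitudes $(\mu u,\mu^{-1}v)$ preserve both $A$ and $B$. Fix $(u,v)\in\Sigma_a$ with $u\ge0$ and $A(u,v)>0$. As $\mu\to\infty$,
\[
J(\mu u,\mu^{-1}v)=A-\frac{\mu^q}{q}\int|u|^q-\frac{\mu^{-p}}{p}\int|v|^p\to-\infty.
\]
This shows that the componentwise bounds \eqref{MK} are indispensable.

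\emph{Saturation of the constraint.} Let $(u,v)$ be an optimiser of $C_K^*$. By invariance we may assume it is normalised, $A=B=1$, so the quotient reduces to $F(u,v)$. Suppose every norm in \eqref{MK} is strictly below $K$. Then the perturbation $(\mu u,\mu^{-1}v)$, with $\mu$ near $1$, keeps $A=B=1$ and stays in $\mathcal M_K$. Its value
\[
f(\mu)=\frac{\mu^q}{q}\int|u|^q+\frac{\mu^{-p}}{p}\int|v|^p
\]
is strictly convex in $\mu$. A strictly convex function cannot have an interior local maximum at $\mu=1$, so this contradicts optimality.

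\emph{Two-sided bounds \eqref{CK_bounds}.} The upper bound follows from the explicit $C_K$ of Theorem~\ref{introinterpolation}. For the lower bound, assume say $q\ge p$. Take a fixed normalised admissible pair and apply the amplitude shift $(\mu u,\mu^{-1}v)$ with $\mu\sim K/K_0$. This keeps the pair normalised and inside $\mathcal M_K$ (the norms of $v$ only shrink), while $F\gtrsim\mu^q\sim K^q$. On the critical curve this gives $a_K=(C_K^*)^{-N/2}\to0$.

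\emph{Hilbert case $r=s=2$.} Since $1=A\le\|\nabla\hu\|_2\|\nabla\hv\|_2$ and $1=B\le\|\hu\|_2\|\hv\|_2$, the class is nonempty only if $K\ge1$. For $K=1$ all four norms equal $1$, and equality in Cauchy--Schwarz forces $\hu=\hv=w$ with $\|w\|_2=\|\nabla w\|_2=1$. This yields \eqref{C1formula}.

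\emph{Attainment of $C_1^*$.} Pass to the Schwarz symmetrisation $w^*$. It preserves $\|w\|_2$, does not increase $\|\nabla w\|_2$, and preserves $F$. We then rescale back to the sphere $\|\nabla w\|_2=1$ with $\|w\|_2=1$ fixed, via $w\mapsto t^{N/2}w(t\cdot)$ with $t\ge1$. Since $p,q>2$, this only increases $F$. So we may maximise over radial nonincreasing functions. Strauss compactness gives $L^p$ and $L^q$ convergence, and the weak limit satisfies both constraints with $\le$. A dilation and amplitude adjustment then restores equality while not decreasing $F$; the monotonicity must be checked using $p,q>2$, and this is the delicate step. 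The Lagrange multiplier rule then gives \eqref{mixedoptimizer} with $\eta,\nu>0$ after sign arguments based on testing the equation against $w$. The subcritical infimum and the critical zero energy at $a_1$ are then realised on the fibre of this optimiser, by Theorem~\ref{introenergytrichotomy}.

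\emph{Pairs in $\mathcal M_1$ and the critical mass.} A solution pair in $\mathcal M_1$ would, up to $T_{\mu,t}$, have components that are multiples of the same function $w$. Two power nonlinearities with different exponents cannot then be balanced, so no such solution exists when $p\ne q$. For $p=q=2+\frac4N$, Weinstein's characterisation identifies the maximiser with $Q$, giving $a_1=\|Q\|_2^2=a_*$.

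\emph{Estimate \eqref{nearminimalestimate}.} The lower bound $C_K^*\ge C_1^*$ is monotonicity of the classes. For the upper bound, an element of $\mathcal M_K$ has $\|\hu\|_2\|\hv\|_2\le K^2$ with the inner products equal to $1$. Hence $\|\hu-\theta\hv\|\lesssim\sqrt{K^2-1}$ in $H^1$ for a suitable scalar $\theta$, by the parallelogram identity. Then $F(\hu,\hv)$ differs from $F(w,w)$, for a nearby normalised $w$, by $O(\sqrt{K^2-1})$, using the Gagliardo--Nirenberg inequalities and the local Lipschitz continuity of $F$ on $H^1$.

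\textbf{Main obstacle.} I expect the hardest step to be the quantitative near-collinearity bound just described, together with the constraint-restoring rescaling in the compactness step.
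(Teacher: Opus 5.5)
Your plan follows the paper's proof almost step for step: reciprocal amplitudes for $\inf_{\Sigma_a}J=-\infty$ and for saturation of \eqref{MK}, the family $(tw,t^{-1}w)$ for \eqref{CK_bounds}, Cauchy--Schwarz rigidity at $K=1$, symmetrisation plus two-parameter renormalisation for attainment, proportionality when $p\ne q$, Weinstein's constant for $a_1$, and comparison with a scalar function for \eqref{nearminimalestimate}. Two steps, however, do not go through by the mechanism you name, although both are repairable.

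\emph{Signs of $\eta,\nu$.} Testing the Lagrange equation against $w$ gives only $\eta+\nu=\|w\|_q^q+\|w\|_p^p$, so it cannot separate the two multipliers. You need a second relation. One option is the Poho\v zaev identity, obtained by testing with $x\cdot\nabla w$. The other, used in the paper, is to differentiate the scale-invariant quotient $\mathcal Q(h)=\sum_{t\in\{p,q\}}\|h\|_t^t/\bigl(t\|\nabla h\|_2^{a_t}\|h\|_2^{b_t}\bigr)$, with $a_t=N(t-2)/2$ and $b_t=t-a_t$. This quotient is maximal at $w$, and its derivative gives explicitly $\eta=\sum_t\frac{a_t}{t}\|w\|_t^t$ and $\nu=\sum_t\frac{b_t}{t}\|w\|_t^t$. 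Note that $b_t>0$ is equivalent to $t<2^*$. The same two exponents govern the renormalisation of the weak limit, via $\int|\phi|^t=\|\nabla w\|_2^{-a_t}\|w\|_2^{-b_t}\int|w|^t$. So the step you call delicate uses $2<p,q<2^*$, not only $p,q>2$.

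\emph{The estimate \eqref{nearminimalestimate}.} The closeness step is easier than you expect. With $\theta=1$, expanding the squares and using the four component bounds gives $\|\widehat u-\widehat v\|_{H^1}^2\le4(K^2-1)$ directly. The weak point is instead the ``nearby normalised $w$''. Normalising $\frac12(\widehat u+\widehat v)$ requires a dilation $x\mapsto\nu x$ with $\nu=1+O(K^2-1)$. Dilations are not uniformly close to the identity on bounded subsets of $H^1$: a fixed bump translated far from the origin is moved by a distance comparable to its width or more. Hence local Lipschitz continuity of $F$ gives no uniform rate through that normalised function.

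The paper avoids this as follows. It applies the Lipschitz bound only between $(\widehat u,\widehat v)$ and $(w,w)$, where $w=\frac12(\widehat u+\widehat v)$ is left unnormalised; its squared norms lie in $[1,\frac12(K^2+1)]$. It then uses the exact scaling law $\int|\mu w(\nu\,\cdot)|^t=\mu^t\nu^{-N}\int|w|^t$ to obtain $F(w,w)\le\bigl(\frac{K^2+1}2\bigr)^{\max\{p,q\}/2}C_1^*$. This requires no $H^1$ proximity to $\mathcal W$ at all.
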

Section \ref{Section_attainment} is dedicated to its proof. 
We stress that the restriction $K=1$ concerns only attainment on the entire invariant
class, and does not nullify the validity of the interpolation inequality \eqref{introGN}. In Proposition~\ref{proportionalattainment} we discuss the attainment on the proportional subclass for every $K\ge1$, and provide an explicit formula when $p=q$. Indeed, equality in Cauchy--Schwarz forces the two normalised components to coincide at this value when $K=1$, while this scalar reduction is unavailable for $K>1$. 

\vskip0.2truecm
In addition to the classification result in Theorem \ref{introexistencescaling}, nondegeneracy plays a crucial role in establishing the persistence of solutions under local perturbations of the nonlinearities through implicit function arguments. Next, we extend Theorem \ref{thm:main} studying the nondegeneracy of full constrained system \eqref{system} and obtaining a unique branch of solutions which bifurcates at any positive radial solution of \eqref{system} away from the bilinear mass critical curve.


\begin{theorem}[Fixed-mass nondegeneracy and power continuation]\label{intropowercontinuation}
	In the range \eqref{range}, at any centred positive frequency family
	$(u_\lambda,v_\lambda)$ the radial linearisation
	of \eqref{system} with fixed bilinear mass $a>0$ in the variables $(u,v,\lambda)$ is an isomorphism $(H^1_{\rm rad})^2\times\R\longrightarrow(H^{-1}_{\rm rad})^2\times\R$ if and only if $\gamma_N(p,q)\ne0$. When $\gamma_N(p,q)=0$ its kernel is $\operatorname{span}\{(\partial_\lambda u_\lambda, \partial_\lambda v_\lambda,1)\}$.
	
	Let $a>0$ and $2<p_0,q_0<2^*$ with $\gamma_N(p_0,q_0)\neq0$. Let $(u_0,v_0,\lambda_0)$ be the centred positive triple with bilinear mass $a$ given by Theorem \ref{introexistencescaling}. For any $\ell_1,\ell_2\in(2,2^*)$, there exists a unique local $C^1$ branch near this triple solving
	\begin{equation}\label{perturbedsystem}
		\begin{cases}
			-\Delta u+\lambda u=v_+^{p-1}+\varepsilon_1v_+^{\ell_1-1},\\
			-\Delta v+\lambda v=u_+^{q-1}+\varepsilon_2u_+^{\ell_2-1},\\
			B(u,v)=a\,,
		\end{cases}
	\end{equation}
	for $(p,q,\varepsilon_1,\varepsilon_2)$ near $(p_0,q_0,0,0)$.
	For $\varepsilon_1,\varepsilon_2\ge0$ the solutions are positive,
	radial, classical and exponentially decaying, with $\lambda>0$.
	Uniqueness is local in $(H^1_{\rm rad})^2\times\R$.
\end{theorem}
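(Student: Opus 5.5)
The plan has two parts: first the linear isomorphism statement, then the continuation by the implicit function theorem.

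\emph{Setting up the linearisation.} Define
\[
\Phi(u,v,\lambda)=\bigl(-\Delta u+\lambda u-|v|^{p-2}v,\;-\Delta v+\lambda v-|u|^{q-2}u,\;B(u,v)-a\bigr).
\]
Its derivative at $(u_\lambda,v_\lambda,\lambda)$ is
\[
L(\phi,\psi,\mu)=\bigl(L_\lambda(\phi,\psi)+\mu(u_\lambda,v_\lambda),\;B(\phi,v_\lambda)+B(u_\lambda,\psi)\bigr),
\]
where $L_\lambda$ is the linearised operator of the two equations at frequency $\lambda$. By the scaling \eqref{introfrequencyscaling}, $L_\lambda$ is conjugate to the operator in \eqref{eq:linearized}. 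By Theorem \ref{thm:main}(2), its kernel in $(H^1)^2$ consists only of the translations, and these are not radial. Hence $L_\lambda$ is injective on $(H^1_{\rm rad})^2$.

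\emph{Fredholm property and the isomorphism criterion.} The operator $L_\lambda$ is a compact perturbation of the isomorphism $(-\Delta+\lambda)$ acting diagonally (it maps the pair to the crossed $H^{-1}$ pairing). Compactness holds because the potentials $V^{p-2}$ and $U^{q-2}$ decay exponentially. So $L_\lambda$ is Fredholm of index zero, and being injective it is an isomorphism on the radial spaces. The bordered operator $L$ is then a rank-one bordering, still Fredholm of index zero, so it suffices to compute its kernel. Suppose $L(\phi,\psi,\mu)=0$. Differentiating the equations in $\lambda$ gives
\[
L_\lambda(\partial_\lambda u_\lambda,\partial_\lambda v_\lambda)=-(u_\lambda,v_\lambda),
\]
so $(\phi,\psi)=\mu(\partial_\lambda u_\lambda,\partial_\lambda v_\lambda)$. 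The constraint row then reads
\[
\mu\,\frac{\dd}{\dd\lambda}B(u_\lambda,v_\lambda)=\mu\,\gamma\lambda^{\gamma-1}B(U,V)=0
\]
by \eqref{masslaw}. Since $B(U,V)>0$ and $\gamma$ has the sign of $\gamma_N(p,q)$ by \eqref{parameters}, this forces $\mu=0$ exactly when $\gamma_N\neq0$. When $\gamma_N=0$, the kernel is precisely $\operatorname{span}\{(\partial_\lambda u_\lambda,\partial_\lambda v_\lambda,1)\}$, as claimed.

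\emph{Continuation.} Set
\[
G(u,v,\lambda;p,q,\varepsilon_1,\varepsilon_2)=\bigl(-\Delta u+\lambda u-v_+^{p-1}-\varepsilon_1v_+^{\ell_1-1},\;\ldots,\;B(u,v)-a\bigr)
\]
on $(H^1_{\rm rad})^2\times\R\times\R^4$. The nonlinearities $s\mapsto s_+^{m-1}$ with $m\in(2,2^*)$ give $C^1$ Nemytskii maps $H^1\to H^{-1}$ via the Sobolev embedding. I expect the \textbf{main obstacle} to be $C^1$ dependence on the exponents $p,q$. I would handle it by differentiating $s_+^{p-1}=e^{(p-1)\log s}$, which produces $s_+^{p-1}\log s_+$, and bounding this by $s^{p-1-\delta}+s^{p-1+\delta}$ with $\delta$ small enough that both exponents remain in $(2,2^*)$. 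This keeps dominated convergence available for continuity of the derivatives. At $(p_0,q_0,0,0)$ the partial derivative $D_{(u,v,\lambda)}G$ is the isomorphism obtained above, so the implicit function theorem yields a unique local $C^1$ branch in $(H^1_{\rm rad})^2\times\R$.

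\emph{Properties along the branch.} Continuity gives $\lambda>0$ near $\lambda_0$. Elliptic regularity, bootstrapped as in Lemma \ref{profileregularity}, makes the solutions classical and exponentially decaying. For positivity: when $\varepsilon_i\ge0$ the right-hand sides are nonnegative, so $u,v\ge0$ by the maximum principle for $-\Delta+\lambda$. Moreover $(u,v)$ is $H^1$-close to $(u_0,v_0)$, so neither component vanishes identically, and the strong maximum principle then gives $u,v>0$. Since every solution is positive, $v_+=v$ and $u_+=u$, so these are genuine solutions of \eqref{perturbedsystem}.
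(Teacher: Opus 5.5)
Your proposal is correct and follows the paper's proof essentially step by step. You obtain radial injectivity of $\mathcal L_\lambda$ from Theorem~\ref{thm:main} via the frequency scaling, Fredholm index zero from compactness of the decaying potentials, and then use $\mathcal L_\lambda(\partial_\lambda u_\lambda,\partial_\lambda v_\lambda)=-(u_\lambda,v_\lambda)$ together with $\frac{\dd}{\dd\lambda}B(u_\lambda,v_\lambda)=\gamma a/\lambda$, followed by the implicit function theorem, negative-part testing and the strong maximum principle. The only differences are minor: you deduce invertibility of the bordered operator from index zero plus trivial kernel, whereas the paper writes the inverse explicitly through $\mathcal L_\lambda^{-1}$; you give more detail on $C^1$ dependence on the exponents; and the paper adds the remark that radial testing recovers the full weak formulation by spherical averaging.
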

The fixed-mass linearisation and the continuation argument are presented in Section \ref{Section_branches_powers}. The individual Sobolev bounds, although not required for the linear nondegeneracy assertion, allow to prove the bifurcation result in $H^1$ spaces.

	Let us come back to the 
	frequency-scaling exponent $\gamma$ defined in \eqref{parameters}.
	Theorem \ref{introexistencescaling} gives the exact law
	\begin{equation*}
		B(u_\lambda,v_\lambda)=\lambda^\gamma B(U,V)\,.
	\end{equation*}
	This mass law has an interesting interpretation. 
	Set
	$a(\lambda)=B(u_\lambda,v_\lambda)$ and
	$\mathcal D(\lambda)=I_\lambda(u_\lambda,v_\lambda)$ along a positive
	frequency family. Since $I_\lambda'(u_\lambda,v_\lambda)=0$,
	differentiation gives
	\begin{equation}\label{introactionresponse1}
		\mathcal D'(\lambda)=\partial_\lambda I_\lambda(u_\lambda,v_\lambda)+\langle I'_\lambda(u_\lambda,v_\lambda),(\nabla u_\lambda,\nabla v_\lambda)\rangle=B(u_\lambda,v_\lambda)=a(\lambda)\,,
	\end{equation}
	hence
	\begin{equation}\label{introactionresponse2}
		\mathcal D''(\lambda)=a'(\lambda)=\frac{\gamma a(\lambda)}{\lambda}\,.
	\end{equation}
	Off criticality the inverse response is
	$\mathrm d\lambda/\mathrm da=\lambda/(\gamma a)$: prescribing the
	charge determines the frequency, and therefore the spatial scale of
	the wave. At criticality the charge is insensitive to changes of
	frequency, so this selection mechanism degenerates. Theorem
	\ref{intropowercontinuation} identifies the corresponding
	kernel in the constrained linearisation. Away from that degeneracy,
	our continuation result quantifies the persistence of centred profiles
	under power perturbations (see also the planar case of exponential perturbations treated in Theorem \ref{introexponentialcontinuation} and Proposition \ref{introcriticalresponse} below).
	
	In scalar solitary-wave theory, the charge--frequency slope and the
	spectrum of the linearised action enter the stability analysis
	\cite{Weinstein1986,GrillakisShatahStrauss1987}. Formulas \eqref{introactionresponse1}-\eqref{introactionresponse2}
	and our nondegeneracy result identify analogous stationary data for the
	crossed problem. 
	\vskip0.2truecm
	To complete the picture, in Section \ref{Section_massresponse} we analyse how the bilinear mass of branches of solutions of \eqref{perturbedsystem} behaves at bilinear-mass critical exponents $(p,q)$. As usual, in the next result, $(U,V)$ denotes the centred positive solution of \eqref{system1}, while $(u_\lambda,v_\lambda)$ its rescaled counterpart at frequency $\lambda$ via the scaling \eqref{introfrequencyscaling}, whose mass is $a_*=\int UV$.
	
	\begin{proposition}[Mass response in the critical case]\label{introcriticalresponseNgeq3}
		Let $N\ge2$, $2<p,q<2^*$ such that $\gamma_N(p,q)=0$, and $2<\ell_1,\ell_2<2^*$. For every $\lambda_0>0$, the perturbed system \eqref{perturbedsystem}, without the mass constraint, has a unique local $C^1$ radial branch $(\lambda,\varepsilon_1,\varepsilon_2)\to(u_{\lambda,\varepsilon},v_{\lambda,\varepsilon})$ near $(\lambda_0,0,0)$, satisfying $(u_{\lambda,0},v_{\lambda,0})=(u_\lambda,v_\lambda).$
		Moreover, as $\varepsilon=(\varepsilon_1,\varepsilon_2)\to0$, its mass satisfies
		\begin{equation}\label{criticalexpansion}
			B(u_{\lambda,\varepsilon},v_{\lambda,\varepsilon})=a_*-\varepsilon_1\frac{e_{\ell_1}^{(v)}}{\ell_1\lambda}\int v_\lambda^{\ell_1}-\varepsilon_2\frac{e_{\ell_2}^{(u)}}{\ell_2\lambda}\int u_\lambda^{\ell_2}+o(|\varepsilon|)\,,
		\end{equation}
		locally uniformly for $\lambda$ near $\lambda_0$, where
		\[
		e_\ell^{(v)}=\frac{\ell q}{D}-\frac N2\,,\qquad
		e_\ell^{(u)}=\frac{\ell p}{D}-\frac N2\,.
		\]
	\end{proposition}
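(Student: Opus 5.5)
The plan is to obtain the branch from the implicit function theorem at \emph{fixed} frequency, and then to get the mass expansion from an action-derivative (envelope) identity, as in \eqref{introactionresponse1}, rather than by solving the linearised system explicitly.

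\emph{Step 1 (branch).} Consider the map
\[
\Phi(u,v;\lambda,\varepsilon)=\bigl(-\Delta u+\lambda u-v_+^{p-1}-\varepsilon_1v_+^{\ell_1-1},\;-\Delta v+\lambda v-u_+^{q-1}-\varepsilon_2u_+^{\ell_2-1}\bigr).
\]
It goes from $(H^1_{\rm rad})^2\times(0,\infty)\times\R^2$ to $(H^{-1}_{\rm rad})^2$. Since $p,q,\ell_1,\ell_2\in(2,2^*)$, the Nemytskii maps $w\mapsto w_+^{m-1}$ are $C^1$ from $H^1$ to $H^{-1}$ (for $N=2$ use any Sobolev embedding $H^1\hookrightarrow L^m$), so $\Phi$ is $C^1$. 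At $(u_{\lambda_0},v_{\lambda_0};\lambda_0,0)$ the partial derivative in $(u,v)$ is the rescaled version of \eqref{eq:linearized}: a compact perturbation of the isomorphism $(-\Delta+\lambda_0)$, hence Fredholm of index zero.

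By Theorem~\ref{thm:main}(2), transported through the scaling \eqref{introfrequencyscaling}, its kernel in $H^1$ is spanned by the translation modes. These are odd, so the radial kernel is trivial and the operator is an isomorphism on the radial spaces. The implicit function theorem then gives a unique local $C^1$ map $(\lambda,\varepsilon)\mapsto(u_{\lambda,\varepsilon},v_{\lambda,\varepsilon})$. Local uniqueness together with the fact that $(u_\lambda,v_\lambda)$ solves $\Phi=0$ at $\varepsilon=0$ yields $(u_{\lambda,0},v_{\lambda,0})=(u_\lambda,v_\lambda)$.

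\emph{Step 2 (envelope identities).} Define the perturbed action
\[
\mathcal D(\lambda,\varepsilon)=I_\lambda(u_{\lambda,\varepsilon},v_{\lambda,\varepsilon})-\frac{\varepsilon_1}{\ell_1}\int (v_{\lambda,\varepsilon})_+^{\ell_1}-\frac{\varepsilon_2}{\ell_2}\int (u_{\lambda,\varepsilon})_+^{\ell_2},
\]
with $I_\lambda$ written using positive parts. This functional is $C^1$ along the branch, and its $(u,v)$-derivative vanishes there. The chain rule therefore gives
\[
\partial_\lambda\mathcal D=B(u_{\lambda,\varepsilon},v_{\lambda,\varepsilon}),\qquad \partial_{\varepsilon_1}\mathcal D=-\frac1{\ell_1}\int (v_{\lambda,\varepsilon})_+^{\ell_1},\qquad \partial_{\varepsilon_2}\mathcal D=-\frac1{\ell_2}\int (u_{\lambda,\varepsilon})_+^{\ell_2}.
\]
All three right-hand sides are $C^1$ functions of $(\lambda,\varepsilon)$, because the branch is $C^1$ into $H^1$ and $L^{\ell}$ norms are $C^1$ on $H^1$. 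Hence $\mathcal D\in C^2$ and Schwarz's theorem applies:
\[
\partial_{\varepsilon_1}B=\partial_\lambda\partial_{\varepsilon_1}\mathcal D=-\frac1{\ell_1}\,\partial_\lambda\!\int (v_{\lambda,\varepsilon})_+^{\ell_1}.
\]
The analogous identity holds for $\varepsilon_2$.

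\emph{Step 3 (evaluation at $\varepsilon=0$).} At $\varepsilon=0$ the components are positive and given by scaling. We have
\[
\int v_\lambda^{\ell}=\lambda^{\ell q/D-N/2}\int V^\ell,
\]
so
\[
\partial_\lambda\!\int v_\lambda^{\ell}=\frac{e^{(v)}_\ell}{\lambda}\int v_\lambda^\ell,
\]
and similarly for $u_\lambda$ with $e^{(u)}_\ell$. Since $\gamma_N(p,q)=0$, \eqref{masslaw} gives $B(u_\lambda,v_\lambda)=a_*$ for every $\lambda$. A first-order Taylor expansion of the $C^1$ function $\varepsilon\mapsto B(u_{\lambda,\varepsilon},v_{\lambda,\varepsilon})$ then yields \eqref{criticalexpansion}. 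The remainder is uniform for $\lambda$ near $\lambda_0$ because the first derivatives are jointly continuous on a compact neighbourhood.

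\emph{Main obstacle.} The delicate point is justifying the mixed-derivative exchange, namely the $C^2$ regularity of $\mathcal D$, without assuming second-order differentiability of the nonlinearities. This is handled by observing that both first partials of $\mathcal D$ are already $C^1$ through the envelope identities, so no second derivative of the Nemytskii maps is needed. A secondary point is to check that the rescaled kernel statement of Theorem~\ref{thm:main} really excludes radial kernel elements in $H^1$.
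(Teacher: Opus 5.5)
Your proof is correct. Step 1 matches the paper's branch construction: invertibility of $\mathcal L_\lambda$ on radial spaces as in Step 1 of Lemma~\ref{linearisation}, then the implicit function theorem with $\lambda$ as a parameter.

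The mass derivative, however, is obtained differently.
\begin{itemize}
\item The paper differentiates the equations. It introduces the crossed operator $\mathcal H_\lambda$, which is symmetric for the $H^{-1}$--$H^1$ pairing, and checks that $\mathcal H_\lambda(\partial_\lambda u_\lambda,\partial_\lambda v_\lambda)=-(v_\lambda,u_\lambda)$ and $\mathcal H_\lambda(h_1,k_1)=(0,v_\lambda^{\ell_1-1})$. Moving $\mathcal H_\lambda$ across the pairing then gives $\int(v_\lambda h_1+u_\lambda k_1)=-\int v_\lambda^{\ell_1-1}\partial_\lambda v_\lambda$.
\item You instead combine the envelope identities for the reduced perturbed action with equality of mixed partials of a $C^2$ function of $(\lambda,\varepsilon)$.
\end{itemize}
Both rest on the same reciprocity: the symmetry of $\mathcal H_\lambda$ is the symmetry of the Hessian of the action, and you package it as a Maxwell-type relation.

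What your route buys:
\begin{itemize}
\item It needs only that the branch is $C^1$ into $(H^1_{\rm rad})^2$ and that $B$ and $w\mapsto\int w_+^{\ell}$ are $C^1$ on $H^1$.
\item It never writes down the linearised equations for the $\varepsilon$-derivatives.
\item It gives $\partial_{\varepsilon_1}B=-\ell_1^{-1}\partial_\lambda\int(v_{\lambda,\varepsilon})_+^{\ell_1}$ at every point of the branch, not only at $\varepsilon=0$.
\item The regularity issue you flag is resolved correctly: both first partials of $\mathcal D$ are already $C^1$, so no second derivative of the Nemytskii maps is needed.
\end{itemize}

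What the paper's route buys: it is explicit at the linear level, and the same operator is reused for the exponential response in Proposition~\ref{introcriticalresponse}. Your argument would transfer there too, using the local $C^1$ regularity near the bounded base profiles supplied by Lemma~\ref{TMmapping}.
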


\vskip0.2truecm

\paragraph{\textbf{The planar exponential counterpart.}}
The scalar Trudinger--Moser growth threshold does not exhaust the
range of admissible nonlinearities for planar Hamiltonian systems. Cassani and Tarsi
\cite{CassaniTarsi2015} obtain finite-energy linking solutions with
growth beyond this scalar scale, using complementary Lorentz--Sobolev
spaces and a prescribed potential: indeed, faster exponential growth in one
component can be balanced by slower growth in the other. The
quadratic-exponential model treated below is the Hilbert-space case
of this broader perspective; Remark~\ref{supercriticalLorentzscope}
explains the distinction and its implications for normalisation.

For Hamiltonian systems on the whole plane, Cassani and Zhang
\cite{CassaniZhang2017} establish, under their critical-growth
hypotheses, compactness modulo translations of the fixed-frequency
ground-state set, uniform bounds and decay, and, with additional
assumptions, positivity and radial symmetry. These estimates also
yield semiclassical states concentrating near minima of an external
potential. The prescribed-mass problem requires control of the
unknown frequency and recovery of the bilinear constraint along
variational sequences; these issues motivate the exponential
interpolation and compactness estimates which we investigate in this paper.

When $N=2$, our bilinear Gagliardo-Nirenberg inequality \eqref{introGN} remains valid with
exponents $\alpha=\left(\frac1p+\frac1q\right)^{-1}-1$ and $\beta=1$. Exponential integrability provided by Ruf's theorem and its rescaled form \cite{Ruf,CST}, however, suggests a bilinear mass-weighted limiting formulation. Let
\[
 C_{\rm R}=\sup_{\|w\|_{H^1(\R^2)}^2\le1}\int_{\R^2}(e^{4\pi w^2}-1)<\infty\,,
 \quad\ \mbox{and}\ \quad \Psi(s)=e^s-1-s\,.
\]
For $K\ge1$, let $\mathcal E_K$ consist of pairs in
$H^1(\R^2)^2$ with $A,B>0$ and
\[
 \|\nabla u\|_2^2,\|\nabla v\|_2^2\le K^2A(u,v)\,,\quad\ \mbox{and}\ \quad \|u\|_2^2,\|v\|_2^2\le K^2B(u,v)\,.
\]
These quadratic component bounds are invariant under common amplitude
changes and spatial dilations.

\begin{theorem}[Exponential Gagliardo--Nirenberg inequalities]\label{introexponentialGN}
	Let $N=2$, $K\ge1$ and $\vartheta>0$. For $(u,v)\in\mathcal E_K$
	with $\vartheta K^2A(u,v)<4\pi$, one has
	\begin{equation}\label{introexponentialbound}
		\int\big[(e^{\vartheta u^2}-1)+(e^{\vartheta v^2}-1)\big]\le\frac{2C_{\rm R}\vartheta K^2B(u,v)}{4\pi-\vartheta K^2A(u,v)}\,.
	\end{equation}
	The threshold is sharp: at fixed $A=A_0>0$, $B=B_0>0$, the
	supremum of the left-hand side over $\mathcal E_K$ is finite if and
	only if $\vartheta K^2A_0<4\pi$.
	For $0<\rho<1$ and $\vartheta K^2A(u,v)\le4\pi\rho$,
	\begin{equation}\label{introtruncatedbound}
		\int[\Psi(\vartheta u^2)+\Psi(\vartheta v^2)]\le C_\rho\vartheta^2K^4A(u,v)B(u,v)\quad\ \mbox{with}\quad\ C_\rho=\frac{C_{\rm R}}{2\pi^2(1-\rho^2)}\,.
	\end{equation}
	Consequently, on $\Sigma_a$ in this gradient range, for $\varepsilon\ge0$,
	\begin{equation}\label{TMenergybound}
	 J^{\rm exp}_\varepsilon(u,v):=A(u,v)-\frac{\varepsilon}{2\vartheta}
	       \int[\Psi(\vartheta u^2)+\Psi(\vartheta v^2)]
	 \ge\left(1-\frac{C_\rho}{2}\varepsilon\vartheta K^4a\right)A(u,v)\,.
	\end{equation}
	If $\varepsilon>0$ and $C_\rho\varepsilon\vartheta K^4a<2$, this
	is a strictly positive barrier at $A=4\pi\rho/(\vartheta K^2)$.
	Every nonzero smooth compactly supported pair in the class has
	negative energy after sufficiently large common mass-preserving dilation.
\end{theorem}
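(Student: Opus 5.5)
The plan is to deduce everything from a single rescaled form of Ruf's inequality, applied to each component separately. The component bounds defining $\mathcal E_K$ then convert scalar gradient and mass quantities into $A(u,v)$ and $B(u,v)$.

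\emph{Scalar estimate.} Let $w\in H^1(\R^2)$ with $G=\|\nabla w\|_2^2$, $M=\|w\|_2^2>0$ and $\theta G<4\pi$. Set
\[
z(x)=c\,w(x/\tau),\qquad c^2=\frac{\theta}{4\pi},\qquad \tau^2=\frac{4\pi-\theta G}{\theta M}.
\]
In two dimensions the Dirichlet integral is dilation invariant and the mass scales by $\tau^2$. Hence
\[
\|z\|_{H^1}^2=c^2\bigl(G+\tau^2M\bigr)=1,\qquad \int(e^{4\pi z^2}-1)=\tau^2\int(e^{\theta w^2}-1).
\]
Ruf's theorem therefore gives
\[
\int(e^{\theta w^2}-1)\le\frac{C_{\rm R}\,\theta M}{4\pi-\theta G}.
\]
Apply this with $\theta=\vartheta$ to $w=u$ and $w=v$, and use $G\le K^2A$, $M\le K^2B$; the right-hand side is monotone in $G$ and $M$. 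Summing the two components yields \eqref{introexponentialbound}.

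\emph{Truncated estimate.} Set $g(\theta)=\int\Psi(\theta w^2)=\sum_{k\ge2}\theta^k\int w^{2k}/k!$. This is a power series with nonnegative coefficients starting at order two, so for $\vartheta\le R$ one has
\[
g(\vartheta)\le\Bigl(\frac{\vartheta}{R}\Bigr)^2 g(R)\le\Bigl(\frac{\vartheta}{R}\Bigr)^2\int(e^{Rw^2}-1).
\]
Insert the scalar estimate at $\theta=R$ with $RG<4\pi$. This gives
\[
g(\vartheta)\le\frac{\vartheta^2C_{\rm R}M}{R\,(4\pi-RG)}.
\]
The remaining task is to choose $R\in[\vartheta,4\pi/G)$ as a function of $\rho$ so that, under $\vartheta G\le4\pi\rho$, the factor $1/\bigl(R(4\pi-RG)\bigr)$ is bounded by $G/\bigl(4\pi^2(1-\rho^2)\bigr)$. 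The unconstrained optimum is $R=2\pi/G$. When that value is smaller than $\vartheta$, one takes an $R$ interpolating between $\vartheta$ and $4\pi/G$, depending on $\rho$.

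Bounding $G$ by $K^2A$ and $M$ by $K^2B$ and summing over both components then produces \eqref{introtruncatedbound}. Matching the precise constant $C_\rho$ through the choice of $R$ is the step I expect to need the most care; the structure of the argument is not in doubt. Then \eqref{TMenergybound} follows by inserting \eqref{introtruncatedbound} with $B=a$ into $J^{\rm exp}_\varepsilon$. The barrier statement is this inequality evaluated at $A=4\pi\rho/(\vartheta K^2)$.

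\emph{Sharpness.} Take $u=K\varphi$ and $v=\varphi/K$. Then $A=\|\nabla\varphi\|_2^2$ and $B=\|\varphi\|_2^2$, while $\|\nabla u\|_2^2=K^2A$, $\|\nabla v\|_2^2=K^{-2}A\le K^2A$, and similarly for the masses. So the pair lies in $\mathcal E_K$, and $\int(e^{\vartheta u^2}-1)=\int(e^{\vartheta K^2\varphi^2}-1)$. If $\vartheta K^2A_0\ge4\pi$, take $\varphi$ to be a Moser concentrating sequence normalised to $\|\nabla\varphi\|_2^2=A_0$. Such a sequence has vanishing mass, which we restore to $B_0$ by adding a fixed far-away bump with small gradient and compensating in the gradient normalisation. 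This makes the left-hand side unbounded. The converse direction is \eqref{introexponentialbound}.

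\emph{Negative energy under dilation.} In the planar case the common mass-preserving dilation is $(tu(t\cdot),tv(t\cdot))$. It leaves $B$ fixed, multiplies $A$ by $t^2$, and gives
\[
t^{-2}\int\bigl[\Psi(\vartheta t^2u^2)+\Psi(\vartheta t^2v^2)\bigr],
\]
which grows exponentially in $t^2$ on the support of a nonzero smooth pair. Hence $J^{\rm exp}_\varepsilon\to-\infty$ along the dilation.
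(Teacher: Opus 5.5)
Your scalar rescaling of Ruf's inequality, the bilinear estimate, the truncated estimate, the energy bound, the barrier and the dilation argument follow the paper's route and are correct. The constant in the truncated step closes with $R=2\pi(1+\rho)/G$. Then $RG(4\pi-RG)=4\pi^2(1-\rho^2)$ exactly, and $R\ge\vartheta$ because $\vartheta G\le4\pi\rho\le2\pi(1+\rho)$. This is the paper's choice $\vartheta'=4\pi\eta/(K^2A)$ with $\eta=(1+\rho)/2$, written with $G$ in place of $K^2A$.

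\textbf{The gap: sharpness at the endpoint.} The ``if and only if'' includes the case $\vartheta K^2A_0=4\pi$, and there your construction gives a \emph{bounded} left-hand side. Your concentrating part is $\alpha_n\phi_n$, with $\phi_n\in H^1_0(B_1)$ and $\|\nabla\phi_n\|_2=1$. Writing $\beta$ for the bump, the gradient normalisation forces $\alpha_n^2=A_0-\|\nabla\beta\|_2^2\le A_0$, hence $\vartheta K^2\alpha_n^2\le4\pi$. Moser's inequality on the unit disk holds \emph{at} the critical exponent $4\pi$, so it bounds $\int_{B_1}(e^{\vartheta K^2\alpha_n^2\phi_n^2}-1)$ uniformly in $n$. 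The bump contributes a fixed finite amount, and letting its gradient tend to zero does not help. Concentration inside a fixed bounded region produces blow-up only strictly above the threshold. There your argument does work, provided the bump's gradient is small enough to keep $\vartheta K^2\alpha_n^2>4\pi$.

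\textbf{The missing idea.} At the endpoint the divergence has to come from the mass normalisation itself. The core disk gives $\int(e^{4\pi\phi_n^2}-1)\ge\pi(1-n^{-2})$, while $b_n=\|\phi_n\|_2^2\to0$, so the exponential integral per unit mass is unbounded. Restore the mass by planar dilation instead of adding a bump. With $t_n^2=A_0b_n/B_0\to0$ and $w_n(x)=\sqrt{A_0}\,\phi_n(t_nx)$, one has $\|\nabla w_n\|_2^2=A_0$ and $\|w_n\|_2^2=B_0$. The pair $(Kw_n,K^{-1}w_n)$ then lies in $\mathcal E_K$ with $A=A_0$, $B=B_0$. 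For $\vartheta K^2A_0\ge4\pi$, the change of variables gives $\int(e^{\vartheta K^2w_n^2}-1)\ge t_n^{-2}\pi(1-n^{-2})\to\infty$. This is the paper's argument, and it covers the endpoint and the supercritical range at once.
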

Section~\ref{secTM} contains the proofs of the several intermediate arguments which together establish the complete result: \hyperref[proof:bilinearTM]{bilinear estimate},
\hyperref[proof:truncatedTM]{truncation}, \hyperref[proof:TMenergy]{energy bound} and
\hyperref[proof:TMbarrier]{barrier}. Taking $\vartheta=\theta/A$ in
\eqref{introexponentialbound} gives a bound proportional to $B/A$,
for $0<\theta<4\pi/K^2$. The truncated estimate has the quartic
mass-critical scaling $AB$. 

Let us first comment on the consequences of Theorem \ref{introexponentialGN} in the scalar case. Set $u=v=w$ and $K=1$. 
The untruncated mass-weighted estimate is the classical rescaled inequality by
Ruf \cite{Ruf} and Adachi--Tanaka \cite{AdTa}, see \cite{CST}. For the nonlinear potential we obtain the following scalar refinement.
\begin{corollary}[Scalar exponential GN with the sharp quartic coefficient]\label{scalartruncatedrefinement}
	Let $\vartheta>0$ and $0<\rho<1$. Then
	\begin{equation}\label{introscalartruncated}
	 \int\Psi(\vartheta w^2)
	 \le\frac{C_\rho}{2}\vartheta^2\|w\|_2^2\|\nabla w\|_2^2,
	 \qquad \vartheta\|\nabla w\|_2^2\le4\pi\rho<4\pi.
	\end{equation}
	It retains the same mass--gradient product as the familiar planar
	quartic inequality $\int w^4\le(2/\|Q_4\|_2^2)\|w\|_2^2\|\nabla w\|_2^2$.
	The optimal quartic leading coefficient is isolated by
	\begin{equation}\label{introscalarquartic}
	 \int\Psi(\vartheta w^2)
	 \le\frac{\vartheta^2}{\|Q_4\|_2^2}\|w\|_2^2\|\nabla w\|_2^2
	       +\frac{C_{\rm R}\vartheta^3}{8\pi^3}\| w\|_2^2\|\nabla w\|_2^4,
	 \qquad \vartheta \|\nabla w\|_2^2\le2\pi.
	\end{equation}
	The coefficient $1/\|Q_4\|_2^2$ is optimal among estimates with a
	remainder $O(\|w\|_2^2\|\nabla w\|_2^4)$ as $G\downarrow0$, at fixed positive mass and
	fixed $\vartheta$.
\end{corollary}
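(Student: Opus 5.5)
The first estimate \eqref{introscalartruncated} is just the diagonal case of Theorem~\ref{introexponentialGN}, and the quartic estimate \eqref{introscalarquartic} comes from splitting off the quadratic term of $\Psi$ and controlling the cubic remainder with the untruncated exponential bound. Throughout, write $B=\|w\|_2^2$ and $G=A=\|\nabla w\|_2^2$.

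\textbf{The estimate \eqref{introscalartruncated}.} Take $u=v=w$ and $K=1$. Then $A(w,w)=\|\nabla w\|_2^2$ and $B(w,w)=\|w\|_2^2$, so $(w,w)\in\mathcal E_1$ trivially for every $w\ne0$. The hypothesis $\vartheta\|\nabla w\|_2^2\le4\pi\rho$ is exactly the gradient range of \eqref{introtruncatedbound}. The left-hand side of \eqref{introtruncatedbound} equals $2\int\Psi(\vartheta w^2)$, and dividing by $2$ gives \eqref{introscalartruncated}. The same specialisation of \eqref{introexponentialbound} gives the scalar mass-weighted bound
\[
\int(e^{\tau w^2}-1)\le\frac{C_{\rm R}\tau B}{4\pi-\tau G},\qquad \tau G<4\pi,
\]
which I will use below.

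\textbf{The estimate \eqref{introscalarquartic}.} Let $f(\vartheta)=\int\Psi(\vartheta w^2)-\frac{\vartheta^2}2\int w^4$, so that $\int\Psi(\vartheta w^2)=\frac{\vartheta^2}{2}\int w^4+f(\vartheta)$.
\begin{itemize}
\item For the quadratic part, Weinstein's sharp planar inequality $\int w^4\le(2/\|Q_4\|_2^2)BG$ gives exactly the leading term $\vartheta^2BG/\|Q_4\|_2^2$.
\item For the remainder, note that $f(0)=f'(0)=f''(0)=0$ and $f'''(\tau)=\int w^6e^{\tau w^2}$. Taylor's formula then yields $f(\vartheta)\le\frac{\vartheta^3}{6}\sup_{0\le\tau\le\vartheta}f'''(\tau)$.
\item To bound $f'''$, use the elementary inequality $x^3\le 6\delta^{-3}(e^{\delta x}-1)$ for $x\ge0$, which follows from Taylor. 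It gives $w^6e^{\tau w^2}\le6\delta^{-3}\bigl(e^{(\tau+\delta)w^2}-1\bigr)$.
\item Apply the scalar mass-weighted bound with exponent $\tau+\delta$. Since $\vartheta G\le2\pi$, choosing $\delta$ of order $\pi/G$ keeps $(\tau+\delta)G$ a fixed fraction below $4\pi$.
\item This gives $f'''(\tau)\lesssim C_{\rm R}\delta^{-3}(\tau+\delta)B/(4\pi-(\tau+\delta)G)\lesssim C_{\rm R}G^2B$.
\end{itemize}
Hence $f(\vartheta)\lesssim C_{\rm R}\vartheta^3BG^2$, which is the structure of the remainder in \eqref{introscalarquartic}.

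The main bookkeeping obstacle is reaching the exact constant $C_{\rm R}/(8\pi^3)$. The crude choices above give a constant of order $C_{\rm R}/\pi^3$ but not necessarily $1/8$. I would first try to optimise $\delta$ as a function of $\tau$ inside the supremum. If that does not reach $1/8$, I would instead bound the series $\sum_{k\ge3}\vartheta^k\int w^{2k}/k!$ directly by comparing it with $\int(e^{2\vartheta w^2}-1)$, using the slack between $2\pi$ and $4\pi$.

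\textbf{Optimality of $1/\|Q_4\|_2^2$.} Suppose an estimate holds with leading coefficient $c<1/\|Q_4\|_2^2$ and a remainder $O(BG^2)$ as $G\downarrow0$. Test it on the mass-preserving dilations $w_t(x)=tQ_4(tx)$, for which $B=\|Q_4\|_2^2$ and $G=t^2\|\nabla Q_4\|_2^2\to0$ as $t\to0$.
\begin{itemize}
\item Since $Q_4$ is an equality case of the quartic inequality, $\frac{\vartheta^2}{2}\int w_t^4=\vartheta^2BG/\|Q_4\|_2^2$ exactly.
\item By the remainder bound just proved, $\int\Psi(\vartheta w_t^2)\ge\frac{\vartheta^2}{2}\int w_t^4=\vartheta^2BG/\|Q_4\|_2^2$.
\end{itemize}
Dividing the assumed estimate by $G\propto t^2$ and letting $t\to0$ gives $1/\|Q_4\|_2^2\le c$, a contradiction.
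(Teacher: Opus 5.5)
Your proof of \eqref{introscalartruncated} is the paper's. Your optimality argument is also correct: it needs only $\Psi(s)\ge s^2/2$ and the fact that $Q_4$ is an equality case of the quartic inequality, and you can rescale the amplitude if a different mass is prescribed. That lower bound comes from $\Psi(s)\ge s^2/2$, not from the remainder bound.

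The gap is in \eqref{introscalarquartic}. The remainder constant $C_{\rm R}/(8\pi^3)$ is part of the statement, and your argument does not reach it. Write $R_3(s)=e^s-1-s-s^2/2$ and $G=\|\nabla w\|_2^2$. Using $x^3\le6\delta^{-3}(e^{\delta x}-1)$, the supremum form of Taylor's remainder and $\delta=c\pi/G$, you get $\int R_3(\vartheta w^2)\le\frac{2+c}{c^3(2-c)}\frac{C_{\rm R}}{\pi^3}\vartheta^3\|w\|_2^2G^2$. The minimum over $c\in(0,2)$ is about $2.06$, roughly sixteen times the stated $1/8$.

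Neither fallback closes this. Optimising $\delta$ pointwise in $\tau$ inside the supremum changes nothing in the worst case $\vartheta G=2\pi$. Your bound on $f'''(\tau)$ increases in $\tau$, so the supremum sits at $\tau=\vartheta$, and the optimal $\delta$ there is exactly the optimal $c$ above. Comparing termwise with $e^{2\vartheta w^2}-1$ gives $\int R_3(\vartheta w^2)\le\frac18\int(e^{2\vartheta w^2}-1)\le C_{\rm R}\vartheta\|w\|_2^2/\bigl(4(4\pi-2\vartheta G)\bigr)$. This is infinite at $\vartheta G=2\pi$ and does not vanish like $G^2$ as $G\downarrow0$.

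The missing idea is that the comparison exponent must scale like $1/G$, not like $\vartheta$.
\begin{enumerate}
\item Put $\vartheta'=2\pi/G$ and $\varepsilon=\vartheta/\vartheta'=\vartheta G/(2\pi)\le1$.
\item The exponential series gives $R_3(\varepsilon s)\le\varepsilon^3(e^s-1)$ for $s\ge0$. Hence $R_3(\vartheta w^2)\le\varepsilon^3(e^{\vartheta' w^2}-1)$ pointwise.
\item Since $\vartheta'G=2\pi$ is exactly half the threshold, the scalar mass-weighted bound you already derived gives $\int(e^{\vartheta' w^2}-1)\le C_{\rm R}\vartheta'\|w\|_2^2/(4\pi-2\pi)=C_{\rm R}\|w\|_2^2/G$.
\item Multiplying yields exactly $\int R_3(\vartheta w^2)\le\frac{C_{\rm R}\vartheta^3}{8\pi^3}\|w\|_2^2G^2$.
\end{enumerate}
This is the paper's argument. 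There it is phrased through the normalisation $f=G^{-1/2}w(\sqrt{\|w\|_2^2/G}\,\cdot)$ and Ruf's inequality applied to $f/\sqrt2$. It is the same device as the choice $\vartheta'=4\pi\eta/(K^2A)$ in the proof of \eqref{introtruncatedbound}, now with $R_3(\varepsilon s)\le\varepsilon^3(e^s-1)$ in place of $\Psi(\varepsilon s)\le\varepsilon^2(e^s-1)$. No differentiation in $\vartheta$ is needed.
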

The \hyperref[proof:scalartruncatedrefinement]{proof} is given in
Section~\ref{secTM}.
These scalar consequences of classical exponential integrability
are stated separately because they provide the bridge from scalar
GN energy estimates to our bilinear exponential theory. We do not
claim a new scalar embedding theorem: the new system formulation,
its sharp gradient threshold on $\mathcal E_K$, and the application
of the quartic remainder to the full Hamiltonian problem are the
key differences from the scalar case. The resulting positive barrier
will give solutions at every mass below the cubic limiting mass.

The quadratic-exponential system considered here admits a direct local
normalisation result at small frequency, with fixed exponential growth
parameters.

\begin{theorem}[Pure exponential normalised solutions]\label{intropureexponential}
	Let $N=2$, $\vartheta_1,\vartheta_2>0$, $Q=Q_4$, and set
	\[
	 d=(\vartheta_1\vartheta_2)^{-1/2}\,,\quad
	 \tau=(\vartheta_1/\vartheta_2)^{1/4}\,,\quad
	 a_{\rm c}=d\|Q\|_2^2\,,\quad
	 \kappa=\frac{d(\tau+\tau^{-1})}{6}\int Q^6>0\,.
	\]
	For all sufficiently small $\lambda>0$ there is a positive radial
	classical exponentially decaying branch solving
	\begin{equation}\label{TMpuresystem}
		\begin{cases}
			-\Delta u+\lambda u=v(e^{\vartheta_1v^2}-1)\quad&\text{in }\ \R^2\,,\\
			-\Delta v+\lambda v=u(e^{\vartheta_2u^2}-1)\quad&\text{in }\ \R^2\,.
		\end{cases}
	\end{equation}
	Its mass extends $C^1$ to zero frequency and satisfies
	\begin{equation}\label{TMpuremass}
		B(u_\lambda,v_\lambda)=a_{\rm c}-\kappa\lambda+o(\lambda)\,,
	 \qquad \frac{d}{d\lambda}B(u_\lambda,v_\lambda)\to-\kappa\,.
	\end{equation}
	Hence every $a\in(a_{\rm c}-\delta,a_{\rm c})$, for some $\delta>0$,
	is realised, with $\lambda(a)=(a_{\rm c}-a)/\kappa+o(a_{\rm c}-a)$.
	Uniqueness holds locally among small-frequency solutions whose cubic
	rescalings \eqref{TMpurescaling} are close to $(Q,Q)$ in $H^2_{\rm rad}$.
\end{theorem}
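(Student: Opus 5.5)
The plan is to reduce \eqref{TMpuresystem} at small frequency to a regular perturbation of the planar cubic system. Then Theorem~\ref{thm:main} (nondegeneracy at $p=q=4$, $N=2$) and the implicit function theorem in $H^2_{\rm rad}(\R^2)^2$ produce the branch, and a Poho\v zaev-type duality computes the mass slope.

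\emph{Step 1: cubic rescaling.} Write
\[
u(x)=\lambda^{1/2}\alpha_0U(\sqrt\lambda x),\qquad v(x)=\lambda^{1/2}\beta_0V(\sqrt\lambda x),
\]
with $\alpha_0=\vartheta_1^{-1/8}\vartheta_2^{-3/8}$ and $\beta_0=\vartheta_1^{-3/8}\vartheta_2^{-1/8}$. These are chosen so that $\vartheta_1\beta_0^3=\alpha_0$ and $\vartheta_2\alpha_0^3=\beta_0$; hence $\alpha_0\beta_0=d$ and $\vartheta_1\beta_0^2=\tau$, $\vartheta_2\alpha_0^2=\tau^{-1}$. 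Expanding $e^s-1=s+s^2/2+\dots$, the system becomes
\[
-\Delta U+U=V_+^3+\lambda G_\tau(\lambda,V_+),\qquad -\Delta V+V=U_+^3+\lambda G_{\tau^{-1}}(\lambda,U_+).
\]
Here $G_c(\lambda,s)=\sum_{k\ge2}\frac{c^k\lambda^{k-2}}{k!}s^{2k+1}$, whose leading term is $\frac c2 s^5$. I would insert positive parts from the start, since this helps with positivity later. I would also write the rescaled system as \eqref{TMpurescaling}, as the statement refers to it. Since $H^2(\R^2)$ is an algebra embedded in $L^\infty$, and the power series converges locally uniformly, the map
\[
\mathcal F(\lambda,U,V)=\bigl(U-\mathcal R(V_+^3+\lambda G_\tau),\;V-\mathcal R(U_+^3+\lambda G_{\tau^{-1}})\bigr)
\]
is $C^1$ from $(-\lambda_1,\lambda_1)\times H^2_{\rm rad}(\R^2)^2$ to $H^2_{\rm rad}(\R^2)^2$ near $(0,Q,Q)$. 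It is $C^1$ in $\lambda$ through $\lambda=0$. The only nonsmoothness comes from $s\mapsto s_+^3$, which is $C^2$, so this suffices. Checking this regularity carefully, in particular uniform control of the exponential tails in $H^2$, is the step I expect to need the most care; I would handle it by bounding $\|s^{2k+1}\|_{H^2}\le C^{k}\|s\|_{H^2}^{2k+1}$ and summing.

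\emph{Step 2: implicit function theorem, positivity and mass.} At $\lambda=0$ one has $\mathcal F(0,Q,Q)=0$. The derivative $D_{(U,V)}\mathcal F(0,Q,Q)$ is identity minus compact. By Theorem~\ref{thm:main}(2) with $p=q=4$, its kernel consists of the translations only, and these are not radial. Hence it is an isomorphism on $H^2_{\rm rad}$, and the implicit function theorem gives a unique $C^1$ branch $(U_\lambda,V_\lambda)$ near $(Q,Q)$; this is also the stated local uniqueness. Positivity follows because the right-hand sides are nonnegative, so the maximum principle gives $U_\lambda,V_\lambda\ge0$, and the strong maximum principle gives strict positivity. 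The positive parts can then be dropped. Classical regularity and exponential decay follow as in Lemma~\ref{profileregularity}.

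For the mass, the rescaling gives
\[
B(u_\lambda,v_\lambda)=\lambda\,\alpha_0\beta_0\lambda^{-1}\int U_\lambda V_\lambda=d\int U_\lambda V_\lambda,
\]
which is $C^1$ on $[0,\lambda_1)$ and equals $a_{\rm c}$ at $\lambda=0$.

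\emph{Step 3: the slope.} Differentiating at $\lambda=0$, the pair $(U',V')$ solves $L(U',V')=\bigl(\tfrac\tau2Q^5,\tfrac{1}{2\tau}Q^5\bigr)$, where
\[
L(\phi,\psi)=\bigl((-\Delta+1)\phi-3Q^2\psi,\;(-\Delta+1)\psi-3Q^2\phi\bigr).
\]
Set $R=Q+x\cdot\nabla Q$. The scalar dilation identity gives $L(R,R)=(-2Q,-2Q)$. Since $L$ is symmetric,
\[
\int Q(U'+V')=-\tfrac12\int (R,R)\cdot L(U',V')=-\tfrac{\tau+\tau^{-1}}4\int(Q^6+Q^5x\cdot\nabla Q)=-\tfrac{\tau+\tau^{-1}}6\int Q^6 ,
\]
using $\int Q^5x\cdot\nabla Q=-\frac26\int Q^6$ in the plane. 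Multiplying by $d$ gives $\frac{d}{d\lambda}B\to-\kappa$ and \eqref{TMpuremass}. Finally, since $\kappa>0$, the map $\lambda\mapsto B$ is strictly decreasing on $(0,\lambda_1)$. It is therefore a $C^1$ bijection onto some interval $(a_{\rm c}-\delta,a_{\rm c})$, and its inverse satisfies $\lambda(a)=(a_{\rm c}-a)/\kappa+o(a_{\rm c}-a)$.
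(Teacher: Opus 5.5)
Your proposal is correct and follows essentially the same route as the paper: the same cubic rescaling (your $\alpha_0,\beta_0$ are the paper's $c_u,c_v$), an implicit function argument at $(0,Q,Q)$ in radial $H^2$ using the radial nondegeneracy at $p=q=4$, and the duality with $L_+(Q+x\cdot\nabla Q)=-2Q$ to obtain $a'(0)=-\kappa$. Two small corrections. First, the series should read $G_c(\lambda,s)=\sum_{k\ge2}\frac{c^{k-1}\lambda^{k-2}}{k!}s^{2k+1}$; your stated leading term $\frac c2 s^5$ is the correct one, and it is the one you actually use. Second, for the $C^1$ regularity it suffices to control the Nemytskii map into $L^2$ through $H^2\hookrightarrow L^\infty$, since $\mathcal R:L^2\to H^2$ is bounded. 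This avoids $H^2$ bounds on $s_+^{2k+1}$, which are awkward because $s_+\notin H^2$ in general.
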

Its proof in Section \ref{Section_11.2} specifies the rescaling.
This limit connects the exponential theory with planar critical
self-focusing: after balancing the two cubic coefficients, the
rescaled components converge to the same Townes profile $Q_4$.
The original profiles have amplitudes of order $\sqrt\lambda$ and
widths of order $\lambda^{-1/2}$, retaining a finite limiting bilinear
mass. The next exponential term breaks the cubic scale invariance
and gives the strictly negative mass slope in \eqref{TMpuremass}.
Thus the expansion measures how higher-order focusing changes the
charge needed to sustain a nearby localised profile. The exponential
nonlinearity serves here as an analytical model of growth beyond every
fixed power; the connection with the optical cubic problem is through
this controlled small-amplitude limit.

The same cubic profile determines the optimal leading coefficient in
\eqref{introscalarquartic} and the variational mass range in
Theorem~\ref{introfullfibreexistence}. The exponential GN refinement
therefore connects the local response near the cubic limit to the
energy estimates used to construct solutions. The local branch theorem
neither requires small exponential coefficients nor claims a global
mass classification. A separate local mapping argument near bounded power
profiles also allows exponential perturbations at fixed prescribed mass,
without a small-gradient condition on the base profile.
\vskip0.2truecm
Set
$$h_\vartheta(t)=t_+(e^{\vartheta t_+^2}-1)\,.$$

\begin{theorem}[Normalised exponential continuation]\label{introexponentialcontinuation}
	Let $N=2$, $p_0,q_0>2$, $\frac1{p_0}+\frac1{q_0}\neq\frac12$, $a>0$ and
	$\vartheta_1,\vartheta_2>0$. Near the centred positive triple $(\lambda_0,u_0,v_0)$ solving \eqref{system} given by Theorem \ref{introexistencescaling}, there exists a unique local
	$C^1$ branch solving
	\begin{equation}\label{TMsystem}
		\begin{cases}
			-\Delta u+\lambda u=v_+^{p-1}+\varepsilon_1h_{\vartheta_1}(v)\quad&\text{in }\ \R^2\,,\\
			-\Delta v+\lambda v=u_+^{q-1}+\varepsilon_2h_{\vartheta_2}(u)\quad&\text{in }\ \R^2\,,\\
			B(u,v)=a\,,
		\end{cases}
	\end{equation}
	for parameters $(p,q,\varepsilon_1,\varepsilon_2)$ near $(p_0,q_0,0,0)$. For nonnegative perturbation
	coefficients the branch consists of positive radial classical solutions
	with positive frequency and exponential decay. Uniqueness is local in
	$(H^1_{\rm rad}(\R^2))^2\times\R$.
\end{theorem}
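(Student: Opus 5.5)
The strategy is an implicit function argument on the radial space $Y:=(H^1_{\rm rad}(\R^2))^2\times\R$, centred at the triple $(u_0,v_0,\lambda_0)$. Define
\[
 \mathcal F(u,v,\lambda;p,q,\varepsilon_1,\varepsilon_2)=\Bigl(-\Delta u+\lambda u-v_+^{p-1}-\varepsilon_1h_{\vartheta_1}(v),\;-\Delta v+\lambda v-u_+^{q-1}-\varepsilon_2h_{\vartheta_2}(u),\;B(u,v)-a\Bigr),
\]
viewed as a map from a neighbourhood in $Y\times\R^4$ into $(H^{-1}_{\rm rad})^2\times\R$. At $(p_0,q_0,0,0)$ the triple $(u_0,v_0,\lambda_0)$ is a zero of $\mathcal F$. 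Since $\frac1{p_0}+\frac1{q_0}\neq\frac12$ is exactly $\gamma_2(p_0,q_0)\neq0$, the first part of Theorem~\ref{intropowercontinuation} shows that $\partial_{(u,v,\lambda)}\mathcal F$ at this point is an isomorphism. The power part of $\mathcal F$, together with its $C^1$ dependence on $(p,q)$, is already covered by the proof of Theorem~\ref{intropowercontinuation}. What remains is to check that the exponential terms give a $C^1$ map $H^1_{\rm rad}(\R^2)\to H^{-1}$, jointly in the parameters. Once that is done, the implicit function theorem produces the unique local $C^1$ branch in $Y$.

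The regularity of the exponential Nemytskii operator $u\mapsto h_\vartheta(u)$ is the step I expect to be the main obstacle. The function $h_\vartheta$ is $C^1$ with $h_\vartheta'(t)=(e^{\vartheta t_+^2}-1)+2\vartheta t_+^2e^{\vartheta t_+^2}$. The plan is as follows.
\begin{enumerate}[label=(\roman*)]
 \item On any $H^1$-ball around $u_0$, Ruf's inequality \cite{Ruf,CST} gives uniform bounds on $\int(e^{\beta u^2}-1)$ for some $\beta>0$. In the radial setting the Strauss decay bound even yields $L^\infty$ control away from the origin, which helps with the tails. This uniformity holds only locally, because the Trudinger--Moser threshold forces $\beta\|\nabla u\|_2^2<4\pi$ after rescaling. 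I will get around this by writing $u=u_0+w$ with $u_0\in L^\infty$ (Lemma~\ref{profileregularity}) and $\|w\|_{H^1}$ small, and using $e^{\vartheta(u_0+w)^2}\le e^{2\vartheta\|u_0\|_\infty^2}e^{2\vartheta w^2}$.
 \item Combining this with Hölder's inequality and the embedding $H^1\hookrightarrow L^m$ for every $m<\infty$, I will show that $h_\vartheta'(u)\phi\in L^{2}\subset H^{-1}$, with a bound linear in $\|\phi\|_{H^1}$.
 \item Continuity of $u\mapsto h'_\vartheta(u)$ from $H^1$ into the multiplier space $\mathcal L(H^1,H^{-1})$ then follows by Vitali convergence, using the uniform higher integrability from step (i).
\end{enumerate}
The map is linear in $\varepsilon_i$, so joint $C^1$ regularity in all variables follows.

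It then remains to establish the qualitative properties for $\varepsilon_1,\varepsilon_2\ge0$.
\begin{itemize}
 \item \emph{Positive frequency.} $\lambda$ stays near $\lambda_0>0$ by continuity.
 \item \emph{Regularity and decay.} Testing the equations with $u_-$ and $v_-$ shows that the negative parts vanish, since the right-hand sides vanish where the arguments are negative. A bootstrap based on the local exponential integrability shows that the right-hand sides lie in $L^m$ for all $m$, hence $u,v\in W^{2,m}$; Schauder estimates then make the solutions classical. Because $u,v\to0$ at infinity (radial decay), the nonlinear terms are eventually $o(1)\cdot(u,v)$, and comparison with $e^{-\sqrt{\lambda/2}|x|}$ gives exponential decay.
 \item \emph{Strict positivity.} The strong maximum principle applied to each equation, with nonnegative right-hand sides that are not identically zero (nonzero by closeness to $(u_0,v_0)$), gives $u,v>0$.
\end{itemize}
Local uniqueness in $Y$ is part of the conclusion of the implicit function theorem.
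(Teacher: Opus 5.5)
Your proposal is correct and follows essentially the paper's route. You apply the implicit function theorem on $(H^1_{\rm rad})^2\times\R$ with the augmented isomorphism $\mathcal A_{\lambda_0}$ of Lemma~\ref{linearisation}, obtain local $C^1$ regularity of $z\mapsto h_\vartheta(z)$ near the bounded base components via the splitting $z=u_0+w$ with $\|w\|_{H^1}$ small and Ruf's inequality (this is the content of Lemma~\ref{TMmapping}), and conclude with negative-part testing, elliptic regularity, the strong maximum principle and exterior comparison. The only cosmetic difference is that you get continuity of $z\mapsto h_\vartheta'(z)$ in $L^4$ from Vitali's theorem, whereas the paper uses the mean-value bound $\|h_\vartheta'(z)-h_\vartheta'(y)\|_4\le C\|z-y\|_8$ coming from an $L^8$ bound on $h_\vartheta''$, which also gives Fr\'echet differentiability directly through the integral Taylor formula.
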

The argument of the proof uses local differentiability near bounded profiles, see Section \ref{Section_11.3}. Both continuation neighbourhoods may depend on the mass, starting exponents, and perturbing nonlinearities.

At criticality the augmented linearisation is singular, but its
fixed-frequency part remains invertible. The resulting mass response,
proved in Section~\ref{Section_exp_response}, extends to the entire
planar bilinear-mass critical curve.

\begin{proposition}[Exponential mass response on the critical curve]\label{introcriticalresponse}
Let $N=2$, $p,q>2$, $1/p+1/q=1/2$ and
$\vartheta_1,\vartheta_2>0$. Let $(U,V)$ be the centred positive
unit-frequency profile, let $(u_\lambda,v_\lambda)$ be given by
\eqref{introfrequencyscaling}, and set
\[
 a_*=\int UV,\qquad s_u=\frac pD=\frac2q,\qquad
 s_v=\frac qD=\frac2p,\qquad s_u+s_v=1.
\]
Near every $(\lambda_0,0,0)$ with $\lambda_0>0$, the first two
equations of \eqref{TMsystem}, without the mass constraint, admit a
unique local $C^1$ radial branch
$(\lambda,\varepsilon_1,\varepsilon_2)\mapsto
(u_{\lambda,\varepsilon},v_{\lambda,\varepsilon})$
through $(u_{\lambda_0},v_{\lambda_0})$. Its mass satisfies
\begin{equation}\label{TMexpansion}
 \begin{aligned}
 B(u_{\lambda,\varepsilon},v_{\lambda,\varepsilon})
 =a_*&-\varepsilon_1D_{\vartheta_1,s_v}[V](\lambda)\\
     &-\varepsilon_2D_{\vartheta_2,s_u}[U](\lambda)
      +o(|\varepsilon|),
 \end{aligned}
\end{equation}
locally uniformly in $\lambda$, where
\begin{equation}\label{TMcoefficient}
 D_{\vartheta,s}[W](\lambda)
 =\frac1{2\vartheta\lambda^2}
   \int\big[2s z(e^z-1)-\Psi(z)\big],
 \qquad z=\vartheta\lambda^{2s}W^2.
\end{equation}
For nonnegative perturbation coefficients the solutions are positive,
classical and exponentially decaying. If $\max\{p,q\}\le8$, both
coefficients in \eqref{TMexpansion} are strictly positive for every
$\lambda>0$. Hence sufficiently small nonzero nonnegative
perturbations give mass strictly below $a_*$, and no radial solution
of mass $a_*$ lies in the local solution neighbourhood.
For $p>8$, the coefficient $D_{\vartheta_1,s_v}[V](\lambda)$ is
negative at sufficiently small positive frequencies; the analogous
statement holds for $D_{\vartheta_2,s_u}[U](\lambda)$ when $q>8$.
Thus the expansion holds on the whole critical curve, whereas a
mass decrease for all nonnegative perturbations does not.
\end{proposition}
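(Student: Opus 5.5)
The plan is to treat the frequency $\lambda$ as a parameter, to use the nondegeneracy of Theorem~\ref{thm:main} for the fixed-frequency part, and to compute the first-order mass response by a duality argument against the frequency derivative $\partial_\lambda(u_\lambda,v_\lambda)$. The point is that $\partial_\lambda(u_\lambda,v_\lambda)$ solves the linearised system with the swapped profile as forcing.

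\emph{Step 1: the branch.} Consider the map
\[
G(u,v;\lambda,\varepsilon)=\bigl((-\Delta+\lambda)u-v_+^{p-1}-\varepsilon_1h_{\vartheta_1}(v),\;(-\Delta+\lambda)v-u_+^{q-1}-\varepsilon_2h_{\vartheta_2}(u)\bigr)
\]
from $(H^1_{\rm rad}(\R^2))^2\times\R^3$ into $(H^{-1}_{\rm rad})^2$. Near bounded, exponentially decaying profiles, Trudinger--Moser (Ruf's inequality) makes the Nemytskii operators $u\mapsto h_\vartheta(u)$ of class $C^1$ into $H^{-1}$; this is the same local differentiability used for Theorem~\ref{introexponentialcontinuation}. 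The partial derivative $\partial_{(u,v)}G$ at $(u_\lambda,v_\lambda;\lambda,0)$ is the linearisation \eqref{eq:linearized}, rescaled to frequency $\lambda$. By Theorem~\ref{thm:main}(2) its kernel consists only of translations, which are not radial. Fredholmness of index zero (identity plus a compact perturbation, since $V^{p-2},U^{q-2}$ decay) then gives invertibility on radial spaces. The implicit function theorem yields the unique local $C^1$ branch. Positivity for $\varepsilon_i\ge0$ follows from $H^1$-closeness together with the maximum principle applied to $(-\Delta+\lambda)u\ge0$; regularity and decay follow as in Lemma~\ref{profileregularity}.

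\emph{Step 2: first-order mass response.} Write $(\phi_1,\psi_1)=\partial_{\varepsilon_1}(u,v)|_{\varepsilon=0}$, so that
\[
L_\lambda(\phi_1,\psi_1)=(h_{\vartheta_1}(v_\lambda),0),
\]
where $L_\lambda$ is the frequency-$\lambda$ linearised operator. Differentiating the equations along the frequency family gives
\[
(-\Delta+\lambda)\partial_\lambda u_\lambda-(p-1)v_\lambda^{p-2}\partial_\lambda v_\lambda=-u_\lambda,
\]
and the analogous identity for $v$. Then $\partial_{\varepsilon_1}B=\int(\phi_1v_\lambda+u_\lambda\psi_1)$. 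Pairing the equation for $(\phi_1,\psi_1)$ with the swapped vector $(\partial_\lambda v_\lambda,\partial_\lambda u_\lambda)$, and using that $L_\lambda$ is symmetric with respect to this crossed pairing, gives
\[
\partial_{\varepsilon_1}B=-\int h_{\vartheta_1}(v_\lambda)\,\partial_\lambda v_\lambda .
\]
From \eqref{introfrequencyscaling} with $s_v=q/D$,
\[
\partial_\lambda v_\lambda=\lambda^{-1}\bigl(s_vv_\lambda+\tfrac12x\cdot\nabla v_\lambda\bigr).
\]
The remaining computation has three parts:
\begin{itemize}
\item Write $\int h(v)v=\int v^2(e^{\vartheta v^2}-1)$.
\item Integrate by parts: $\int h(v)\,x\cdot\nabla v=-2\int\Psi(\vartheta v^2)/(2\vartheta)$.
\item Change variables $x\mapsto x/\sqrt\lambda$, which contributes a further factor $\lambda^{-1}$.
\end{itemize}
Together these produce exactly $D_{\vartheta_1,s_v}[V](\lambda)$ in \eqref{TMcoefficient}, with $z=\vartheta\lambda^{2s_v}V^2$; the $\varepsilon_2$ term is symmetric. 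Since $B(u_\lambda,v_\lambda)=a_*$ for every $\lambda$ on the critical curve, a Taylor expansion in $\varepsilon$ with $C^1$ dependence gives \eqref{TMexpansion}, locally uniformly in $\lambda$.

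\emph{Step 3: the sign.} Expand $f_s(z)=2sz(e^z-1)-\Psi(z)=\sum_{k\ge2}\frac{2sk-1}{k!}z^k$.
\begin{itemize}
\item If $p\le8$, then $s_v=2/p\ge1/4$. Every coefficient is $\ge0$, and those with $k\ge3$ are $>0$, so $D>0$ for all $\lambda>0$.
\item If $p>8$, then $s_v<1/4$. As $\lambda\to0$ we have $z\to0$ uniformly, and the negative $z^2$ term $\frac{4s_v-1}{2}z^2$ dominates, so $D<0$.
\end{itemize}
Positivity of the coefficients forces $B<a_*$ for small nonzero nonnegative $\varepsilon$, so no solution of mass $a_*$ lies in the local neighbourhood.

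The main obstacle I expect is the functional-analytic side of Step 1: proving $C^1$ dependence of the exponential Nemytskii maps in $H^1_{\rm rad}$ uniformly near the family. The duality sign bookkeeping in Step 2 is the other delicate point and must be checked carefully.
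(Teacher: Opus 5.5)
Your proposal is correct and follows the paper's proof essentially step by step. The branch comes from the implicit function theorem using the radial isomorphism $\mathcal L_\lambda$, the symmetric crossed pairing against $\partial_\lambda(u_\lambda,v_\lambda)$ gives $\partial_{\varepsilon_1}B=-\int h_{\vartheta_1}(v_\lambda)\,\partial_\lambda v_\lambda$, and the sign follows from the power series in $z$. The only cosmetic difference is that the paper rewrites this integral as $-\frac{d}{d\lambda}\int H_{\vartheta_1}(v_\lambda)$ and differentiates the scaled form $\frac{1}{2\vartheta\lambda}\int\Psi(\vartheta\lambda^{2s}W^2)$, which avoids your explicit integration by parts against $x\cdot\nabla v_\lambda$.
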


The preceding local branches use invertibility, whereas a variational
construction needs uniform control of sequences. The exponential GN
estimate supplies such control through the following compactness
criterion. For the positive-part primitive
$H_\vartheta(t)=\Psi(\vartheta t_+^2)/(2\vartheta)$, write
\[
 \begin{aligned}
 F_{\rm e}(u,v)&=\int[H_{\vartheta_2}(u)+H_{\vartheta_1}(v)]\,,
 &J_{\rm e}(u,v)&=A(u,v)-F_{\rm e}(u,v)\,,\\
 S_{\rm e}(u,v)&=\int[u h_{\vartheta_2}(u)+v h_{\vartheta_1}(v)]\,,
 &P_{\rm e}(u,v)&=2A-S_{\rm e}(u,v)+2F_{\rm e}(u,v)\,.
 \end{aligned}
\]
\begin{theorem}[Compactness for radial pure exponential sequences]
\label{introTMcompactness}
Let $\Theta=\max\{\vartheta_1,\vartheta_2\}$, $m_0=\|Q_4\|_2^2$,
$K\ge1$, $a>0$, and assume $\kappa_a=\Theta K^4a/m_0<1$.
Suppose a radial sequence $(u_n,v_n)\in\mathcal E_K$ with
$B(u_n,v_n)=a$, is such that $F_{\rm e}(u_n,v_n)$, $S_{\rm e}(u_n,v_n)$ are finite, and
\begin{equation}\label{TMcompactlevel}
 J_{\rm e}(u_n,v_n)\to c\,,\quad P_{\rm e}(u_n,v_n)\to0\,,\qquad
 0<c<\frac{2\pi(1-\kappa_a)}{\Theta K^2}\,.
\end{equation}
If for some real $\lambda_n$ the two residuals
\begin{equation}\label{TMcompactresiduals}
 -\Delta u_n+\lambda_nu_n-h_{\vartheta_1}(v_n)\,,\qquad
 -\Delta v_n+\lambda_nv_n-h_{\vartheta_2}(u_n)
\end{equation}
tend to zero in $H^{-1}(\R^2)$, up to a subsequence, $(u_n,v_n)$ converges strongly in
$H^1(\R^2)^2$ and $\lambda_n\to\lambda>0$. The limit is a positive
radial classical exponentially decaying solution of bilinear
mass $a$ and energy $c$ of \eqref{TMpuresystem}.
\end{theorem}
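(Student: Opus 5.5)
We first extract uniform bounds and a multiplier limit, then pass to the limit in the equations, and finally recover strong convergence and positivity of the frequency.

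\textbf{Step 1: uniform $H^1$ bounds.} Since $(u_n,v_n)\in\mathcal E_K$ and $B(u_n,v_n)=a$, the masses satisfy $\|u_n\|_2^2,\|v_n\|_2^2\le K^2a$. For the gradients, the Pohožaev-type identity $P_{\rm e}\to0$ combined with $J_{\rm e}\to c$ gives
\[
A_n=c+F_{\rm e}+o(1),\qquad S_{\rm e}-2F_{\rm e}=2A_n+o(1).
\]
Since $\Psi(s)\ge0$ and $sh-2H\ge0$, $A_n$ is bounded below by $c/2$ asymptotically. For the upper bound, note that $A_n$ large is incompatible with the level: along the mass-preserving dilation, the truncated estimate \eqref{introtruncatedbound} controls $F_{\rm e}$ by $\frac{C_\rho}{2}\Theta K^4aA_n$ when $\Theta K^2A_n\le4\pi\rho$. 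A cleaner route is to use the sharp quartic refinement \eqref{introscalarquartic} on each component. It gives
\[
F_{\rm e}\le\frac{\Theta}{2m_0}K^2a\cdot K^2A_n+O(A_n^2)=\frac{\kappa_a}{2}A_n+O(A_n^2),
\]
hence $J_{\rm e}\ge(1-\kappa_a/2)A_n-O(A_n^2)$. Combined with $P_{\rm e}=0$, this should show that $A_n$ stays in the range where $\Theta K^2A_n<2\pi$.

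The target is $\limsup A_n<2\pi/(\Theta K^2)$. I would try to prove it by writing $c=\lim(A_n-F_{\rm e})$ together with $S_{\rm e}-2F_{\rm e}=2A_n$ and the elementary inequality $sh(s)-2H\ge 2H$ for the exponential primitive (since $e^z z-2\Psi(z)\ge\Psi(z)$ type bounds). This yields $A_n\le$ a multiple of $c$, and the assumption $c<2\pi(1-\kappa_a)/(\Theta K^2)$ is designed exactly to keep $\Theta K^2A_n$ below $2\pi$. \textbf{This step is the main obstacle}: the constants must match the threshold, and I would balance them by comparing against $(1-\kappa_a)A_n\le J_{\rm e}$ from \eqref{TMenergybound}.

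\textbf{Step 2: the multiplier and weak limits.} Testing the residuals against $(v_n,u_n)$ gives
\[
2A_n+2\lambda_na=S_{\rm e}+o(1)\|(u_n,v_n)\|_{H^1},
\]
so $\lambda_na=(S_{\rm e}-2A_n)/2+o(1)=F_{\rm e}+o(1)$ by $P_{\rm e}\to0$. Since $F_{\rm e}$ is bounded, $\lambda_n$ is bounded, and along subsequences $\lambda_n\to\lambda=\lim F_{\rm e}/a\ge0$. By radial compactness (Strauss), $u_n\to u$ and $v_n\to v$ strongly in $L^s$ for $2<s<\infty$.

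The uniform subthreshold gradient bound $\Theta K^2A_n<4\pi\rho$, together with the exponential estimate \eqref{introexponentialbound} applied at a slightly larger $\vartheta$, gives uniform integrability of $e^{\vartheta u_n^2}$ in $L^{1+\delta}$. Hence $h(u_n)\to h(u)$ and $H(u_n)\to H(u)$ in $L^1$ (and $h(u_n)\to h(u)$ in $H^{-1}$, paired with $L^2$ bounds), so the limit solves \eqref{TMpuresystem} weakly, and $F_{\rm e},S_{\rm e}$ pass to the limit.

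\textbf{Step 3: $\lambda>0$ and strong convergence.} If $F_{\rm e}\to0$, the convergence of the nonlinear terms gives $S_{\rm e}\to0$. Then $P_{\rm e}\to0$ forces $A_n\to0$ and hence $c=0$, contradicting $c>0$. Therefore $\lambda>0$ and $(u,v)\ne0$. With $\lambda>0$, subtracting the equations tested against $(v_n-v,u_n-u)$ and $(u_n-u,v_n-v)$ gives convergence of
\[
\|\nabla(u_n-u)\|_2^2+\|\nabla(v_n-v)\|_2^2+\lambda\|(u_n-u,v_n-v)\|_2^2
\]
to zero, using the coercive combination obtained by adding the tests $(u_n-u)$ on the second equation and $(v_n-v)$ on the first, componentwise via the diagonal $(-\Delta+\lambda)$. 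Thus the sequence converges strongly in $H^1$, $B(u,v)=a$, and $J_{\rm e}(u,v)=c$.

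\textbf{Step 4: properties of the limit.} Positivity follows from $h$ vanishing on the negative half-line: testing with $u^-$ and the maximum principle, followed by the strong maximum principle, since $(u,v)\neq0$. Regularity and exponential decay follow by elliptic bootstrap as in Lemma \ref{profileregularity}.
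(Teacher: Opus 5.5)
Step~1 is not proved, and it is the heart of the theorem. None of the routes you sketch works. The quartic refinement \eqref{introscalarquartic} is valid only for $\vartheta\|\nabla w\|_2^2\le2\pi$, and the energy bound \eqref{TMenergybound} only for $\vartheta K^2A\le4\pi\rho$. Both presuppose the gradient bound you want, so using them here is circular. Even granted, they give only a barrier near $A=0$ and say nothing about a sequence sitting at large $A$. The inequality $t h_\vartheta(t)\ge4H_\vartheta(t)$, i.e.\ $S_{\rm e}\ge4F_{\rm e}$, combined with $A_n=c+F_{\rm e}+o(1)$ and $S_{\rm e}=2c+4F_{\rm e}+o(1)$, returns only $c\ge0$.

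The missing idea is to form $2J_{\rm e}-\frac12P_{\rm e}$ and expand in the exponential series. Set $F_j=\frac1{2j!}\int[\vartheta_2^{j-1}u_+^{2j}+\vartheta_1^{j-1}v_+^{2j}]$. Monotone convergence gives $F_{\rm e}=\sum_{j\ge2}F_j$ and $S_{\rm e}=\sum_{j\ge2}2jF_j$, so
\[
 A-2J_{\rm e}+\tfrac12P_{\rm e}=3F_{\rm e}-\tfrac12S_{\rm e}=\sum_{j\ge2}(3-j)F_j\le F_2 .
\]
Every term beyond the quartic one enters with the favourable sign. The quartic term is controlled with no smallness assumption by the sharp inequality $\|w\|_4^4\le(2/m_0)\|w\|_2^2\|\nabla w\|_2^2$ and the component bounds of $\mathcal E_K$: $F_2\le\Theta K^4aA/m_0=\kappa_aA$. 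Your constant $\kappa_a/2$ omits one component. Hence $(1-\kappa_a)A_n\le2J_{\rm e}-\frac12P_{\rm e}\to2c$, so $\limsup A_n\le2c/(1-\kappa_a)<4\pi/(\Theta K^2)$. This is exactly how the hypothesis on $c$ enters. Then $B=a$ and the component bounds bound the sequence in $H^1$.

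The threshold you actually obtain is $4\pi/(\Theta K^2)$, not your target $2\pi/(\Theta K^2)$. The factor $2$ in $2J_{\rm e}$ cannot be removed by this bookkeeping: on the exact Poho\v zaev set, $J_{\rm e}=\sum_{j\ge3}(j-2)F_j$ while $A=\sum_{j\ge2}(j-1)F_j$. Consequently, in Step~2 you cannot rely on uniform $L^2$ bounds for $h_\vartheta(u_n)$, which would need $2\Theta K^2A_n<4\pi$. Instead, with $\Theta K^2A_n\le4\pi\rho$, choose $1<r<\min\{2,1/\rho\}$ and $b>r\Theta$ with $bK^2A_n<4\pi$. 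Then \eqref{introexponentialbound} at $b$ bounds $h_\vartheta(u_n)$ and $h_\vartheta'(u_n)$ in $L^r$. The mean value inequality, H\"older, strong radial $L^s$ convergence and the planar embedding of $L^t$ ($1<t\le2$) into $H^{-1}$ then give $h_\vartheta(u_n)\to h_\vartheta(u)$ in $H^{-1}$.

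With that in place, your remaining steps follow the paper: the identity $\lambda_na=F_{\rm e}+o(1)$, the exclusion of $\lambda=0$ through $c>0$, strong convergence by inverting $-\Delta+\lambda$, and positivity. For strong convergence, test the $u$-residual with $u_n-u$ and the $v$-residual with $v_n-v$. The crossed tests you describe only yield the indefinite quantity $A+\lambda B$ of the differences, not a norm.
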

The proof is given in Section \ref{Section_9.4}. 
The estimate $(1-\kappa_a)A(u,v)\le2J_{\rm e}(u,v)-\frac12P_{\rm e}(u,v)$ first puts the sequence
strictly below the exponential concentration threshold. The exponential
GN inequality then yields uniform integrability, and radial compactness
passes the nonlinear terms to the limit. Thus the inequality has a
concrete role in a variational approach.

\vskip0.2truecm
The tilde-map method (see Section \ref{sectildepreliminaries}), in the formulation developed by Cassani and Tarsi
\cite{CassaniTarsi2015}, also leads to linking on the mass constraint
itself. The next result settles the geometry and the upper level bound
for finite-dimensional negative approximations.
\begin{theorem}[Mass-preserving tilde linking]\label{introtildelinking}
Let $\vartheta_0=\sqrt{\vartheta_1\vartheta_2}$ and choose
$0<\rho<1$, $a>0$ such that
\begin{equation}\label{tildebarrierconstants}
 \delta=1-\frac{C_\rho(\vartheta_1+\vartheta_2)a}{4}>0\,,
 \qquad R_0=\frac{4\pi\rho}{\vartheta_0}\,,\qquad \beta_0=\delta R_0\,.
\end{equation}
For any finite-dimensional radial negative space $W_m$, the
mass-preserving chart \eqref{tildechart} yields a linking class
on $\Sigma_{a,m}=\mathcal X_a(\mathbb S\times W_m)$, with minimax
level $c_m$ satisfying
\[
 0<\beta_0\le c_m\le\overline c<\frac{4\pi}{\vartheta_0}\,.
\]
The cylinder parameters and $\overline c$ are independent of $m$.
There is a Cerami sequence, hence a Palais--Smale sequence, for
$J_{\rm e}|_{\Sigma_{a,m}}$ at $c_m$.
\end{theorem}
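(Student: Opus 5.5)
We plan to build, following the tilde-map construction of Cassani and Tarsi, a linking on the mass constraint and to verify three things: the lower barrier, a uniform upper bound, and the existence of a Cerami sequence.

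\textbf{Step 1: the chart and the linking class.} We take the mass-preserving chart \eqref{tildechart}, $\mathcal X_a:\mathbb S\times W_m\to\Sigma_a$. It maps a positive direction $e\in\mathbb S$ (unit sphere in the diagonal subspace $\{(w,w)\}$, normalised so that $B=a$) together with a negative perturbation $\xi\in W_m\subset\{(w,-w)\}$ to a pair on $\Sigma_a$. Composing with the mass-preserving dilation $S_t$, we use the cylinder
\[
Q_m=\{S_t\mathcal X_a(e_0,\xi):\ t\in[0,T],\ \|\xi\|\le R\}
\]
for a fixed radial positive profile $e_0$, say built from $Q_4$. We link it with the sphere
\[
\mathcal N=\{(u,v)\in\Sigma_{a,m}\cap\mathcal E_K:\ A(u,v)=R_0\}.
\]
The linking property should follow from a degree argument in the finite-dimensional variables $(t,\xi)$. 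Since $S_t$ preserves $B$ and $A(S_t\cdot)=t^2A$, the map $t\mapsto A$ along the diagonal ray crosses $R_0$ exactly once, which gives a nonzero Brouwer degree.

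\textbf{Step 2: the lower barrier.} On $\mathcal N$ we apply \eqref{introtruncatedbound} componentwise, with $\vartheta_1$ on $v$ and $\vartheta_2$ on $u$. Because $\vartheta_0 A= 4\pi\rho$ at $A=R_0$, the estimate \eqref{TMenergybound} adapted to unequal parameters gives
\[
J_{\rm e}\ge\Bigl(1-\tfrac{C_\rho(\vartheta_1+\vartheta_2)a}{4}\Bigr)A=\delta R_0=\beta_0.
\]
The main obstacle is here: \eqref{introtruncatedbound} needs the component bounds of $\mathcal E_K$ with $\vartheta K^2A\le4\pi\rho$, but the chart uses $\vartheta_0$ rather than $\max\vartheta_i$. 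To deal with this, we would first exploit the amplitude balancing $\tau$, i.e. rescale $(u,v)\mapsto(\tau^{-1}u,\tau v)$, which preserves $A$ and $B$ and converts both exponents to $\vartheta_0$. We would then check that the chart image has $K=1$ up to this balancing, via Cauchy--Schwarz equality on the diagonal, so that the constant $\delta$ comes out as stated.

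\textbf{Step 3: the boundary and the upper bound.} On the boundary of $Q_m$ we need $J_{\rm e}\le0$:
\begin{itemize}
\item at $t=0$, $J_{\rm e}=0$;
\item at $t=T$, the fibre statement of Theorem~\ref{introexponentialGN} makes the energy negative, uniformly on the compact part;
\item on the lateral part $\|\xi\|=R$, the term $A=\|\nabla w\|^2-\|\nabla\xi\|^2$ becomes negative while $F_{\rm e}\ge0$.
\end{itemize}
For the upper bound we estimate $\sup_{Q_m}J_{\rm e}\le\sup_{t,\xi}\bigl(A_+-F_{\rm e}\bigr)$. The negative part only lowers $A$, and $F_{\rm e}$ is bounded below by its quartic part via convexity. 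This reduces the estimate to a two-dimensional maximisation independent of $m$, and choosing $e_0$ with $A$ small relative to the cubic term gives $\overline c<4\pi/\vartheta_0$.

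\textbf{Step 4: the Cerami sequence.} Since $\Sigma_{a,m}$ is a $C^1$ Hilbert manifold ($B'\ne0$ on $\Sigma_a$), a deformation lemma in Cerami form (Ghoussoub's minimax principle) applied to $J_{\rm e}|_{\Sigma_{a,m}}$ at the level $c_m\in[\beta_0,\overline c]$ yields a Cerami sequence, and hence a Palais--Smale sequence.
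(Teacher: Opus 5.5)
\textbf{Main gap: the upper bound in Step~3.} The real problem is the bound $\overline c<4\pi/\vartheta_0$. After convexity your reduction is $J_{\rm e}(\mathcal X_a(\phi_t,w))\le\xi^2g-\frac{2\ell a}{\xi^2}\int H_{\vartheta_0}(\xi\phi)$, with $\xi=t\,r_a(w)/\sqrt2$, $g=\|\nabla\phi\|_2^2$ and $\ell=\min\{b^2,b^{-2}\}$; this is \eqref{tildeuniformupper}.

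Keeping only the quartic part of $H_{\vartheta_0}$ gives $\xi^2\bigl(g-\frac{\ell a\vartheta_0}{2}\int\phi_+^4\bigr)$. The sharp planar GN inequality $\int\phi^4\le 2g/\|Q_4\|_2^2$ makes the bracket at least $g(1-\ell a\vartheta_0/\|Q_4\|_2^2)>0$ on the whole admissible range. Indeed $C_{\rm R}\ge4\pi$, so $\delta>0$ forces $\vartheta_0a<\pi<\|Q_4\|_2^2$. Hence your bound is $+\infty$, whatever $e_0$ you choose.

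Worse, fix any profile, for instance one built from $Q_4$. The exact energy on the positive graph $w=0$ is $\xi^2g-(b^2+b^{-2})\frac a{\xi^2}\int H_{\vartheta_0}(\xi\phi)$, which tends to $\xi^2g$ as $a\downarrow0$. So the maximum over your cylinder is unbounded for small admissible $a$.

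The missing idea is Lemma~\ref{tildemoser}. The profile must depend on $a$ and exploit the critical exponential growth. One takes a normalised Moser function $\phi=m_L/\|m_L\|_2$ with $L=L(a)$ so large that the core-disk contribution of the full exponential exceeds $y=\xi^2/\|m_L\|_2^2$ for every $y\ge4\pi/\vartheta_0$. Every fixed polynomial truncation of $H_{\vartheta_0}$ is negligible in this limit.

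\textbf{The construction in Steps~1--3 also needs repair.}

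\emph{The cylinder.} Applying the dilation to the whole pair also dilates the negative coordinate, and $t\xi(t\,\cdot)\notin W_m$ in general. So your cylinder is not contained in $\Sigma_{a,m}$, and no degree in $W_m\times\R$ is available. Moreover $t=0$ is not a point of the constraint, since the dilates converge only weakly to $0$. Dilate only the positive direction: take $\chi_m(t,w)=\mathcal X_a(\phi_t,w)$ with $t\in[t_-,t_+]$ and $t_->0$.

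\emph{The linking set.} Compute the degree of $(t,w)\mapsto\bigl(w_\gamma,\frac12\|\nabla z_\gamma\|_2^2-R_0\bigr)$. Its forced zero lies on the positive graph $\{(bh,b^{-1}h):\|h\|_2^2=a,\ \|\nabla h\|_2^2=R_0\}$. This is exactly where your balancing gives $\delta$, through $F_{\rm e}(bh,b^{-1}h)=(b^2+b^{-2})\int H_{\vartheta_0}(h)$ and $(h,h)\in\mathcal E_1$. Your set $\mathcal N\subset\mathcal E_K$ is not what a degree argument hits: a crossing of $\{A=R_0\}$ gives no component control. Off $w=0$ the balanced pair is not in $\mathcal E_1$, and for $K>1$ the constant is not $\delta$.

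\emph{The boundary estimates.} On the lateral boundary $A=\frac12\bigl(r_a(\xi)^2\|\nabla e_0\|_2^2-\|\nabla\xi\|_2^2\bigr)$ with $r_a(\xi)^2=2a+\|\xi\|_2^2$, so $A$ need not become negative. You need the convexity bound \eqref{tildejensen}:
\begin{itemize}
\item its sextic term gives \eqref{tildetop}, which is nonpositive at the top uniformly in $w$ and tends to $-\infty$ as $\|w\|_{H^1}\to\infty$;
\item its quartic term gives the bottom estimate \eqref{tildebottom}, also uniform in $w$.
\end{itemize}
These uniform bounds fix $t_\pm$ before $R$, which removes the circular dependence in your choice of $T$ and $R$, and they make all parameters independent of $m$. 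With these repairs, Step~4 is fine.
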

The \hyperref[proof:tildelinking]{linking construction} uses the upper bound  for the linking level in Lemma~\ref{tildemoser}. The positive-graph barrier uses the exponential
GN inequality, whereas a mass-normalised Moser direction gives the upper bound.
The chart preserves $B(u,v)=a$ throughout the linking deformation and
introduces no component inequality constraints. Its sequences can be
chosen to satisfy the Cerami condition, namely that the constrained
derivative multiplied by the factor $(1+\|(u,v)\|_{H^1\times H^1})$ tends to zero.
The following result provides uniform compactness at precisely the
same upper level, without the component assumptions of
Theorem~\ref{introTMcompactness}.

\begin{theorem}[Uniform compactness for the full exponential system]\label{introtildecompactness}
	Let $a,\vartheta_1,\vartheta_2>0$, $\vartheta_0=\sqrt{\vartheta_1\vartheta_2}$,
	and $0<c<4\pi/\vartheta_0$. Choose nested finite-dimensional
	$W_n\subset C^\infty_{c,\rm rad}(\R^2)$ with dense union in
	$H^1_{\rm rad}(\R^2)$, and let $E_n$ be the weighted positive/negative
	test spaces in \eqref{tildeGalerkinspace}. Suppose $(u_n,v_n)\in E_n$
	and $\lambda_n\ge0$ satisfy
	\begin{equation}\label{tildeCPconditions}
	 \begin{gathered}
	 B(u_n,v_n)=a,\qquad J_{\rm e}(u_n,v_n)\to c,\qquad P_{\rm e}(u_n,v_n)\to0\,,\\
	 (1+\|u_n,v_n\|_{H^1\times H^1})\left\|\left.(J_{\rm e}'(u_n,v_n)+\lambda_nB'(u_n,v_n))\right|_{E_n}\right\|_{E_n^*}\to0\,.
	 \end{gathered}
	\end{equation}
	Then the components are uniformly bounded in $H^1(\R^2)$, and the multipliers
	stay in a compact subset of $(0,\infty)$. A subsequence converges
	strongly in $H^1(\R^2)^2$ to a positive radial classical exponentially
	decaying solution of \eqref{TMpuresystem}, of mass $a$ and energy $c$.
	The same conclusion holds for full radial residuals, with $(u_n,v_n)\in H^1_{\rm rad}(\R^2)^2$ and $E_n=H^1_{\rm rad}(\R^2)^2$ throughout.
\end{theorem}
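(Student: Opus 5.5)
The argument has three steps: a uniform $H^1$ bound from the Pohožaev combination, control of the multipliers from the Lagrange identities, and strong convergence from uniform exponential integrability below the threshold $4\pi/\vartheta_0$.

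\textbf{Step 1: a priori bounds.} Since $B(u_n,v_n)=a$ only fixes a difference of modes, I will combine $J_{\rm e}$ with $P_{\rm e}$. We have
\[
 2J_{\rm e}-P_{\rm e}=S_{\rm e}-4F_{\rm e}=\int[\phi_{\vartheta_2}(u_n)+\phi_{\vartheta_1}(v_n)],
\]
where $\phi_\vartheta(t)=t h_\vartheta(t)-2H_\vartheta(t)$ is nonnegative and superquadratic. Hence $\int[\phi(u_n)+\phi(v_n)]\to2c$.

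Testing the residual equations against $(v_n,0)$ and $(0,u_n)$ gives
\[
 \|\nabla u_n\|_2^2,\ \|\nabla v_n\|_2^2\ \text{ in terms of } A,\ \lambda_n a,\ \int u_nh(u_n),\ \int v_nh(v_n),
\]
up to errors of size $o(1)(1+\|(u_n,v_n)\|)$. This is the step where the Cerami factor is needed. On $E_n$ the test directions must lie in the weighted spaces \eqref{tildeGalerkinspace}, so I will use the positive/negative splitting of $E_n$ in place of $(v_n,0)$ and $(0,u_n)$.

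The multiplier comes from pairing with $(u_n,v_n)$: $2A-S_{\rm e}+2\lambda_na=o(1)$. Using $P_{\rm e}\to0$, this gives $\lambda_n a=F_{\rm e}+o(1)$, which is bounded once $F_{\rm e}\lesssim\int\phi$. The inequality $H\le C\phi$ holds on $\{|t|\ge1\}$. For small $|t|$, $H_\vartheta(t)\sim\vartheta t^4/4$ while $\phi\sim\vartheta t^4/2$, so $H\le\phi$ there too. Hence $F_{\rm e}\le C c$, and $\lambda_n$ is bounded. Since $J_{\rm e}=A-F_{\rm e}$, $A$ is bounded as well.

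To bound the component norms, I will write the linear parts as $(-\Delta+\lambda_n)u_n\approx h(v_n)$. Testing with $u_n$ and $v_n$ and using Cauchy--Schwarz on the mixed terms, the key scalar estimate is $\int u_nh(v_n)\le\varepsilon\|u_n\|^2+C_\varepsilon(\ldots)$. This requires a Trudinger--Moser bound for $e^{\vartheta_1v_n^2}$ in $L^{1+}$. It is exactly here that the level restriction $c<4\pi/\vartheta_0$ enters, in the Cassani--Tarsi form for the product: a concentration-compactness alternative shows that if one component blows up in $H^1$, then the asymmetric rescaling $u\mapsto\tau u$, $v\mapsto\tau^{-1}v$ (which preserves $A$ and $B$) produces a Moser-type concentration with energy at least $4\pi/\vartheta_0$. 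I expect this step, excluding divergence of the individual norms without the component bounds of $\mathcal E_K$, to be the main obstacle. I would first try a Lions-type vanishing/concentration dichotomy applied to the balanced pair $(\tau_nu_n,\tau_n^{-1}v_n)$ with $\tau_n$ chosen to equalise the gradients, then invoke the sharp threshold of Theorem~\ref{introexponentialGN} to get uniform integrability.

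\textbf{Step 2: multipliers and limit.} Once $H^1$-boundedness holds, radial compactness (Strauss) combined with the uniform $L^{1+\delta}$ bound on $e^{\vartheta v_n^2}-1$ gives $F_{\rm e}$, $S_{\rm e}$ and $\int u_nh(v_n)$ convergence along weak limits. Density of $\bigcup W_n$ passes the Galerkin residuals to the full equation. We then get $\lambda_na=F_{\rm e}(u_n,v_n)+o(1)\to F_{\rm e}(u,v)$. If $F_{\rm e}(u,v)=0$, then $\int\phi\to0$, contradicting $\int\phi\to2c>0$; hence $\lambda>0$. Positivity of $\lambda$ gives coercivity of $-\Delta+\lambda$, which upgrades weak convergence to strong convergence via the equations. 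Strong convergence yields $B(u,v)=a$ and energy $c$.

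\textbf{Step 3: regularity and positivity.} Positivity, classical regularity and exponential decay follow as in the earlier profile lemma: the nonlinearities vanish for negative arguments, the maximum principle applied with $\lambda>0$ gives positivity, and the decay follows from comparison with the Green function of $-\Delta+\lambda$. The full-residual case is the same argument with $E_n=H^1_{\rm rad}(\R^2)^2$.
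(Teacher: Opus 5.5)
There is a genuine gap already in Step~1. The identity $2J_{\rm e}-P_{\rm e}=S_{\rm e}-4F_{\rm e}$ is correct, but it corresponds to $\phi_\vartheta=th_\vartheta-4H_\vartheta$, not $th_\vartheta-2H_\vartheta$. Expanding, $S_{\rm e}-4F_{\rm e}=\sum_{j\ge3}(2j-4)F_j$, so the quartic term $F_2$ cancels and $\phi_\vartheta(t)\sim\vartheta^2t^6/6$ at zero. Hence $H_\vartheta\le C\phi_\vartheta$ fails for small $t$, and $F_{\rm e}\le Cc$, the bound on $\lambda_n$ and the bound on $A$ are all unproved. This is structural: $F_2$ scales exactly like $A$ under mass-preserving dilation. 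Without the component bounds of $\mathcal E_K$ (which Lemma~\ref{TMlevelbound} needs precisely to control $F_2$), no combination of $J_{\rm e}$, $P_{\rm e}$ and $B$ controls it.

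The missing tool is the tilde test. The pair $\mathcal T Z_n=(b^2v_n,b^{-2}u_n)$, with $b=(\vartheta_1/\vartheta_2)^{1/4}$, is exactly the admissible combination of your $(v_n,0)$ and $(0,u_n)$. In the coordinates \eqref{tildecoordinates} it is $(z,w)\mapsto(z,-w)$, so it lies in $E_n$, and the Cerami factor allows testing with it before boundedness is known. It yields $G_b(Z_n)+\lambda_nM_b(Z_n)=\int\mathcal C_b(u_n,v_n)+o(1)$. The cross term is bounded pointwise by diagonal quantities, $\mathcal C_b\le(1+\varepsilon)[uh_{\vartheta_2}(u)+vh_{\vartheta_1}(v)]+C_\varepsilon[H_{\vartheta_2}(u)+H_{\vartheta_1}(v)]$, so no Trudinger--Moser bound on $e^{\vartheta_1v_n^2}$ is needed. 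Your estimate of $\int u_nh(v_n)$ through such a bound is circular, since it presupposes gradient control below the threshold. Combined with $\lambda_na=F_n+o(1)$, $S_n=2c+4F_n+o(1)$ and $H_\vartheta\le\varepsilon th_\vartheta+C_\varepsilon t^2$, this rules out $\lambda_n\to\infty$. Indeed, the $L^2$ mass would be $O(S_n/\lambda_n)$, giving $F_n\le(\varepsilon+o(1))S_n+o(1)$ while $S_n/F_n\to4$. Once $\lambda_n$ is also bounded below, the same identity bounds the component norms.

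Two later steps also lack a mechanism. First, the lower bound on $\lambda_n$ must be proved before any $H^1$ bound, since $B=a$ gives no $L^2$ control and the only one available is $\lambda_nM_b(Z_n)\le CS_n$. Your argument through $F_{\rm e}(u,v)=0$ presupposes boundedness and convergence of the nonlinear terms. In the paper, $\lambda_n\to0$ forces $G_b(Z_n)\to2c$, hence $\vartheta_2\|\nabla u_n\|_2^2$ and $\vartheta_1\|\nabla v_n\|_2^2$ tend to $\vartheta_0c<4\pi$. Meanwhile $F_n\to0$ kills the $L^4$ norms of the positive parts, and Lemma~\ref{tildequarticvanishing}, which needs no $L^2$ bound, gives $S_n\to0$, contradicting $S_n\to2c$.

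Second, the uniform exponential integrability you invoke in Step~2 does not follow from $c<4\pi/\vartheta_0$, because the full gradients $\vartheta_2\|\nabla u_n\|_2^2$ need not be below $4\pi$. What lies below the threshold is the concentration defect. One first recovers $B(u,v)=a$ from the Poho\v zaev identity of the weak limit, since radial compactness gives no $L^2$ convergence. Comparing the tilde test along the sequence with the limiting one then gives $D_x+D_y+\lambda D_M\le2D_A$ for the balanced components $u_n/b$, $bv_n$. With $|D_A|\le\sqrt{D_xD_y}$ this forces $D_M=0$ and $D_x=D_y=D_A=c-J_{\rm e}(u,v)\le c<4\pi/\vartheta_0$. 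So only the remainders $u_n-u$, $v_n-v$ lie below the scalar threshold, and the splitting $u_n^2\le(1+\eta)(u_n-u)^2+(1+\eta^{-1})u^2$ gives uniform integrability and strong convergence.

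A Lions dichotomy for $(\tau_nu_n,\tau_n^{-1}v_n)$ cannot replace this. That rescaling preserves $A$ and $B$ but neither $F_{\rm e}$ nor the residual equations. It also does not place the pair in a fixed $\mathcal E_K$, so Theorem~\ref{introexponentialGN} does not apply.
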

The proof is given in Section \ref{sectildecompactness}.  
A refined tilde estimate
first bounds the multiplier. The weak limit recovers the prescribed
mass through its Poho\v zaev identity. The two possible gradient defects
then coincide after rescaling the components; each lies below its
scalar exponential threshold, which gives strong convergence.
We obtain the required sequence by maximising over the
full negative space while incorporating spatial dilation through
the mass-normalising quotient $\Phi_a=A-aF_{\rm e}/B$ on $B>0$.
The exponential primitive satisfies the strict curvature inequality
\[
 h_\vartheta'(t)>\frac{h_\vartheta(t)}t+\frac{h_\vartheta(t)^2}{2H_\vartheta(t)}\,,\qquad t>0\,.
\]
It gives a unique nondegenerate maximum on each fibre. The reduced
minimisation produces full residuals with $P_{\rm e}=0$ and
$\lambda=F_{\rm e}/a>0$ at every step, so the compactness theorem
applies without a conditional sequence-selection assumption.

The uniform compactness given by Theorem \ref{introtildecompactness} is the essential ingredient to finally prove existence of solutions to system \eqref{TMpuresystem} with prescribed bilinear mass, hence the exponential analogue of system \eqref{system}.
\begin{theorem}[Exponential solutions below the cubic limiting mass]
\label{introfullfibreexistence}
Let $\vartheta_1,\vartheta_2>0$ and set
$a_{\rm c}=\|Q_4\|_2^2/\sqrt{\vartheta_1\vartheta_2}$. For every
\begin{equation}\label{pureexpsmallmass}
 0<a<a_{\rm c}
\end{equation}
there exist $\lambda>0$ and positive radial classical exponentially
decaying $u,v\in H^1(\R^2)$ satisfying
\begin{equation}\label{TMpurenormalisedsystem}
	\begin{cases}
		-\Delta u+\lambda u=v(e^{\vartheta_1v^2}-1)\quad&\text{in }\ \R^2\,,\\
		-\Delta v+\lambda v=u(e^{\vartheta_2u^2}-1)\quad&\text{in }\ \R^2\,,\\
		B(u,v)=a\,,
	\end{cases}
\end{equation}
The solution has least energy among positive radial solutions of this
mass, at a level $c_a$ with
$$0<\beta_a\le c_a\le\overline c<4\pi/\sqrt{\vartheta_1\vartheta_2}.$$
\end{theorem}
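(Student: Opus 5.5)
The existence part will come from a constrained minimax on the positive-negative splitting, carried out through the mass-normalising quotient $\Phi_a=A-aF_{\rm e}/B$ on $\{B>0\}$, followed by the uniform compactness of Theorem~\ref{introtildecompactness}.

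\emph{Step 1 (fibre reduction).} For a radial pair with $B(u,v)>0$, consider the mass-preserving dilations combined with the full negative fibre $(u+w,v-w)$, $w\in W_n$, together with amplitude normalisation to $B=a$. The curvature inequality $h_\vartheta'(t)>h_\vartheta(t)/t+h_\vartheta(t)^2/(2H_\vartheta(t))$ will be used to show that $\Phi_a$ restricted to each such fibre has a unique nondegenerate maximum. The reduced functional $\mathcal J_n(z)=\max_{\text{fibre}}\Phi_a$ is then $C^1$ on the positive directions, by the implicit function theorem.

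\emph{Step 2 (level bounds).} Next I would show that $\inf\mathcal J_n=c_{a,n}$ satisfies $\beta_a\le c_{a,n}\le\overline c$ uniformly in $n$.
\begin{itemize}
\item[(a)] For the lower bound, apply the truncated exponential GN inequality \eqref{introtruncatedbound}, in the form used in Theorem~\ref{introtildelinking}, at the barrier $A=4\pi\rho/\vartheta_0$. The condition $a<a_{\rm c}$ should make $\delta>0$ possible for a suitable $\rho$. This is where the refined quartic coefficient $1/\|Q_4\|_2^2$ of Corollary~\ref{scalartruncatedrefinement} enters: the cubic term of $F_{\rm e}$ is $\tfrac{\sqrt{\vartheta_1\vartheta_2}}{4}\cdot$ (quartic), after balancing the components via $\tau$.
\item[(b)] For the upper bound, take the Moser-type test direction of Lemma~\ref{tildemoser}.
\end{itemize}

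\emph{Step 3 (Ekeland and Palais--Smale).} Apply Ekeland's principle to $\mathcal J_n$ and lift the resulting sequence back to the fibre maxima. This gives $(u_n,v_n)\in E_n$ with $B=a$, $P_{\rm e}=0$, and multipliers $\lambda_n=F_{\rm e}/a>0$. The fibre-maximality yields the residual estimate of \eqref{tildeCPconditions}, including the Cerami factor, since the fibre maxima are controlled on bounded sets. Then Theorem~\ref{introtildecompactness}, applied at $c=\lim c_{a,n}<4\pi/\vartheta_0$, produces a strongly convergent subsequence whose limit is a positive radial classical decaying solution with mass $a$, energy $c_a$ and $\lambda>0$.

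\emph{Step 4 (least energy).} Every positive radial solution of mass $a$ satisfies the Poho\v zaev identity $P_{\rm e}=0$ and is a critical point of $\Phi_a$ on its fibre. By uniqueness of the fibre maximum, its energy is therefore at least $\mathcal J(z)\ge c_a$. Here $c_a$ is the limit of the Galerkin levels, which agrees with the infimum over full fibres by density of $\bigcup W_n$.

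The main obstacle is Step 2(a): making the barrier positive for every $a<a_{\rm c}$, not only for small $a$. The constant $C_\rho$ in \eqref{introtruncatedbound} is far from sharp. My plan is therefore to use \eqref{introscalarquartic} on the balanced components $\tau^{\pm1/2}$-rescaled, so that the leading quartic part gives $A(1-a/a_{\rm c})$ and the remainder $O(aA^2)$ is absorbed by choosing the barrier radius small. This yields $\beta_a>0$ depending on $a_{\rm c}-a$.
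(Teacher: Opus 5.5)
Your outline follows the paper's architecture: the quotient $\Phi_a$, the curvature inequality, the tilted quartic barrier, the Moser upper bound, Ekeland, tilde compactness, and least energy via fibre uniqueness. However, restricting the negative coordinate to $W_n$ breaks exactly the step that reaches every $a<a_{\rm c}$.

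Use the balanced coordinates \eqref{tildecoordinates} with $b=(\vartheta_1/\vartheta_2)^{1/4}$, which are the ones of $E_n$ and of Theorem~\ref{introtildecompactness}. Your trial pair $(b_{\rm c}h,b_{\rm c}^{-1}h)$, with $b_{\rm c}=\tau^{1/2}=(\vartheta_1/\vartheta_2)^{1/8}$, has coordinates $z=\frac{b_{\rm c}/b+b/b_{\rm c}}{\sqrt2}\,h$ and $w=\frac{b_{\rm c}/b-b/b_{\rm c}}{\sqrt2}\,h$. When $\vartheta_1\neq\vartheta_2$, its negative coordinate is a nonzero multiple of the positive direction itself. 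The same holds for the splitting $(u+w,v-w)$ you wrote, i.e.\ $b=1$. Since positive directions are unrestricted, this $w$ is not in $W_n$. So the tilted pair does not lie in a linear Galerkin fibre $\{\mathcal L(r\zeta,w):r>0,\ w\in W_n\}$, which is the kind of fibre compatible with $E_n$ and with the Hessian argument.

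What such a fibre reliably contains is the positive graph, where $F_{\rm e}(bh,b^{-1}h)=(b^2+b^{-2})\int H_{\vartheta_0}(h)$ has quartic coefficient $(\vartheta_1+\vartheta_2)/4>\vartheta_0/2$. Even with the sharp quartic constant, this gives a barrier only for $a<2\|Q_4\|_2^2/(\vartheta_1+\vartheta_2)$, which is strictly below $a_{\rm c}$.

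Above that value the failure is real, not merely an unproved estimate. Take a strongly concentrated copy $\zeta_t$, $t\to\infty$, of the normalised $Q_4$ profile. The fixed finite-dimensional $W_n$ then becomes uniformly small on its support. On the bounded region where the fibre energy could be positive, that energy is asymptotically at most the positive-graph value $J_{\rm e}(bh,b^{-1}h)\le\bigl(1-\frac{(\vartheta_1+\vartheta_2)a}{2\|Q_4\|_2^2}\bigr)\|\nabla h\|_2^2<0$. Hence $\inf\mathcal J_n=0$ for every $n$. There are then no positive fibre maxima to select, and Theorem~\ref{introtildecompactness}, which needs $c>0$, cannot be invoked. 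Switching the splitting to $b_{\rm c}$ does not help: $\mathcal T Z$ then leaves the Galerkin test space, and \eqref{tildecrossbound} relies on the exponential-balancing $b$.

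The missing idea is the paper's: maximise over the \emph{whole} negative space $H$ on each fibre $\mathcal K_\zeta$ (Proposition~\ref{fullfibremaximum}). Then the tilted pairs of Lemma~\ref{cubicbarrier} lie in every fibre, while compactness still uses the balanced $b$. Full fibres also give the dilation invariance $m_a(\zeta_t)=m_a(\zeta)$. This is used to normalise $\|\nabla\zeta_n\|_2=1$ in Ekeland's principle on $\{\|\zeta\|_2=1\}$, with $m_a=+\infty$ off $\mathbb S_+$. The exact tests on $U_n$ and $\mathcal T U_n$ then bound the components and multipliers, hence $r_n$ and $\tau_n$. The envelope identity converts $dm_a(\zeta_n)\to0$ into vanishing full-space residuals, so the full-space version of Theorem~\ref{introtildecompactness} applies directly.

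This also removes your Step~4. Density gives only $\mathcal J_n(\zeta)\uparrow m_a(\zeta)$ pointwise, not convergence of the infima. Indeed, by the above, the Galerkin levels are $0$ for $2\|Q_4\|_2^2/(\vartheta_1+\vartheta_2)\le a<a_{\rm c}$, while $\inf_{\mathbb S_+}m_a\ge\beta_a>0$. In the paper $c_a$ is defined as $\inf_{\mathbb S_+}m_a$. Least energy then follows because every positive radial solution of mass $a$ is the unique maximum of its own full fibre. Separately, the spatial normalisation $\mathcal N_a$ does not preserve $E_n$, so even for small $a$ your sequences would not lie in $E_n$ as claimed.
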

The proof is established in Section \ref{secfullfibre}.  
The improved range follows from a uniform sixth-order remainder estimate and
the sharp quartic GN constant. Trial pairs balanced at the cubic
order lie in every full negative fibre, even though they need not
lie on its zero negative-coordinate graph
(Lemma~\ref{cubicbarrier}). This covers the entire interval below
$a_{\rm c}$, including the masses furnished by the local branch.

At the concentration level itself, Corollary~\ref{tildeendpoint}
proves compactness provided the multipliers stay away from zero.
The positive weak limit consumes a strictly positive amount of
energy, leaving a subthreshold gradient defect. Thus zero-frequency
degeneration is the only remaining obstruction at that level under
the stated radial residual and Poho\v zaev hypotheses.

Proposition~\ref{exponentialcomparison} also gives a global
comparison of the two positive components. For
$\vartheta_1\ge\vartheta_2$,
\[
 v\le u\le\sqrt{\vartheta_1/\vartheta_2}\,v\,.
\]
Equal parameters force synchronisation at every positive frequency,
and the antisymmetric linearisation is coercive. 
Proposition~\ref{nodalsynchronisation} explains why nodal exponential
solutions require the odd extension and reduces the equal-parameter
nodal problem, as well as the diagonal power problem, to its scalar
counterpart.
\vskip0.2truecm

\paragraph{\textbf{Organisation of the paper.}}

After some preliminary results collected in Section \ref{Section_preliminary}, we start by discussing existence, decay, and uniqueness for the unit-frequency problem \eqref{system1} in Section \ref{Section_system1}; in particular, Theorem \ref{thm:main} will be proved here. The scaling connection between solutions of \eqref{system1} and normalised solutions of \eqref{system} will be investigated in Section \ref{Section_normalised}, leading to the classification result in Theorem \ref{introexistencescaling}. In Section \ref{Section_GN} we identify a new bilinear Gagliardo-Nirenberg inequality and prove Theorem \ref{introinterpolation}, whose variational consequences described in Theorem \ref{introenergytrichotomy} will be proved in Section \ref{secgeometry}. Section \ref{Section_attainment} is devoted to the study of the best constant $C_K^*$ (Theorem \ref{introattainment}), while in Section \ref{Section_branches_powers} we present the proof of the bifurcation results in Theorem \ref{intropowercontinuation} and Proposition \ref{introcriticalresponseNgeq3}. From Section~\ref{secTM} onwards we focus on the planar case.
Subsections~\ref{secTMscalar}--\ref{secTMtruncated} derive the
mass-weighted exponential inequality, its sharp bilinear gradient
threshold and its truncated form, proving the corresponding parts of
Theorem~\ref{introexponentialGN} and the scalar refinement in
Corollary~\ref{scalartruncatedrefinement}.
Subsection~\ref{Section_9.4} completes the energy-barrier assertions
of Theorem~\ref{introexponentialGN} and proves the radial compactness
criterion under component bounds in Theorem~\ref{introTMcompactness}.
Subsection~\ref{sectildelinking} establishes the mass-preserving
tilde linking and the uniform upper bound for its levels
(Theorem~\ref{introtildelinking}).
Subsection~\ref{sectildecompactness} then removes the component
bounds from the compactness argument and proves
Theorem~\ref{introtildecompactness}, together with the endpoint
criterion in Corollary~\ref{tildeendpoint}.
Finally, Subsection~\ref{secfullfibre} combines the sharp quartic
barrier with maximisation over the full negative space to construct
least-energy positive radial solutions at every
$0<a<a_{\rm c}$, as stated in Theorem~\ref{introfullfibreexistence}.

Section~\ref{secTMsolutions} first establishes component comparison,
synchronisation and the distinction between positive and nodal
extensions. Subsection~\ref{Section_11.2} proves the small-frequency
pure exponential branch and its mass expansion
(Theorem~\ref{intropureexponential}).
Subsection~\ref{Section_11.3} proves continuation at prescribed mass
away from the critical curve (Theorem~\ref{introexponentialcontinuation}).
Subsection~\ref{Section_exp_response} treats the critical curve itself:
using the fixed-frequency linearisation from
Section~\ref{Section_branches_powers}, it proves the exponential
mass-response formula and its sign conclusions in
Proposition~\ref{introcriticalresponse}, in parallel with the
power-response calculation of Subsection~\ref{Section_massresponse}.
\vskip0.2truecm

\paragraph{\textbf{Notation.}}
We denote by $t_+=\max\{t,0\}$ and $t_-=\max\{-t,0\}$ the negative and positive parts, respectively, so both
are nonnegative. Norms $\|\cdot\|_r$ refer to $L^r(\R^N)$. We define $H^1_{\rm rad}=\{w\in H^1(\R^N):w\text{ is radial}\}$
and denote its dual by $H^{-1}_{\rm rad}$. Similarly $C_{0,\mathrm{rad}}(\R^N)=\{w\in C_0(\R^N):w\text{ is radial}\}$, with the norm inherited from $C_0(\R^N)$. Hereafter, the letter $C$ will be used to denote positive constants which are independent of relevant quantities and whose value may change from line to line.

\section{Preliminaries}\label{Section_preliminary}

\subsection{A conjugate Sobolev setting}
For $r,s>1$ set
\[
 X=W^{1,r}(\R^N)\times W^{1,s}(\R^N)\,.
\]
\begin{lemma}\label{setting}
	Under \eqref{range}, there exist conjugate exponents $r,s>1$ with
	\begin{equation*}
		r<q\,,\quad s<p\,,\quad 0<N\left(\frac1r-\frac1q\right)<1\,,\quad0<N\left(\frac1s-\frac1p\right)<1\,.
	\end{equation*}
	For every such choice, the embeddings $W^{1,r}(\R^N)\hookrightarrow L^q(\R^N)$ and
	$W^{1,s}(\R^N)\hookrightarrow L^p(\R^N)$ are continuous. The maps $J,B:X\to\R$
	are $C^1$, and $\Sigma_a$ is a $C^1$ hypersurface for $a>0$.
	A critical point of $J|_{\Sigma_a}$ solves \eqref{system} weakly for
	some $\lambda\in\R$.
\end{lemma}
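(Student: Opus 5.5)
Write $x=1/r$ and $y=1/s$, so conjugacy means $x+y=1$. The plan is to reduce the existence of $r,s$ to the nonemptiness of an interval, and then check the analytic assertions one at a time using only Hölder's inequality, Sobolev embeddings and the regular value theorem.

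\textbf{Choice of $r,s$.} The conditions $r<q$ and $0<N(\frac1r-\frac1q)<1$ say exactly that $x\in I_1:=(\frac1q,\frac1q+\frac1N)$. Substituting $y=1-x$, the conditions on $s$ say that $x\in I_2:=(1-\frac1p-\frac1N,\,1-\frac1p)$. Two open intervals of the same length $\frac1N$ intersect precisely when their left endpoints differ by less than $\frac1N$. That is, we need
\[
\Bigl|\frac1q-\Bigl(1-\frac1p-\frac1N\Bigr)\Bigr|<\frac1N,
\quad\text{equivalently}\quad \frac{N-2}{N}<\sigma<1 .
\]
The left inequality is \eqref{range}, and the right one follows from $p,q>2$. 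Moreover $I_1\cap I_2\subset(\frac1q,1-\frac1p)\subset(0,1)$, so any $x$ in the intersection gives $r,s>1$, and $r<q$, $s<p$ hold automatically.

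\textbf{Embeddings and regularity of $J,B$.} Since $\frac1r-\frac1N<\frac1q<\frac1r$, the exponent $q$ lies strictly between $r$ and the Sobolev exponent $r^*$; when $r\ge N$ it simply satisfies $q>r$. The Sobolev (or Gagliardo--Nirenberg) inequality therefore gives $W^{1,r}\hookrightarrow L^q$, and in the same way $W^{1,s}\hookrightarrow L^p$. By Hölder with conjugate exponents, $A(u,v)=\int\nabla u\cdot\nabla v$ and $B(u,v)=\int uv$ are bounded bilinear forms on $L^r\times L^s$, hence on $X$. They are therefore smooth, with
\[
B'(u,v)[\varphi,\psi]=\int(\varphi v+u\psi).
\]
The maps $u\mapsto\int|u|^q$ on $L^q$ and $v\mapsto\int|v|^p$ on $L^p$ are $C^1$, with derivatives $q\int|u|^{q-2}u\varphi$ and $p\int|v|^{p-2}v\psi$. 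Composing with the embeddings shows that $F$, and hence $J=A-F$, is $C^1$ on $X$.

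\textbf{Hypersurface and Lagrange multiplier.} At $(u,v)\in\Sigma_a$ we have $B'(u,v)[(u,v)]=2B(u,v)=2a\neq0$. So $B'(u,v)\neq0$, and $X=\ker B'(u,v)\oplus\operatorname{span}\{(u,v)\}$. The regular value theorem (implicit function theorem) then shows that $\Sigma_a$ is a $C^1$ hypersurface. At a constrained critical point, $J'(u,v)$ vanishes on $\ker B'(u,v)$, so $J'(u,v)+\lambda B'(u,v)=0$ in $X^*$ for some $\lambda\in\R$.

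\textbf{Reading off the system.} Test the identity $J'(u,v)+\lambda B'(u,v)=0$ with $(\varphi,0)$, where $\varphi\in C_c^\infty$. This gives
\[
\int\nabla v\cdot\nabla\varphi+\lambda\int v\varphi=\int|u|^{q-2}u\,\varphi,
\]
which is the weak form of the second equation of \eqref{system}. Testing with $(0,\psi)$ gives the first equation in the same way. All integrals are finite by Hölder together with the embeddings above.

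\textbf{Main point of care.} The only step that is not routine is the interval computation. It must be done carefully, because that is exactly where the Sobolev-subcritical condition in \eqref{range} is used. The remaining steps are standard functional analysis.
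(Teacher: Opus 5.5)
Your proof is correct and follows the paper's argument. The paper picks $\frac1r$ in $\bigl(\max\{\frac1q,1-\frac1p-\frac1N\},\min\{\frac1q+\frac1N,1-\frac1p\}\bigr)$, which is exactly your $I_1\cap I_2$, and then proceeds as you do: the scalar Gagliardo--Nirenberg embeddings, H\"older for $A$ and $B$, nonvanishing of $B'$ on $\Sigma_a$ (tested on $(u,0)$ instead of $(u,v)$), and the Lagrange multiplier identity tested componentwise, with you supplying more of the routine details.
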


\begin{proof}
	Choose $r>1$ such that
	$$\max\left\{\frac1q,1-\frac1p-\frac1N\right\}<\frac1r<\min\left\{\frac1q+\frac1N,1-\frac1p\right\}$$
	and set $\frac1s=1-\frac1r$. The interval is nonempty by \eqref{range} and $p,q>2$.
	Set $\theta_1=N\big(\frac1r-\frac1q\big)$ and $\theta_2=N\big(\frac1s-\frac1p\big)$.
	The scalar Gagliardo--Nirenberg inequality \eqref{scalarGN} then gives
	\begin{equation}\label{GNrs}
	 \|u\|_q\le C_{N,r,q}\|\nabla u\|_r^{\theta_1}\|u\|_r^{1-\theta_1}\,,
	 \qquad
	 \|v\|_p\le C_{N,s,p}\|\nabla v\|_s^{\theta_2}\|v\|_s^{1-\theta_2}\,.
	\end{equation}
	H\"older's inequality controls both bilinear forms, and the continuous
	Lebesgue embeddings give the asserted differentiability. Since
	$B'(u,v)[u,0]=a$ on $\Sigma_a$, its derivative is nonzero.
	Writing the multiplier identity as $J'+\lambda B'=0$ and testing
	separately in the two components gives \eqref{system}.
\end{proof}

For $\lambda>0$, we define a weak solution of \eqref{system} a pair $(u,v)\in X$ whose nonlinearities are
locally integrable and which satisfies
\[
\int(\nabla u\cdot\nabla\varphi+\lambda u\varphi)
=\int|v|^{p-2}v\varphi\,,\qquad
\int(\nabla v\cdot\nabla\psi+\lambda v\psi)
=\int|u|^{q-2}u\psi
\]
for all $\varphi,\psi\in C_c^\infty(\R^N)$.

\vskip0.2truecm
Let us next justify the choice of our constraint and its connection with the equations in \eqref{system}, and compare our choice with the ones in the literature.

\begin{remark}
\label{quadraticconstraint}
	For a smooth compactly supported pair $(u,v)$, define
	\[
	 C(u,v):=\frac12\int(c_{11}u^2+2c_{12}uv+c_{22}v^2)\,.
	\]
	The identity $J'+\lambda C'=0$ then gives
	\[
	 \begin{cases}
	 -\Delta u+\lambda(c_{12}u+c_{22}v)=|v|^{p-2}v\,,\\
	 -\Delta v+\lambda(c_{11}u+c_{12}v)=|u|^{q-2}u\,.
	 \end{cases}
	\]
	To obtain a common nonzero diagonal frequency for arbitrary pairs, the
	coefficients must satisfy $c_{11}=c_{22}=0$ and $c_{12}\ne0$;
	absorbing $c_{12}$ into the multiplier gives precisely $C=B$ in \eqref{bilinear_mass}.
	Thus bilinear mass is forced within this class of quadratic constraints.
	The choice, instead, of a total mass \eqref{total_mass} as in \cite{MaiaMiyagaki} corresponds to $c_{11}=c_{22}=1$, $c_{12}=0$ and therefore gives
	\eqref{totalmassEL}. Separate component masses produce two crossed
	multiplier terms for this same energy, hence a
	different variational structure. Note that on $u=v$ the two single constraints
	coincide, so the scalar reduction can therefore hide the distinction.
\end{remark}

\subsection{Poho\v zaev-type identities}
\begin{proposition}\label{pohozaev}
	Let $(u,v)\in\cS$ be a positive solution for \eqref{system} with
	frequency $\lambda>0$. Write $A=A(u,v)$ and $B=B(u,v)$. Then
	\begin{equation}\label{testing}
		\int u^q=\int v^p=:H=A+\lambda B\,,
	\end{equation}
	\begin{equation}\label{poho}
		(N-2)A+N\lambda B=N\sigma H\,.
	\end{equation}
	Consequently,
	\begin{align}
		A&=\frac{N(1-\sigma)}2 H>0\,,\label{Aidentity}\\
		\lambda B&=\frac{N\sigma-(N-2)}2 H>0\,,\label{Bidentity}\\
		J(u,v)&=\frac{N-(N+2)\sigma}{2}H\,.\label{Jidentity}
	\end{align}
\end{proposition}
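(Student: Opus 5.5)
The plan is to derive the two basic identities by testing the equations against the natural multipliers, and then solve the resulting linear system. Since $(u,v)\in\mathcal S$, Lemma \ref{profileregularity} gives smoothness and exponential decay of $u,v$ and their derivatives. This justifies every integration by parts below, with boundary terms on large spheres vanishing.

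\emph{Testing identities.} Multiply the first equation of \eqref{system} by $v$ and the second by $u$, and integrate. Integrating by parts, the first gives $A+\lambda B=\int v^p$ and the second gives $A+\lambda B=\int u^q$. This is \eqref{testing}, with $H:=A+\lambda B$.

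\emph{Poho\v zaev identity.} Multiply the first equation by $x\cdot\nabla v$ and the second by $x\cdot\nabla u$, then add. The step needing care is the crossed gradient term. The standard computation uses $\operatorname{div}$ of the vector field
$$(x\cdot\nabla u)\nabla v+(x\cdot\nabla v)\nabla u-(\nabla u\cdot\nabla v)\,x.$$
It shows
$$\int\big(-\Delta u\,(x\cdot\nabla v)-\Delta v\,(x\cdot\nabla u)\big)=\frac{N-2}{1}\cdot\frac{1}{1}\,A\cdot\frac{1}{1}\,,$$
that is, it equals $(N-2)A$. Similarly, $\lambda\int\big(u\,x\cdot\nabla v+v\,x\cdot\nabla u\big)=\lambda\int x\cdot\nabla(uv)=-N\lambda B$. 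On the right-hand side, $\int v^{p-1}x\cdot\nabla v=-\frac Np\int v^p$ and $\int u^{q-1}x\cdot\nabla u=-\frac Nq\int u^q$. Using \eqref{testing}, these sum to $-N\sigma H$. Changing signs yields \eqref{poho}. I expect the only real obstacle to be sign and constant bookkeeping in this crossed Pohožaev computation. The rigour itself is covered by decay: alternatively, one can integrate on $B_R$ and let $R\to\infty$.

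\emph{Algebra.} From $\lambda B=H-A$, substitute into \eqref{poho} to get $(N-2)A+N(H-A)=N\sigma H$. Hence $2A=N(1-\sigma)H$, which is \eqref{Aidentity}. Then $\lambda B=H-A=\frac{2-N+N\sigma}{2}H$, which is \eqref{Bidentity}. Finally,
$$J=A-\frac1q\int u^q-\frac1p\int v^p=A-\sigma H=\frac{N-(N+2)\sigma}{2}H,$$
which is \eqref{Jidentity}.

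\emph{Positivity.} Positivity follows since $H>0$ for a positive solution. Moreover $\sigma<1$ because $p,q>2$, and $N\sigma>N-2$ by \eqref{range}.
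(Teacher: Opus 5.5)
Your route is the paper's: test with $v$ and $u$, then with $x\cdot\nabla v$ and $x\cdot\nabla u$, then solve the linear system. However, the step you flagged as delicate is carried out with the wrong sign, and as written \eqref{poho} does not follow from it. Since $\partial_i(x\cdot\nabla v)=\partial_i v+x_j\partial_{ij}v$, integration by parts gives
\[
\int(-\Delta u)(x\cdot\nabla v)+\int(-\Delta v)(x\cdot\nabla u)=2A+\int x\cdot\nabla(\nabla u\cdot\nabla v)=(2-N)A,
\]
not $(N-2)A$; the garbled display with the factors $\frac{N-2}{1}\cdot\frac11$ records the wrong value. Your divergence identity is correct. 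What it actually yields is $\int\bigl[\Delta u\,(x\cdot\nabla v)+\Delta v\,(x\cdot\nabla u)\bigr]=(N-2)A$, with plus signs on the Laplacians. The minus sign was lost when you transferred it to the terms $-\Delta u$, $-\Delta v$ that appear in the equations.

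This error is not cosmetic. With your value, the summed identity reads $(N-2)A-N\lambda B=-N\sigma H$, and changing signs gives $(2-N)A+N\lambda B=N\sigma H$. Combined with \eqref{testing}, that would give $A=\frac{N(1-\sigma)}{2(N-1)}H$ instead of \eqref{Aidentity}. So your stated intermediate contradicts the conclusion you draw from it.

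With the correct value $(2-N)A$, the summed identity is $(2-N)A-N\lambda B=-N\sigma H$, and changing signs gives exactly \eqref{poho}. After this correction, the rest is right and coincides with the paper's proof: the $\lambda$-term, the two nonlinear terms, the algebra leading to \eqref{Aidentity}--\eqref{Jidentity}, and the positivity argument via $\sigma<1$ and $N\sigma>N-2$.
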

\begin{proof}
	The identity \eqref{testing} is obtained by testing the first equation with $v$ and the second with $u$. To get \ref{poho}, multiply the first equation in \eqref{system} by $x\cdot\nabla v$ and the second by $x\cdot\nabla u$. Integration by parts gives
	$$\int(-\Delta u)(x\cdot\nabla v)=\int\nabla u\cdot\nabla v+\int\nabla u\cdot(D^2v\,x)\,,$$
	and, analogously,
	$$\int(-\Delta v)(x\cdot\nabla u)=\int\nabla u\cdot\nabla v+\int\nabla v\cdot(D^2u\,x)\,.$$
	Hence
	$$I_1=2A+\int\big[\nabla u\cdot(D^2v\,x)+\nabla v\cdot(D^2u\,x)\big]=2A+\int x\cdot\nabla(\nabla u\cdot\nabla v)\,.$$
	Therefore, using
	$$\int x\cdot\nabla F=-N\int F\,,$$
	we find
	\begin{equation}\label{poho1}
		I_1=2A-NA=(2-N)A\,.
	\end{equation}
	Similarly,
	\begin{equation}\label{poho2}
		\int u(x\cdot\nabla v)+\int v(x\cdot\nabla u)=\int x\cdot\nabla(uv)=-N\int uv=-NB\,,
	\end{equation}
	and
	\begin{equation}\label{poho3}
		\int v^{p-1}(x\cdot\nabla v)+\int u^{q-1}(x\cdot\nabla u)
		=-N\int\left(\frac{v^p}p+\frac{u^q}q\right)=-N\sigma H\,.
	\end{equation}
	Combining \eqref{poho1}-\eqref{poho3} proves \eqref{poho}. We stress the fact that, although all above computations are formal, they can be made however rigorous by using with cutoff functions equal to $1$ on $B_R$ and supported in $B_{2R}$, since decay makes every cutoff error vanish as $R\to+\infty$. 
	Subtracting \eqref{poho} from $N$ times \eqref{testing} gives \eqref{Aidentity}, while \eqref{Bidentity} and \eqref{Jidentity} follow algebraically.
\end{proof}


\subsection{The tilde map and the geometry of bilinear mass}\label{sectildepreliminaries}
The tilde-map duality argument, introduced by de Figueiredo, do Ó and Ruf \cite{FigueiredoDoORuf2005}, provides a way to overcome the strong indefiniteness of the Hamiltonian functional by identifying suitable manifolds on which its quadratic part is sign-definite. Here, we recall the formulation developed by Cassani and Tarsi \cite[Sections 2.1--2.2]{CassaniTarsi2015}.
The tilde map is defined by maximising the Hamiltonian pairing with respect to the second component, while keeping the first component fixed. This construction provides a useful way to exploit the dual structure of the system. We describe its realisation in weighted Hilbert spaces, which is suited to the planar problem with exponential nonlinearities. For general power nonlinearities, we instead retain the conjugate Sobolev framework introduced in Lemma~\ref{setting}.

Let $\mathcal H=H^1(\R^N)$, or its radial subspace, and fix
$\omega,b>0$. Write
\[
 (f,g)_\omega=\int(\nabla f\cdot\nabla g+\omega fg)\,,\quad\ 
 \|f\|_\omega^2=(f,f)_\omega\,,\quad\ 
 \mathfrak q_\omega(u,v)=(u,v)_\omega=A(u,v)+\omega B(u,v)\,.
\]
The component norms are $\|u\|_+=b^{-1}\|u\|_\omega$ and
$\|v\|_-=b\|v\|_\omega$. The parameter $\omega$ fixes an equivalent
Hilbert norm; it is independent of the unknown normalising frequency. For the proof of the next Lemma we refer to \cite{CassaniTarsi2015}.

\begin{lemma}[The weighted Hilbert tilde map]\label{tildemaplemma}
	For every $u\in\mathcal H$ the problem
	\begin{equation}\label{tildevariationaldefinition}
	 \max_{\|v\|_-=\|u\|_+}\mathfrak q_\omega(u,v)
	       =\|u\|_+^2
	\end{equation}
	has the unique maximiser $\widetilde u=b^{-2}u$.
	Its inverse sends $v$ to $b^2v$. On the product space the map
	\begin{equation}\label{tildeinvolution}
	 \mathcal T(u,v)=(b^2v,b^{-2}u)
	\end{equation}
	is an isometric involution for the weighted norm
	$\|(u,v)\|_{b,\omega}^2=b^{-2}\|u\|_\omega^2+b^2\|v\|_\omega^2$.
	Its eigenspaces
	\[
	 E_b^+=\{(bh,b^{-1}h):h\in\mathcal H\}\,,\qquad
	 E_b^-=\{(bh,-b^{-1}h):h\in\mathcal H\}
	\]
	give an orthogonal direct sum $\mathcal H^2=E_b^+\oplus E_b^-$.
	Moreover,
	\begin{equation}\label{tildecoercivepairing}
	 \mathfrak q_\omega'(Z)[\mathcal T Z]=\|Z\|_{b,\omega}^2\,.
\end{equation}
\end{lemma}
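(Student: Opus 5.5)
The plan is to check each assertion of the lemma directly from the Hilbert structure of $(\cdot,\cdot)_\omega$. The proof is elementary: Cauchy--Schwarz in the equivalent norm $\|\cdot\|_\omega$, followed by explicit algebra for the involution. I would carry it out in the following order.

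\emph{Step 1: the variational characterisation.} Fix $u\in\mathcal H$; the case $u=0$ is trivial, since then the constraint forces $v=0$. The constraint $\|v\|_-=\|u\|_+$ reads $b\|v\|_\omega=b^{-1}\|u\|_\omega$, that is, $\|v\|_\omega=b^{-2}\|u\|_\omega$. By Cauchy--Schwarz for $(\cdot,\cdot)_\omega$,
\[
\mathfrak q_\omega(u,v)=(u,v)_\omega\le\|u\|_\omega\|v\|_\omega=b^{-2}\|u\|_\omega^2=\|u\|_+^2 .
\]
Equality in Cauchy--Schwarz with a positive value forces $v=cu$ with $c\ge0$, and the norm constraint then gives $c=b^{-2}$. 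Hence the maximum is attained only at $\widetilde u=b^{-2}u$, and it equals $\|u\|_+^2$. The same argument with the roles of the two components exchanged shows that the inverse map is $v\mapsto b^2v$.

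\emph{Step 2: involution and isometry.} Both follow by substitution. First, $\mathcal T^2(u,v)=\mathcal T(b^2v,b^{-2}u)=(u,v)$. Second,
\[
\|\mathcal T(u,v)\|_{b,\omega}^2=b^{-2}b^4\|v\|_\omega^2+b^2b^{-4}\|u\|_\omega^2=\|(u,v)\|_{b,\omega}^2 .
\]

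\emph{Step 3: eigenspaces and orthogonal splitting.} One checks directly that $\mathcal T(bh,b^{-1}h)=(bh,b^{-1}h)$ and $\mathcal T(bh,-b^{-1}h)=-(bh,-b^{-1}h)$. For the direct sum, I would decompose an arbitrary pair explicitly as
\[
(u,v)=(bh,b^{-1}h)+(bk,-b^{-1}k),\qquad h=\tfrac12(b^{-1}u+bv),\quad k=\tfrac12(b^{-1}u-bv).
\]
The intersection $E_b^+\cap E_b^-$ is trivial because the eigenvalues $\pm1$ differ. Orthogonality in the weighted product inner product follows from
\[
b^{-2}(bh,bk)_\omega+b^{2}(b^{-1}h,-b^{-1}k)_\omega=(h,k)_\omega-(h,k)_\omega=0 .
\]
Alternatively, orthogonality follows abstractly, since $\mathcal T$ is a self-adjoint unitary (an isometric involution), so its $\pm1$ eigenspaces are orthogonal.

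\emph{Step 4: the coercive pairing \eqref{tildecoercivepairing}.} Since $\mathfrak q_\omega$ is bilinear, its derivative at $Z=(u,v)$ in the direction $W=(w_1,w_2)$ is $(u,w_2)_\omega+(w_1,v)_\omega$. Taking $W=\mathcal TZ=(b^2v,b^{-2}u)$ gives
\[
\mathfrak q_\omega'(Z)[\mathcal T Z]=b^{-2}\|u\|_\omega^2+b^2\|v\|_\omega^2=\|Z\|_{b,\omega}^2 .
\]

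There is no genuine obstacle here. The only point needing care is the uniqueness of the maximiser, which rests on the equality case of Cauchy--Schwarz with a nonnegative proportionality constant. That case is valid because $(\cdot,\cdot)_\omega$ is a genuine inner product on $\mathcal H$ for $\omega>0$, in both the full and the radial settings.
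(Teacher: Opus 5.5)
Your proposal is correct and complete. The equality case of Cauchy--Schwarz in $(\cdot,\cdot)_\omega$, with the constraint $\|v\|_\omega=b^{-2}\|u\|_\omega$, gives both the value $\|u\|_+^2$ and the uniqueness of the maximiser $b^{-2}u$; the case $u=0$ is handled. The remaining claims are checked correctly by direct computation:
\begin{itemize}
\item $\mathcal T^2=I$ and the isometry identity;
\item the explicit splitting with $h=\tfrac12(b^{-1}u+bv)$ and $k=\tfrac12(b^{-1}u-bv)$, which is the paper's coordinate system \eqref{tildecoordinates} with $h=z/\sqrt2$, $k=w/\sqrt2$;
\item orthogonality in the weighted inner product;
\item the pairing identity \eqref{tildecoercivepairing}, which follows from bilinearity of $\mathfrak q_\omega$.
\end{itemize}
Your abstract alternative for orthogonality is also valid. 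A linear isometric involution of a real Hilbert space is surjective and inner-product preserving, hence self-adjoint, so its $\pm1$ eigenspaces are orthogonal.

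The paper does not prove this lemma itself; it refers to \cite{CassaniTarsi2015}, where the tilde map comes from the general framework of maximising the Hamiltonian pairing between dual pairs of function spaces. In a general Banach dual pair, existence and uniqueness of such a maximiser would have to come from structural properties such as reflexivity and strict convexity of the norms. In the weighted Hilbert realisation stated here, those questions reduce to Cauchy--Schwarz, and the maximiser is explicit. Your route is therefore the natural elementary specialisation. It makes the lemma self-contained and verifies every assertion by hand, whereas the cited approach places the lemma within the broader duality framework that the paper also invokes when discussing growth beyond the Hilbert setting.
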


Equivalently, introduce the positive and negative coordinates
\begin{equation}\label{tildecoordinates}
 u=\frac b{\sqrt2}(z+w)\,,\quad\ 
 v=\frac1{b\sqrt2}(z-w)\,,\quad\ 
 z=\frac{b^{-1}u+bv}{\sqrt2}\,,\quad\ 
 w=\frac{b^{-1}u-bv}{\sqrt2}\,.
\end{equation}
The tilde involution sends $(z,w)$ to $(z,-w)$, and expansion gives
\begin{equation}\label{tildequadraticdecomposition}
 \begin{aligned}
 A(u,v)&=\tfrac12(\|\nabla z\|_2^2-\|\nabla w\|_2^2)\,,&
 B(u,v)&=\tfrac12(\|z\|_2^2-\|w\|_2^2)\,,\\
 \mathfrak q_\omega(u,v)&=\tfrac12(\|z\|_\omega^2-\|w\|_\omega^2)\,,&
 \|(u,v)\|_{b,\omega}^2&=\|z\|_\omega^2+\|w\|_\omega^2\,.
 \end{aligned}
\end{equation}
Thus the quadratic energy has infinitely many positive and negative
directions, while its tilde test controls both component norms.
The case $b=1$ gives $\widetilde u=u$ and the familiar diagonal and
antidiagonal graphs.
\vskip0.2truecm

For a prescribed mass $a>0$, set
$\mathbb S_{\mathcal H}=\{\zeta\in\mathcal H:\|\zeta\|_2=1\}$
and $r_a(w)=(2a+\|w\|_2^2)^{1/2}$. The formula
\begin{equation}\label{tildechart}
 \mathcal X_a(\zeta,w)=
 \left(\frac b{\sqrt2}[r_a(w)\zeta+w],
       \frac1{b\sqrt2}[r_a(w)\zeta-w]\right)
\end{equation}
is a smooth parametrisation of the whole Hilbert constraint $B=a$.
Indeed \eqref{tildequadraticdecomposition} gives $B=a$ directly;
conversely $B=a$ implies $\|z\|_2=r_a(w)>0$, so the inverse is
$(u,v)\mapsto(z/\|z\|_2,w)$. This inverse is smooth as well.
The factor $r_a(w)$ is the correction needed to add a negative
direction while keeping the mass fixed.

The tilde test itself has a corresponding tangent correction.
For $Z=(u,v)$ with $B(Z)=a$ define
$M_b(Z)=b^{-2}\|u\|_2^2+b^2\|v\|_2^2$. Since
$B'(Z)[\mathcal T Z]=M_b(Z)$ and $B'(Z)[Z]=2a$,
\begin{equation}\label{tildemasscorrection}
 Y_Z=\mathcal T Z-\frac{M_b(Z)}{2a}Z
                  \in T_Z\{B=a\}.
\end{equation}
If the negative coordinate is restricted to a linear subspace
$W_m\subset\mathcal H$, both $\mathcal T Z$ and $Y_Z$ remain in
the corresponding linear test space. Hence these tests are available
in finite negative approximations and in the full space. In
Section~\ref{secTM}, the choice $b=(\vartheta_1/\vartheta_2)^{1/4}$
balances the exponential coefficients; \eqref{tildechart} supplies
the mass-preserving linking geometry, and
\eqref{tildecoercivepairing}--\eqref{tildemasscorrection} supply the
tests used for uniform estimates. The nonlinear estimates and the
full-fibre existence argument are proved there.



\section{The unit-frequency problem \texorpdfstring{\eqref{system1}}{(system1)}: existence and uniqueness of positive solutions}
\label{Section_system1}

In Theorem \ref{introexistencescaling} essential will be the connection, via scaling, between the constrained problem \eqref{system} and its unit-frequency counterpart \eqref{system1}. In this section we deal with solutions of \eqref{system1}: we show that any weak $(H^1)^2$ is in fact classical and exponentially decaying (Lemma \ref{profileregularity}), we prove the existence of a least-action solution, and we identify the least-action constant (Lemma \ref{dualexistence}). Our main contribution is the uniqueness and nondegeneracy result of Theorem \ref{thm:main}, which will be proved at the end of this section.

Since we will work with decaying solution in the class $\cS$, we first collect their well-known properties in the following lemma.
We record the passage from the weak solution class to the strong one, because neither exponent is assumed to be Sobolev subcritical by itself.  


\begin{lemma}[Regularity, symmetry and decay]\label{profileregularity}
	Under \eqref{range}, the class of positive $H^1(\R^N)^2$ weak solutions of
	\eqref{system1} and the one of positive classical solutions decaying at infinity
	coincide. Every such pair is smooth, radial and strictly
	radially decreasing about a common centre, and, for every $0<c<1$, there exists a constant $C_c>0$ such that
	\begin{equation}\label{profiledecay}
		U+V+|\nabla U|+|\nabla V|\le C_c\,e^{-c|x|}\,.
	\end{equation}
\end{lemma}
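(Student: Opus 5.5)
The plan is to treat the two inclusions separately. In both directions the decay and symmetry assertions come from one comparison argument followed by the moving-plane result of \cite{BuscaSirakov}.

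\emph{From weak to classical.} Let $(U,V)\in H^1(\R^N)^2$ be a positive weak solution, with $V^{p-1}$ and $U^{q-1}$ locally integrable. Since neither $p$ nor $q$ is assumed below $2^*$ individually, I would bootstrap in Lebesgue scales using the representation $U=\mathcal R(V^{p-1})$, $V=\mathcal R(U^{q-1})$, where $\mathcal R=(-\Delta+1)^{-1}$. First I would check that this representation holds, by approximating $V^{p-1}$ with truncations $\min\{V,k\}^{p-1}$ and using monotone convergence together with positivity of the Bessel kernel.

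The bootstrap runs as follows. If $V\in L^{t}$ with $t\ge p-1$, then $V^{p-1}\in L^{t/(p-1)}$. Calder\'on--Zygmund estimates and Sobolev embedding give $U\in W^{2,t/(p-1)}\subset L^{t_1}$ with
\[
\frac1{t_1}=\frac{p-1}{t}-\frac2N ,
\]
and the second equation plays the same role with $q$ in place of $p$. Starting from $U,V\in L^{2^*}$ (or from every $L^t$, $t<\infty$, when $N=2$), composing the two steps gives an affine map on reciprocal exponents. The hyperbola condition $\frac1p+\frac1q>\frac{N-2}N$ is exactly what makes this map push the exponents strictly towards $L^\infty$. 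This is the point where the combined subcriticality replaces componentwise subcriticality, as in de Figueiredo--Felmer and Sirakov. I expect this step to be the main obstacle: the first iteration has to be arranged, for example by interpolating the starting integrability between $L^2$ and $L^{2^*}$, so that every intermediate exponent stays admissible, and the strict gain must be uniform. Once $U,V\in L^\infty$, the $W^{2,t}$ estimates for all $t$ give $C^{1,\alpha}$ regularity. Schauder estimates then give $C^{2,\alpha}$, and further bootstrapping gives smoothness wherever $U,V>0$, that is everywhere by the strong maximum principle.

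\emph{Decay.} Since $U,V\in W^{2,t}$ for large $t$, they are uniformly continuous and integrable, hence $U,V\to0$ at infinity. Fix $0<c<1$ and set $w=U+V$. Then
\[
-\Delta w+w=V^{p-2}V+U^{q-2}U\le \varepsilon(x)\,w,\qquad \varepsilon(x)\to0 .
\]
So $-\Delta w+c^2w\le0$ outside a large ball $B_{R_0}$. Comparing with $C e^{-c|x|}$, which is a supersolution there, via the maximum principle on $B_R\setminus B_{R_0}$ and letting $R\to\infty$, gives $w\le C_ce^{-c|x|}$. Interior $W^{2,t}$ estimates on unit balls $B_1(x)$ then transfer the bound to $|\nabla U|+|\nabla V|$, proving \eqref{profiledecay}.

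\emph{Converse and symmetry.} Conversely, let $(U,V)$ be a positive classical solution decaying at infinity. The same comparison argument, which uses only decay and positivity, gives exponential decay of $U$ and $V$. Elliptic estimates on unit balls give exponential decay of the gradients. Hence $(U,V)\in H^1(\R^N)^2$, and integration against test functions shows it is a weak solution. Finally, the system is cooperative, with increasing nonlinearities and positive decaying solutions, so the moving-plane theorem of \cite{BuscaSirakov} yields radial symmetry about a common centre and strict radial monotonicity.
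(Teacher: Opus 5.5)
Your decay, converse and symmetry steps coincide with the paper's: exterior comparison of $U+V$ with $e^{-c|x|}$, interior estimates for the gradients, and \cite{BuscaSirakov}. The gap is in the weak-to-classical bootstrap, exactly where you flagged it, and the remedy you suggest does not close it.

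For $N\ge3$ the hyperbola constrains only $\frac1p+\frac1q$, so one exponent may be arbitrarily large. For example, with $N=3$ and $q<3$ every $p>2$ is admissible. If $p-1>2^*$, then $V\in L^2\cap L^{2^*}$ only gives $V^{p-1}\in L^s$ with $s\le 2^*/(p-1)<1$. The step $U=\mathcal R(V^{p-1})$ then cannot even be started. Interpolating between $L^2$ and $L^{2^*}$ produces no exponent above $2^*$, so it cannot help.

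For the same reason, your claim that the hyperbola is "exactly" what makes the composed map move toward $L^\infty$ is false from the starting point $L^{2^*}\times L^{2^*}$. Composing the two steps, beginning with the first equation, the reciprocal exponent of $V$ evolves by $x\mapsto(p-1)(q-1)x-\frac{2q}N$. This map has a repelling fixed point $\frac{2q}{ND}$. The requirement $\frac1{2^*}<\frac{2q}{ND}$ amounts to $(N-2)pq-(N-2)p-(N+2)q<0$. That condition differs from the hyperbola by $2(p-q)$, so it is strictly stronger when $p>q$.

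The missing idea is the paper's first step: test the first equation with $\chi_R^2T_k(V)$. This is admissible because the test function is bounded, compactly supported and in $H^1$, while $V^{p-1}\in L^1_{\rm loc}$. It gives $\int\chi_R^2V^{p-1}T_k(V)\le C\|U\|_{H^1}\|V\|_{H^1}$ uniformly in $R\ge1$ and $k$. Fatou's lemma and monotone convergence then yield $V\in L^p$, and symmetrically $U\in L^q$, no matter how large $p,q$ are.

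Consequently $V^{p-1}\in L^{p'}$ and $U^{q-1}\in L^{q'}$. The resolvent representation follows from temperedness, so your truncation argument (which by itself only gives $U\ge\mathcal R(V^{p-1})$) is not needed, and $U\in W^{2,p'}$, $V\in W^{2,q'}$. From this starting point your heuristic becomes exactly right. One step improves the reciprocal exponent of $U$ from $\frac1q$ to $\frac1{p'}-\frac2N$, and the inequality $\frac1{p'}-\frac2N<\frac1q$ is precisely $\frac1p+\frac1q>\frac{N-2}N$. The paper then invokes the bootstrap of \cite{BMR}.

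Alternatively, you can repair your own scheme by starting the iteration from the equation that carries the smaller exponent. That exponent is necessarily below $2^*$. With $q\le p$, the relevant condition $\frac1{2^*}<\frac{2p}{ND}$ does follow from the hyperbola, and every subsequent step stays above $L^1$.
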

\begin{proof}
	Let $T_k(t):=\min\{t,k\}$ for $t\ge0$, and let $\chi_R$ be a
	standard cutoff which equals one on $B_R$, vanishes outside $B_{2R}$,
	and satisfies $|\nabla\chi_R|\le C/R$. Using
	$\chi_R^2T_k(V)$ in the first equation, by truncation and density, gives
	\begin{align*}
		\int\chi_R^2V^{p-1}T_k(V)\dd x
		&=\int\left[\nabla U\cdot\nabla(\chi_R^2T_k(V))
		+u\chi_R^2T_k(V)\right]\dd x \le C\|U\|_{H^1}\|V\|_{H^1}\,,
	\end{align*}
	where $C$ is independent of $R\ge1$ and $k>0$; here
	$|\nabla T_k(V)|\le|\nabla V|$ and $|T_k(V)|\leq V$ were used.
	Fatou's lemma as $R\to+\infty$, followed by monotone convergence as
	$k\to+\infty$, proves $V\in L^p(\R^N)$. Interchanging the two
	equations, one gets $U\in L^q(\R^N)$. Hence
	$V^{p-1}\in L^{p/(p-1)}(\R^N)\,,\ u^{q-1}\in L^{q/(q-1)}(\R^N)\,.
	$
	Since an $H^1$ solution is tempered, inversion of $-\Delta+1$ and
	the global elliptic estimates give
	$
	U\in W^{2,p/(p-1)}(\R^N)\,,\ V\in W^{2,q/(q-1)}(\R^N)\,.
	$
	The pair is therefore in the strong-solution class of
	\cite[Proposition 3.1]{BMR}; the bootstrap in
	\cite[Theorem 1.7 and Appendix A.1]{BMR} gives bounded, smooth
	representatives vanishing at infinity.  By
	\cite[Theorem 2]{BuscaSirakov}, both components are radial about a
	common point and strictly decreasing in the radial variable.
	After translation to the origin,
	\[
	U'(r)<0\,,\qquad V '(r)<0\,, \qquad r>0\,.
	\]
	For every $0<c<1$, choose $\varepsilon>0$ with
	$c^2<1-\varepsilon$. Since $U,V\to0$, outside a sufficiently large
	ball one has $U^{q-2}, V^{p-2}\le\varepsilon$. Hence $U+V$ satisfies
	$
	-\Delta(U+V)+(1-\varepsilon)(U+V)\le0
	$
	there. Exterior comparison, followed by the interior gradient
	estimates in \cite[Chapters 3 and 9]{GilbargTrudinger}, yields \eqref{profiledecay}.
	The same argument applies to positive classical decaying solutions,
	which therefore belong to $H^1(\R^N)^2$. Thus the weak and classical
	decaying solution classes coincide. Moreover, boundedness of $U,V$
	and the chain rule give $U^{q-1},V^{p-1}\in H^1(\R^N)$.
\end{proof}
%

Concerning existence, we follow the classical dual variational approach \cite{BMT} applied on the whole space with the Bessel resolvent.
Recall the definition of the resolvent $\mathcal R=(-\Delta+1)^{-1}$, which acts boundedly from $L^t(\R^N)$ to $W^{2,t}(\R^N)$
for $1<t<\infty$, and as a contraction on $L^t$ for $1\le t\le\infty$.
The convolution kernel of $\mathcal R$ has the representation
\begin{equation}\label{eq:bessel-potential}
	\cR f(x)=\int G_N(x-y)f(y)\dd y\quad\mbox{with}\quad G_N(x)=\int_0^\infty e^{-t}(4\pi t)^{-N/2}e^{-|x|^2/(4t)}\dd t\,.	
\end{equation}
It is positive and radially decreasing, has unitary $L^1-$norm, and belongs to
$L^b$ for $1\le b<N/(N-2)$ if $N\ge3$, and for every finite $b\ge1$
if $N=2$. In particular $G_N\in L^{1/\sigma}$ under \eqref{range}. 

Moreover, $\mathfrak C_{p,q}$ defined in \eqref{Cpq} is finite and positive. Indeed, 
\begin{equation*}
	\int f\,\cR g=\int f(G_N\ast g)\leq\|f\|_{q'}\|G_N\ast g\|_q\leq\|G_N\|_{1/\sigma}\|g\|_{p'}=\|G_N\|_{1/\sigma}
\end{equation*}
by Young's inequality for convolutions. Ground-state existence, positivity and attainment of the equivalent best constant $\mathfrak{C}_{p,q}$ in \eqref{Cpq} are established in \cite[Theorems 1.8--1.9 and Lemma 3.5]{BMR}; see
also \cite[Section 6.2, Theorem 6.4 and (6.5)]{BMT}.
The exponents in \cite{BMR} are $p-1,q-1$ in our notation.
Their hypotheses include $N=2$ and all $p,q>2$. The proof below is included to make the
dual normalisation and subsequent mass formulas self-contained.

Recall the definition of $c_+$ in \eqref{c_+} for the least action among positive profiles. 
To compare directly with the best constant in the cited literature, set
\[
S_{p,q}=\inf\left\{\|(-\Delta+1)w\|_{p'}^{p'}:w\in W^{2,p'}(\R^N),\ \|w\|_q=1\right\}\,.
\]
Duality and $w=\mathcal Rg$ give
$\mathfrak C_{p,q}=\|\mathcal R\|_{L^{p'}\to L^q}
=S_{p,q}^{-1/p'}$. Maximising the scalar fibre of
$\frac1{p'}\|(-\Delta+1)w\|_{p'}^{p'}-\frac1q\|w\|_q^q$ gives
\[
c_+=\left(\frac1{p'}-\frac1q\right)
S_{p,q}^{q/(q-p')}
=(1-\sigma)\mathfrak C_{p,q}^{-1/(1-\sigma)}\,.
\]

\begin{lemma}[Attainment of dual constant and least action]\label{dualexistence}
	For every $N\ge2$ and $p,q$ in \eqref{range}, the supremum in
	\eqref{Cpq} is attained by nonnegative radially decreasing
	functions. It yields a positive radial solution of \eqref{system1} in $\mathcal S$, and
	\begin{equation}\label{dualaction}
		c_+=(1-\sigma)\mathfrak C_{p,q}^{-1/(1-\sigma)}\,.
	\end{equation}
	In particular for $N=2$ the least-action positive profile exists for arbitrary $p,q>2$.
\end{lemma}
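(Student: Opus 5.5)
The plan is the classical dual variational approach: maximise $\int f\,\mathcal Rg$ over $\|f\|_{q'}=\|g\|_{p'}=1$, symmetrise, extract a maximiser by compactness, and read off the Euler--Lagrange equations. The whole argument rests on the fact that under \eqref{range} the kernel satisfies $G_N\in L^{1/\sigma}$, and that the relevant exponents are strictly subcritical.

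\emph{Step 1: reduction to nonnegative radially decreasing pairs.} Since $G_N>0$, replacing $(f,g)$ by $(|f|,|g|)$ does not decrease the functional. Since $G_N$ is radially decreasing, the Riesz rearrangement inequality gives $\int f\,(G_N\ast g)\le\int f^*\,(G_N\ast g^*)$, and rearrangement preserves the norms. We may therefore take a maximising sequence $(f_n,g_n)$ of nonnegative radially decreasing functions.

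\emph{Step 2: compactness of the maximising sequence.} This is the main obstacle. Radially decreasing functions in $L^t$ obey the pointwise bound $f(x)\le C|x|^{-N/t}\|f\|_t$. By Helly's selection theorem we can pass to a pointwise a.e. limit $(f,g)$, with $\|f\|_{q'},\|g\|_{p'}\le1$. To pass to the limit in the functional, I would split $G_N$ into a part near the origin, a part at infinity, and a bounded compactly supported part. Since $G_N\in L^{1/\sigma}$ and decays exponentially, the tails of $f_n,g_n$ (on $|x|>R$ and on $|x|<1/R$) contribute little, uniformly in $n$. I expect this to require a Brezis--Lieb type splitting, or the strict inequality that the bilinear form is strictly less than $\|f\|\|g\|$ times the constant unless mass is preserved.

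A cleaner route is to argue through the scalar problem: maximise $\|\mathcal Rg\|_q$ over $\|g\|_{p'}=1$. Then $\mathcal R$ maps radial $L^{p'}$ compactly into radial $L^q$ by Strauss-type decay, since $W^{2,p'}_{\rm rad}\hookrightarrow L^q$ is compact because $q$ lies strictly between the endpoints under \eqref{range}. The limit $g$ then attains the supremum with $\|g\|_{p'}\le1$, and by homogeneity $\|g\|_{p'}=1$. Finally take $f=(\mathcal Rg)^{q-1}/\|\mathcal Rg\|_q^{q-1}$, which is the H\"older equality case.

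\emph{Step 3: Euler--Lagrange equations.} Lagrange multipliers give $\mathcal Rg=\mathfrak C f^{q'-1}$ and $\mathcal Rf=\mathfrak C g^{p'-1}$. Set $U=c_1\mathcal Rg$ and $V=c_2\mathcal Rf$. Inverting $f\sim U^{q-1}$ and $g\sim V^{p-1}$ gives $(-\Delta+1)U\propto V^{p-1}$ and $(-\Delta+1)V\propto U^{q-1}$. Because $p,q>2$ and $D>0$, the constants $c_1,c_2$ can be chosen so that both proportionality constants equal one. Positivity follows from $G_N>0$, and $U,V\in W^{2,t}$ gives an $H^1$ weak solution, hence membership in $\mathcal S$ by Lemma \ref{profileregularity}.

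\emph{Step 4: the least-action identity.} For any $(U,V)\in\mathcal S$, \eqref{testing} gives $I_1=(1-\sigma)H$ with $H=\int U^q=\int V^p$. Put $f=U^{q-1}/\|U^{q-1}\|_{q'}$ and $g=V^{p-1}/\|V^{p-1}\|_{p'}$, and compute $\int f\,\mathcal Rg=\int U^q/(H^{1/q'}H^{1/p'})=H^{1-2+\sigma}=H^{\sigma-1}$. This is at most $\mathfrak C_{p,q}$, so $H\ge\mathfrak C_{p,q}^{-1/(1-\sigma)}$, with equality for the pair built in Step 3 (where the scaling was chosen precisely so that equality holds). Hence $c_+=(1-\sigma)\mathfrak C_{p,q}^{-1/(1-\sigma)}$, which is \eqref{dualaction}. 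The case $N=2$ is covered because every $p,q>2$ satisfies \eqref{range}.
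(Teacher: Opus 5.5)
Your proof follows the paper's argument. You symmetrise via positivity of $G_N$ and Riesz rearrangement, and get strong $L^q$ convergence of $\mathcal R g_n$ from local compactness of $W^{2,p'}\hookrightarrow L^q$ (using $p'<q$ and $1-\sigma<2/N$) together with radial tail decay. You then read off $\mathcal Rg=\mathfrak C_{p,q}f^{1/(q-1)}$ and $\mathcal Rf=\mathfrak C_{p,q}g^{1/(p-1)}$, rescale using $D\ne0$, and prove \eqref{dualaction} by testing \eqref{Cpq} with $(U^{q-1},V^{p-1})$.

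The one variation is in Step 2. You reduce to the operator-norm problem $\sup_{\|g\|_{p'}=1}\|\mathcal Rg\|_q$ and recover $f$ from the H\"older equality case. The paper instead takes weak limits of both $f_n$ and $g_n$ and uses weak--strong convergence. Both work, and your speculative first half of Step 2 is unnecessary.

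In Step 4, equality for the constructed pair does not come from the choice of amplitudes. It holds because the normalised powers of $(U,V)$ reproduce the maximiser $(f,g)$, whatever the constants $c_1,c_2$ are.

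One inference in Step 3 is wrong as stated: $U=c_1\mathcal Rg\in W^{2,p'}$ does not give an $H^1$ weak solution in general.
\begin{itemize}
\item For $N\ge3$, \eqref{range} allows one exponent above $2^*$, since neither exponent is assumed subcritical by itself. For example, $N=3$, $p=10$, $q=5/2$ is admissible.
\item In that case $p'<2N/(N+2)$, so $W^{2,p'}\not\hookrightarrow W^{1,2}$. Equivalently $L^{p'}\not\subset H^{-1}$, so even $\int g\,\mathcal Rg$ need not be finite a priori.
\item Radial monotonicity does not help here: singular profiles $g\sim|x|^{-N/p'+\epsilon}$ near $0$ defeat it.
\end{itemize}

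To repair this, first bootstrap the coupled strong solution $(U,V)\in W^{2,p'}\times W^{2,q'}$ to $L^\infty$. This is the Bonheure--dos Santos--Ramos regularity for the strong-solution class, which is how the paper invokes Lemma \ref{profileregularity}: it records that the pair is a strong solution and applies that lemma's proof. After the bootstrap, $g\propto V^{p-1}\in L^{p'}\cap L^\infty\subset L^2$ gives $U\in H^2$, and likewise for $V$. Membership in $\mathcal S$ then follows. For $N=2$ your inference is correct as written.
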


\begin{proof}
	The positivity of $G_N$ permits replacing $f,g$ by their absolute values.
	The Riesz rearrangement inequality then allows to choose a maximising
	sequence $f_n,g_n$ which is nonnegative and radially decreasing about the
	origin, see \cite[Theorem 3.7]{LiebLoss}.
	Since $\|f_n\|_{q'}=\|g_n\|_{p'}=1$, we can pass to weak limits in $L^{q'}$ and $L^{p'}$,
	respectively, denoted by $f,g$. Write $w_n=\mathcal Rg_n$.
	The resolvent estimate bounds $(w_n)_n$ in $W^{2,p'}$, and
	$\|w_n\|_{p'}\le1$ because $\|G_N\|_1=1$ by Young's inequality for convolutions.
	Moreover $w_n$ is radially decreasing. Setting $\omega_N:=|B_1|$, we claim that
	\[
	w_n(\rho)\le\omega_N^{-1/p'}\rho^{-N/p'},\qquad
	\int_{|x|>R}w_n^q\le C R^{-N(q-p')/p'}.
	\]
	Indeed, since $w_n$ is radially decreasing, then
		$$1\geq\|w_n\|_{p'}^{p'}\geq\int_{B_\rho(0)}w_n^{p'}\geq\omega_N\rho^Nw_n(\rho)^{p'}\,.$$
		Moreover, using this estimate,
		\begin{equation*}
			\int_{|x|>R}w_n^q\leq N\omega_N\int_R^{+\infty}\omega_N^{-q/p'}\rho^{-Nq/p'+N-1}\dd\rho=C_{N,p,q}R^{-N(q-p')/p'}.
		\end{equation*}
	Here $q>p'$ follows from $\sigma<1$. The strict inequality
	$1/p'-1/q=1-\sigma<2/N$ gives compact local embedding of
	$W^{2,p'}$ into $L^q$, which, together with the boundedness of $(w_n)_n$ in $W^{2,p'}(\R^N)$, implies that there exists a nonnegative $w\in W^{2,p'}(\R^N)$ such that $w_n\rightharpoonup w$ in $W^{2,p'}(\R^N)$ and $w_n\to w$ a.e. in $\R^N$. Hence, by Fatou's Lemma, the tail bound proved above for $w_n$ holds for $w$ as well. Combining local compactness with the tail bounds
	shows, after extraction, $\mathcal Rg_n=w_n\to w$ strongly in $L^q(\R^N)$. The identification of the limit $w=\mathcal Rg$ follows from the resolvent equation in distributions. Consequently, by weak-strong convergence,
	\[
	\int f\mathcal Rg=\lim_{n\to+\infty}\int f_n\mathcal Rg_n=\mathfrak C_{p,q}\,.
	\]
	Weak lower semicontinuity gives $\|f\|_{q'},\|g\|_{p'}\le1$, whereas
	Young's inequality for convolutions applied to the right-hand side of \eqref{Cpq} gives
	$\mathfrak C_{p,q}\le\mathfrak C_{p,q}\|f\|_{q'}\|g\|_{p'}$.
	Both norms therefore equal one, proving attainment. Uniform convexity
	also gives strong convergence of $f_n,g_n$ in their respective spaces.
	
	The multiplier equations on the
	two unit spheres, tested against $f$ and $g$, are
	\[
	\mathcal Rg=\mathfrak C_{p,q} f^{1/(q-1)}\,,\qquad
	\mathcal Rf=\mathfrak C_{p,q} g^{1/(p-1)}\,.
	\]
	Then
	$$u_0=f^{1/(q-1)}\,,\qquad v_0=g^{1/(p-1)}\,,$$
	satisfy
		\begin{equation*}
			\begin{cases}
				-\Delta u_0+u_0=\mathfrak C_{p,q}^{-1}v_0^{p-1}\,,\\
				-\Delta v_0+v_0=\mathfrak C_{p,q}^{-1}u_0^{q-1}\,.
			\end{cases}
		\end{equation*}
	In order to get rid of the constant in the right-hand side, define
	\[
	U=\mathfrak C_{p,q}^{-p/D}u_0\,,\qquad V=\mathfrak C_{p,q}^{-q/D}v_0\,.
	\]
	Since $u_0=C^{-1}\mathcal Rg$ and $v_0=C^{-1}\mathcal Rf$, and since
		$$-1+(p-1)\frac qD=\frac pD\quad\mbox{and}\quad-1+(q-1)\frac pD=\frac qD\,,$$
		it is easy to verify that
	these functions solve \eqref{system1}.
	The kernel is strictly positive, so both components are positive. 
	A simple computation shows that
	\begin{equation}\label{dualH}
		\int U^q=\int V^p=\mathfrak C_{p,q}^{-pq/D}=\mathfrak C_{p,q}^{-1/(1-\sigma)}\,.
	\end{equation}
	Hence the constructed pair is a strong solution. 
	Therefore Lemma \ref{profileregularity} implies $(U,V)\in\mathcal S$ and proves exponential decay.
	
	It remains to prove the characterisation \eqref{dualaction} for the least action.
	For any $(u,v)\in\mathcal S$, testing the equations with $(u,v)$ gives
	$$H:=\int u^q=\int\nabla u\cdot\nabla v+\int uv=\int v^p\,.$$
	Taking $f=u^{q-1}$ and $g=v^{p-1}$ in the homogeneous form of \eqref{Cpq}, 
	and using $\mathcal Rg=u$, yields
		\begin{equation*}
			\mathfrak C_{p,q}\geq\frac{\int u^{q-1}\cR(v^{p-1})}{\|u^{q-1}\|_{q'}\|v^{p-1}\|_{p'}}=\frac{\|u\|_q^q}{\|u\|_q^{q-1}\|v\|_p^{p-1}}\,,
		\end{equation*}
		from which
	\[
	H\le\mathfrak C_{p,q} H^{1/q'+1/p'}=\mathfrak C_{p,q} H^{2-\sigma}\quad\mbox{and hence}\quad
	H\ge\mathfrak C_{p,q}^{-1/(1-\sigma)}.
	\]
	Note that the action functional on solutions is $I_1=(1-\sigma)H$, and \eqref{dualH} attains this lower
	bound. This proves \eqref{dualaction} and concludes the proof.
\end{proof}

\subsection{Global uniqueness and nondegeneracy: Proof of Theorem \ref{thm:main}}\label{secuniqueness}
We first establish nondegeneracy in the whole range \eqref{range} and then compactness of all positive profiles; uniqueness will be then a plain consequence. We stress the fact that our argument does not assume any least-action type assumption.
Throughout this section, we assume that $(U,V)\in\mathcal S$ is a centred profile, and we recall that Lemma~\ref{profileregularity} gives $U',V'<0$ on $(0,\infty)$ and exponential decay at $\infty$. In order to have a lighter notation, in all radial integrals we omit the common sphere-area factor.

For $a\ge0$ and $r>0$ we introduce the modified Bessel functions of the first and second kind given by
\begin{equation*}
	I_a(r)=\sum_{j=0}^\infty\frac{(r/2)^{2j+a}}{j!\,\Gamma(j+a+1)},\qquad
	K_a(r)=\int_0^\infty e^{-r\cosh t}\cosh(at)\dd t,
\end{equation*}
respectively, where $\Gamma$ is Euler's gamma function, see
\cite[Chapter 10]{NIST}. Moreover, for $\cR=(-\Delta+1)^{-1}$, we also recall its the representation for radial functions (identified with their profiles)
\begin{equation}\label{eq:radial-resolvent}
	\cR f(r)=\int_0^\infty\mathcal G(r,s)f(s)s^{N-1}\dd s,
\end{equation}
where
\begin{equation}\label{sp:Bessel-kernel}
	\mathcal G(r,s)=(rs)^{-(N-2)/2}I_{\frac{N-2}2}(\min\{r,s\})K_{\frac{N-2}2}(\max\{r,s\})\,,\qquad r,s>0\,.
\end{equation}
The normalization follows from the modified-Bessel Wronskian identity,
and the endpoint asymptotics select the regular solution at zero and
the decaying solution at infinity, see \cite[Chapter 10]{NIST}.

The radial counterpart of Proposition \ref{pohozaev} is
\begin{align*}
	&(N-2)\int_0^\infty U'V'r^{N-1}\dd r
	+N\int_0^\infty UVr^{N-1}\dd r =\frac Nq\int_0^\infty U^qr^{N-1}\dd r
	+\frac NP\int_0^\infty V^pr^{N-1}\dd r\,,
\end{align*}
which, together with \eqref{testing}, gives
\begin{equation}\label{eq:pohozaev}
	2\int_0^\infty UV r^{N-1}\dd r
	=\delta\int_0^\infty U^qr^{N-1}\dd r\,,
	\qquad
	\delta=\frac Nq+\frac Np-(N-2)>0\,.
\end{equation}
\begin{lemma}\label{cumulative}
	For every $r>0$,
	\[
	F_0(r)=\int_0^r\left(U^q-\frac2\delta UV\right)s^{N-1}\dd s>0\,,
	\qquad
	G_0(r)=\int_0^r\left(V^p-\frac2\delta UV\right)s^{N-1}\dd s>0\,.
	\]
\end{lemma}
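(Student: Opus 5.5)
The plan is to study $F_0$ and $G_0$ as functions on $(0,\infty)$ with known values at both ends, and to show that each of them first increases and then decreases. By \eqref{eq:pohozaev} and \eqref{testing} (recall $\int U^q=\int V^p$ for solutions of \eqref{system1}), we have
\[
\lim_{r\to\infty}F_0(r)=\int_0^\infty U^qs^{N-1}\dd s-\frac2\delta\int_0^\infty UVs^{N-1}\dd s=0,
\]
and likewise $G_0(\infty)=0$. Trivially $F_0(0)=G_0(0)=0$. The derivatives are
\[
F_0'(r)=r^{N-1}U\Bigl(U^{q-1}-\tfrac2\delta V\Bigr),\qquad G_0'(r)=r^{N-1}V\Bigl(V^{p-1}-\tfrac2\delta U\Bigr).
\]
Since $F_0$ vanishes at both ends and is not identically zero, its integrand must change sign. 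If $w_1:=U^{q-1}-\frac2\delta V$ changes sign \emph{exactly once}, from positive to negative, then $F_0$ is strictly increasing and then strictly decreasing towards $0$. This forces $F_0>0$ on $(0,\infty)$, and the same argument applies to $G_0$ with $w_2:=V^{p-1}-\frac2\delta U$.

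The behaviour at infinity is easy. Using the decay in Lemma~\ref{profileregularity} and a comparison lower bound $U,V\ge ce^{-(1+\epsilon)r}$, one gets $U^{q-1}/V\to0$ and $V^{p-1}/U\to0$ as $r\to\infty$, because $q-1,p-1>1$. So $w_1,w_2<0$ for large $r$. At $r=0$ nothing is known a priori about the sign, but this is harmless: a negative start would contradict $F_0(\infty)=0$ unless there are at least two sign changes, and these are excluded below.

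The core step, and the one I expect to be the main obstacle, is to exclude two or more sign changes of $w_1$, i.e.\ to prove that the quotient $\rho_1=U^{q-1}/V$ crosses the level $2/\delta$ only once. First I would try to show directly that $\rho_1$ is strictly decreasing on $(0,\infty)$. To do this, use the radial ODEs $(r^{N-1}U')'=r^{N-1}(U-V^{p-1})$ and $(r^{N-1}V')'=r^{N-1}(V-U^{q-1})$, and at a putative critical point of $\rho_1$ compare the second-order equations satisfied by $U^{q-1}$ and $V$ to rule out interior local minima. This is a maximum-principle argument on the Wronskian-type quantity $r^{N-1}(V(U^{q-1})'-U^{q-1}V')$.

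If monotonicity of $\rho_1$ fails to be provable, I would use a local Pohozaev argument instead. Multiply the first equation by $s^NV'$ and the second by $s^NU'$, add the two equations tested against $V$ and $U$ on $[0,r]$, and integrate over $[0,r]$ instead of $[0,\infty)$. This yields an identity of the form
\[
\delta F_0(r)+\delta G_0(r)\ (\text{or a weighted combination})=\text{boundary terms in }r^NU'V',\ r^{N-1}U'V,\ r^{N-1}UV',\ r^NUV,\ r^NU^q,\ r^NV^p .
\]
The boundary terms have a definite sign at any $r$ where $F_0'$ or $G_0'$ vanishes, using $U',V'<0$. I would use this identity to show that $F_0$ cannot attain a nonpositive interior local minimum. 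With that information, combined with $F_0(0)=F_0(\infty)=0$ and negativity of $F_0'$ near infinity, positivity follows. The same computation with the roles of $(U,q)$ and $(V,p)$ interchanged gives the statement for $G_0$.
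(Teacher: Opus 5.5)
Your reduction is logically sound: a single sign change of $U^{q-1}-\frac2\delta V$, from positive to negative, would give $F_0>0$. The gap is that this single-crossing step carries all the weight, it is not proved, and neither mechanism you propose delivers it.

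\emph{The monotonicity route.} Strict decrease of $\rho_1=U^{q-1}/V$ is a much stronger statement than the lemma, and your maximum-principle sketch does not give it. For $W=r^{N-1}\bigl(V(U^{q-1})'-U^{q-1}V'\bigr)$ the radial equations yield
\[
W'=r^{N-1}U^{q-3}\Bigl[(q-1)(q-2)(U')^2V+(q-2)U^2V+U^{q+1}-(q-1)UV^{p}\Bigr].
\]
The bracket has no definite sign, also at points where $W=0$, where $\rho_1''$ has the sign of $W'$. So interior minima of $\rho_1$ are not excluded. Even if they were, the increasing-then-decreasing case would remain, and it gives two crossings whenever $\rho_1(0)<2/\delta$.

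\emph{The local Poho\v zaev route.} It fails for a related reason: such identities necessarily couple $F_0$ and $G_0$. For example,
\[
\Bigl(\frac Nq-(N-2)\Bigr)F_0(r)+\frac Np\,G_0(r)=r^NE(r)+(N-2)r^{N-1}U(r)V'(r),\qquad E=U'V'-UV+\frac{U^q}q+\frac{V^p}p.
\]
At a critical point of $F_0$ alone you control neither the sign of $G_0$ nor that of the right-hand side. There $r^NE>0$ competes with $(N-2)r^{N-1}UV'<0$, and the coefficient of $F_0$ is negative once $N\ge3$ and $q>N/(N-2)$. So the asserted ``definite sign'' of the boundary terms is not available.

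\emph{The missing idea.} You need to treat $F_0$ and $G_0$ jointly, through the Wronskian of $U$ and $V$ themselves. The radial equations give $\frac{\dd}{\dd r}\bigl[r^{N-1}(U'V-UV')\bigr]=r^{N-1}(U^q-V^p)$, hence $r^{N-1}(U'V-UV')=F_0-G_0$.

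Suppose one of the two functions is somewhere negative, and take a negative global minimum $r_0$ of $\min\{F_0,G_0\}$. If it is attained by $F_0$, then $r_0$ is a global minimum of $F_0$ and $F_0(r_0)\le G_0(r_0)$. From $F_0'(r_0)=0$ and $F_0''(r_0)\ge0$ you get $(U^{q-1}/V)'(r_0)\ge0$. That is, $V'/V\le(q-1)U'/U<U'/U$ at $r_0$, using $U'<0$ and $q>2$. By the Wronskian identity this says $F_0(r_0)>G_0(r_0)$, a contradiction.

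The case of $G_0$ is symmetric, and the same argument at an interior zero gives strict positivity. This uses exactly the one-point second-order information you were trying to exploit, but it never needs the global single-crossing property, for which you have no argument.
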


\begin{proof}
	Both functions vanish at zero and tend to zero at infinity by
	\eqref{testing} and \eqref{eq:pohozaev}. The radial equations are
	\begin{equation}\label{radial_eqs}
		U''+\frac{N-1}{r}U'=U-V^{p-1}\,,
		\qquad
		V''+\frac{N-1}{r}V'=V-U^{q-1}\,.
	\end{equation}
	Consequently,
	\begin{align*}
		\frac\dd{\dd r}\left[r^{N-1}(U'V-UV')\right]
		&=r^{N-1}\left[
		\left(U''+\frac{N-1}{r}U'\right)V
		-U\left(V''+\frac{N-1}{r}V'\right)\right]\\
		&=r^{N-1}\bigl[(U-V^{p-1})V-U(V-U^{q-1})\bigr] =r^{N-1}(U^q-V^p)\,.
	\end{align*}
	Radial $C^2$ regularity gives $U'(r),V'(r)=O(r)$ as $r\to0$;
	hence
	$
	\lim_{r\to0}r^{N-1}(U'V-UV')=0\,.
	$
	Integration from $0$ to $r$ therefore gives
	\begin{equation}\label{eq:cumulative-wronskian}
		r^{N-1}\bigl(U'(r)V(r)-U(r)V'(r)\bigr)
		=\int_0^r(U^q-V^p)s^{N-1}\dd s
		=F_0(r)-G_0(r)\,.
	\end{equation}
	In the last equality, the two terms involving $UV$ in the definitions
	of $F_0$ and $G_0$ cancel.
	Suppose one of $F_0$, $G_0$ takes a negative value. Since both functions
	vanish at zero and tend to zero at infinity, $\min\{F_0,G_0\}$ attains
	a negative global minimum at some $r_0\in(0,\infty)$. If it is attained
	by $F_0$, then $F_0(r_0)\le G_0(r_0)$, $F_0'(r_0)=0$ and
	$F_0''(r_0)\ge0$. Since
	\[
	F_0'=r^{N-1}UV\left(\frac{U^{q-1}}V-\frac2\delta\right),
	\]
	the identity $F_0'(r_0)=0$ gives $U(r_0)^{q-1}/V(r_0)=\frac2\delta$, and hence
	\[
	F_0''(r_0)=r_0^{N-1}U(r_0)V(r_0)
	\left(\frac{U^{q-1}}V\right)'(r_0)\,.
	\]
	It follows that $({U^{q-1}}/V)'(r_0)\ge0$, or equivalently,
	\[
	\frac{V'(r_0)}{V(r_0)}
	\le q\frac{U'(r_0)}{U(r_0)}
	<\frac{U'(r_0)}{U(r_0)}\,.
	\]
	Equation~\eqref{eq:cumulative-wronskian} now gives
	$F_0(r_0)>G_0(r_0)$, a contradiction. If the minimum is attained by
	$G_0$, the same argument with the components interchanged gives
	$G_0(r_0)>F_0(r_0)$, again a contradiction. Thus $F_0,G_0\ge0$.
	At any interior zero of $F_0$, the same local-minimum argument
	would give $F_0>G_0$, forcing $G_0<0$; an interior zero of $G_0$ is excluded
	in the same way. This proves both strict inequalities.
\end{proof}

Set
\begin{equation}\label{eq:measures}
	\dd\mu= U^qr^{N-1}\dd r,\qquad
	\dd\nu=V^pr^{N-1}\dd r,\qquad
	\dd\rho=UVr^{N-1}\dd r\,.
\end{equation}
By \eqref{testing} and \eqref{eq:pohozaev}, they are finite, have positive densities, and satisfy
$\mu((0,\infty))=\nu((0,\infty))=\frac2\delta\rho((0,\infty))$.
By the characterization of the usual stochastic order in
\cite[Section 1.A.1]{ShakedShanthikumar}, we have, for nonconstant
nonincreasing functions $h$, $k$,
\begin{equation}\label{eq:measure-comparison}
	\int_0^\infty h\dd\mu>\frac2\delta\int_0^\infty h\dd\rho\,,
	\quad h\in L^1(\mu)\cap L^1(\rho)\,;
	\qquad
	\int_0^\infty k\dd\nu>\frac2\delta\int_0^\infty k\dd\rho\,,
	\quad k\in L^1(\nu)\cap L^1(\rho)\,.
\end{equation}
The inequalities are strict by Lemma \ref{cumulative} and since $h,k$ are nonconstant.



\subsubsection{Nondegeneracy}

Since
\begin{equation}\label{eq:nonlinear-resolvent}
	U=\mathcal R(V^{p-1})\quad\mbox{and}\quad V=\mathcal R(U^{q-1})\,,
\end{equation}
define
\begin{equation}\label{sp:def-PQ}
	Pk=\frac{\mathcal R(V^{p-1}k)}U\,,\qquad
	Qh=\frac{\mathcal R(U^{q-1}h)}V\,.
\end{equation}
These initially act on bounded radial functions.

\begin{lemma}\label{sp:weighted-operators}
	The operators $P:L^2(\nu)\to L^2(\mu)$ and
	$Q:L^2(\mu)\to L^2(\nu)$ extend to compact injective contractions.
	They satisfy $P^*=Q$ and $P1=Q1=1$. Thus $T=PQ$ is compact,
	injective, self-adjoint and nonnegative, with simple largest eigenvalue
	$1$ and eigenspace $\operatorname{span}\{1\}$. 
\end{lemma}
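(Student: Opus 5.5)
The plan is to write both operators as integral operators with strictly positive Markov-type kernels and read off every claim from that structure. Using \eqref{eq:radial-resolvent}, and suppressing the sphere factor as agreed, we have
\[
Pk(r)=\int_0^\infty \frac{\mathcal G(r,s)V(s)^{p-1}}{U(r)}\,k(s)\,s^{N-1}\dd s\,,
\]
and similarly for $Q$. By \eqref{eq:nonlinear-resolvent} the kernel $\mathcal G(r,s)V^{p-1}(s)s^{N-1}/U(r)$ has total mass one in $s$ for every $r$. This is exactly $P1=1$, and in the same way $Q1=1$.

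\emph{Step 1 (duality).} For bounded radial $k,h$, the symmetry $\mathcal G(r,s)=\mathcal G(s,r)$ gives
\[
\int (Pk)\,h\dd\mu=\int \mathcal R(V^{p-1}k)\,U^{q-1}h=\int V^{p-1}k\,\mathcal R(U^{q-1}h)=\int k\,(Qh)\dd\nu\,.
\]
Hence $P^*=Q$ on a dense set.

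\emph{Step 2 (contraction).} The kernel is positive with unit mass, so Cauchy--Schwarz (Jensen) gives the pointwise bound $(Pk)^2\le P(k^2)$. Integrating against $\mu$ and applying Step 1 with $h=1$ and $Q1=1$ yields
\[
\|Pk\|_{L^2(\mu)}^2\le\int P(k^2)\dd\mu=\int k^2\,Q1\dd\nu=\|k\|_{L^2(\nu)}^2\,.
\]
The same argument applies to $Q$, and both operators extend by density to contractions.

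\emph{Step 3 (injectivity).} If $Pk=0$, then $\mathcal R(V^{p-1}k)=0$. Since $\mathcal R$ is injective on distributions, this gives $V^{p-1}k=0$, and $V>0$ forces $k=0$. The same holds for $Q$. Consequently $T=PQ$ is injective, self-adjoint, and nonnegative, because $T=PP^*$ and $\langle Th,h\rangle_\mu=\|Qh\|_\nu^2\ge0$.

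\emph{Step 4 (simple top eigenvalue).} Since $\|T\|\le1$ and $T1=1$, the top of the spectrum is $1$. Suppose $Th=h$. Then
\[
\|h\|_\mu^2=\|Qh\|_\nu^2\le\int Q(h^2)\dd\nu=\|h\|_\mu^2\,,
\]
so equality holds in the pointwise Cauchy--Schwarz inequality $\nu$-a.e. The kernel of $Q$ is strictly positive everywhere, since $I_a,K_a>0$ and $U>0$. Therefore $h$ coincides $\mu$-a.e. with a constant, and the eigenspace is $\operatorname{span}\{1\}$.

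\emph{Step 5 (compactness), which I expect to be the main obstacle.} Hilbert--Schmidt is the first thing to try. With respect to $\nu$ and $\mu$, the kernel of $P$ is $\mathcal G(r,s)/(U(r)V(s))$, so
\[
\|P\|_{HS}^2=\iint \mathcal G^2\,U^{q-2}(r)V^{p-2}(s)\,r^{N-1}s^{N-1}\,,
\]
and the exponential decay \eqref{profiledecay} controls the tails. However, $\mathcal G$, like $G_N$, is only in $L^2$ near the diagonal for $N\le3$.

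My plan is therefore to split the kernel as $\mathcal G=\mathcal G\mathbf 1_{|r-s|\ge\varepsilon}+\mathcal G\mathbf 1_{|r-s|<\varepsilon}$, or equivalently to split $G_N$ at scale $\varepsilon$ in $\R^N$. The off-diagonal piece is bounded, and after an additional truncation to $\{r,s\le R\}$ it is Hilbert--Schmidt. The remaining pieces are to be made small in operator norm by a weighted Schur test:
\begin{enumerate}[label=(\roman*)]
\item the near-diagonal piece has row and column sums at most $\|G_N\mathbf 1_{B_\varepsilon}\|_1\cdot\sup_{|x-y|<\varepsilon}V^{p-1}(y)/U(x)$;
\item the tail $\{r>R\}$ or $\{s>R\}$ has norm that tends to zero as $R\to\infty$.
\end{enumerate}
Part (i) requires that the ratio $V^{p-1}(y)/U(x)$ stay bounded on $\varepsilon$-neighbourhoods of the diagonal. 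I would prove this from the Harnack-type comparability of $U$ and $V$ at nearby points, using radial monotonicity and the exponential decay rates, together with the bound $V^{p-1}\le C\,V$. Part (ii) should follow because the unit-mass property of the kernel combined with the decay \eqref{profiledecay} makes the contribution of the tails small. Thus $P$ is a norm limit of compact operators, hence compact. The same holds for $Q$, and therefore $T=PQ$ is compact.
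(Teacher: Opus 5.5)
Your Steps 1--4 are the paper's argument: unit mass of the kernels from \eqref{eq:nonlinear-resolvent}, duality and mass preservation from the symmetry of $\mathcal G$, the pointwise Jensen bound $(Pk)^2\le P(k^2)$, injectivity of $\mathcal R$, and the equality case of Jensen with a strictly positive kernel. The weak point is Step 5, where both your diagnosis and your replacement argument are off. The obstacle you anticipate does not exist. Spherical averaging removes the diagonal singularity of $G_N$. The kernel $\mathcal G(r,s)$ in \eqref{sp:Bessel-kernel} is the Green function of a one-dimensional radial operator, so it is continuous on $(0,\infty)^2$, including across $r=s$. Its only singularity is at the origin, where $\mathcal G(r,s)\le C\max\{r,s\}^{2-N}$ (a logarithm when $N=2$), and
\[
\iint_{\{r,s<1\}}\max\{r,s\}^{4-2N}\,r^{N-1}s^{N-1}\dd r\dd s=\frac2N\int_0^1 s^3\dd s<\infty\,.
\]
For large arguments $\mathcal G(r,s)\le C(rs)^{-(N-1)/2}e^{-|r-s|}$, and $U^{q-2},V^{p-2}$ are bounded and decay exponentially. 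So the Hilbert--Schmidt integral you wrote down is finite for every $N\ge2$, and $P$ is compact immediately. The restriction $N\le3$ you recall is the condition $G_N\in L^2(\R^N)$, which concerns the non-radial operator.

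The splitting you propose instead is not established as written.
\begin{itemize}
\item The bound $\|G_N\mathbf 1_{B_\varepsilon}\|_1$ fits a splitting of $G_N$ in $\R^N$, not the radial splitting $\{|r-s|<\varepsilon\}$. The annulus $\{||y|-|x||<\varepsilon\}$ is not contained in $B_\varepsilon(x)$, and the two splittings are not equivalent.
\item The Schur test needs both $\sup V^{p-1}(y)/U(x)$ and the column ratio $U^{q-1}(x)/V(y)$, which you do not mention. Both require lower bounds on $U,V$ at infinity, which Lemma~\ref{profileregularity} does not provide. Local Harnack comparability together with $V^{p-1}\le CV$ does not compare $V$ with $U$ at infinity. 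What does work is $U(x)\ge c|x|^{-(N-1)/2}e^{-|x|}$, which follows from $U=\mathcal R(V^{p-1})$, combined with $V^{p-1}\le Ce^{-(p-1)c|x|}$ for some $(p-1)c>1$.
\item Unit mass only bounds the tail Schur sums by $1$; it does not make them small.
\end{itemize}

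The paper avoids kernel estimates entirely. The unitaries $h\mapsto V^{p/2}h$ and $k\mapsto U^{q/2}k$ conjugate $P$ to $f\mapsto U^{(q-2)/2}\mathcal R\bigl(V^{(p-2)/2}f\bigr)$ on $L^2((0,\infty),r^{N-1}\dd r)$. Since $\mathcal R$ maps $L^2$ boundedly into $H^2_{\rm rad}$, Rellich gives compactness on bounded balls. The tail is at most $\sup_{r\ge R}U(r)^{(q-2)/2}\|V\|_\infty^{(p-2)/2}\|f\|_2\to0$, and compactness of $Q=P^*$ follows by Schauder. This uses only boundedness and decay of the profiles, and it is also the clean way to handle your tail step (ii).
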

\begin{proof}
	By \eqref{eq:nonlinear-resolvent} and \eqref{eq:radial-resolvent},
	\begin{equation}\label{eq20}
		U(r)=\int_0^\infty\mathcal G(r,s)V(s)^{p-1}s^{N-1}\dd s\,,
		\qquad
		V(r)=\int_0^\infty\mathcal G(r,s)U(s)^{q-1}s^{N-1}\dd s\,.
	\end{equation}
	Thus the positive kernels defining $P$ and $Q$ have total mass one,
	and hence $P1=Q1=1\,.$
	
	For nonnegative $h\in L^1(\nu)$ and
	$k\in L^1(\mu)$, Tonelli's theorem, the symmetry of
	$\mathcal G$, and \eqref{eq:nonlinear-resolvent} give
	\begin{equation}\label{sp:mass-preservation}
		\begin{aligned}
			\int_0^\infty Ph\dd\mu
			&=\int_0^\infty U^{q-1}\cR(V^{p-1}h)r^{N-1}\dd r
			=\int_0^\infty V^{p-1}h\,\cR(U^{q-1})r^{N-1}\dd r
			=\int_0^\infty h\dd\nu\,,\\
			\int_0^\infty Qk\dd\nu
			&=\int_0^\infty V^{p-1}\cR(U^{q-1}k)r^{N-1}\dd r
			=\int_0^\infty U^{q-1}k\,\cR(V^{p-1})r^{N-1}\dd r
			=\int_0^\infty k\dd\mu\,.
		\end{aligned}
	\end{equation}
	Let $h\in L^2(\nu)$. By
	\eqref{sp:mass-preservation},
	$P(|h|^2)<\infty$ $\mu$-almost everywhere. For such $r$, H\"older's
	inequality and \eqref{eq:radial-resolvent} give
	\begin{equation}\label{eq22}
		|Ph(r)|^2\le\frac1{U(r)}\int_0^\infty\mathcal G(r,s)V(s)^{p-1}|h(s)|^2s^{N-1}\dd s=P(|h|^2)(r)\,.
	\end{equation}
	Integrating and using \eqref{sp:mass-preservation}, we obtain
	$
	\|Ph\|_{L^2(\mu)}^2
	\le\int_0^\infty P(|h|^2)\dd\mu
	=\int_0^\infty |h|^2\dd\nu
	=\|h\|_{L^2(\nu)}^2.
	$
	The same argument for $Q$ yields
	$
	\|Qk\|_{L^2(\nu)}\le\|k\|_{L^2(\mu)}\ \text{for all   $k\in L^2(\mu)$.}
	$
	
	Le us now address adjointness. Let $h\in L^2(\nu)$ and
	$k\in L^2(\mu)$. By the contraction estimates,
	\[
	\int_0^\infty P(|h|)|k|\dd\mu
	\le \|h\|_{L^2(\nu)}\|k\|_{L^2(\mu)}\,.
	\]
	Thus the double integral below is absolutely convergent, and Fubini's
	theorem and the symmetry of $\mathcal G$ yield
	\begin{align*}
		\langle Ph,k\rangle_{L^2(\mu)}
		&=\int_0^\infty\int_0^\infty
		\mathcal G(r,s)U(r)^{q-1}V(s)^ph(s)k(r)
		r^{N-1}s^{N-1}\dd s\dd r
		=\langle h,Qk\rangle_{L^2(\nu)}\,.
	\end{align*}
	Therefore $P^*=Q$.
	
	Let us now turn to the compactness issue. Let $(h_j)_j$ be bounded in $L^2(\nu)$ and set
	$f_j=V^{p/2}h_j$. Then $(f_j)_j$ is bounded in
	$L^2((0,\infty),r^{N-1}\dd r)$ and
	\begin{equation}\label{eq23}
		U^{q/2}Ph_j
		=U^{(q-2)/2}\cR\bigl(V^{(p-2)/2}f_j\bigr).
	\end{equation}
	By continuity of the operator $\cR$, the sequence
	$\cR(V^{(p-2)/2}f_j)$ is bounded in
	$H^2_{\mathrm{rad}}(\R^N)$ and is therefore relatively compact
	in $L^2$ on every bounded ball. Moreover,
	\[
	\left\|
	U^{(q-2)/2}\cR\bigl(V^{(p-2)/2}f_j\bigr)
	\right\|_{L^2((R,\infty),r^{N-1}\dd r)}
	\le
	\sup_{r\ge R}U(r)^{(q-2)/2}
	\left\|\cR\bigl(V^{(p-2)/2}f_j\bigr)\right\|_{L^2}.
	\]
	Since $U(r)\to0$ as $r\to+\infty$, the right-hand side tends to zero
	uniformly in $j$. Thus, after passing to a subsequence,
	$U^{(q-2)/2}\cR(V^{(p-2)/2}f_j)$ converges in
	$L^2((0,\infty),r^{N-1}\dd r)$; by \eqref{eq23},
	$(Ph_j)$ converges in $L^2(\mu)$. Hence $P$ is compact. Since $Q=P^*$, the operator $Q$ is
	also compact.

	Concerning injectivity, we first note that
	\[
	\int_0^\infty |V^{p-1}h|^2r^{N-1}\dd r
	=\int_0^\infty V^{p-2}|h|^2\dd\nu
	\le\|V\|_\infty^{p-2}\|h\|_{L^2(\nu)}^2\quad\text{for $h\in L^2(\nu)$}\,.
	\]
	If $Ph=0$, then $\cR(V^{p-1}h)=uPh=0$. The injectivity of $\cR$ and
	the positivity of $V$ imply $h=0$. Hence $P$ is injective, and the
	same argument proves that $Q$ is injective.
	
	To obtain the claimed properties of $T$, we exploit $P^*=Q$. Since $P=Q^*$, for every
	$f\in L^2(\mu)$,
	$T=PQ=Q^*Q,\ 
	\langle Tf,f\rangle_{L^2(\mu)}
	=\|Qf\|_{L^2(\nu)}^2\ge0.$
	Consequently, $T$ is compact, injective, self-adjoint, and
	nonnegative. Furthermore, $T1=1,\ \|T\|\le\|P\|\,\|Q\|\le1,$
	so $1$ is the largest eigenvalue of $T$.
	
	To show simplicity, suppose finally that $f\in L^2(\mu)$ satisfies
	$Tf=f$. Then
	$\|f\|_{L^2(\mu)}^2
	=\langle Tf,f\rangle_{L^2(\mu)}
	=\|Qf\|_{L^2(\nu)}^2.$
	Using this and the second equality in
	\eqref{sp:mass-preservation}, we obtain
	\begin{align*}
		0
		&=\|f\|_{L^2(\mu)}^2-\|Qf\|_{L^2(\nu)}^2 =\int_0^\infty Q(|f|^2)\dd\nu
		-\int_0^\infty |Qf|^2\dd\nu =\int_0^\infty
		\bigl[Q(|f|^2)-|Qf|^2\bigr]\dd\nu\,.
	\end{align*}
	Similarly to \eqref{eq22}, 
	$
	|Qf|^2\le Q(|f|^2).
	$
	So the integrand is nonnegative and therefore vanishes for
	almost every $r$. For every such $r$, the definition of $Q$ and
	\eqref{eq:nonlinear-resolvent} give
	\[
	\frac1{V(r)}\int_0^\infty
	\mathcal G(r,s)U(s)^{q-1}s^{N-1}\dd s=1\,,
	\qquad
	\frac1{V(r)}\int_0^\infty
	\mathcal G(r,s)U(s)^{q-1}f(s)s^{N-1}\dd s=Qf(r)\,.
	\]
	Consequently,
	\begin{align*}
		0=Q(|f|^2)(r)-|Qf(r)|^2 
		&=\frac1{V(r)}\int_0^\infty\mathcal G(r,s)U(s)^{q-1}\bigl(|f(s)|^2-2f(s)Qf(r)+|Qf(r)|^2\bigr)s^{N-1}\dd s\\
		&=\frac1{v(r)}\int_0^\infty\mathcal G(r,s)U(s)^{q-1}|f(s)-Qf(r)|^2s^{N-1}\dd s\,.
	\end{align*}
	Since $\mathcal G(r,s)U(s)^{q-1}>0$ for $r,s>0$, it follows that
	$f(s)=Qf(r)$ for almost every $s>0$. Thus $f$ is constant almost
	everywhere, and hence $\ker(T-I)=\operatorname{span}\{1\}$. We thus conclude that the eigenvalue $1$ is simple.
\end{proof}

\begin{lemma}\label{sp:monotonicity}
	If $h\in L^2(\nu)$ has a nonincreasing representative, then so does
	$Ph$. If $k\in L^2(\mu)$ has a nonincreasing representative, then so
	does $Qk$. Consequently, $T$ preserves the cone of nonincreasing
	functions in $L^2(\mu)$.
\end{lemma}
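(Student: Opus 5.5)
The plan is to read $Ph(r)$ as an average of $h$ against a probability measure $\pi_r$ on $(0,\infty)$, and to show that $\pi_r$ moves to the right, in the usual stochastic order, as $r$ increases. An average of a nonincreasing function against such a family is then nonincreasing in $r$.

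\textbf{Averaging representation.} By \eqref{eq20} and \eqref{eq:radial-resolvent},
\[
Ph(r)=\int_0^\infty h(s)\,\pi_r(\dd s),\qquad \pi_r(\dd s)=\frac{\mathcal G(r,s)V(s)^{p-1}s^{N-1}}{U(r)}\dd s .
\]
Each $\pi_r$ is a probability measure, since $P1=1$ by Lemma~\ref{sp:weighted-operators}.

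\textbf{Monotone likelihood ratio.} The key step is to show that the kernel is totally positive of order two. For $0<r_1<r_2$, the quotient $\mathcal G(r_2,s)/\mathcal G(r_1,s)$ should be nondecreasing in $s$. The factor $(rs)^{-(N-2)/2}$ contributes only the constant $(r_1/r_2)^{(N-2)/2}$, so it suffices to study $I_{\frac{N-2}2}(\min)K_{\frac{N-2}2}(\max)$. Write $I=I_{\frac{N-2}2}$ and $K=K_{\frac{N-2}2}$. On the three ranges of $s$ the ratio equals:
\begin{itemize}
\item for $s\le r_1$: the constant $K(r_2)/K(r_1)$;
\item for $r_1\le s\le r_2$: $\dfrac{K(r_2)}{I(r_1)}\cdot\dfrac{I(s)}{K(s)}$, which is increasing in $s$ because $I$ is increasing and $K$ is positive and decreasing;
\item for $s\ge r_2$: the constant $I(r_2)/I(r_1)$.
\end{itemize}
The pieces match continuously at $s=r_1$ and $s=r_2$, so the ratio is nondecreasing in $s$.

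\textbf{From likelihood ratio to monotone averages.} Multiplying by the common weight $V^{p-1}s^{N-1}$ does not change the ratio. Hence $\pi_{r_2}$ has a nondecreasing density with respect to $\pi_{r_1}$. By the standard implication from likelihood-ratio order to usual stochastic order \cite[Section 1.A]{ShakedShanthikumar}, we get $\pi_{r_1}\le_{\rm st}\pi_{r_2}$. Therefore $\int h\,\dd\pi_{r_2}\le\int h\,\dd\pi_{r_1}$ for every bounded nonincreasing $h$.

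The argument for $Q$ is identical, with $U^{q-1}$ and $V$ replacing $V^{p-1}$ and $U$. Since $T=PQ$, the statement for $T$ follows by composing the two.

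\textbf{Unbounded $h$ (expected main obstacle).} A nonincreasing $h\in L^2(\nu)$ may be unbounded near $0$ or near $\infty$. I would truncate: set $h_n=\max\{\min\{h,n\},-n\}$, which is bounded and nonincreasing, so each $Ph_n$ is nonincreasing. Then $h_n\to h$ in $L^2(\nu)$ by dominated convergence. By the contraction property in Lemma~\ref{sp:weighted-operators}, $Ph_n\to Ph$ in $L^2(\mu)$, hence $\mu$-a.e. along a subsequence. Since $\mu$ has a positive density, this is Lebesgue-a.e. convergence.

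An a.e. limit of nonincreasing functions agrees a.e. with a nonincreasing function. For instance, take the right-continuous regularisation of the limit restricted to a full-measure set, on which it is nonincreasing. This gives the required nonincreasing representative of $Ph$.
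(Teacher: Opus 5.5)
Your proposal is correct and is essentially the paper's proof. Both arguments show that $s\mapsto\mathcal G(r_2,s)/\mathcal G(r_1,s)$ is constant, then increasing (through $I_{\frac{N-2}2}/K_{\frac{N-2}2}$), then constant, which is exactly the determinant inequality \eqref{sp:TP2}. Both then pass from bounded to general $h$ by truncation, the $L^2$ contraction of $P$, and an a.e.\ convergent subsequence.

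The only differences are in how individual steps are justified:
\begin{itemize}
\item You deduce $Ph(r_2)\le Ph(r_1)$ by citing the likelihood-ratio $\Rightarrow$ usual-stochastic-order implication. The paper instead proves it directly by antisymmetrising $\int\!\!\int h(s)\bigl(\mathcal G(r_2,s)\mathcal G(r_1,t)-\mathcal G(r_1,s)\mathcal G(r_2,t)\bigr)V(s)^{p-1}V(t)^{p-1}(st)^{N-1}\dd s\dd t$ over $\{s<t\}$, which is precisely the proof of that implication.
\item That implication is found in Section~1.C of Shaked--Shanthikumar rather than Section~1.A.
\item You get the monotonicity of $I/K$ from the monotonicity of $I$ and $K$, while the paper uses the Wronskian identity; both work.
\end{itemize}
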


\begin{proof}
	We first establish the determinant inequality for the Green kernel.
	Fix $0<r_1<r_2$. By \eqref{sp:Bessel-kernel}, the function
	\[
	s\mapsto\frac{\mathcal G(r_2,s)}{\mathcal G(r_1,s)}
	\]
	is constant on $(0,r_1)$ and $(r_2,\infty)$, while on
	$(r_1,r_2)$ it is a positive constant times
	$I_{\frac{N-2}2}(s)/K_{\frac{N-2}2}(s)$. The modified-Bessel Wronskian
	identity gives
	\[
	\frac{\dd}{\dd s}
	\left(
	\frac{I_{\frac{N-2}2}(s)}{K_{\frac{N-2}2}(s)}
	\right)
	=\frac1{sK_{\frac{N-2}2}(s)^2}>0\,,
	\]
	see \cite[Chapter 10]{NIST}. Therefore,
	\begin{equation}\label{sp:TP2}
		\mathcal G(r_1,s)\mathcal G(r_2,t)
		-\mathcal G(r_1,t)\mathcal G(r_2,s)\ge0\,,
		\qquad 0<s<t\,.
	\end{equation}
	Let $h\in L^2(\nu)$ be bounded and nonincreasing. By
	\eqref{sp:def-PQ} and \eqref{eq:radial-resolvent},
	$$Ph(r)=\frac1{U(r)}\int_0^\infty\mathcal G(r,s)V(s)^{p-1}h(s)s^{N-1}\dd s\,.$$
	For $0<r_1<r_2$, by Fubini's theorem and \eqref{eq20}, 
	\begin{align*}
		&U(r_1)U(r_2)\bigl(Ph(r_2)-Ph(r_1)\bigr)\\
		&=U(r_1)\int_0^\infty
		\mathcal G(r_2,s)V(s)^{p-1}h(s)s^{N-1}\dd s -U(r_2)\int_0^\infty
		\mathcal G(r_1,s)V(s)^{p-1}h(s)s^{N-1}\dd s\\
		&=\int_0^\infty\int_0^\infty h(s)
		\bigl(\mathcal G(r_2,s)\mathcal G(r_1,t)
		-\mathcal G(r_1,s)\mathcal G(r_2,t)\bigr)
		V(s)^{p-1}V(t)^{p-1}(st)^{N-1}\dd s\dd t\\
		&=\int_{0<s<t<\infty}\bigl(h(s)-h(t)\bigr)
		\bigl(\mathcal G(r_2,s)\mathcal G(r_1,t)
		-\mathcal G(r_1,s)\mathcal G(r_2,t)\bigr)
		V(s)^{p-1}V(t)^{p-1}(st)^{N-1}\dd s\dd t\,.
	\end{align*}
	For $0<s<t$, the monotonicity of $h$ and \eqref{sp:TP2} give
	$h(s)\geq h(t)$ and $\mathcal G(r_2,s)\mathcal G(r_1,t)\leq\mathcal G(r_1,s)\mathcal G(r_2,t)$.
	Since $U(r_1)U(r_2)>0$, it follows that
	$Ph(r_2)\le Ph(r_1)$. Thus $Ph$ is nonincreasing.
	
	Now let $h\in L^2(\nu)$ have a nonincreasing representative and set
	$
	h_j=\max\{-j,\min\{h,j\}\}.
	$
	Then $h_j$ is bounded and nonincreasing and $h_j\to h$ in
	$L^2(\nu)$. Since $P:L^2(\nu)\to L^2(\mu)$ is a contraction,
	$Ph_j\to Ph$ in $L^2(\mu)$. Passing to an a.e. convergent subsequence,
	we conclude that $Ph$ has a nonincreasing representative.
	Interchanging the two components proves the corresponding assertion
	for $Q$. The conclusion for $T=PQ$ follows immediately.
\end{proof}

\begin{lemma}\label{sp:second-mode}
	The restriction of $T$ to the orthogonal complement of the constant
	functions in $L^2(\mu)$ has a largest eigenvalue $\theta_2$, which
	satisfies $0<\theta_2<1$. Its eigenspace contains a nonzero
	nonincreasing function.
\end{lemma}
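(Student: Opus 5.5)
The plan is to use the spectral theorem for compact self-adjoint operators for the first two claims, and a power-iteration argument inside the cone of nonincreasing functions for the third.

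\emph{Reduction to $\{1\}^\perp$.} By Lemma~\ref{sp:weighted-operators}, $T$ is compact, self-adjoint, nonnegative and injective on $L^2(\mu)$, with $T1=1$. Self-adjointness gives $\langle Tf,1\rangle_{L^2(\mu)}=\langle f,T1\rangle=\langle f,1\rangle$. Hence $T$ maps $E:=\{1\}^\perp$ into itself, and $T|_E$ is again compact, self-adjoint, nonnegative and injective. The space $E$ is infinite dimensional, since $\mu$ has a positive density on $(0,\infty)$. So $T|_E$ is a nonzero compact self-adjoint operator.

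\emph{Existence of $\theta_2$ and $0<\theta_2<1$.} Set $\theta_2=\sup\{\langle Tf,f\rangle:f\in E,\ \|f\|_{L^2(\mu)}=1\}$. The spectral theorem shows this supremum is attained and is the largest eigenvalue of $T|_E$. It is positive because $T|_E$ is nonnegative and injective, so it has a positive eigenvalue. Moreover $\theta_2\le\|T\|\le1$. If $\theta_2=1$, an eigenfunction in $E$ would lie in $\ker(T-I)=\operatorname{span}\{1\}$, contradicting simplicity in Lemma~\ref{sp:weighted-operators}. Hence $\theta_2<1$.

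\emph{A nonincreasing eigenfunction.} Let $\mathcal C_0$ be the set of $f\in E$ having a nonincreasing representative. This set is a convex cone. It is closed in $L^2(\mu)$, because an $L^2$-convergent sequence has an a.e. convergent subsequence, exactly as in the proof of Lemma~\ref{sp:monotonicity}. By Lemma~\ref{sp:monotonicity} and the invariance of $E$, $T\mathcal C_0\subset\mathcal C_0$.

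Let $E_2=\ker(T-\theta_2)\cap E$, which is finite dimensional and nonzero, and let $\Pi$ be the orthogonal projection onto $E_2$. The key step is to find $f_0\in\mathcal C_0$ with $\Pi f_0\neq0$. I will take
\[
f_R=\mathbf 1_{(0,R)}-\frac{\mu((0,R))}{\mu((0,\infty))}\,,\qquad R>0\,,
\]
which are nonincreasing and have zero $\mu$-mean, so $f_R\in\mathcal C_0$. The linear span of the $f_R$ together with the constants contains all step functions with finitely many jumps. These are dense in $L^2(\mu)$, because $\mu$ is a finite measure. Fix $0\neq e\in E_2$; since $e\perp1$, if $\langle f_R,e\rangle=0$ for every $R$ then $e$ would be orthogonal to a dense set, so $e=0$. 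Hence some $f_0=f_R$ satisfies $\langle f_0,e\rangle\neq0$, and therefore $\Pi f_0\ne0$.

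Now I expand $f_0$ in an orthonormal eigenbasis of $T|_E$. All eigenvalues other than $\theta_2$ lie in $[0,\theta_2)$ and can accumulate only at $0$, so they are bounded above by some $\theta'<\theta_2$. This gives
\[
\bigl\|\theta_2^{-n}T^nf_0-\Pi f_0\bigr\|_{L^2(\mu)}\le\Bigl(\frac{\theta'}{\theta_2}\Bigr)^n\|f_0\|_{L^2(\mu)}\to0\,.
\]
Each $\theta_2^{-n}T^nf_0$ lies in $\mathcal C_0$, so the closedness of $\mathcal C_0$ gives $\Pi f_0\in\mathcal C_0$. Thus $\Pi f_0$ is a nonzero nonincreasing function in the $\theta_2$-eigenspace.

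The main obstacle is this last step: guaranteeing that the iteration detects the top eigenspace rather than a lower one. The density of the $f_R$ handles it. The only other care needed is to check that $\mathcal C_0$ is closed and $T$-invariant, which follows directly from Lemma~\ref{sp:monotonicity}.
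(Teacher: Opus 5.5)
Your proposal is correct and follows the paper's proof essentially step for step. Both arguments use the invariance of $\{1\}^\perp$ and the spectral theorem to get $0<\theta_2<1$, the same test functions $f_R=\mathbf 1_{(0,R)}-\mu((0,R))/\mu((0,\infty))$, and the same power iteration $\theta_2^{-n}T^nf_R\to\Pi f_R$ combined with closedness of the nonincreasing cone; the only cosmetic difference is that you show $\Pi f_R\neq0$ for some $R$ via density of step functions in $L^2(\mu)$, whereas the paper notes that $\int_0^R e\dd\mu=0$ for all $R$ forces $e=0$ by absolute continuity of the indefinite integral.
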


\begin{proof}
	Since $T$ is self-adjoint and $T1=1$, the orthogonal complement of
	the constant functions is invariant under $T$. By
	Lemma~\ref{sp:weighted-operators}, its restriction to this subspace is
	a nonzero compact, self-adjoint, nonnegative operator. Hence the
	spectral theorem \cite[Theorem~6.11]{Brezis} shows that its norm is a
	positive eigenvalue $\theta_2$. Since $\theta_2\le\|T\|=1$ and
	$\ker(T-I)=\operatorname{span}\{1\}$, we have $0<\theta_2<1$. 
	For $R>0$, set
	\[
	h_R=\mathbf 1_{(0,R)}
	-\frac{\mu((0,R))}{\mu((0,\infty))}\,.
	\]
	Then $h_R$ is nonincreasing and $\dd\mu$-orthogonal to the constant functions.
	We claim that, for some $R>0$, its orthogonal projection onto the
	$\theta_2$-eigenspace is nonzero. Otherwise, any nonzero function $e$
	in this eigenspace would satisfy, for every $R>0$,
	\[
	0=\langle e,h_R\rangle_{L^2(\mu)}
	=\int_0^R e\dd\mu
	-\frac{\mu((0,R))}{\mu((0,\infty))}
	\int_0^\infty e\dd\mu
	=\int_0^R e\dd\mu\,,
	\]
	where the last equality follows because $e$ is orthogonal to the
	constant functions. Since $\dd\mu=U^qr^{N-1}\dd r$ has a strictly
	positive density on $(0,\infty)$, the absolute continuity of the
	indefinite integral would imply $e=0$ a.e., a
	contradiction.
	
	Fix such an $R$ and denote the projection of $h_R$ onto the
	$\theta_2$-eigenspace by $h$. Then $h\ne0$ and $Th=\theta_2h$.
	Since $h$ is the orthogonal projection of $h_R$ onto the
	$\theta_2$-eigenspace, the spectral theorem gives a number
	$0\le\rho<\theta_2$ such that
	\[
	\bigl\|\theta_2^{-j}T^jh_R-h\bigr\|_{L^2(\mu)}
	=\theta_2^{-j}\bigl\|T^j(h_R-h)\bigr\|_{L^2(\mu)}
	\le
	\left(\frac{\rho}{\theta_2}\right)^j
	\|h_R-h\|_{L^2(\mu)}
	\to 0\,.
	\]
	$\theta_2^{-j}T^jh_R$ is nonincreasing by
	Lemma~\ref{sp:monotonicity}. Passing to an almost-everywhere
	convergent subsequence, we conclude that $h$ is nonincreasing. Thus $h$ is the required eigenfunction.
\end{proof}

\begin{proposition}\label{sp:strict-gap}
	The eigenvalue $\theta_2$ in Lemma~\ref{sp:second-mode}
	satisfies $\theta_2<((p-1)(q-1))^{-1}$. Consequently, the radial kernel of \eqref{eq:linearized} is trivial
	in both $L^\infty(\R^N)^2$ and $L^2(\R^N)^2$.
\end{proposition}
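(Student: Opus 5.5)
The plan is to compare the second radial eigenvalue $\theta_2$ of $T$ with the generator of the frequency scaling in Theorem~\ref{introexistencescaling}, testing against the nonincreasing eigenfunction $e$ given by Lemma~\ref{sp:second-mode}. The strict inequalities \eqref{eq:measure-comparison} then supply the sign information.

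\emph{Step 1: the dilation generator.} Differentiating \eqref{introfrequencyscaling} at $\lambda=1$ gives the radial pair
\[
\phi_0=\tfrac pD U+\tfrac12 rU',\qquad \psi_0=\tfrac qD V+\tfrac12 rV'.
\]
By the exponential decay \eqref{profiledecay}, this pair solves
\[
(-\Delta+1)\phi_0-(p-1)V^{p-2}\psi_0=-U,\qquad (-\Delta+1)\psi_0-(q-1)U^{q-2}\phi_0=-V.
\]
Set $h_0=\phi_0/U$ and $k_0=\psi_0/V$, which lie in $L^2(\mu)$ and $L^2(\nu)$ respectively. Applying $\mathcal R$ and using \eqref{sp:def-PQ}, the system becomes
\[
h_0=(p-1)Pk_0-\frac{\mathcal R U}{U},\qquad k_0=(q-1)Qh_0-\frac{\mathcal R V}{V}.
\]

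\emph{Step 2: testing against $e$.} Take $e$ nonincreasing with $Te=\theta_2e$ and $\int e\,d\mu=0$. Pair the first identity with $e$ in $L^2(\mu)$ and use $P^*=Q$. Then insert the second identity and use $\langle Qh_0,Qe\rangle_\nu=\langle h_0,Te\rangle_\mu=\theta_2\langle h_0,e\rangle_\mu$. The symmetry of $\mathcal G$ together with \eqref{eq:nonlinear-resolvent} converts the two source terms into integrals against $\rho$:
\[
\int \mathcal R(U)\,U^{q-1}e=\int Qe\,d\rho,\qquad \int\mathcal R(V)\,V^{p-1}Qe=\int Te\,d\rho=\theta_2\int e\,d\rho.
\]
This yields the key identity
\[
\bigl(1-(p-1)(q-1)\theta_2\bigr)\langle h_0,e\rangle_{L^2(\mu)}=-(p-1)\theta_2\int e\,d\rho-\int Qe\,d\rho .
\]

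\emph{Step 3: signs on the right.} Both $e$ and $Qe$ are nonincreasing by Lemma~\ref{sp:monotonicity}. They are nonconstant, because $e\perp1$ and $e\neq0$, and $Qe$ is nonconstant since $Q$ is injective with $Q1=1$. Their integrals vanish: $\int e\,d\mu=0$, and $\int Qe\,d\nu=\int e\,d\mu=0$ by \eqref{sp:mass-preservation}. Hence \eqref{eq:measure-comparison} gives $\int e\,d\rho<0$ and $\int Qe\,d\rho<0$, so the right-hand side is strictly positive.

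\emph{Step 4: sign on the left.} Because $e\perp1$, the constant part $p/D$ of $h_0$ drops out, and
\[
\langle h_0,e\rangle_\mu=\tfrac12\int rU'U^{q-1}e\,r^{N-1}\dd r=\tfrac1{2q}\int r^N(U^q)'e\,\dd r.
\]
Integrating by parts, and justifying this by truncating $e$ and using decay, the term proportional to $\int e\,d\mu$ vanishes. What remains is $-\frac1{2q}\int r^NU^q\,de$, which is $\ge0$ since $e$ is nonincreasing. If $\theta_2\ge((p-1)(q-1))^{-1}$, the left-hand side of the key identity would be $\le0$, contradicting Step~3. Therefore $\theta_2<((p-1)(q-1))^{-1}$.

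\emph{Step 5: the kernel consequence.} Let $(\phi,\psi)$ be a radial kernel element in $L^\infty$ or $L^2$. Then $h=\phi/U$ belongs to $L^2(\mu)$, since $\int\phi^2U^{q-2}<\infty$ because $U$ is bounded; similarly $k=\psi/V$ belongs to $L^2(\nu)$. The linearised system \eqref{eq:linearized} becomes $h=(p-1)Pk$ and $k=(q-1)Qh$, so $Th=((p-1)(q-1))^{-1}h$. This eigenvalue is less than $1$, so $h\perp1$ by self-adjointness and Lemma~\ref{sp:weighted-operators}. It then exceeds $\theta_2$, the largest eigenvalue on $1^\perp$, which is impossible unless $h=0$. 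Hence $h=0$, and then $\phi=0$ and $\psi=0$.

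I expect the main obstacle to be the rigorous integration by parts in Step~4 for a merely $L^2(\mu)$ nonincreasing $e$. I would handle it by a Stieltjes formulation: approximate $e$ by bounded monotone truncations and control the boundary terms via the exponential decay of $U$.
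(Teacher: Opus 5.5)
Your proposal is correct and is essentially the paper's proof. Since $h_0=\tfrac12\xi+\mathrm{const}$ and $k_0=\tfrac12\eta+\mathrm{const}$, your single identity is exactly half of the paper's two relations $\langle\xi,h\rangle-(p-1)\langle\eta,k\rangle=-2\int k\dd\rho$ and $\langle\eta,k\rangle-(q-1)\theta_2\langle\xi,h\rangle=-2\theta_2\int h\dd\rho$, added after multiplying the second by $p-1$; the constants drop out because the eigenfunction has zero mean.

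The only difference is in obtaining the sign of $\langle\xi,h\rangle$. The paper uses a layer-cake decomposition based on $\int_0^R\xi\dd\mu=\tfrac1qR^NU(R)^q>0$, which avoids the Stieltjes boundary terms you flagged. You also correctly note that only $\langle h_0,e\rangle\ge0$ is needed.
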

\begin{proof}
	Choose a nonzero nonincreasing eigenfunction
	$h\in L^2(\mu)$ given by Lemma~\ref{sp:second-mode} and set
	$k=Qh\in L^2(\nu)$. Since $h$ belongs to the orthogonal complement
	of the constant functions,
	$
	\int_0^\infty h\dd\mu=0.
	$
	Moreover, by this and $P^*=Q$,
	$
	\int_0^\infty k\dd\nu
	=\langle Qh,1\rangle_{L^2(\nu)}
	=\langle h,P1\rangle_{L^2(\mu)}
	=\int_0^\infty h\dd\mu=0.
	$
	Thus, we have
	\begin{equation}\label{sp:eigenrelations}
		Qh=k,\qquad Pk=Th=\theta_2h,\qquad
		\int_0^\infty h\dd\mu=\int_0^\infty k\dd\nu=0\,.
	\end{equation}
	By Lemma~\ref{sp:monotonicity}, $k$ is nonincreasing. Since $Q$ is
	injective, $k\ne0$; hence both $h$ and $k$ are nonconstant. 
	We first verify the integrability needed to apply the measure
	comparison. Since
	$$\int_0^\infty |U^{q-1}h|^2r^{N-1}\dd r
	=\int_0^\infty U^{q-2}|h|^2\dd\mu
	\le\|U\|_\infty^{q-2}\|h\|_{L^2(\mu)}^2\,,
	$$ and similarly $$
	\int_0^\infty |V^{p-1}k|^2r^{N-1}\dd r
	\le\|V\|_\infty^{p-2}\|k\|_{L^2(\nu)}^2\,,$$
	the continuity of $\cR$ and \eqref{sp:eigenrelations} give
	$
	Vk=\cR(U^{q-1}h)\in H^2_{\mathrm{rad}}(\R^N),
	\ 
	Uh=\theta_2^{-1}\cR(V^{p-1}k)
	\in H^2_{\mathrm{rad}}(\R^N).
	$
	Since
	$Uh,Vk,U,V\in L^2((0,\infty),r^{N-1}\dd r)$, the
	Cauchy--Schwarz inequality yields
	\begin{align*}
		\int_0^\infty |h|\dd\rho
		=\int_0^\infty |U(r)h(r)|V(r)r^{N-1}\dd r 
		<\infty\,,\qquad
		\int_0^\infty |k|\dd\rho
		=\int_0^\infty |V(r)k(r)|U(r)r^{N-1}\dd r 
		<\infty\,.
	\end{align*}
	The finiteness of $\mu$ and $\nu$ also gives
	$h\in L^1(\mu)$ and $k\in L^1(\nu)$. Therefore,
	\eqref{eq:measure-comparison} and \eqref{sp:eigenrelations} imply
	\begin{equation}\label{sp:negative-rho}
		\int_0^\infty k\dd\rho<0\,,\qquad
		\int_0^\infty h\dd\rho<0\,.
	\end{equation}
	
	Set
	\[
	\xi=\frac Nq+\frac{rU'}U\,,\qquad
	\eta=\frac Np+\frac{rV'}V\,.
	\]
	By the definition of $\xi$, \eqref{eq:measures}, and
	Lemma \ref{profileregularity}, for every $R>0$,
	\begin{align}\label{eq27}
		\int_0^R\xi\dd\mu
		=\frac1q
		\left[r^NU(r)^q\right]_0^R>0\,,\qquad
		\int_R^\infty\xi\dd\mu
		=\frac1q
		\left[r^NU(r)^q\right]_R^\infty<0\,.
	\end{align}	
	By Lemma \ref{profileregularity},
	$\xi \in L^2(\mu)$ and, since $h\in L^2(\mu)$,
	$\xi h\in L^1(\mu)$. Moreover, $h\not\equiv0$ and
	\eqref{sp:eigenrelations} implies that $h$ changes sign. Since $h$ is nonincreasing, $\{h>t\}$ is a
	proper initial interval for $0<t<\sup h_+$, whereas $\{h<-t\}$ is a
	proper terminal interval for $0<t<\sup h_-$. Hence \eqref{eq27} gives
	\[
	\int_{\{h>t\}}\xi\dd\mu>0
	\quad(0<t<\sup h_+)\,,\qquad
	\int_{\{h<-t\}}\xi\dd\mu<0
	\quad(0<t<\sup h_-)\,.
	\]
	Since
	\[
	h_+(r)=\int_0^\infty\mathbf 1_{\{h(r)>t\}}\dd t\,,
	\qquad
	h_-(r)=\int_0^\infty\mathbf 1_{\{h(r)<-t\}}\dd t\,,
	\]
	Fubini's theorem gives
	\begin{equation}\label{sp:xi-positive}
		\begin{aligned}
			\langle\xi,h\rangle_{L^2(\mu)}
			&=\int_0^\infty\xi h_+\dd\mu
			-\int_0^\infty\xi h_-\dd\mu\\
			&=\int_0^\infty
			\left(\int_{\{h(r)>t\}}\xi\dd\mu\right)\dd t
			-\int_0^\infty
			\left(\int_{\{h(r)<-t\}}\xi\dd\mu\right)\dd t>0\,.
		\end{aligned}
	\end{equation}
	
	Using
	$
	(-\Delta+1)(x\cdot\nabla w)
	=x\cdot\nabla\bigl((-\Delta+1)w\bigr)-2\Delta w,
	$
	the definitions of $\xi,\eta$ and \eqref{system1} give
	\begin{align*}
		(-\Delta+1)(U\xi)&-(p-1)V^p(V\eta) =(-\Delta+1)
		\left(\frac NqU+x\cdot\nabla U\right)
		-(p-1)V^p(V\eta)\\
		&=\frac NqV^{p-1}
		+(p-1)V^px\cdot\nabla V-2\Delta U
		-(p-1)V^p(V\eta)\\
		&=\frac NqV^{p-1}
		+(p-1)V^{p-2}\left(V\eta-\frac NpV\right)
		+2(V^{p-1}-U)-(p-1)V^{p-2}(V\eta)\\
		&=\left(\frac Nq
		-\frac{(p-1)N}p+2\right)V^{p-1}-2U
		\overset{\eqref{eq:pohozaev}}{=}\delta V^{p-1}-2U\,.
	\end{align*}
	The same calculation with the two components interchanged gives
	\[
	(-\Delta+1)(V\eta)-(q-1)U^{q-2}(U\xi)=\delta U^{q-1}-2V\,.
	\]	
	By Lemma \ref{profileregularity}, we also have
	$
	\eta\in L^2(\nu),\ U\xi,\ V\eta,\ V^{p-1}\eta,\ U^{q-1}\xi\in L^2(\R^N).
	$
	Applying $\cR$ to
	$$
	(-\Delta+1)(U\xi)=(p-1)V^{p-1}\eta+\delta V^{p-1}-2U,
	\ 
	(-\Delta+1)(V\eta)=(q-1)U^{q-1}\xi+\delta U^{q-1}-2V,
	$$
	and using $\cR(V^{p-1})=U$ and $\cR(U^{q-1})=V$, we obtain
	$$
	U\xi-(p-1)\cR(V^{p-1}\eta)
	=\delta U-2\cR U,\ 
	V\eta-(q-1)\cR(U^{q-1}\xi)
	=\delta V-2\cR V.
	$$
	Dividing by $U$ and $V$, respectively, and using the definitions of
	$P$ and $Q$ gives
	\begin{equation}\label{sp:dilation-weighted}
		\xi-(p-1)P\eta=\delta-2\frac{\cR U}{U}\,,
		\qquad
		\eta-(q-1)Q\xi=\delta-2\frac{\cR V}{V}\,.
	\end{equation}
	Since $\xi\in L^2(\mu)$, $\eta\in L^2(\nu)$, and $\mu,\nu$ are
	finite, Lemma~\ref{sp:weighted-operators} gives
	$P\eta\in L^2(\mu)$ and $Q\xi\in L^2(\nu)$. Hence
	\eqref{sp:dilation-weighted} implies
	$
	\frac{\cR U}{U}\in L^2(\mu),
	\ 
	\frac{\cR V}{V}\in L^2(\nu).
	$	
	By this and the self-adjointness of $\cR$ and
	\eqref{sp:eigenrelations},
	\begin{align*}
		\left\langle\frac{\cR U}{U},h\right\rangle_{L^2(\mu)}
		&=\int_0^\infty(\cR U)U^{q-1}h r^{N-1}\dd r
		=\int_0^\infty U\,\cR(U^{q-1}h)r^{N-1}\dd r
		=\int_0^\infty k\dd\rho\,,\\
		\left\langle\frac{\cR V}{V},k\right\rangle_{L^2(\nu)}
		&=\int_0^\infty(\cR V)V^{p-1}k r^{N-1}\dd r
		=\int_0^\infty V\,\cR(V^{p-1}k)r^{N-1}\dd r
		=\theta_2\int_0^\infty h\dd\rho\,.
	\end{align*}
	Using $Qh=k$, $Pk=\theta_2h$, and the zero-mean identities in
	\eqref{sp:eigenrelations}, together with $P^*=Q$, the first identity
	in \eqref{sp:dilation-weighted} and \eqref{sp:negative-rho} give
	\begin{align*}
		\langle\xi,h\rangle_{L^2(\mu)}
		-(p-1)\langle\eta,k\rangle_{L^2(\nu)}
		&=\langle\xi,h\rangle_{L^2(\mu)}
		-(p-1)\langle\eta,Qh\rangle_{L^2(\nu)} =\langle\xi-(p-1)P\eta,h\rangle_{L^2(\mu)}\\
		&=\delta\int_0^\infty h\dd\mu
		-2\left\langle\frac{\cR U}{U},h\right\rangle_{L^2(\mu)} =-2\int_0^\infty k\dd\rho>0\,.
	\end{align*}
	Similarly, the second identity in \eqref{sp:dilation-weighted} and \eqref{sp:negative-rho} give 
	\begin{align*}
		\langle\eta,k\rangle_{L^2(\nu)}
		-(q-1)\theta_2\langle\xi,h\rangle_{L^2(\mu)}
		&=\langle\eta,k\rangle_{L^2(\nu)}
		-(q-1)\langle\xi,Pk\rangle_{L^2(\mu)} =\langle\eta-(q-1)Q\xi,k\rangle_{L^2(\nu)}\\
		&=\delta\int_0^\infty k\dd\nu
		-2\left\langle\frac{\cR V}{V},k\right\rangle_{L^2(\nu)} =-2\theta_2\int_0^\infty h\dd\rho>0\,,
	\end{align*}
	Consequently,
	$
	\langle\xi,h\rangle_{L^2(\mu)}
	>(p-1)\langle\eta,k\rangle_{L^2(\nu)}
	>(p-1)(q-1)\theta_2\langle\xi,h\rangle_{L^2(\mu)}.
	$
	Since \eqref{sp:xi-positive} gives
	$\langle\xi,h\rangle_{L^2(\mu)}>0$, we conclude that
	$\theta_2<1/((p-1)(q-1)).$
	
	Finally, let $(\phi,\psi)$ be a radial solution of
	\eqref{eq:linearized} belonging either to
	$L^\infty(\R^N)^2$ or to $L^2(\R^N)^2$. Then
	\[
	\left\|\frac\phi U\right\|_{L^2(\mu)}^2
	=\int_0^\infty|\phi|^2U^{q-2}r^{N-1}\dd r\,,
	\qquad
	\left\|\frac\psi V\right\|_{L^2(\nu)}^2
	=\int_0^\infty|\psi|^2V^{p-2}r^{N-1}\dd r\,.
	\]
	In the bounded case these integrals are finite by
	Lemma \ref{profileregularity}; in the $L^2$ case they are finite because
	$U^{q-2}$ and $V^{p-2}$ are bounded. The same argument gives
	$V^{p-2}\psi,\ U^{q-2}\phi\in L^2(\R^N).$
	Applying $\cR$ to \eqref{eq:linearized} gives
	\[
	\phi/U=(p-1)P(\psi/V)\,,
	\qquad
	\psi/V=(q-1)Q(\phi/U)\,,
	\qquad
	T(\phi/U)=\frac{\phi/U}{(p-1)(q-1)}\,.
	\]
	If $\phi/U\ne0$, then the self-adjointness of $T$ and $T1=1$ give
	\[
	\frac{\langle\phi/U,1\rangle_{L^2(\mu)}}{(p-1)(q-1)}
	=\langle T(\phi/U),1\rangle_{L^2(\mu)}
	=\langle\phi/U,T1\rangle_{L^2(\mu)}
	=\langle\phi/U,1\rangle_{L^2(\mu)}\,.
	\]
	Since $(p-1)(q-1)>1$, we have $\phi/U\perp1$ in $L^2(\mu)$. Thus $((p-1)(q-1))^{-1}$ is an
	eigenvalue of $T$ on the orthogonal complement of the constants, so
	$((p-1)(q-1))^{-1}\le\theta_2$, contradicting $\theta_2<((p-1)(q-1))^{-1}$.
	Therefore $\phi=0$, and the second identity gives $\psi=0$.
\end{proof}

We are now in the position to prove the nondegeneracy part of the proof of Theorem \ref{thm:main}.

\begin{proof}[Proof of Theorem \ref{thm:main}: Nondegeneracy]
	Let $(\phi,\psi)$ solve \eqref{eq:linearized} in either
	$L^\infty(\R^N)^2$ or $L^2(\R^N)^2$.
	By Lemma \ref{profileregularity},
	$U^{q-2},V^{p-2}\in L^2(\R^N)\cap L^\infty(\R^N)$,
	so $V^{p-2}\psi,U^{q-2}\phi\in L^2(\R^N)$ in both cases.
	Inverting $-\Delta+1$ on tempered distributions gives
	\[
	\phi=(p-1)\cR(V^{p-2}\psi)\,,\qquad
	\psi=(q-1)\cR(U^{q-2}\phi)\,.
	\]
	Thus $(\phi,\psi)\in H^2(\R^N)^2$ by
	Lemma \ref{profileregularity}. Differentiating \eqref{system1} shows that
	$(\partial_{x_j}U,\partial_{x_j}V)$ solves \eqref{eq:linearized}.
	These translation pairs are bounded by Lemma \ref{profileregularity}, and the
	same argument places them in $H^2(\R^N)^2$.
	It therefore suffices to classify the $L^2$ solutions.
	
	We use the spherical-harmonic decomposition as in
	\cite[Section~2.1]{FrankKimPistoia}.
	Write $x=r\omega$, and choose a real orthonormal basis
	$\{Y_{\ell,m}\}$ of spherical harmonics on $\mathbb S^{N-1}$, where
	$-\Delta_{\mathbb S^{N-1}}Y_{\ell,m}
	=\ell(\ell+N-2)Y_{\ell,m}$ and $m$ indexes a basis of degree $\ell$.
	Define the coefficients by
	\[
	a_{\ell,m}(r)=\int_{\mathbb S^{N-1}}
	\phi(r\omega)Y_{\ell,m}(\omega)\dd\omega\,,\qquad
	b_{\ell,m}(r)=\int_{\mathbb S^{N-1}}
	\psi(r\omega)Y_{\ell,m}(\omega)\dd\omega\,.
	\]
	Parseval's identity on $\mathbb S^{N-1}$, integrated with respect
	to $r^{N-1}\dd r$,\\ gives
	$a_{\ell,m},b_{\ell,m}\in L^2((0,\infty),r^{N-1}\dd r)$ and
	\begin{equation}\label{fn:harmonic-expansion}
		\phi(r\omega)=\sum_{\ell,m}a_{\ell,m}(r)Y_{\ell,m}(\omega)\,,
		\qquad
		\psi(r\omega)=\sum_{\ell,m}b_{\ell,m}(r)Y_{\ell,m}(\omega)\,,
	\end{equation}
	with convergence in $L^2(\R^N)$.
	By the spherical-harmonic projection identities in
	\cite[Section~2.1]{FrankKimPistoia}, angular projection commutes
	with $\Delta$. It also commutes with multiplication by
	$U^{q-2}$ and $V^{p-2}$, since these functions are radial.
	Applying the projection to \eqref{eq:linearized}, we therefore
	find that
	$(a_{\ell,m}Y_{\ell,m},b_{\ell,m}Y_{\ell,m})$
	is an $H^2(\R^N)^2$ distributional solution of the same
	system. Using
	\[
	\Delta=\partial_r^2+\frac{N-1}{r}\partial_r
	+\frac1{r^2}\Delta_{\mathbb S^{N-1}}\,,
	\qquad
	-\Delta_{\mathbb S^{N-1}}Y_{\ell,m}
	=\ell(\ell+N-2)Y_{\ell,m}\,,
	\]
	we obtain the coefficient equations
	\begin{equation}\label{fn:mode-system}
		\begin{cases}
			-a_{\ell,m}''-\dfrac{N-1}{r}a_{\ell,m}'
			+\left(1+\dfrac{\ell(\ell+N-2)}{r^2}\right)a_{\ell,m}
			=(p-1)V^{p-2}b_{\ell,m}\,,\\[2mm]
			-b_{\ell,m}''-\dfrac{N-1}{r}b_{\ell,m}'
			+\left(1+\dfrac{\ell(\ell+N-2)}{r^2}\right)b_{\ell,m}
			=(q-1)U^{q-2}a_{\ell,m}\,,
		\end{cases}
		\qquad r>0\,.
	\end{equation}
	
	For $\ell=0$, $Y_{0,m}$ is a nonzero constant. Hence
	$(a_{0,m}(|x|),b_{0,m}(|x|))$ is a radial
	$H^2(\R^N)^2$ solution of \eqref{eq:linearized}.
	Proposition~\ref{sp:strict-gap} therefore gives
	$a_{0,m}=b_{0,m}=0$.
	
	For each $\ell\ge1$, define
	$R_\ell:L^2((0,\infty),r^{N-1}\dd r)\to L^2((0,\infty),r^{N-1}\dd r)$ by
	\begin{equation}\label{fn:green}
		(R_\ell f)(r)=\int_0^\infty\mathcal G_\ell(r,s)f(s)s^{N-1}\dd s\,,
	\end{equation}
	with
	\begin{equation}\label{fn:green2}
		\mathcal G_\ell(r,s)=(rs)^{-(N-2)/2}I_{\ell+\frac{N-2}2}(\min\{r,s\})K_{\ell+\frac{N-2}2}(\max\{r,s\})>0\,,\qquad r,s>0\,.
	\end{equation}
	The Bessel resolvent formula
	\cite[\S\S~2.2--2.3, \S~3, and Theorem~4.1]{DerezinskiRichard},
	with order $\ell+(N-2)/2\ge1$ and resolvent parameter $1$,
	applied under the change $g(r)\mapsto r^{(N-1)/2}g(r)$,
	shows that $R_\ell$ is bounded and that, for
	$f,g\in L^2((0,\infty),r^{N-1}\dd r)$,
	\begin{equation}\label{fn:radial-inverse}
		g=R_\ell f\quad\Longleftrightarrow\quad
		-g''-\frac{N-1}{r}g'+\left(1+\frac{\ell(\ell+N-2)}{r^2}\right)g=f
		\quad\text{in }(0,\infty)\,.
	\end{equation}
	The condition $\ell+(N-2)/2\ge1$ ensures uniqueness within this
	$L^2$ space, without an additional boundary condition at zero.
	The symmetry $\mathcal G_\ell(r,s)=\mathcal G_\ell(s,r)$ gives
	\begin{equation}\label{fn:radial-adjoint}
		\int_0^\infty (R_\ell f)g\,r^{N-1}\dd r
		=\int_0^\infty f(R_\ell g)r^{N-1}\dd r,
		\qquad f,g\in L^2((0,\infty),r^{N-1}\dd r)\,.
	\end{equation}
	Moreover, the order monotonicity of the modified-Bessel product
	\cite[proof of Theorem~1, item~(d)]{BariczPonnusamy} gives
	\begin{equation}\label{fn:green-difference}
		0<\mathcal G_\ell(r,s)<\mathcal G_1(r,s),\qquad \ell\ge2,\quad r,s>0\,.
	\end{equation}
	
	Since $a_{\ell,m},b_{\ell,m}\in
	L^2((0,\infty),r^{N-1}\dd r)$ and $U,V$ are bounded,
	the right-hand sides of \eqref{fn:mode-system} also belong to
	$L^2((0,\infty),r^{N-1}\dd r)$.
	Applying \eqref{fn:radial-inverse} to the two equations in
	\eqref{fn:mode-system} yields
	\begin{equation}\label{fn:projected}
		\begin{aligned}
			a_{\ell,m}=(p-1)R_\ell(V^{p-2}b_{\ell,m})\,,\qquad
			b_{\ell,m}=(q-1)R_\ell(U^{q-2}a_{\ell,m})\,.
		\end{aligned}
	\end{equation}
	Differentiating \eqref{radial_eqs} gives
	\begin{equation}\label{fn:translation-equations}
		\begin{cases}
			-U'''-\dfrac{N-1}{r}U''
			+\left(1+\dfrac{N-1}{r^2}\right)U'=(p-1)V^{p-2}V'\,,\\[2mm]
			-V'''-\dfrac{N-1}{r}V''
			+\left(1+\dfrac{N-1}{r^2}\right)V'=(q-1)U^{q-2}U'\,.
		\end{cases}
	\end{equation}
	By Lemma \ref{profileregularity},
	$U',V',V^{p-2}V',U^{q-2}U'\in L^2((0,\infty),r^{N-1}\dd r)$.
	For $\ell=1$, \eqref{fn:translation-equations} is precisely the
	radial equation in \eqref{fn:radial-inverse}, first with
	$(g,f)=(U',(p-1)V^{p-2}V')$ and then with $(g,f)=(V',(q-1)U^{q-2}U')$. Consequently,
	\begin{equation}\label{fn:translation-relations}
		\begin{aligned}
			-U'=(p-1)R_1\bigl(V^{p-2}(-V')\bigr)\,,\qquad
			-V'=(q-1)R_1\bigl(U^{q-2}(-U')\bigr)\,.
		\end{aligned}
	\end{equation}
	Both $-U'$ and $-V'$ are strictly positive on $(0,\infty)$
	by Lemma \ref{profileregularity}. Positivity of the kernel in \eqref{fn:green}-\eqref{fn:green2} and \eqref{fn:projected}
	imply that
	\begin{equation}\label{fn:absolute-comparison}
		|a_{\ell,m}|\le(p-1)R_\ell(V^{p-2}|b_{\ell,m}|)\,,
		\qquad
		|b_{\ell,m}|\le(q-1)R_\ell(U^{q-2}|a_{\ell,m}|)\,.
	\end{equation}
	All integrals below are finite by Cauchy--Schwarz, since
	$a_{\ell,m},b_{\ell,m},U',V'\in L^2((0,\infty),r^{N-1}\dd r)$, the weights
	$U^{q-2},V^{p-2}$ are bounded, and $R_\ell$ is bounded on $L^2((0,\infty),r^{N-1}\dd r)$.
	Since $U^{q-2}(-U')>0$, the kernel ordering
	\eqref{fn:green-difference}, together with
	\eqref{fn:translation-relations}, gives
	$R_\ell(U^{q-2}(-U'))\le R_1(U^{q-2}(-U'))=(-V')/(q-1)$
	for $\ell\ge1$, with equality when $\ell=1$.
	Using this inequality, \eqref{fn:absolute-comparison},
	and \eqref{fn:radial-adjoint}, we obtain
	\begin{equation}\label{fn:comparison}
		\begin{aligned}
			\int_0^\infty U^{q-2}(-U')|a_{\ell,m}|r^{N-1}\dd r
			&\le(p-1)\int_0^\infty U^{q-2}(-U')R_\ell(V^{p-2}|b_{\ell,m}|)r^{N-1}\dd r\\
			&=(p-1)\int_0^\infty V^{p-2}|b_{\ell,m}|R_\ell\bigl(U^{q-2}(-U')\bigr)r^{N-1}\dd r\\
			&\le\frac{p-1}{q-1}\int_0^\infty V^{p-2}(-V')|b_{\ell,m}|r^{N-1}\dd r\,.
		\end{aligned}
	\end{equation}
	The same computation with the two components interchanged gives
	\[
	\int_0^\infty V^{p-2}(-V')|b_{\ell,m}|r^{N-1}\dd r
	\le\frac{q-1}{p-1}\int_0^\infty U^{q-2}(-U')|a_{\ell,m}|r^{N-1}\dd r\,.
	\]
	Combining these inequalities shows that equality holds
	at every step of \eqref{fn:comparison}.
	
	For $\ell=1$, equality in the first step of
	\eqref{fn:comparison} and $U^{q-2}(-U')>0$ imply
	\[
	R_1(V^{p-2}|b_{1,m}|)
	=\frac{|a_{1,m}|}{p-1}
	=\bigl|R_1(V^{p-2}b_{1,m})\bigr|
	\quad\text{a.e.}\,.
	\]
	If the positive and negative parts of $b_{1,m}$ were both
	nonzero, their images under $R_1(V^{p-2}\,\cdot)$ would both
	be strictly positive by \eqref{fn:green}-\eqref{fn:green2}. The integral triangle
	inequality would then be strict, contradicting this equality.
	Thus $b_{1,m}$ has one sign, and \eqref{fn:projected} shows that
	every nonzero solution of the degree-one integral system has
	both components strictly of the same sign almost everywhere.
	Choose $c\in\R$ such that
	\[
	\int_0^\infty (a_{1,m}-cU')U'r^{N-1}\dd r=0\,;
	\]
	this is possible because
	$0<\int_0^\infty |U'|^2r^{N-1}\dd r<\infty$.
	By linearity and \eqref{fn:translation-relations},
	$(a_{1,m}-cU',b_{1,m}-cV')$ also solves \eqref{fn:projected}
	with $\ell=1$. If this pair were nonzero, its first component
	would have one strict sign, contradicting the displayed
	integral since $U'<0$. Hence
	$(a_{1,m},b_{1,m})=c(U',V')$.
	
	For $\ell\ge2$, positivity of $U^{q-2}(-U')$ and
	\eqref{fn:green-difference} give
	\[
	R_\ell\bigl(U^{q-2}(-U')\bigr)<R_1\bigl(U^{q-2}(-U')\bigr)=\frac{-V'}{q-1}\,.
	\]
	If $b_{\ell,m}\ne0$, the second inequality in
	\eqref{fn:comparison} would therefore be strict, and not an equality as shown before, a contradiction.
	Thus $b_{\ell,m}=0$, and \eqref{fn:projected} gives
	$a_{\ell,m}=0$. We have proved
	\begin{equation}\label{fn:sector-classification}
		\begin{cases}
			(a_{\ell,m},b_{\ell,m})
			\in\operatorname{span}\{(U',V')\}\,,&\ell=1\,,\\
			(a_{\ell,m},b_{\ell,m})=(0,0)\,,
			&\ell=0\text{ or }\ell\ge2\,.
		\end{cases}
	\end{equation}
	Since the degree-one harmonic space is
	$\operatorname{span}\{\omega_1,\ldots,\omega_N\}$,
	\eqref{fn:harmonic-expansion} and
	\eqref{fn:sector-classification} give
	$c_1,\ldots,c_N\in\R$ such that
	\[
	(\phi(r\omega),\psi(r\omega))
	=(U'(r),V'(r))\sum_{j=1}^N c_j\omega_j
	=\sum_{j=1}^N c_j(\partial_{x_j}U,\partial_{x_j}V)(r\omega)\,.
	\]
	The translation pairs are linearly independent because $U'<0$ and
	$\omega_1,\ldots,\omega_N$ are linearly independent.
	Since these pairs belong to $H^2(\R^N)^2$ and
	$H^2(\R^N)\subset H^1(\R^N)\subset L^2(\R^N)$,
	the same kernel formula holds in $H^1(\R^N)^2$ and
	$H^2(\R^N)^2$.
\end{proof}

\subsubsection{Compactness and continuation throughout the hyperbola}
\begin{lemma}[Radial Liouville theorem]\label{lem:cc-liouville}
	Under \eqref{range}, there is no positive radial $C^2(\R^N)^2$
	solution of
	\begin{equation}\label{eq:cc-massless}
		-\Delta u=v^{p-1},\quad -\Delta v=u^{q-1}\qquad\text{in }\R^N.
	\end{equation}
\end{lemma}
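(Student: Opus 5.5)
For $N=2$ I would use a direct Liouville argument; for $N\ge3$, a Pohozaev-type identity of Mitidieri on balls $B_R$, whose boundary terms vanish as $R\to\infty$ thanks to pointwise decay estimates for radial solutions.

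\textbf{The case $N=2$.} Both $u$ and $v$ are positive superharmonic functions on $\R^2$. Any such function is constant. Hence $\Delta u=0$, which forces $v^{p-1}\equiv0$, contradicting positivity.

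\textbf{Monotonicity and decay for $N\ge3$.} For a positive radial solution, integration of $(r^{N-1}u')'=-r^{N-1}v^{p-1}$ gives
\[
r^{N-1}u'(r)=-\int_0^r v^{p-1}s^{N-1}\dd s<0\,,
\]
and similarly $v'<0$. Since $v$ is decreasing,
\[
-r^{N-1}u'(r)\ge \frac{v(r)^{p-1}r^N}{N}\,.
\]
Integrating between $r/2$ and $r$ and using the monotonicity of $u$ yields
\[
u(r/2)\ge u(r/2)-u(r)\ge c\,r^2v(r)^{p-1},\qquad v(r/2)\ge c\,r^2u(r)^{q-1}\,.
\]
Iterating these two inequalities, in the way Serrin--Zou do for radial Lane--Emden systems, gives
\[
u(r)\le Cr^{-2p/D},\qquad v(r)\le Cr^{-2q/D},\qquad D=pq-p-q\,,
\]
as in \eqref{parameters}. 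The radial formula for $u'$ then gives
\[
|u'(r)|\le Cr\,v(r/2)^{p-1}+C r^{1-N}\!\int_0^{r/2}\!v^{p-1}s^{N-1}\dd s\,,
\]
so $|u'|\le Cr^{-1-2p/D}$, and analogously $|v'|\le Cr^{-1-2q/D}$. Justifying this iteration is the step I expect to be the main obstacle. In particular one must make sure the second term in $u'$ has the stated rate, which follows because the bound on $v$ turns $\int_0^{r}v^{p-1}s^{N-1}\dd s$ into at most $Cr^{N-2-2p/D}$ when that exponent is positive. If that exponent is not positive, the estimate $|u'|\le Cr^{1-N}$ is still enough, since then $R^{N-1}\cdot R^{1-N}\cdot(\dots)$ decays in the boundary terms below.

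\textbf{Pohozaev identity.} Using the assumption $\frac Nq+\frac Np>N-2$ from \eqref{range}, I would choose $a_1<\frac Nq$ and $a_2<\frac Np$ with $a_1+a_2=N-2$. Multiply the first equation by $x\cdot\nabla v+a_2v$ and the second by $x\cdot\nabla u+a_1u$, then integrate over $B_R$, exactly as in the proof of Proposition~\ref{pohozaev} but without letting the boundary terms vanish. This gives
\[
\int_{B_R}\Big[\Big(\tfrac Nq-a_1\Big)u^q+\Big(\tfrac Np-a_2\Big)v^p\Big]
=C_N R^{N}\Big[\tfrac{u^q}q+\tfrac{v^p}p+u'v'\Big](R)+C_NR^{N-1}\big(a_1uv'+a_2vu'\big)(R)\,.
\]
The left-hand side is positive and nondecreasing in $R$.

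\textbf{Vanishing of the boundary terms.} By the decay estimates, every boundary term is $O\big(R^{N-2pq/D}\big)=O\big(R^{N-2/(1-\sigma)}\big)$. The Sobolev-subcritical condition $\sigma>\frac{N-2}N$ is equivalent to $\frac2{1-\sigma}>N$, so the right-hand side tends to $0$ as $R\to\infty$. This contradicts the positivity and monotonicity of the left-hand side, and the lemma follows.
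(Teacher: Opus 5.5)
Your proof is correct. It differs from the paper mainly in being self-contained.

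\textbf{Case $N\ge3$.} The paper simply invokes Mitidieri's radial nonexistence result \cite[Corollary 3.1]{Mitidieri} with exponents $p-1,q-1$, and checks that its hypothesis $N[(p-1)(q-1)-1]<2pq$ is equivalent to \eqref{range}. You instead reproduce the Rellich--Pohozaev mechanism behind that citation. Your identity on $B_R$ is right. The boundary exponents all collapse to $N-2pq/D=N-2/(1-\sigma)<0$. This makes visible that \eqref{range} is used twice: once to choose $a_1<N/q$, $a_2<N/p$ with $a_1+a_2=N-2$, and once for the sign of the boundary exponent.

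\textbf{The step you flag as the obstacle.} It closes easily. Since $r^{N-1}(-u')$ is nondecreasing, integrating from $r$ to $\infty$ gives $-u'(r)\le (N-2)u(r)/r$. Hence $r^{N-2}u$ is nondecreasing and $u(r/4)\le 4^{N-2}u(r)$. Your two inequalities combine to $u(r/4)\ge c\,r^{2p}u(r)^{(p-1)(q-1)}$, which then yields $u^D\le Cr^{-2p}$ in one step. Naive iteration starting from $u\le u(0)$ also works: since $(p-1)(q-1)>1$, the exponents increase to $2p/D$ with uniformly bounded constants.

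The same inequality gives $|u'|\le Cr^{-1-2p/D}$ directly, so your case split on the sign of $N-2-2p/D$ is unnecessary. In that split, the borderline case actually produces an extra $\log r$, which is harmless. Moreover, when $2p/D>N-2$, the lower bound $u(r)\ge u(1)r^{2-N}$ already contradicts the decay.

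\textbf{Case $N=2$.} You use the Liouville theorem for positive superharmonic functions in the plane, which does not need radial symmetry. The paper gives the elementary radial version: $-ru'(r)\ge c>0$ for $r\ge1$ forces $u(R)\le u(1)-c\log R$.

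\textbf{Comparison.} The paper's route is shorter. Yours makes explicit exactly where Sobolev subcriticality enters, with no dependence on an external citation.
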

\begin{proof}
	For $N\ge3$, apply \cite[Corollary 3.1]{Mitidieri} with zero
	weights and exponents $p-1,q-1$. Its condition is
	$N[(p-1)(q-1)-1]<2pq$, equivalent to \eqref{range}.
	For $N=2$, $-ru'(r)=\int_0^r sv(s)^{p-1}\dd s\ge c>0$ for
	$r\ge1$, hence integration from $1$ to $R$ gives $u(R)\le u(1)-\left(\int_0^1 sv(s)^{p-1}\dd s\right)\log R$, 
	contradicting $U(R)>0$ for large $R$.
\end{proof}

Define the parameters range
\begin{equation*}
	\D:=\left\{(p,q)\in(2,\infty)^2:\frac1p+\frac1q>\frac{N-2}N\right\}\,.
\end{equation*}

\begin{proposition}[Compactness of centred profiles]\label{prop:cc-compactness}
	Let $(p_j,q_j)\to(p,q)$ in $\cD_N$ and $(U_j,V_j)$ be centred positive solutions of \eqref{system1} at $(p_j,q_j)$. Then there exists a subsequence which converges to a centred positive solution of \eqref{system1} at $(p,q)$ in $C_0(\R^N)^2$, $C^2_{\rm loc}(\R^N)^2$ and $H^1(\R^N)^2$.
\end{proposition}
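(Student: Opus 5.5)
The plan is the standard blow-up/compactness scheme: first a uniform $L^\infty$ bound, then uniform decay, then passage to the limit. The main point is to exclude loss of mass in the limit.

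\emph{Step 1 (uniform $L^\infty$ bound by blow-up).} Suppose $M_j:=\max\{U_j(0)^{1/\alpha_j},V_j(0)^{1/\beta_j}\}\to\infty$, where $\alpha_j=p_j/D_j$ and $\beta_j=q_j/D_j$ are the scaling exponents of \eqref{introfrequencyscaling}. Since the profiles are centred and radially decreasing (Lemma~\ref{profileregularity}), the maxima sit at the origin. Set
\[
\tU_j(x)=M_j^{-\alpha_j}U_j(M_j^{-1/2}x),\qquad \tV_j(x)=M_j^{-\beta_j}V_j(M_j^{-1/2}x).
\]
These solve $-\Delta\tU_j+M_j^{-1}\tU_j=\tV_j^{p_j-1}$ and the symmetric equation, are bounded by $1$, and have $\max\{\tU_j(0),\tV_j(0)\}=1$. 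Local elliptic estimates give $C^{2,\gamma}_{\rm loc}$ bounds, so a subsequence converges in $C^2_{\rm loc}$ to a nonnegative radial nonincreasing solution of \eqref{eq:cc-massless} at $(p,q)$ with one component equal to $1$ at the origin. The strong maximum principle, applied through the coupling, makes both components positive, which contradicts Lemma~\ref{lem:cc-liouville}. Hence $\|U_j\|_\infty+\|V_j\|_\infty\le C$.

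\emph{Step 2 (nonvanishing).} From the equations, $U_j(0)=\mathcal R(V_j^{p_j-1})(0)\le\|V_j\|_\infty^{p_j-1}$, and similarly $V_j(0)\le\|U_j\|_\infty^{q_j-1}$. Since the maxima are at the origin, this gives
\[
U_j(0)\le V_j(0)^{p_j-1}\le U_j(0)^{(p_j-1)(q_j-1)}.
\]
As $(p-1)(q-1)>1$ uniformly near the limit, $U_j(0)$ is bounded below, and then so is $V_j(0)$.

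\emph{Step 3 (uniform decay).} This is the main obstacle: ruling out mass escaping to infinity in a way that is uniform in $j$. I would use monotonicity. Since $U_j$ is radially decreasing, $U_j(R)^{q_j}|B_R|\le\int U_j^{q_j}=H_j$. By \eqref{Aidentity}, \eqref{testing} and Lemma~\ref{dualexistence}, $H_j$ is comparable to $A_j+B_j$, but a uniform upper bound on $H_j$ is not immediate. Instead I would argue pointwise, using boundedness together with the radial decay estimate for radially decreasing functions in $L^{t}$ for suitable $t$. Concretely, $U_j=\mathcal R(V_j^{p_j-1})$ with $V_j\le C$, so $U_j\le C$ and the $L^1$-norm of the kernel $G_N$ is $1$. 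To get decay, show that the set where $U_j^{q_j-2}+V_j^{p_j-2}\ge\varepsilon$ lies in a fixed ball $B_{R_\varepsilon}$. If it did not, monotonicity would give $U_j\ge c$ on $B_{R_j}$ with $R_j\to\infty$. Testing the first equation against the first Dirichlet eigenfunction of $B_{R_j}$ then yields a contradiction, because $V_j^{p_j-1}\ge\mathcal R$-type lower bounds force superlinear growth of $\int U_j\varphi_1$ against $(1+\lambda_1(B_{R_j}))\int U_j\varphi_1$. More precisely, the coupled sub/supersolution on large balls, combined with the bound from Step 1, should contradict the spectral inequality. Once that set is confined to $B_{R_\varepsilon}$, the comparison argument of Lemma~\ref{profileregularity} gives $U_j+V_j+|\nabla U_j|+|\nabla V_j|\le Ce^{-c|x|}$ uniformly in $j$.

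\emph{Step 4 (conclusion).} Uniform exponential bounds plus $C^2_{\rm loc}$ convergence give convergence in $C_0(\R^N)^2$. Testing the equations, $\|U_j\|_{H^1}$ and $\|V_j\|_{H^1}$ are dominated by uniformly integrable nonlinear terms, so dominated convergence gives $H^1$ convergence. The limit is centred, radial, nonnegative, nonzero by Step 2, and positive by the maximum principle.
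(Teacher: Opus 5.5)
Your Steps 1, 2 and 4 follow the paper's own argument: blow-up to the massless system plus Lemma~\ref{lem:cc-liouville}, then $U_j(0),V_j(0)\ge1$ from the maxima at the origin, then exterior comparison. Your blow-up exponents $p_j/D_j$, $q_j/D_j$ with spatial scale $M_j^{-1/2}$ are exactly the ones dictated by \eqref{introfrequencyscaling}.

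The gap is Step~3. The scenario you must exclude is $U_j\ge c$ (hence also $V_j\ge c'$) on balls $B_{R_j}$ with $R_j\to\infty$. In the $C^2_{\rm loc}$ limit this means $(U,V)\to(U_\infty,V_\infty)$ at infinity, with $U_\infty=V_\infty^{p-1}$ and $V_\infty=U_\infty^{q-1}$, so $(U_\infty,V_\infty)=(1,1)$. But $U\equiv V\equiv1$ is itself a positive bounded solution of \eqref{system1}. Testing the equations against a Dirichlet eigenfunction of a ball gives relations that every solution satisfies, the constant pair included. Such relations cannot rule out profiles that are close to $(1,1)$ on $B_{R_j}$.

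The spectral Liouville mechanism needs $-\Delta U_j$ to be bounded below by a positive constant on the ball. Here $-\Delta U_j=V_j^{p_j-1}-U_j$ has no sign and vanishes on the plateau, so $\lambda_1(B_{R_j})\to0$ yields nothing. As you noticed, no uniform bound on $H_j$ is available either, since Lemma~\ref{dualexistence} gives only a lower bound. The plateau scenario is precisely one with $H_j\to\infty$.

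What is missing is information carried inward from infinity, which the local test on $B_{R_j}$ does not see. The paper obtains it from the radial Hamiltonian. Using \eqref{radial_eqs}, one computes
\[
E_j(r)=U_j'V_j'-U_jV_j+\frac{U_j^{q_j}}{q_j}+\frac{V_j^{p_j}}{p_j},\qquad E_j'(r)=-\frac{2(N-1)}{r}\,U_j'V_j'\le0 .
\]
Each profile decays together with its derivatives, so $E_j(r)\to0$ as $r\to\infty$, hence $E_j\ge0$ on $(0,\infty)$. The $C^2_{\rm loc}$ limit therefore satisfies $U'V'-UV+U^q/q+V^p/p\ge0$. If $(U_\infty,V_\infty)=(1,1)$, interior estimates give $U',V'\to0$, and letting $r\to\infty$ yields $0\le-1+1/p+1/q<0$, a contradiction. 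So the limit vanishes at infinity.

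Your confinement claim then needs no separate argument. Choose $R$ with $U(R)+V(R)<\varepsilon$; local uniform convergence and radial monotonicity give $\sup_{r\ge R}(U_j+V_j)<2\varepsilon$ for all large $j$. From there your Step~4 goes through: uniform exterior comparison gives uniform exponential bounds, hence convergence in $C_0$ and in $H^1$. This also supplies explicitly the $H^1$ convergence that the paper asserts but leaves implicit.
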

\begin{proof}
	We first prove that
	$\sup_j\bigl(U_j(0)+V_j(0)\bigr)<\infty$.
	By the radial monotonicity we already known that
	$\|U_j\|_\infty=U_j(0)$ and $\|V_j\|_\infty=V_j(0)$.
	Suppose by contradiction that $\max\{U_j(0),V_j(0)\}\to+\infty$ along a subsequence. Set
	\[
	\alpha_j=\frac{2(p_j+1)}{p_jq_j-1}\,,\qquad
	\beta_j=\frac{2(q_j+1)}{p_jq_j-1}\,,\qquad
	t_j=\max\big\{U_j(0)^{1/\alpha_j},V_j(0)^{1/\beta_j}\big\}\,,
	\]
	and define
	\[
	\tU_j(x)=t_j^{-\alpha_j}U_j(x/t_j)\,,\qquad
	\tV_j(x)=t_j^{-\beta_j}V_j(x/t_j)\,.
	\]
	Since $(p_j,q_j)\to(p,q)$ satisfying \eqref{range}, then $\alpha_j,\beta_j$
	converge to positive finite limits, and hence $t_j\to+\infty$.
	The identities $p_j\beta_j=\alpha_j+2$ and
	$q_j\alpha_j=\beta_j+2$ give
	\[
	-\Delta\tU_j+t_j^{-2}\tU_j=\tV_j^{p_j-1}\,,\qquad
	-\Delta\tV_j+t_j^{-2}\tV_j=\tU_j^{q_j-1}\,,\qquad 
	0<\tU_j,\tV_j\le1\,.
	\]
	Interior elliptic estimates \cite[Chapter~9]{GilbargTrudinger} give uniform
	local $W^{2,N+1}$ bounds for $\tU_j,\tV_j$.
	After passage to a subsequence, $(U_j,V_j)$ converges weakly
	in $W^{2,N+1}_{\mathrm{loc}}(\R^N)^2$ and locally uniformly
	to $(\tU,\tV)$.
	Taking limits in the weak formulation of the rescaled equations shows that
	$(\tU,\tV)$ is a nonnegative radial weak solution of
	\eqref{eq:cc-massless}, with $\max\{\tU(0),\tV(0)\}=1$.
	Elliptic regularity gives $\tU,\tV\in C^2(\R^N)$.
	Neither component can vanish identically, since the equations
	would then force both to vanish. The strong maximum principle
	gives $\tU,\tV>0$, contradicting Lemma~\ref{lem:cc-liouville}.
	Hence $(U_j)_j$ and $(V_j)j$ are uniformly bounded.
		
	Interior elliptic estimates therefore give, after passage to a subsequence,
	$
	(U_j,V_j)\to(U,V)$
	in $C^2_{\mathrm{loc}}(\R^N)^2,
	$
	where $(U,V)$ is a nonnegative radial nonincreasing solution of
	\eqref{system1} at $(p,q)$.
	Then, by
	$U_j(0)	\le-\Delta U_j(0)+U_j(0)
	=V_j(0)^{p_j-1},\ 
	V_j(0)
	\le-\Delta V_j(0)+V_j(0)
	=U_j(0)^{q_j-1},$ we have
	$$U_j(0)\leq U_j(0)^{(p_j-1)(q_j-1)}\,,\quad\mbox{and}\quad V_j(0)\leq V_j(0)^{(p_j-1)(q_j-1)}\,.$$
	Since $(p_j-1)(q_j-1)>1$, these inequalities give
	$U_j(0),V_j(0)\ge1$, and hence $U(0),V(0)\ge1$.
	The strong maximum principle then gives $U,V>0$.
	
	We next prove that $U,V$ vanish at infinity.
	Positivity and radial monotonicity give finite limits
	$U_\infty,V_\infty\ge0$. For any sequence $|x_k|\to+\infty$
	and any fixed $R>0$,
	$
	U(|x_k|+R)\le U(x_k+x)\le U(|x_k|-R),
	\  |x|\le R,
	$
	for all sufficiently large $k$, and similarly for $V$.
	Hence $U(x_k+\cdot)\to U_\infty$ and
	$V(x_k+\cdot)\to V_\infty$ locally uniformly.
	Choose $\zeta\in C_c^\infty(\R^N)$.
	By \eqref{system1}, integration by parts, and local uniform
	convergence,
	\begin{align*}
		\int\bigl(U(x_k+x)-V(x_k+x)^{p-1}\bigr)\zeta(x)\,\dd x
		&=\int\Delta U(x_k+x)\zeta(x)\,\dd x\\
		&=\int U(x_k+x)\Delta\zeta(x)\,\dd x\\
		&\longrightarrow
		U_\infty\int\Delta\zeta(x)\,\dd x=0\,.
	\end{align*}
	So we have $U_\infty=V_\infty^{p-1}$.
	Applying the same argument to the second equation also gives 
	$V_\infty=U_\infty^{q-1}$. It follows that
	$(U_\infty,V_\infty)$ is either $(0,0)$ or $(1,1)$.
	
	To exclude $(1,1)$, we use the radial equations \eqref{radial_eqs} for $U_j,V_j$:
	\begin{equation*}
		\begin{split}
			\frac{\dd}{\dd r}\left(
			U_j'V_j'-U_jV_j+\frac{U_j^{q_j}}{q_j}
			+\frac{V_j^{p_j}}{p_j}\right)
			&=(U_j''-U_j+V_j^{p_j-1})V_j'
			+(V_j''-V_j+U_j^{q_j-1})U_j'\\
			&=-\frac{2(N-1)}r U_j'V_j'\le0\,.
		\end{split}
	\end{equation*}
	The expression in parentheses tends to zero as $r\to+\infty$
	by Lemma \ref{profileregularity}, and is therefore nonnegative.
	Letting $j\to+\infty$ and using local $C^2$ convergence, we obtain
	\begin{equation}\label{eq:cc-radial-energy}
		U'V'-UV+\frac{U^q}q+\frac{V^p}p\ge0\,,\qquad r>0\,.
	\end{equation}
	Suppose that $U_\infty=V_\infty=1$. Applying interior $W^{2,N+1}$ estimates and Sobolev embedding to
	$(-\Delta+1)(U-1)=V^{p-1}-1$ gives
	$$|\nabla U(x_k)|\le C\left(\|U-1\|_{L^\infty(B_2(x_k))}+\|V^{p-1}-1\|_{L^\infty(B_2(x_k))}\right)\to 0\,,$$
	where $C$ is independent of $k$. The same argument applied to
	$(-\Delta+1)(V-1)=U^{q-1}-1$ gives $\nabla V(x_k)\to0$.
	Since $|\nabla U(x_k)|=|U'(|x_k|)|$ and the sequence
	$|x_k|\to+\infty$ was arbitrary, we have $U'(r),V'(r)\to0$ as $r\to+\infty$.
	Letting $r\to+\infty$ in \eqref{eq:cc-radial-energy} therefore yields
	\[
	0\le-1+\frac1q+\frac1p<0\,,
	\]
	a contradiction, since $p,q>2$. Thus $U_\infty=V_\infty=0$, and
	Lemma \ref{profileregularity} shows that $(U,V)$ is a centred
	positive decaying solution.
	
	Finally, let $\varepsilon>0$ and choose $R>0$ such that
	$U(R)+V(R)<\varepsilon$.
	Local uniform convergence gives
	$U_j(R)+V_j(R)<2\varepsilon$ for all sufficiently large $j$.
	Radial monotonicity then yields
	\[\sup_{r\ge R}\bigl(|U_j(r)-U(r)|+|V_j(r)-V(r)|\bigr)\le U_j(R)+V_j(R)+U(R)+V(R)<3\varepsilon\,.\]
	Together with uniform convergence on $\overline B_R$, this proves
	$\|U_j-U\|_\infty+\|V_j-V\|_\infty\to0$.
\end{proof}

\begin{lemma}\label{lem:cc-local-continuation}
	Let $(U_0,V_0)$ be a centred positive solution of
	\eqref{system1} at $(p_0,q_0)\in\D$.
	There exist $\tdelta,\varepsilon>0$ such that, for every
	$(p,q)\in\D$ with $|p-p_0|+|q-q_0|<\tdelta$,
	\eqref{system1} has exactly one centred positive solution
	$(U_{p,q},V_{p,q})$ satisfying
	\[
	\|U_{p,q}-U_0\|_\infty+\|V_{p,q}-V_0\|_\infty<\varepsilon\,.
	\]
	Moreover, the map
	$(p,q)\mapsto(U_{p,q},V_{p,q})$ is $C^1$ on this parameter
	neighbourhood, with values in $C_{0,\mathrm{rad}}(\R^N)^2$.
\end{lemma}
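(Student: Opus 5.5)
The plan is to apply the implicit function theorem to a fixed-point formulation of \eqref{system1} posed in $C_{0,\mathrm{rad}}(\R^N)^2$. For $(p,q)$ near $(p_0,q_0)$ and $(U,V)$ near $(U_0,V_0)$ in $C_{0,\mathrm{rad}}(\R^N)^2$, we define
\[
\Phi(p,q,U,V)=\Bigl(U-\cR\bigl(V_+^{p-1}\bigr),\;V-\cR\bigl(U_+^{q-1}\bigr)\Bigr).
\]
By \eqref{eq:nonlinear-resolvent}, its zeros with positive components are exactly the positive solutions.

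The first step is to check that $\Phi$ is $C^1$ jointly in $(p,q,U,V)$ as a map into $C_{0,\mathrm{rad}}(\R^N)^2$. Since $p,q>2$, the map $(p,t)\mapsto t_+^{p-1}$ is $C^1$ on $(2,\infty)\times\R$. Its $p$-derivative is $t_+^{p-1}\log t_+$, which is continuous and vanishes at $t=0$. On bounded sets in $C_0$ the Nemytskii operators are therefore $C^1$ from $C_0$ to $C_0\cap L^\infty$. The difficulty is that $\cR$ does not map $C_0$ into $C_0$ with control of decay in the radial direction through the $L^1$ norm alone. However, $\cR$ is a contraction on $L^\infty$, and it maps $C_0$ into $C_0$ because $G_N\in L^1$. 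This suffices for $C^1$ regularity. Near $(U_0,V_0)$ the components stay bounded, so no growth issue arises.

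The second step is to show that $D_{(U,V)}\Phi$ at $(p_0,q_0,U_0,V_0)$ is an isomorphism of $C_{0,\mathrm{rad}}(\R^N)^2$. This derivative is $I-\mathcal K$, where
\[
\mathcal K(\phi,\psi)=\bigl((p_0-1)\cR(V_0^{p_0-2}\psi),\,(q_0-1)\cR(U_0^{q_0-2}\phi)\bigr).
\]
The operator $\mathcal K$ is compact on $C_{0,\mathrm{rad}}$. The weights decay exponentially by \eqref{profiledecay}, and $\cR$ applied to bounded functions gives uniformly equicontinuous functions, by $W^{2,t}$ bounds for large $t$ together with Sobolev embedding. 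The tails are uniformly small because of the decaying weights and the exponential decay of $G_N$ in the convolution. By the Fredholm alternative it then suffices to prove injectivity. A kernel element is a bounded radial solution of \eqref{eq:linearized}, and Proposition~\ref{sp:strict-gap} forces it to be zero. This is the step where nondegeneracy enters, and it is the main point of the argument. Restricting to radial functions removes the translation kernel.

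The third step is the conclusion. The implicit function theorem gives $\tdelta,\varepsilon$ and a unique $C^1$ branch of zeros of $\Phi$ in the $\varepsilon$-ball. The remaining work is positivity and the meaning of uniqueness. Since $U_0,V_0>0$, the nonlinearity $V_+^{p-1}$ is nonnegative and not identically zero near $V_0$, so $U=\cR(V_+^{p-1})>0$ by strict positivity of $G_N$, and similarly $V>0$. Hence branch elements are positive radial solutions. They decay at infinity and are therefore in $\mathcal S$ by Lemma~\ref{profileregularity}; they are centred by radial symmetry. Conversely, any centred positive solution within $\varepsilon$ of $(U_0,V_0)$ in sup norm is radial by Lemma~\ref{profileregularity}, so it lies in $C_{0,\mathrm{rad}}$ and is a zero of $\Phi$. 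It must therefore coincide with the branch element.

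I expect the compactness and regularity verification for the $C_0$ setting to be the main technical point. In particular, the tails must be controlled uniformly, which holds because $G_N$ decays exponentially.
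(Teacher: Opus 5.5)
Your proposal is correct and follows essentially the same route as the paper: the fixed-point map $\bigl(U-\cR(V_+^{p-1}),\,V-\cR(U_+^{q-1})\bigr)$ on $C_{0,\mathrm{rad}}(\R^N)^2$, $C^1$ regularity of the Nemytskii maps, compactness of $h\mapsto\cR(ah)$ with the Fredholm alternative and injectivity from Proposition~\ref{sp:strict-gap}, the implicit function theorem, and positivity from $G_N>0$. The only difference is cosmetic: you get equicontinuity from $W^{2,t}$ bounds and Sobolev embedding, whereas the paper uses continuity of translations of $G_N$ in $L^1$ together with Ascoli--Arzel\`a.
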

\begin{proof}
	We define the map $\mathcal F:\D\times C_{0,\mathrm{rad}}(\R^N)^2\to C_{0,\mathrm{rad}}(\R^N)^2$ such that
	\begin{equation}\label{eq:cc-fixed-point}
		\mathcal F(p,q,U,V)=\bigl(U-\cR(V_+^{p-1}),\ v-\cR(U_+^{q-1})\bigr)\,.
	\end{equation}
	Recall that $\cR$ is a bounded operator on $C_0(\R^N)$; moreover, it is clear from
	the representation \eqref{eq:radial-resolvent} that	$\cR f$ is radial whenever $f$ is radial.
	The scalar derivatives
	$
	\partial_t(t_+^{p-1})=(p-1)t_+^{p-2},\ 
	\partial_p(t_+^{p-1})=t_+^{p-1}\log t_+,
	$
	where the second expression is defined as zero for $t\le0$,
	are uniformly continuous when $p$ ranges over a compact
	subinterval of $(1,\infty)$ and $t$ over a bounded interval.
	The mean-value formula therefore shows that
	$(p,w)\mapsto w_+^{p-1}$ is $C^1$ in the uniform norm.
	Consequently, $\mathcal F$ is a $C^1$ map.
	
	For every $a\in C_0(\R^N)$, the operator: 
	\begin{equation}\label{eq:cc-compact-resolvent}
		C_0(\R^N)\to C_0(\R^N),\quad h\mapsto\cR(ah)\quad\text{is compact.}
	\end{equation}
	Indeed, for $\|h\|_\infty\le1$,
	\eqref{eq:bessel-potential} and $\|G_N\|_{L^1(\R^N)}=1$ give
	\[
	\|\cR(ah)\|_\infty\le\|a\|_\infty,\qquad
	|\cR(ah)(x+z)-\cR(ah)(x)|
	\le\|a\|_\infty
	\|G_N(\cdot+z)-G_N\|_{L^1(\R^N)}\,.
	\]
	Splitting the convolution integral into $|y|\le R$ and $|y|>R$
	also gives
	\[
	|\cR(ah)(x)|
	\le \|a\|_\infty
	\int_{\{|y|\ge |x|-R\}}G_N(y)\dd y
	+\sup_{|y|>R}|a(y)|\,,\qquad |x|>R\,.
	\]
	Translation continuity in $L^1$ gives equicontinuity.
	Since $a\in C_0$ and $G_N\in L^1$, first choosing $R$ large
	and then $|x|$ large gives uniform decay at infinity.
	Ascoli--Arzel\`a theorem therefore provides compactness in $C_0(\R^N)$.
	If $a$ is radial, the same conclusion holds on $C_{0,\mathrm{rad}}(\R^N)$.
	
	At a centred positive solution $(U,V)$, one has
	$$D_{(U,V)}\mathcal F(p,q,U,V)[\phi,\psi]=\bigl(\phi-(p-1)\cR(V^{p-2}\psi),\ \psi-(q-1)\cR(U^{q-2}\phi)\bigr)$$
	for $(\phi,\psi)\in C_{0,\mathrm{rad}}(\R^N)^2$. Because $U^{q-2},V^{p-2}\in C_{0,\mathrm{rad}}(\R^N)$,
	\eqref{eq:cc-compact-resolvent} shows that this derivative
	is the identity minus a compact operator.
	If $(\phi,\psi)$ belongs to its kernel, then
	\[
	\phi=(p-1)\cR(V^{p-2}\psi)\,,\qquad
	\psi=(q-1)\cR(U^{q-2}\phi)\,.
	\]
	Applying $-\Delta+1$ shows that $(\phi,\psi)$ is a bounded
	radial solution of \eqref{eq:linearized}.
	Proposition~\ref{sp:strict-gap} gives then $\phi=\psi=0$.
	The Fredholm alternative \cite[Theorem~6.6(c)]{Brezis} thus implies that $D_{(U,V)}\cF$ is also surjective.
	It is therefore bijective, and its inverse is bounded by the
	bounded inverse theorem.
	
	For $\varepsilon>0$ the implicit function theorem gives $\tdelta>0$
	such that, for every $(\widetilde p,\widetilde q)\in\D$ with
	$|\widetilde p-p|+|\widetilde q-q|<\delta$, there exists exactly
	one pair $(\widetilde U,\widetilde V)
	\in C_{0,\mathrm{rad}}(\R^N)^2$ satisfying
	\[
	\mathcal F(\widetilde p,\widetilde q,\widetilde U,\widetilde V)=0\,,
	\qquad
	\|\widetilde U-U\|_\infty+\|\widetilde V-V\|_\infty<\varepsilon\,.
	\]
	Moreover, the map
	$(\widetilde p,\widetilde q)\mapsto(\widetilde U,\widetilde V)$
	is $C^1$ on this parameter neighbourhood.
	
	Choose $\varepsilon<\|U\|_\infty+\|V\|_\infty$, decreasing
	$\tdelta$ if necessary, so that $(\widetilde U,\widetilde V)\ne(0,0)$.
	By \eqref{eq:cc-fixed-point} and $G_N>0$, both components are
	strictly positive: a zero of either component would force both
	to vanish identically.
	Elliptic regularity and Lemma \ref{profileregularity} then show that
	$(\widetilde U,\widetilde V)$ is a centred positive decaying solution.
	Conversely, every nearby centred positive solution satisfies
	$\mathcal F=0$ by \eqref{eq:nonlinear-resolvent}, so the uniqueness
	already proved applies.
\end{proof}

We are finally in the position to complete the proof of Theorem \ref{thm:main} by proving that, for each $(p,q)\in\D$, there is exactly one
centred positive solution.

\begin{proof}[Proof of Theorem~\ref{thm:main}: Uniqueness]
	Fix $(p_0,q_0)\in\D$. The set of centred positive solutions
	at $(p_0,q_0)$ is compact by Proposition~\ref{prop:cc-compactness}, and each solution is
	isolated by Lemma~\ref{lem:cc-local-continuation}.
	Hence there are only finitely many such solutions. Indeed, otherwise, compactness would give a sequence of pairwise distinct
	solutions converging, after passage to a subsequence, to a solution
	$(U,V)$. Local uniqueness in Lemma~\ref{lem:cc-local-continuation} gives a neighbourhood of $(U,V)$
	containing no other solution at $(p_0,q_0)$, which is a contradiction.
	
	We now claim that the number of centred positive solutions
	is constant for parameters sufficiently close to $(p_0,q_0)$.
	Suppose that there exist $m\ge1$ solutions at $(p_0,q_0)$, and choose pairwise
	disjoint neighbourhoods of these solutions in
	$C_{0,\mathrm{rad}}(\R^N)^2$.
	By Lemma~\ref{lem:cc-local-continuation}, the neighbourhoods
	can be chosen so that every parameter sufficiently close to
	$(p_0,q_0)$ has exactly one solution in each of them.
	There can be no additional solutions outside their union.
	Indeed, such solutions along a sequence
	$(p_j,q_j)\to(p_0,q_0)$ would, by
	Proposition~\ref{prop:cc-compactness}, converge along a
	subsequence to one of the $m$ solutions at $(p_0,q_0)$.
	They would therefore eventually belong to its neighbourhood,
	a contradiction.
	If there is no solution at $(p_0,q_0)$, the same compactness
	proposition excludes solutions along any sequence of parameters
	converging to $(p_0,q_0)$.
	Thus the number of centred positive solutions is locally
	constant throughout $\D$.
	
	Now fix $(p,q)\in\D$ and set $s=\min\{p,q\}$.
	The segment
	\[
	(p_t,q_t)=(s+t(p-s),s+t(q-s)),\qquad 0\le t\le1,
	\]
	lies in $\D$, since
	\[
	\frac1{p_t}+\frac1{q_t}
	\ge\frac1p+\frac1q>\frac{N-2}N\,.
	\]
	At $(s,s)$, existence of a centred positive solution follows
	from \cite[Theorems~1.7--1.9]{BMR} and Lemma \ref{profileregularity}.
	For any solution at $(s,s)$, subtracting the two equations
	and testing with $U-V\in H^1(\R^N)$ gives
	\[
	\int\bigl(|\nabla(U-V)|^2+|U-V|^2\bigr)\dd x=-\int(U^s-V^s)(U-V)\dd x\le0\,.
	\]
	Therefore $U=V$. Since $(s,s)\in\D$, the exponent $s$ is
	scalar subcritical, and Kwong's theorem~\cite{Kwong} gives
	uniqueness of the centred positive solution of
	$-\Delta U+U=U^s$.
	Consequently, there is exactly one centred positive solution
	at $(s,s)$.
	The number of centred positive solutions is locally constant
	along the segment and hence constant on $[0,1]$: it therefore equals $1$ at $(p,q)$.
	The symmetry result in Lemma \ref{profileregularity} gives in turn uniqueness
	up to common translations.
	Uniqueness and Lemma~\ref{lem:cc-local-continuation} also imply
	that the centred solution depends $C^1$ on $(p,q)$ with values
	in $C_{0,\mathrm{rad}}(\R^N)^2$.
\end{proof}

\section{Normalised solutions for \texorpdfstring{\eqref{system}}{(system)}: Existence, Classification, and further results}\label{Section_normalised}

In light of the existence and uniqueness result for system \eqref{system1} in Theorem \ref{thm:main}, obtained in the previous section, we are now ready to present the proof of Theorem \ref{introexistencescaling} concerning normalised solutions of system \eqref{system}. 
\begin{proof}[Proof of Theorem~\ref{introexistencescaling}]
	First, we deal with the scaling connection between \eqref{system} and \eqref{system1}.
	In order for the ansatz 
	$$u_\lambda(x)=\lambda^\xi U(\sqrt\lambda x)\,,\quad v_\lambda(x)=\lambda^\eta V(\sqrt\lambda x)\,,$$
	to be a solution of the equations in \eqref{system}, necessarily $(\xi,\eta)$ must satisfy $\xi+1=(p-1)\eta$ and $\eta+1=(q-1)\xi$.
	The unique solution is $\xi=p/D$, $\eta=q/D$ with $D$ defined in \eqref{parameters}; a simple change of variables gives \eqref{masslaw}.
	The inverse transformation is
	$$U(y)=\lambda^{-p/D}u(y/\sqrt\lambda)\,,\quad V(y)=\lambda^{-q/D}v(y/\sqrt\lambda)\,.$$
	Such transformation clearly preserve positivity and the decay class. 
	Lemma \ref{dualexistence} shows the existence a profile in $\mathcal S$, while the scaling law \eqref{masslaw} then selects the frequency \eqref{frequencychoice}. 
	Furthermore, Theorem~\ref{thm:main} identifies $\mathcal S$, modulo translations,
	with one profile, therefore necessarily the least-action one constructed in
	Theorem~\ref{introexistencescaling}. For $\gamma\ne0$, the mass law chooses the
	one frequency given in \eqref{frequencychoice}. Note that the Poho\v zaev identities in Proposition \ref{pohozaev} imply
	$B(U,V)=(\gamma+1)I_1(U,V)$, which for $\gamma=0$ reduces to $B(U,V)=I_1(U,V)$. Hence, \eqref{dualaction} gives the unique mass $a_*=c_+$ in
	\eqref{introcriticalmass}, and \eqref{masslaw} preserves it for all $\lambda>0$: every positive frequency produces a solution of that mass. 
\end{proof}

\begin{remark}
	Observe that, by the identity \eqref{Jidentity} in Proposition \ref{poho}, positive solutions have negative, zero or positive energy according
	as $\sigma$ is larger than, equal to or smaller than $N/(N+2)$, namely in dependence of the sign of $\gamma_N(p,q)$.
	Also, \eqref{Bidentity} rules out such positive finite-energy solutions with
	$\lambda>0$ at or above the Sobolev-critical hyperbola. 
\end{remark}


%


\begin{remark}[The diagonal case]
	If $p=q=m$ and $(u,v)$ is a positive decaying solution of
	\eqref{system} with $\lambda>0$, then $u=v$. Indeed, subtraction and testing with $u-v$ yield
	\[
	\int|\nabla(u-v)|^2+\lambda\int (u-v)^2=\int(v^{m-1}-u^{m-1})(u-v)\le0\,,
	\]
	thus $u-v=0$. Therefore, applying Theorem \eqref{introexistencescaling} gives that, if $m\neq2+\frac4N$ the normalised positive
	solution is unique up to translation, while at $m=2+\frac4N$ its only mas is $\|Q_m\|_2^2$ and its frequencies vary in the whole $(0,\infty)$. Hence, we retrieve the well-known scalar results in \cite{Kwong}.
\end{remark}

\begin{corollary}[Uniqueness of the nonnegative dual optimiser]
	The nonnegative normalised maximising pair in \eqref{Cpq}
	is unique up to a common translation.
\end{corollary}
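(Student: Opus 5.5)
Every nonnegative normalised maximiser of \eqref{Cpq} will be converted into a positive solution of \eqref{system1}. Theorem~\ref{thm:main} then fixes that solution up to translation, and the conversion can be inverted.

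Let $(f,g)$ be a nonnegative maximiser with $\|f\|_{q'}=\|g\|_{p'}=1$. The multiplier equations in the proof of Lemma~\ref{dualexistence} hold for any maximiser, not only the rearranged one. Indeed, $(f,g)$ is a constrained critical point of $(f,g)\mapsto\int f\mathcal Rg$ on the product of the two unit spheres. Testing the Euler--Lagrange equations against $f$ and $g$ identifies both multipliers with $\mathfrak C_{p,q}$. This gives
\[
\mathcal Rg=\mathfrak C_{p,q}f^{q'-1}=\mathfrak C_{p,q}f^{1/(q-1)},\qquad \mathcal Rf=\mathfrak C_{p,q}g^{1/(p-1)}
\]
almost everywhere. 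One point needs care here. The constraint functional $\|\cdot\|_{q'}^{q'}$ is differentiable on $L^{q'}$ with derivative $q'|f|^{q'-2}f$, and this is fine because $q'>1$; nonnegativity removes the absolute values.

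Set $u_0=f^{1/(q-1)}$ and $v_0=g^{1/(p-1)}$, and rescale exactly as before to $U=\mathfrak C_{p,q}^{-p/D}u_0$ and $V=\mathfrak C_{p,q}^{-q/D}v_0$. Then $(U,V)$ solves \eqref{system1}, and both components are strictly positive because $G_N>0$ and $f,g\not\equiv0$. Moreover $U\in W^{2,p'}$ and $V\in W^{2,q'}$, so $(U,V)$ is a strong solution. Lemma~\ref{profileregularity} places it in $\mathcal S$, and Theorem~\ref{thm:main}(1) shows it is a common translate of the unique centred profile $(U_*,V_*)$.

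Finally the map is inverted: $f=u_0^{q-1}=\mathfrak C_{p,q}^{p(q-1)/D}U^{q-1}$ and $g=\mathfrak C_{p,q}^{q(p-1)/D}V^{p-1}$. The constants depend only on $p,q,N$, so two maximisers produce $(U,V)$ and $(U_*(\cdot-x_0),V_*(\cdot-x_0))$ and hence coincide up to the common translation $x_0$.

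The main obstacle is the first step: deriving the Euler--Lagrange equations for an arbitrary maximiser, which might not be radial or smooth. Lemma~\ref{dualexistence} only asserts them for the constructed one. The plan is to differentiate along $f+\epsilon\varphi$ with $\varphi\in L^{q'}$ and normalise, using that $f$ enters linearly in the numerator. Positivity of the resulting $U,V$ everywhere then follows from the strictly positive kernel.
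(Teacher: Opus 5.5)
Your proposal is correct and follows the paper's proof. The paper likewise applies the multiplier equations $\mathcal Rg=\mathfrak C_{p,q}f^{1/(q-1)}$ and $\mathcal Rf=\mathfrak C_{p,q}g^{1/(p-1)}$ to every nonnegative maximiser, rescales by the fixed amplitude factors into $\mathcal S$, invokes Theorem~\ref{thm:main}, and inverts. Your explicit derivation of the Euler--Lagrange equations for a possibly non-radial maximiser fills in a step the paper asserts without comment.
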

\begin{proof}
	Every nonnegative maximiser satisfies the multiplier equations in the
	proof of Theorem~\ref{introexistencescaling} and produces a member of
	$\mathcal S$ by the same fixed amplitude factors. Theorem~\ref{thm:main}
	identifies these profiles. Reversing the amplitude factors and taking
	the powers $q-1,p-1$ identifies the original normalised maximisers.
\end{proof}

\subsection{Frequency and energy of normalised solutions}
We finally investigate the dependence of the frequency and of the energy of the (unique) normalised solution on the mass $a$. Recall the definition of $\gamma$ in \eqref{parameters}, and note that $\gamma>-1$ by \eqref{range}. 


\begin{proposition}[Energy and frequency of positive normalised solutions]\label{introleastenergy}
	Assume \eqref{range}, $\gamma_N(p,q)\ne0$, and let $a>0$. Let $(\lambda,u,v)$ be the positive normalised solution triple, which is unique up to common translation by Theorem~\ref{introexistencescaling}. Then
	\begin{equation}\label{introgroundfrequency}
		\lambda=\left(\frac{a}{(\gamma+1)c_+}\right)^\frac1\gamma.
	\end{equation}
	Moreover, setting $E_+(a):=J(u,v)$, then
	$$E_+(a)=-\gamma c_+ \left(\frac{a}{(\gamma+1)c_+}\right)^{\frac{\gamma+1}\gamma}.$$
	In particular, $E_+'(a)=-\lambda$.
\end{proposition}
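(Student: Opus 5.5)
The plan is to reduce everything to the unique unit-frequency profile. By Theorem~\ref{thm:main}, the centred element $(U,V)\in\mathcal S$ is unique, so it is the least-action profile of Lemma~\ref{dualexistence} and $I_1(U,V)=c_+$. By Theorem~\ref{introexistencescaling}, the normalised solution is $(u,v)=(u_\lambda,v_\lambda)$ from \eqref{introfrequencyscaling}, with frequency fixed by $a=\lambda^\gamma B(U,V)$. Everything then follows from two scaling identities and one Poho\v zaev computation.

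\emph{Step 1: the mass of the profile.} Apply Proposition~\ref{pohozaev} with $\lambda=1$ and write $H=\int U^q=\int V^p$. Testing the equations gives $I_1(U,V)=A+B-\sigma H=(1-\sigma)H$, while \eqref{Bidentity} gives $B=\frac{N\sigma-(N-2)}2H$. Taking the ratio,
\[
\frac{B(U,V)}{I_1(U,V)}=\frac{N\sigma-N+2}{2(1-\sigma)}=\frac{\sigma}{1-\sigma}-\frac N2+1=\gamma+1 .
\]
Hence $B(U,V)=(\gamma+1)c_+$; this is the identity already used in the proof of Theorem~\ref{introexistencescaling}. Since $\gamma>-1$, this quantity is positive. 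Inserting it into \eqref{frequencychoice} gives \eqref{introgroundfrequency}.

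\emph{Step 2: the energy.} Each of the four terms of $I_\lambda(u_\lambda,v_\lambda)$ picks up the same factor under the change of variables $y=\sqrt\lambda x$. For instance,
\[
\int\nabla u_\lambda\cdot\nabla v_\lambda=\lambda^{(p+q)/D+1-N/2}A(U,V),
\]
and the terms $\lambda\int u_\lambda v_\lambda$, $\int u_\lambda^q$, $\int v_\lambda^p$ give the same power of $\lambda$, using $q\cdot p/D=p\cdot q/D$. Since $(p+q)/D=\sigma/(1-\sigma)$, this common exponent is $\gamma+1$, so $I_\lambda(u_\lambda,v_\lambda)=\lambda^{\gamma+1}c_+$. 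As a consistency check, this agrees with \eqref{introactionresponse1}. Therefore
\[
E_+(a)=J(u,v)=I_\lambda(u,v)-\lambda B(u,v)=c_+\lambda^{\gamma+1}-\lambda\,(\gamma+1)c_+\lambda^{\gamma}=-\gamma c_+\lambda^{\gamma+1}.
\]
Substituting $\lambda$ from Step 1 gives the stated formula for $E_+(a)$.

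\emph{Step 3: the derivative.} Regard both $E_+$ and $a$ as functions of $\lambda$. From $a=(\gamma+1)c_+\lambda^\gamma$ we get $\frac{da}{d\lambda}=\gamma(\gamma+1)c_+\lambda^{\gamma-1}$, and from Step 2 we get $\frac{dE_+}{d\lambda}=-\gamma(\gamma+1)c_+\lambda^\gamma$. Because $\gamma\ne0$, the map $\lambda\mapsto a$ is a smooth bijection of $(0,\infty)$. The chain rule then gives $E_+'(a)=-\lambda$. No step is a genuine obstacle. The only point needing care is Step 1, namely the algebra identifying $B/I_1$ with $\gamma+1$ and its positivity from $\gamma>-1$, which the whole computation rests on.
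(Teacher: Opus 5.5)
Your proof is correct and follows essentially the paper's route. The Poho\v zaev identities give $B(U,V)=(\gamma+1)c_+$, the mass law then fixes $\lambda$, and frequency scaling gives $E_+(a)=-\gamma c_+\lambda^{\gamma+1}$, from which $E_+'(a)=-\lambda$ follows. The only cosmetic difference is that you obtain the energy as $I_\lambda-\lambda B$ after scaling the action, whereas the paper scales $J$ directly, using $J(U,V)=-\gamma c_+$.
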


\begin{proof}
	Let $(U,V)\in\cS$ be the unit-frequency positive profile of action $I_1(U,V)=c_+$ given by Theorem \ref{thm:main}, then by Theorem \ref{introexistencescaling} then $(u_\lambda,v_\lambda)$ solve \eqref{system}. The Pohozaev identities in Proposition~\ref{pohozaev} yield
	\[
	B(U_+,V_+)=(\gamma+1)c_+\,,
	\qquad
	J(U_+,V_+)=-\gamma c_+\,,
	\]
	hence, the mass scaling law \eqref{masslaw} implies
	\[
	a=\lambda^\gamma B(U_+,V_+)=\lambda^\gamma(\gamma+1)c_+\,.
	\]
	Since $\gamma_N(p,q)\ne0$, we obtain \eqref{introgroundfrequency}. Moreover, the scaling law for $J$ gives
	\begin{equation*}
		E_+(a)=J(u_\lambda,v_\lambda)=\lambda^{\gamma+1}J(U_+,V_+)=-\gamma c_+\left(\frac a{(\gamma+1)c_+}\right)^{\frac{\gamma+1}{\gamma}}.
	\end{equation*}
	Differentiating with respect to $a$, yields $E_+'(a)=-\lambda$.
\end{proof}

\section{A new Gagliardo-Nirenberg type inequality}\label{Section_GN}
The question underlying Theorem~\ref{introinterpolation} is whether
the bilinear mass critical curve \eqref{criticalcurve} detected by the scaling also governs
the competition between the bilinear Dirichlet energy $A$ and the nonlinear potential $F$ on the bilinear mass sphere $\Sigma_a$ \eqref{Sigma_a}. An estimate of $F$ in terms of powers of $A$ and $B$ would make this competition explicit, exactly as in the scalar theory. A delicate part of the problem is the domain in which such an inequality can hold: the crossed forms $A$ and $B$ can change sign, and even their positivity does not control
the two components. We first identify this obstruction, then derive
the only scaling-compatible exponents and finally construct a class on which
the desired bound holds. 
\vskip0.2truecm
Set
$$\mathcal P=\{(u,v)\in X:A(u,v)>0,\ B(u,v)>0\}\,.$$

\begin{proposition}[Failure on the full positive set]\label{obstruction}
	No finite constant can bound $F(u,v)$ solely in terms of fixed positive
	values of $A(u,v)$ and $B(u,v)$ on $\mathcal P$. Moreover, for every $a>0$,
	\[
	 \inf_{\Sigma_a}J=-\infty\,,
	\]
	even if the infimum is restricted to strictly positive smooth pairs with
	$A>0$.
\end{proposition}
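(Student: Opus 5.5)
The plan is to use the reciprocal amplitude transformation $(u,v)\mapsto(\kappa u,\kappa^{-1}v)$, $\kappa>0$, which leaves $A$ and $B$ unchanged. On the potential it acts by
\[
F(\kappa u,\kappa^{-1}v)=\frac{\kappa^q}{q}\int|u|^q+\frac{\kappa^{-p}}{p}\int|v|^p .
\]
(Here $F$ is read as $\frac1q\int|u|^q+\frac1p\int|v|^p$, the potential in $J$; the displayed definition of $F$ before \eqref{C_K*} looks like a typo.)

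For the first assertion, fix any pair in $\mathcal P$ with $u\not\equiv0$; for instance $u=v=\varphi$ with $\varphi\in C_c^\infty$ nonzero, so that $A=\|\nabla\varphi\|_2^2>0$ and $B=\|\varphi\|_2^2>0$. Letting $\kappa\to+\infty$ sends $F\to+\infty$ while $A$ and $B$ stay fixed. Combining this with the two-parameter scaling $T_{\mu,t}$ from the introduction, any prescribed positive values $(A,B)=(A_0,B_0)$ can be reached first. After that, the $\kappa$-family shows that no function of $(A_0,B_0)$ alone can bound $F$. In particular, this family leaves every class $\mathcal M_K$, since $\|\widehat u\|_r$ blows up along it. That explains why the component bounds \eqref{MK} are needed.

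For the second assertion, fix $a>0$. Take $\varphi>0$ smooth, for example a Gaussian, which is strictly positive and lies in every $W^{1,r}$. Normalise it so that $\int\varphi^2=a$, and set $(u,v)=(\varphi,\varphi)\in\Sigma_a$, which has $A>0$.

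Then
\[
J(\kappa\varphi,\kappa^{-1}\varphi)=\|\nabla\varphi\|_2^2-\frac{\kappa^q}{q}\int\varphi^q-\frac{\kappa^{-p}}{p}\int\varphi^p\to-\infty
\]
as $\kappa\to\infty$. Meanwhile $B=a$ and $A=\|\nabla\varphi\|_2^2>0$ are preserved, and both components stay strictly positive and smooth. Since $(\kappa\varphi,\kappa^{-1}\varphi)\in X$, this gives $\inf_{\Sigma_a}J=-\infty$, even over the restricted class.

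There is no real obstacle here. The only points to check are membership in $X$, which holds because Gaussians lie in all $W^{1,r}$, and the invariance of $A$ and $B$ under the reciprocal scaling, which is immediate from bilinearity.
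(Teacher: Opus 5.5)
Your proposal is correct and follows the paper's proof essentially verbatim. The paper also takes a strictly positive Schwartz $\phi$ with $\int\phi^2=a$ and the reciprocal-amplitude family $(t\phi,t^{-1}\phi)$, which fixes $A$ and $B$ while $F\to+\infty$ (hence $J\to-\infty$); your extra use of $T_{\mu,t}$ to reach arbitrary prescribed values $(A_0,B_0)$, and your reading of $F$ as $\frac1q\int|u|^q+\frac1p\int|v|^p$, are both consistent with how the paper uses these objects.
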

\begin{proof}
	Choose a strictly positive Schwartz function $\phi$ with $\int\phi^2=a$ and set $u_t=t\phi$, $v_t=t^{-1}\phi$. For every $t>0$, one has
	$$B(u_t,v_t)=a\,,\qquad A(u_t,v_t)=\int|\nabla\phi|^2\,,\qquad F(u_t,v_t)=\frac{t^q}{q}\int\phi^q+\frac{t^{-p}}p\int\phi^p\,.$$
	The last expression tends to infinity as $t\to+\infty$ or $t\to0$.
\end{proof}

In particular, positivity, radial symmetry, proportionality of the
components, or simultaneous equality in the two Cauchy--Schwarz
inequalities cannot by themselves prove the desired estimate \eqref{introGN}.
The missing ingredient is a quantitative control of the relative sizes
of the components. To compare with the usual vector-valued estimates,
let $\eta,\theta>0$ and $2<\ell=\eta+\theta<2^*$. H\"older's inequality and the
scalar GN inequality give, for $u,v\in H^1(\R^N)$,
\begin{equation*}
	\int |u|^\eta|v|^\theta\le C\bigl(\|\nabla u\|_2^2+\|\nabla v\|_2^2\bigr)^{N(\ell-2)/4}\bigl(\|u\|_2^2+\|v\|_2^2\bigr)^{\ell/2-N(\ell-2)/4}.
\end{equation*}
Sharp estimates of this type for coupled gradient systems are studied in \cite{MZ}. However, the right-hand side involves sums of squared norms, whereas the desired estimate for $F$ must be expressed in terms of the cross terms $A$ and $B$. Cauchy--Schwarz provides bounds on these cross terms in terms of the sums, but not conversely. As shown by the family constructed in Proposition~\ref{obstruction}, no such reverse control holds in general.
\vskip0.2truecm

\subsection{The admissible sets and the proof of Theorem~\texorpdfstring{\ref{introinterpolation}}{(introinterpolation)}}
In order to first detect the only possible positive exponents $\alpha,\beta$ in \eqref{introGN}, for $\mu,t>0$ let us introduce the two-parameter transformation $T_{\mu,t}:\cP\to\cP$ defined by
\begin{equation}\label{T}
	T_{\mu,t}(u,v)=\bigl(\mu^{1/q}u(t\,\cdot),\mu^{1/p}v(t\,\cdot)\bigr)\,.
\end{equation}
Direct computation gives
\begin{equation}\label{Tlaws}
 \begin{aligned}
 A(T_{\mu,t}(u,v))&=\mu^\sigma t^{2-N}A(u,v),\\
 B(T_{\mu,t}(u,v))&=\mu^\sigma t^{-N}B(u,v),\\
 F(T_{\mu,t}(u,v))&=\mu t^{-N}F(u,v).
 \end{aligned}
\end{equation}
An estimate of the kind \eqref{introGN} is homogeneous under both
parameters $\mu$ and $t$ precisely when
$\sigma(\alpha+\beta)=1$ and $(2-N)\alpha-N\beta=-N$.
Therefore the only possible exponents $\alpha$ and $\beta$ are the ones in \eqref{alphabeta}. 
Both are positive under \eqref{range}; $\alpha=1$ is corresponds to the
mass-critical case $\gamma_N(p,q)=0$. For $p=q=m$ and $u=v$, these are the exponents
of \eqref{scalarGN} written in terms of squared norms.

\begin{remark}
	The same critical balance can be seen directly in the conjugate
	Sobolev spaces. If $\gamma_N(p,q)=0$, choose
	\begin{equation*}
		r=\frac{Nq}{N+2}\,,\qquad s=\frac{Np}{N+2}\,,\quad\mbox{with}\quad\frac1r+\frac1s=1\,.
	\end{equation*}
	These exponents satisfy Lemma~\ref{setting}, since $p,q>2$, and
	equivalently $q=r\big(1+\frac2N\big)$ and $p=s\big(1+\frac2N\big)$. Taking the powers $q$ and $p$ of the two
	inequalities in \eqref{GNrs} gives
	\[
	\|u\|_q^q\le C\|\nabla u\|_r^2\|u\|_r^{q-2}\,,\qquad
	\|v\|_p^p\le C\|\nabla v\|_s^2\|v\|_s^{p-2}\,.
	\]
	Thus both nonlinear terms become quadratic in their component
	gradient norms at exactly the bilinear-mass critical curve. 
\end{remark}
\vskip0.2truecm
For $(u,v)\in\mathcal P$ put
\begin{equation}\label{normalisation}
 \widehat t=\sqrt{B(u,v)/A(u,v)}\,,\quad\ 
 \widehat\mu=\left(\frac{B(u,v)^{\frac N2-1}}{A(u,v)^{\frac N2}}\right)^\frac1\sigma\,,
 \quad\ (\widehat u,\widehat v)=T_{\widehat\mu,\widehat t}(u,v)\,.
\end{equation}
Then \eqref{Tlaws} implies
\begin{equation}\label{normalised}
 A(\widehat u,\widehat v)=B(\widehat u,\widehat v)=1\,,\qquad
 F(\widehat u,\widehat v)=\frac{F(u,v)}{A(u,v)^\alpha B(u,v)^\beta}\,.
\end{equation}
The transformation sending $(A,B)$ to $(1,1)$ is unique, and it is not difficult to check that
\begin{equation}\label{hatinvariance}
 \widehat{T_{\mu,t}(u,v)}=(\widehat u,\widehat v)\,.
\end{equation}
The scaling transformations satisfy \(T_{a,b}T_{\mu,t}=T_{a\mu,bt}\). Consequently, the normalisation parameters for \(T_{\mu,t}(u,v)\) are \(\widehat\mu/\mu\) and \(\widehat t/t\), so that normalising the transformed pair yields exactly the same normalised pair as normalising \((u,v)\) directly. 

This is the analogue of normalising the mass and gradient norm in
the scalar variational characterisation of the sharp GN constant
\cite{Weinstein}. Here the associated quotient is
$$\mathcal Q_{\rm GN}(u,v)=\frac{A(u,v)^\alpha B(u,v)^\beta}{F(u,v)}\,,\qquad (u,v)\in\cP\,.$$
A positive lower bound for this quotient is equivalent to the
desired inequality. However, imposing $A=B=1$ does not bound a
sequence in $X$: one component may grow while the other decreases.
Indeed Proposition~\ref{obstruction} implies
$\inf_{\mathcal P}\mathcal Q_{\rm GN}=0$. Thus bilinear
normalisation alone gives neither a positive lower bound for the quotient nor the
compactness required for attainment. We impose component bounds
after normalisation to obtain a uniform estimate while retaining
both scaling parameters.
\begin{remark}
The tilde map of Subsection~\ref{sectildepreliminaries} explains the
positive and negative geometry of the action, but its positive
graph need not be preserved by these transformations. Already in
the weighted Hilbert realisation, if $v=b^{-2}u$ and
$(u_{\mu,t},v_{\mu,t})=T_{\mu,t}(u,v)$, then
\[
 v_{\mu,t}=b^{-2}\mu^{1/p-1/q}u_{\mu,t}.
\]
Thus, for $p\ne q$ the graph with fixed $b$ is not invariant
under arbitrary $\mu$. Positivity of the action pairing on a tilde
graph and invariance of a domain for the homogeneous GN quotient
are therefore distinct requirements. The following construction enforces the
latter explicitly.
\end{remark}
Fix conjugate exponents $r,s>1$ as in Lemma~\ref{setting}. For $K>0$, recall that the set $\cM_K$ is defined in \eqref{MK}.

\begin{proof}[Proof of Theorem~\ref{introinterpolation}]
	Choose a nonzero nonnegative smooth function $\phi\in C^\infty_c(\R^N)$ and apply
	\eqref{normalisation} to $(\phi,\phi)\in\cP$. The resulting pair has
	$A=B=1$ and $(\phi,\phi)\in\cM_K$ with
	$$K:=\max\{\|\widehat\phi\|_r\,,\,\|\nabla\widehat\phi\|_r\,,\,\|\widehat\phi\|_s\,,\,\|\nabla\widehat\phi\|_s\}<\infty$$
	since $\widehat\phi\in C^\infty_c(\R^N)$. Invariance of $\cM_K$ under $T_{\mu,t}$ follows from
	\eqref{hatinvariance}, while translation invariance is clear because norms enjoy this property.
	Fix now $a>0$ and consider $(u,v)\in\cM_K$ for some $K>0$. By \eqref{normalised} $(\hu,\hv)\in\cM_K$ with $B(\hu,\hv)=1$, we claim that
	$T_{a^{1/\sigma},1}(\widehat u,\widehat v)\in\cM_K$ has bilinear mass $a$. Indeed,
	\begin{equation*}
		B\big(T_{a^{1/\sigma},1}(\hu,\hv)\big)=B\left(a^\frac1{q\sigma}\hu,a^\frac1{p\sigma}\hv\right)=a^{\frac1\sigma\left(\frac1p+\frac1q\right)}B(\hu,\hv)=a
	\end{equation*}
	by virtue of \eqref{range} and \eqref{normalised}. Finally apply \eqref{GNrs} to $(\widehat u,\widehat v)$. By definition of $\cM_K$ it is clear that
	$$F(\widehat u,\widehat v)\le C_{N,r,q}^q\frac{K^q}q+C_{N,s,p}^p\frac{K^p}p\,.$$
	Use eventually \eqref{normalised} to undo the normalisation and obtain \eqref{introGN}.
\end{proof}

To stress the role of the bounds in $\cM_K$, observe that uniform control of the normalised component norms is exactly
what the bilinear normalisations alone fail to provide.
For example, the reciprocal-amplitude family in
Proposition~\ref{obstruction} has fixed $A,B$ and hence fixed
normalisation parameters, but unbounded normalised component norms, hence it eventually leaves each fixed $\mathcal M_K$.

\begin{remark}[A Hilbert-space specialisation]
	In the individually subcritical range $2<p,q<2^*$, one can instead
	choose $r=s=2$. For $0<\delta<1$ let
	\begin{equation}\label{Mdelta}
	 \mathcal H_\delta=\big\{(u,v)\in H^1\times H^1:A,B>0,
	 \ \|\widehat u\|_2^2,\|\nabla\widehat u\|_2^2,
	 \|\widehat v\|_2^2,\|\nabla\widehat v\|_2^2\le\delta^{-1}\big\}\,.
	\end{equation}
	Since $A(\widehat u,\widehat v)=B(\widehat u,\widehat v)=1$, Cauchy--Schwarz also bounds
	all four squared norms from below by $\delta$.
	Thus \eqref{Mdelta} is the closed version of imposing the four reciprocal
	ratios in a normalised Hilbert-space construction between $\delta$ and
	$\delta^{-1}$.
	
	This class is nonempty for every $0<\delta<1$. Indeed choose a smooth
	$w$ with $\int w^2=\int|\nabla w|^2=1$. 
	Then $(w,w)$ is already normalised and all four squared norms are one.
	Consequently Theorem~\ref{introinterpolation} holds on $\mathcal H_\delta$ with
	$K=\delta^{-1/2}$. 

\end{remark}

\subsection{An equivalent inhomogeneous estimate}
An alternative approach is to consider the quotient $(A+B)/F^\sigma$,
whose numerator is the quadratic part of the action at frequency one $I_1$.
Minimising with respect to the spatial scaling yields the homogeneous GN estimate, thereby linking the fixed-frequency and prescribed-mass problems at the level of inequalities. 
To establish the reverse implication, the domain must be invariant under the scaling transformations used in the argument, and in fact every $\mathcal M_K$ has this property.

\begin{proposition}\label{equivalent}
	Let $\mathcal E\subset\mathcal P$ be invariant under every $T_{\mu,t}$.
	The inequality \eqref{introGN} on $\mathcal E$ is equivalent to the existence of $c>0$ such that
	\begin{equation}\label{inhomogeneous}
		A(u,v)+B(u,v)\ge c F(u,v)^\sigma\quad\hbox{on }\mathcal E\,.
	\end{equation}
\end{proposition}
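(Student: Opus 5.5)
The plan is to exploit the invariance of $\mathcal E$ under the spatial parameter $t$ only, using the scaling laws \eqref{Tlaws} with $\mu=1$. Along the orbit $t\mapsto T_{1,t}(u,v)$ we have
\[
A\mapsto t^{2-N}A,\qquad B\mapsto t^{-N}B,\qquad F\mapsto t^{-N}F .
\]
Both inequalities are then compared after optimising in $t$.

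For the implication \eqref{introGN}$\Rightarrow$\eqref{inhomogeneous}, fix $(u,v)\in\mathcal E$ and write $x=A/B>0$. Writing the quantities at $(u,v)$, I would argue directly rather than along the orbit. By Young's inequality with weights $\alpha\sigma$ and $\beta\sigma$, which sum to one because $\sigma(\alpha+\beta)=1$, one gets
\[
A^{\alpha\sigma}B^{\beta\sigma}\le \alpha\sigma\,A+\beta\sigma\,B\le \max\{\alpha\sigma,\beta\sigma\}(A+B).
\]
Raising \eqref{introGN} to the power $\sigma$ gives $F^\sigma\le C^\sigma A^{\alpha\sigma}B^{\beta\sigma}$. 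Combining the two yields \eqref{inhomogeneous} with $c=\bigl(C^\sigma\max\{\alpha\sigma,\beta\sigma\}\bigr)^{-1}$. This direction does not even need invariance.

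For the converse, apply \eqref{inhomogeneous} to $T_{\mu,t}(u,v)\in\mathcal E$ and choose the parameters so that the pair becomes normalised. The cleanest choice is $(\widehat\mu,\widehat t)$ from \eqref{normalisation}: by \eqref{normalised} the normalised pair $(\widehat u,\widehat v)$ has $A=B=1$ and $F(\widehat u,\widehat v)=F/(A^\alpha B^\beta)$. By invariance, $(\widehat u,\widehat v)\in\mathcal E$, so \eqref{inhomogeneous} gives $2\ge c\,F(\widehat u,\widehat v)^\sigma$. Hence
\[
F\le (2/c)^{1/\sigma}A^\alpha B^\beta ,
\]
which is \eqref{introGN} with $C=(2/c)^{1/\sigma}$.

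The only points to check are that $T_{\widehat\mu,\widehat t}$ is an admissible element of the invariance group, which holds because $\widehat\mu,\widehat t>0$ on $\mathcal P$, and that \eqref{normalised} follows from \eqref{Tlaws}, which was already recorded. The main subtlety is the converse direction, since it genuinely needs both scaling parameters. The amplitude parameter $\mu$ fixes the homogeneity degree $\sigma$ relating $F$ to $A,B$, while $t$ balances $A$ against $B$. This is exactly why the hypothesis requires invariance under every $T_{\mu,t}$.
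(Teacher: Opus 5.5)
Your proof is correct. The forward direction is the paper's weighted AM--GM argument, with the harmless refinement $\max\{\alpha\sigma,\beta\sigma\}$ in place of $1$. The converse follows a slightly different route.

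\begin{itemize}
\item \emph{Paper's route.} It applies \eqref{inhomogeneous} along the family $T_{t^N,t}$. On this family $F$ is constant, while $A$ and $B$ scale like $t^{2-h}$ and $t^{-h}$ with $h=N(1-\sigma)$. It then minimises $t^{2-h}A+t^{-h}B$ over $t$.
\item \emph{Your route.} You evaluate at the single point $(\widehat\mu,\widehat t)$ where $A=B=1$ and read the bound off \eqref{normalised}.
\end{itemize}

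Your route is shorter because it reuses the normalisation already in place. The paper's minimisation gives the best constant this argument can yield, namely $k_h\le 2$. Your constant $2$ is the value of the same expression at the balancing point $t^2A=B$, so the two agree only when $h=1$. Either constant suffices for the equivalence.

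One correction to your closing remark: the converse does not need the amplitude parameter. Under $T_{\mu,1}$ both sides of \eqref{inhomogeneous} scale like $\mu^\sigma$, and both sides of \eqref{introGN} scale like $\mu$. Hence applying \eqref{inhomogeneous} at $T_{\widehat\mu,\widehat t}(u,v)$ gives exactly the same information as applying it at $T_{1,\widehat t}(u,v)$. Invariance of $\mathcal E$ under spatial dilations alone therefore suffices. That is what your opening sentence said, and your last paragraph contradicts it.
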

\begin{proof}
	Set $h=N(1-\sigma)\in(0,2)$ by \eqref{range}. 
	Suppose first \eqref{introGN} holds. By \eqref{alphabeta} one has
	$$F(u,v)^\sigma\le C^\sigma A(u,v)^{\alpha\sigma} B(u,v)^{1-\alpha\sigma},$$
	and the weighted arithmetic--geometric mean gives
	$$A(u,v)^{\alpha\sigma}B(u,v)^{1-\alpha\sigma}\leq\alpha\sigma A(u,v)+(1-\alpha\sigma)B(u,v)\le A(u,v)+B(u,v)\,,$$
	since $\alpha\sigma=\frac h2\in(0,1)$. Conversely, assume now \eqref{inhomogeneous} holds in $\cE\subset\cP$. Apply \eqref{inhomogeneous} to $T_{t^N\!,t}(u,v)$ and a simple computation shows that
		$$A(T_{t^N\!,t}(u,v))=t^{2-h}A(u,v)\,,\ \ B(T_{t^N\!,t}(u,v))=t^{-h}B(u,v)\,,\ \ F(T_{t^N\!,t}(u,v))=F(u,v)\,.$$
	Thus
	\[
	cF(u,v)^\sigma\le At^{2-h}+Bt^{-h}\quad\mbox{for all}\ \,t>0\,.
	\]
	Minimisation in $t$ of the right-hand side gives
	$$cF(u,v)^\sigma\le k_h A(u,v)^\frac h2 B(u,v)^{1-\frac h2}=k_hA(u,v)^{\alpha\sigma} B(u,v)^{1-\alpha\sigma}$$
	with
	$$k_h=\left(\frac h{2-h}\right)^{1-\frac h2}+\left(\frac h{2-h}\right)^{-\frac h2}\!.$$
	By \eqref{alphabeta} taking the $1/\sigma$ power gives the desired exponents in \eqref{introGN}.
\end{proof}
%

\section{The constrained energy geometry}\label{secgeometry}
Fix $K$ for which $\mathcal M_K\ne\emptyset$. Recall that the mass slice is defined by $\cM_K^a=\cM_K\cap\Sigma_a$, and the best constant $C_K^*$ in \eqref{introGN} is given in \eqref{C_K*}. Observe that such supremum has the same value on every mass slice, because $T_{\mu,1}$ changes the mass arbitrarily and preserves the ratio. 

We next show that the energy functional $J$ on its dilation fibres sets \eqref{fiber_set} has the same behaviour appearing in the scalar theory of \cite{Weinstein,Jeanjean,BS,BScorrection}, according to the position of the exponents $(p,q)$ with respect to the critical curve \eqref{critical_curve}, now on the explicit invariant sets required by the bilinear structure. Let us define first the mass-preserving dilation by
\begin{equation}\label{St}
	S_t(u,v)=\left(t^{N/(q\sigma)}u(t\,\cdot),t^{N/(p\sigma)}v(t\,\cdot)\right)=T_{t^{N/\sigma},t}(u,v)\,.
\end{equation}
Since $B(S_t(u,v))=B(u,v)$, it preserves $\cM_K^a$ and, by \eqref{Tlaws} and \eqref{alphabeta}, satisfies
\begin{equation}\label{fiber}
	A(S_t(u,v))=t^2A(u,v),\quad F(S_t(u,v))=t^{2\alpha}F(u,v)\,,\quad
	J(S_t(u,v))=t^2A(u,v)-t^{2\alpha}F(u,v)\,.
\end{equation}
\begin{proof}[Proof of Theorem~\ref{introenergytrichotomy}]
	The bilinear Gagliardo-Nirenberg inequality \eqref{introGN} implies that,
	\begin{equation}\label{lowerbound}
	 J(u,v)\ge A(u,v)-C_K^*a^\beta A(u,v)^\alpha.
	\end{equation}
	hence
	\begin{equation*}
		J(S_t(u,v))\geq A(u,v)\left(t^2-t^{2\alpha}C_K^*A(u,v)^{\alpha-1} a^\beta\right).
	\end{equation*}
	When $\gamma_N(p,q)>0$, i.e. $\alpha\in(0,1)$, it is then clear that $J(u,v)\to+\infty$ along any sequence in $\cM_K^a$
	with $A(u,v)\to+\infty$. Moreover, the minimum of the map $t-ct^\alpha$ over $t>0$ is
	$-(1-\alpha)\alpha^{\alpha/(1-\alpha)}c^{1/(1-\alpha)}$ and $J(S_t(u,v))\geq J(S_{t_\alpha}(u,v))$
	gives the lower bound in \eqref{subinf}.
	
	In case $\gamma_N(p,q)=0$, i.e. $\alpha=1$, \eqref{lowerbound} reads
	$$J(u,v)\ge(1-C_K^*a^{2/N})A(u,v)\,.$$
	For $a\leq a_K$ one has $J(u,v)\geq0$ and by \eqref{fiber} $J(S_t(u,v))$ tends to zero as $t\to0$ for any fixed $(u,v)\in\cM_K^a$, proving that $\inf_{\cM_K^a}J=0$.
	If $a<a_K$, positivity of $J(S_t(u,v))$ is strict. If instead $a>a_K$, by using the definition of $C_K^*$, there exists $(u,v)\in\cM_K^a$ such that $F(u,v)>A(u,v)$. Indeed, fixing $\varepsilon>0$ there exists $(u,v)\in\cM_K^a$ such that
	\begin{equation*}
		C_K^*-\varepsilon\leq\frac{F(u,v)}{A(u,v)^\alpha a^\frac2N}<C_K^*\frac{F(u,v)}{A(u,v)}\,,
	\end{equation*}
	which clearly implies the claim. On its fiber set \eqref{fiber_set}, the energy tends to $-\infty$ as $t\to+\infty$, since $J(S_t(u,v))=t^2(A(u,v)-F(u,v))$. 
	In case $a=a_K$, similar computations show that $(u,v)\in\cM_K^a$ is such that $A(u,v)=F(u,v)$, equivalently $J(u,v)=0$, if and only if $(u,v)$ is an optimiser of the bilinear Gagliardo-Nirenberg inequality \eqref{introGN}. A best-constant pair can be rescaled to mass $a_K$.
	
	If $\gamma_N(p,q)<0$, i.e. $\alpha>1$, \eqref{fiber} shows that $J$ tends to $-\infty$ as $t\to+\infty$ on any fiber set.
	Moreover, by \eqref{introGN} for $(u,v)\in\cM_K^a$ one finds
	\begin{equation*}
		J(u,v)\geq A(u,v)-C_K^*A(u,v)^\alpha B(u,v)^\beta=A(u,v)\left(1-C_K^*a^\beta A(u,v)^{\alpha-1}\right),
	\end{equation*}
	so taking $0<A(u,v)\leq\rho$ with $\rho=(2C_K^*a^\beta)^{-1/(\alpha-1)}$ gives
	\eqref{barrier}. Finally, differentiation of \eqref{fiber} gives
	$2tA-2\alpha t^{2\alpha-1}F$, which changes sign exactly once,
	at the displayed $t_*$. At that point the second derivative is
	$4(1-\alpha)A<0$.
\end{proof}


\begin{remark}
	A derivative in $t$ of $J$ composed with the fiber map yields
	$$P(u,v)=\left.\frac{\dd}{\dd t}J(S_t(u,v))\right|_{t=1}=2A(u,v)-2\alpha F(u,v)\,.$$
	For solutions, $F=\sigma H$ and \eqref{Aidentity} imply $P=0$. Thus the fibre identity and the PDE Poho\v zaev identity agree.
\end{remark}


\begin{remark}[The role of the estimate beyond homogeneous powers]
	The role of the bilinear mass Gagliardo-Nirenberg inequality established in Theorem \ref{introinterpolation} remains central when exact scaling of solutions
	is unavailable, unlike the case of the homogeneous nonlinearities considered in \eqref{system}. For example, for $f,g$ continuous, define
	$\mathfrak F(t)=\int_0^t f(\tau)\dd\tau$ and
	$\mathfrak G(t)=\int_0^t g(\tau)\dd\tau$, and suppose that, for some
	$c_0>0$ and every $t\in\R$,
	\[
	0\le\mathfrak F(t)\le\frac{c_0}{p}|t|^p\,,\qquad
	0\le\mathfrak G(t)\le\frac{c_0}{q}|t|^q\,.
	\]
	The corresponding energy functional is
	$J_{f,g}(u,v)=A(u,v)-\int[\mathfrak G(u)+\mathfrak F(v)]$.
	On $\mathcal M_K^a$, Theorem~\ref{introinterpolation} immediately gives
	\begin{equation*}
		J_{f,g}(u,v)\ge A(u,v)-c_0C_Ka^\beta A(u,v)^\alpha.
	\end{equation*}
	This bound supplies the same lower control for $\alpha<1$, positivity at sufficiently small
	mass for $\alpha=1$, and a barrier at small positive $A$ for
	$\alpha>1$. Negative energy at large scales requires additional
	lower growth assumptions on $f$ and $g$.
\end{remark}

\section{On the attainment of the best constant in the Gagliardo-Nirenberg inequality: the proof of Theorem~\texorpdfstring{\ref{introattainment}}{(introattainment)}}\label{Section_attainment}

\subsection{Proof of Theorem~\texorpdfstring{\ref{introattainment}}{(introattainment)}}
We split the proof of the theorem in several steps.
\begin{proof}[Proof of Theorem ~\ref{introattainment}, part 1] We show here that every optimiser of $C_K^*$ has at least one normalised component norm in \eqref{MK} equal to $K$.
	
	Consider the reciprocal-amplitude path
	$R_\tau(u,v)=(e^\tau u,e^{-\tau}v)$, which clearly preserves the bilinear forms $A$ and $B$.
	Normalisation commutes with $R_\tau$, so a point $(u,v)\in\cM_K^a$ satisfying strictly all four conditions in \eqref{MK} stays in $\cM_K^a$ for small $|\tau|$.
	Since
	$$\frac{\dd^2}{\dd\tau^2}F(R_\tau(u,v))=q\,e^{q\tau}\int|u|^q+p\,e^{-p\tau}\int|v|^p>0\,,$$
	the interpolation ratio \eqref{normalised} is strictly convex along the path $\tau\mapsto R_\tau(u,v)$, and the
	energy $J$ is strictly concave since 
	$$\frac{\dd^2}{\dd\tau^2}J(R_\tau(u,v))=-\frac{\dd^2}{\dd\tau^2}F(R_\tau(u,v))<0\,.$$
	This shows that neither can have the indicated extremum at
	an interior point of the path. On $\Sigma_a$, $B(R_\tau(u,v))=a$ for every real $\tau$,
	so no component restriction prevents this variation at any point.
\end{proof}


\begin{proof}[Proof of Theorem~\ref{introattainment}, part 2] Let us now prove \eqref{CK_bounds}.
	 
	The upper bound follows from \eqref{introGN}. Choose a smooth
	nonnegative function $w$ with $\int w^2=\int|\nabla w|^2=1$, and define
	$L_r=\max\{\|w\|_r,\|\nabla w\|_r\}$ and $L_s=\max\{\|w\|_s,\|\nabla w\|_s\}$.
	The pair $(tw,t^{-1}w)$ is already normalised, since $B(tw,t^{-1}w)=1$, and coincides with $(\widehat{tw},\widehat{t^{-1}w})$. 
	Clearly $\|tw\|_r\,,\,\|\nabla(tw)\|_r\leq tL_r$ and similarly $\|t^{-1}w\|_s\,,\,\|\nabla(t^{-1}w)\|_s\leq tL_s$. In order for $(tw,t^{-1}w)$ to belong to $\cM_K$, by \eqref{MK} one needs to impose the bounds $\frac{L_s}K\leq t\leq\frac K{L_r}$, an interval which is nonempty provided $K^2\ge L_rL_s$. Taking the extremes values, gives both
	$$C_K^*\ge\frac{\|w\|_q^q}{qL_r^q}K^q\quad\mbox{ and }\quad C_K^*\ge\frac{\|w\|_p^p}{pL_s^p}K^p\,.$$
	The assertion about $a_K$ follows from \eqref{threshold}.
\end{proof}

For the rest of the section, assume the individual bounds 
\begin{equation}\label{22*}
	2<p,q<2^*
\end{equation} and hence use $r=s=2$.
The Cauchy-Schwarz inequality makes $K=1$ the smallest possible component bound since $B(\hu,\hv)=1$ by \eqref{normalised}.
Write $\mathcal M_1$ for \eqref{MK} with $K=1$, and define
\[
\mathcal W=\{w\in H^1(\R^N):\|w\|_2=\|\nabla w\|_2=1\}\,.
\]

\begin{proof}[Proof of Theorem~\ref{introattainment}, part 3] Here we discuss the attainment on the minimal Hilbert class of the best constant $C_K^*$.

	We first reduce the problem to a scalar one. For $(u,v)\in\mathcal M_1$, the identity
	$\int\widehat u\widehat v=1$ and the bounds
	$\|\widehat u\|_2,\|\widehat v\|_2\le1$ force
	$\widehat u=\widehat v=w$ for some $w\in H^1(\R^N)$ with $\|w\|_2=1$.
	The gradient normalisation gives $\|\nabla w\|_2=1$, hence $w\in\cW$.
	Conversely every $(w,w)$ with $w\in\mathcal W$ is in $\mathcal M_1$.
	The second identity in \eqref{normalised} gives \eqref{C1formula} with a supremum.
	
	We next prove compactness and attainment. 
	Let $(w_n)_n\subset\cW$ be a maximising sequence for \eqref{C1formula}, namely such that
	\begin{equation}\label{C1seq}
		\frac1q\int|w_n|^q+\frac1p\int|w_n|^p>C_1^*-\frac1n\,.
	\end{equation}
	We claim that we can suppose that $(w_n)_n$ is nonnegative, radial, and radially decreasing. Indeed, for all $n\in\N$ let $w_n^*$ be the nonnegative
	symmetric decreasing rearrangement of $w_n$. Since rearrangements preserve Lebesgue norms, $w_n^*$ fulfills \eqref{C1seq} as well, however it may happen that $w_n^*\notin\cW$ since in general $\|\nabla w_n^*\|_2\leq1$. Defining $\tw_n:=\tmu w_n^*(\tnu\,\cdot)$, it is easy to see that 
	$$\tmu=\|\nabla w_n^*\|_2^{-\frac N2}\quad\ \mbox{and}\ \quad \tnu=\|\nabla w_n^*\|_2^{-1}$$
	is the unique choice of the parameters such that $\tw_n\in\cW$. It remains to verify that also $(\tw_n)_n$ is a maximising sequence in the sense of \eqref{C1seq}. In fact, one computes
	\begin{equation*}
		\frac1q\int|\tw_n|^p+\frac1p\int|\tw_n|^p=\|\nabla w_n^*\|_2^{\frac N2(2-p)}\int|w_n|^p+\|\nabla w_n^*\|_2^{\frac N2(2-q)}\int|w_n|^q\,.
	\end{equation*}
	Recalling \eqref{22*}, the exponents are negative and therefore
	\begin{equation*}
		\frac1q\int|\tw_n|^q+\frac1p\int|\tw_n|^p
		\geq\frac1q\int|w_n|^q+\frac1p\int|w_n|^p>C_1^*-\frac1n\,.
	\end{equation*}
	Since $(\tw_n)_n$ is bounded in $H^1(\R^N)$, after extraction, has a weak limit $w\in H^1(\R^N)$. Moreover, since $\tw_n$ is radially decreasing by construction, the estimate $\tw_n(\rho)\le\omega_N^{-1/2}\rho^{-N/2}$ holds for any $\rho>0$, since
	$$1=\|\tw_n\|_2^2\geq\int_{B_\rho(0)}|\tw_n|^2\geq\tw_n(\rho)^2\omega_N\rho^N\,.$$
	This, together with local Rellich compactness, implies strong convergence in
	both $L^p(\R^N)$ and $L^q(\R^N)$. In particular $w\ne0$ and
	\begin{equation}\label{C1w}
		F(w,w)=C_1^*>0\,.
	\end{equation}
	Let us now show that $w\in\cW$. 
	Suppose by contradiction that $w\notin\cW$, that is either $\|w\|_2<1$ or $\|\nabla w\|_2<1$ holds. Similarly as before, define $\phi:=\mu w(\nu\,\cdot)$. There is a unique choice of $\mu$, $\nu$ such that $\|\phi\|_2=\|\nabla\phi\|_2=1$, namely
	\begin{equation*}
		\mu=\|w\|_2^{-1+\frac N2}\|\nabla w\|_2^{-\frac N2}\quad\ \mbox{and}\ \quad\nu=\|\nabla w\|_2^{-1}\|w\|_2\,.
	\end{equation*}
	With such a choice, for $t\in\{p,q\}$, we compute
	\begin{equation}\label{at_bt}
		\int|\phi|^t=\mu^t\nu^{-N}\int|w|^t=\|\nabla w\|_2^{-a_t}\|w\|_2^{-b_t}\,,\quad\mbox{where}\quad a_t:=\frac{N(t-2)}2\,,\ \ b_t:=t-a_t\,.
	\end{equation}
	 Since either $\|w\|_2<1$ or $\|\nabla w\|_2<1$, both nonlinear terms strictly increase,
	contradicting \eqref{C1w}. Thus $\|w\|_2=\|\nabla w\|_2=1$, the supremum in \eqref{C1formula} is attained,
	and the chosen sequence converges strongly in $H^1(\R^N)$. 
	
	Let us next show that $w$ solves \eqref{mixedoptimizer} for suitable $\eta,\nu>0$.
	For any nonzero $h\in H^1(\R^N)$ define
	$$\mathcal Q(h)=\sum_{t\in\{p,q\}}\frac{\|h\|_t^t}{t\|\nabla h\|_2^{a_t}\|h\|_2^{b_t}}\,.$$
	The sum is interpreted with both terms counted, also when $p=q$. 
	The above computations show that $\mathcal Q(h)\le C_1^*$ with equality at
	$w$. For $t\in\{p,q\}$, differentiating this $C^1$ quotient at $w$ gives
	\begin{equation*}
		\frac\dd{\dd\varepsilon}_{\left|_{\varepsilon=0}\right.}\,\frac{\int|w+\varepsilon\varphi|^t}{t\left(\int|\nabla(w+\varepsilon\varphi|^2\right)^{\frac{a_t}2}\left(\int|w+\varepsilon\varphi|^2\right)^{\frac{b_t}2}}=\int|w|^{t-2}w\varphi-\frac1t\int|w|^t\left(a_t\int\nabla w\nabla\varphi+b_t\int w\varphi\right),
	\end{equation*}
	that is \eqref{mixedoptimizer} with
	\[
	\eta=\sum_{t\in\{p,q\}}\frac{a_t}t\|w\|_t^t>0\,,
	\quad\ \mbox{and}\ \quad
	\nu=\sum_{t\in\{p,q\}}\frac{b_t}t\|w\|_t^t>0\,,
	\]
	by summing the two expressions for $t\in\{p,q\}$. 
	Elliptic regularity and the strong maximum principle make $w$ classical
	and strictly positive. The attainment conclusions for $J$ follow by
	choosing the mass and then the optimal fibre in Theorem~\ref{introenergytrichotomy}. 
	In fact, choose a sequence on the mass slice whose interpolation ratios tend to $C_K^*$. On each of its fibres, $t^2A$ ranges over all positive numbers, and its ratio $Q=F/(A^\alpha a^\beta)$ is constant. Minimising on this fibre therefore gives the same expression with $Q$ in place of $C_K^*$. Taking the limit proves equality.
	
	Let us next discuss the connection between the optimisation problem \eqref{C1formula} and the equation \eqref{mixedoptimizer}. 
	Note that every pair in $\mathcal M_1$ has proportional components. If $u=cv>0$
	solves \eqref{system}, the two equations imply
	$v^{p-1}=c^qv^{q-1}$. For $p\ne q$ this would make $v$ a positive
	constant, contrary to integrability. When $p=q=m\in(2,2^*)$,
	$C_1^*=\frac2mG_{N,m}$, where $G_{N,m}$ is the sharp scalar constant in
	\eqref{scalarGN}. At $m=2+\frac4N$, its value
	$G_{N,m}=\frac m2\|Q_m\|_2^{-4/N}$ \cite{Weinstein} gives
	$a_1=(C_1^*)^{-N/2}=\|Q_m\|_2^2$.
\end{proof}
The idea behind the last step of the proof is to replace the vector problem with a scalar once, incurring an error that tends to zero as \(K\searrow1\).
\begin{proof}[Proof of Theorem~\ref{introattainment}, part 4]
	
	Write $u=\widehat u$, $v=\widehat v$ for a normalised admissible pair,
	so $A(u,v)=B(u,v)=1$ and each of its four component norms in \eqref{MK} is at most $K$ (recall that now $r=s=2$). Expanding the squares gives
	$$\|u-v\|_{H^1}^2=\|u\|_2^2+\|v\|_2^2-2B(u,v)+\|\nabla u\|_2^2+\|\nabla v\|_2^2-2A(u,v)\le4(K^2-1)\,.$$
	Similarly, for $w=\frac12(u+v)$ one has 
	$$1\leq\|w\|_2^2,\|\nabla w\|_2^2\le\frac12(K^2+1)=:s_K\,,$$ where the lower bound follows from the Cauchy-Schwarz inequality.
	Hence, $H^1$ norms of $u,\,v,\,w$ are uniformly bounded for $K\searrow1$.
	The mean-value inequality $\big||a|^t-|b|^t\big|\leq t\big(|a|^{t-1}+|b|^{t-1}\big)|a-b|$ for $t\in\{p,q\}$ and the Sobolev embeddings therefore give
	\begin{equation*}
		\begin{split}
			|F(u,v)-F(w,w)|&\le\int\left(|u|^{q-1}+|w|^{q-1}\right)|u-w|+\int\left(|v|^{p-1}+|w|^{p-1}\right)|v-w|\\
			&\leq\left(\|u\|_q^{q-1}+\|w\|_q^{q-1}\right)\|u-w\|_q+\left(\|v\|_p^{p-1}+\|w\|_p^{p-1}\right)\|v-w\|_p\\
			&\leq C\left(\|u-w\|_{H^1}+\|v-w\|_{H^1}\right)=C\|u-v\|_{H^1}\leq C\sqrt{K^2-1}\,.
		\end{split}
	\end{equation*}
	We normalise $w$ by scalar amplitude and dilation to obtain
	$\widetilde w\in\mathcal W$. With $a_t,b_t>0$ as in \eqref{at_bt}, 
	$$F(w,w)=\sum_{t\in\{p,q\}}\frac{\|\nabla w\|_2^{a_t}\|w\|_2^{b_t}}t\int|\widetilde w|^t\le s_K^{\frac{\max\{p,q\}}2}C_1^*\,.$$
	Since $s_K^{\max\{p,q\}/2}-1\le C(K^2-1)$ for $K\in[1,2]$, taking
	the supremum proves the upper bound in \eqref{nearminimalestimate}.
	The lower bound follows from the inclusion $\mathcal M_1\subset\mathcal M_K$.
\end{proof}

\subsection{Further results}
The scalar reduction described above extends to every $K\ge1$ if one restricts the
normalised components to be proportional. This is a subclass of
$\cM_K$, hence the following attainment result does not hold for the whole class.

\begin{proposition}[Attainment on the proportional subclass]\label{proportionalattainment}
	Under \eqref{22*}, for $K\ge1$ define
	$$C_{K,\mathrm{prop}}^*:=\sup_{\substack{w\,\in\,\mathcal W\\K^{-1}\le\, c\,\le K}}\left(\frac{c^q}q\int|w|^q+\frac{c^{-p}}p\int|w|^p\right).$$
	This constant is attained, satisfies $C_{K,\mathrm{prop}}^*\le C_K^*$,
	and is the larger of the two attained maxima
	\begin{equation*}
	 \max_{w\in\mathcal W}
	 \left(\frac{K^q}{q}\int|w|^q+\frac{K^{-p}}p\int|w|^p\right),
	 \qquad
	 \max_{w\in\mathcal W}
	 \left(\frac{K^{-q}}q\int|w|^q+\frac{K^p}p\int|w|^p\right).
	\end{equation*}
	A maximiser can be chosen positive, radial and nonincreasing. For
	the selected endpoint $c\in\{K,K^{-1}\}$ it satisfies
	\begin{equation}\label{eq_displayed}
		-\eta\Delta w+\nu w=c^q w^{q-1}+c^{-p}w^{p-1},	
	\end{equation}
	for some $\eta,\nu>0$. In particular, if $p=q=m$, then
	\begin{equation*}
	 C_{K,\mathrm{prop}}^*=
	       \frac{G_{N,m}}m\left(K^m+K^{-m}\right),
	\end{equation*}
	where $G_{N,m}$ is the sharp scalar constant in \eqref{scalarGN}.
\end{proposition}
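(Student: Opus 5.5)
The plan is to repeat the scalar reduction of part 3 of the proof of Theorem~\ref{introattainment}, now keeping track of the amplitude ratio $c$.

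\emph{Step 1: reduction to proportional pairs.} A normalised proportional pair in $\mathcal M_K$ (with $r=s=2$) has the form $(\widehat u,\widehat v)=(cw,c^{-1}w)$, since $B=1$ fixes the product of the amplitudes. Here $w\in\mathcal W$ after absorbing the common scaling, and the four component bounds become exactly $K^{-1}\le c\le K$. By \eqref{normalised}, its interpolation ratio is
$$\frac{c^q}{q}\int|w|^q+\frac{c^{-p}}{p}\int|w|^p .$$
Hence $C^*_{K,\mathrm{prop}}$ is a supremum over a subset of $\mathcal M_K$, and $C^*_{K,\mathrm{prop}}\le C_K^*$ follows.

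\emph{Step 2: optimise in $c$ for fixed $w$.} The map $c\mapsto c^q X+c^{-p}Y$ with $X,Y>0$ is strictly convex in $\tau=\log c$; this is the reciprocal-amplitude computation of part 1. So its maximum on $[K^{-1},K]$ is attained at an endpoint. Therefore
$$C^*_{K,\mathrm{prop}}=\max\{M_K,M_{K^{-1}}\},\qquad M_c:=\sup_{w\in\mathcal W}\Big(\frac{c^q}{q}\int|w|^q+\frac{c^{-p}}{p}\int|w|^p\Big).$$

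\emph{Step 3: attainment of each $M_c$.} I would repeat the part 3 argument with the weights $c^q,c^{-p}$ inserted.
\begin{itemize}
\item Replace a maximising sequence by its symmetric decreasing rearrangements.
\item Renormalise the gradient by $\widetilde w=\widetilde\mu w^*(\widetilde\nu\,\cdot)$. Each weighted term is multiplied by $\|\nabla w^*\|_2^{-a_t}\ge1$, so the sequence remains maximising.
\item Use the radial decay bound $\widetilde w_n(\rho)\le\omega_N^{-1/2}\rho^{-N/2}$ and Rellich compactness to get strong convergence in $L^p\cap L^q$ under \eqref{22*}.
\item Exclude loss of normalisation in the weak limit by \eqref{at_bt}: if $\|w\|_2<1$ or $\|\nabla w\|_2<1$, renormalising strictly increases both positive weighted terms, a contradiction.
\end{itemize}
This yields a nonnegative, radial, nonincreasing maximiser $w\in\mathcal W$.

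\emph{Step 4: Euler--Lagrange equation.} Differentiate the scale-invariant quotient
$$\sum_{t}\frac{c_t\|h\|_t^t}{t\,\|\nabla h\|_2^{a_t}\|h\|_2^{b_t}},\qquad c_q=c^q,\ c_p=c^{-p},$$
at the maximiser. This gives \eqref{eq_displayed} with
$$\eta=\sum_t\frac{a_tc_t}{t}\|w\|_t^t>0,\qquad \nu=\sum_t\frac{b_tc_t}{t}\|w\|_t^t>0.$$
Elliptic regularity and the strong maximum principle then give a positive classical maximiser.

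\emph{Step 5: the diagonal case $p=q=m$.} Both weighted functionals reduce to $\frac{K^m+K^{-m}}{m}\int|w|^m$. Their maximum over $\mathcal W$ is $\frac{K^m+K^{-m}}{m}\,G_{N,m}$, because on $\mathcal W$ the scalar inequality \eqref{scalarGN} reads $\int|w|^m\le G_{N,m}$ with equality attained by the Weinstein optimiser.

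The main obstacle is Step 3, namely the strong convergence and the recovery of the constraint $w\in\mathcal W$. Under \eqref{22*} it goes through as in part 3, because all exponents $-a_t$ are negative and both weights $c_t$ are positive.
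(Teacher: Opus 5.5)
Your proposal is correct and follows essentially the same route as the paper: you reduce proportional normalised pairs to $(cw,c^{-1}w)$ with $w\in\mathcal W$ and $K^{-1}\le c\le K$, use strict convexity in $\log c$ to restrict to the endpoints, and then rerun the rearrangement, radial compactness and renormalisation argument of part 3 of Theorem~\ref{introattainment} with the weights $c^q,c^{-p}$ retained, before differentiating the weighted scale-invariant quotient. Two details are worth stating explicitly. The renormalisation contradiction uses $b_t=t-a_t>0$, that is $t<2^*$, as well as $a_t>0$. When $p=q$, the two weighted terms must be counted separately in the sums defining $\eta$ and $\nu$.
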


\begin{proof}
	For $(u,v)=(cw,c^{-1}w)$ with $w\in\mathcal W$, one has $A=B=1$.
	The four component bounds in \eqref{MK} are equivalent to $K^{-1}\le c\le K$.
	Conversely every positively proportional normalised pair has this
	representation. 
	For each fixed $w\in\cW$, the map
	$$c\longmapsto\Phi(c):=\frac{c^q}{q}\int|w|^q+\frac{c^{-p}}p\int|w|^p$$
	is strictly convex in $\log c$. Indeed, defining $\psi(s):=\Phi(\log c)$, one has
	$$\psi(s)''=qe^{qs}\int|w|^q+p e^{-ps}\int|w|^p>0\,.$$
	Hence the maximum of $\Phi$ over $[K^{-1},K]$ is attained at one of the endpoints. Consequently, the supremum defining $C_{K,\mathrm{prop}}^*$ can be restricted to these two values of $c$.
	For either endpoint both coefficients in $\Phi(c)$ are
	strictly positive. The rearrangement, radial compactness and
	renormalisation proof of Proposition~\ref{introattainment} applies
	with these coefficients retained. More explicitly, a radial
	maximising sequence converges strongly in $L^p\cap L^q$ to a
	nonzero weak $H^1$ limit. If its squared mass or gradient norm is
	less than one, scalar renormalisation multiplies each power integral
	by $\|\nabla w\|_2^{-a_t}\|w\|_2^{-b_t}>1$, contradicting maximality. Both norms
	therefore equal one. Differentiating the weighted scalar quotient
	gives \eqref{eq_displayed}, with positive coefficients $\eta,\nu$; positivity is then a consequence of the maximum principle. 
\end{proof}

\begin{remark}[A loss of compactness in the class $\cM_K$]
	For any $K>1$, choose nonnegative smooth compactly supported functions
	$w,z\in\mathcal W$, set $s=\sqrt{K^2-1}$ and let
	$z_n=z(\,\cdot-y_n)$, with $|y_n|\to+\infty$ and disjoint supports
	from $w$. Then
	\[
	 u_n=w+sz_n\,,\qquad v_n=w\,,\qquad A(u_n,v_n)=B(u_n,v_n)=1\,,
	\]
	while both norms of $u_n$ equal $K$ and both norms of $v_n$ equal
	$1$. No common translation makes this sequence strongly compact in
	$H^1\times H^1$: indeed, keeping the nonzero $v$ component tight leaves the
	$z_n$ component escaping. 
	This example identifies a defect that an attainment proof for all of $\mathcal M_K$
	must rule out.
\end{remark}


\section{Local continuation for nonhomogeneous systems}\label{Section_branches_powers}
In this section, by means of an implicit function argument, we
obtain positive normalised solutions for the perturbed system \eqref{perturbedsystem}, branching from solutions of \eqref{system} which do not belong to
the mass-critical curve \eqref{critical_curve}. To this aim, we first prove that the global nondegeneracy obtained in Theorem \ref{thm:main} for the unit-frequency system \eqref{system1} transfer to our system \eqref{system} by means of the scaling \eqref{introfrequencyscaling}.


\begin{lemma}[The fixed-mass linearisation]\label{linearisation}
	Let $(p,q)$ in the range \eqref{range} and $a=B(u_\lambda,v_\lambda)=\int u_\lambda v_\lambda>0$.
	The bounded operator $\cA(H^1_{\rm rad})^2\times\R\to(H^{-1}_{\rm rad})^2\times\R$ defined by
	\begin{equation}\label{augmentedoperator}
		\cA_\lambda(h,k,\tau)=
		\begin{pmatrix}
			(-\Delta+\lambda)h-(p-1)v_\lambda^{p-2}k+\tau u_\lambda\\
			(-\Delta+\lambda)k-(q-1)u_\lambda^{q-2}h+\tau v_\lambda\\
			\int(v_\lambda h+u_\lambda k)
		\end{pmatrix}
	\end{equation}
	is an isomorphism if and only if $\gamma_N(p,q)\ne0$. If $\gamma_N(p,q)=0$, its kernel is precisely
	${\rm span}\{(\partial_\lambda u_\lambda,\partial_\lambda v_\lambda,1)\}$.
\end{lemma}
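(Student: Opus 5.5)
The plan is to split $\cA_\lambda$ into its fixed-frequency block and a rank-one bordering. Write $L_\lambda(h,k)$ for the first two rows with $\tau=0$. By the frequency scaling \eqref{introfrequencyscaling}, $L_\lambda$ is conjugate to the unit-frequency linearisation \eqref{eq:linearized} up to a factor $\lambda$. Proposition~\ref{sp:strict-gap} shows that the radial kernel of \eqref{eq:linearized} is trivial in $L^2$, hence in $H^1_{\rm rad}$.

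Next we check that $L_\lambda:(H^1_{\rm rad})^2\to(H^{-1}_{\rm rad})^2$ is Fredholm of index zero. It equals the invertible diagonal operator $(-\Delta+\lambda)$ plus the off-diagonal multiplication by $v_\lambda^{p-2},u_\lambda^{q-2}$. These weights decay exponentially by Lemma~\ref{profileregularity}, so after composing with $(-\Delta+\lambda)^{-1}$ this perturbation is compact from $H^1$ to $H^{-1}$ (local Rellich plus small tails). Trivial kernel therefore gives that $L_\lambda$ is an isomorphism. The full operator $\cA_\lambda$ is a finite-rank bordering of $L_\lambda\oplus 0_{\R}$, hence also Fredholm of index zero. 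Invertibility is then equivalent to a trivial kernel, which reduces the lemma to a Schur-complement computation.

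For the kernel, suppose $\cA_\lambda(h,k,\tau)=0$. Then $L_\lambda(h,k)=-\tau(u_\lambda,v_\lambda)$. Differentiating the equations of \eqref{system} for the family $(u_\lambda,v_\lambda)$ with respect to $\lambda$ gives
\[
L_\lambda(\partial_\lambda u_\lambda,\partial_\lambda v_\lambda)=-(u_\lambda,v_\lambda).
\]
The derivatives are explicit from \eqref{introfrequencyscaling}, namely $\partial_\lambda u_\lambda=\lambda^{-1}\bigl(\tfrac pD u_\lambda+\tfrac12 x\cdot\nabla u_\lambda\bigr)$ and similarly for $v_\lambda$; they are radial and lie in $H^1$ by exponential decay. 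Injectivity of $L_\lambda$ then gives $(h,k)=\tau(\partial_\lambda u_\lambda,\partial_\lambda v_\lambda)$.

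The third equation becomes
\[
0=\tau\int(v_\lambda\partial_\lambda u_\lambda+u_\lambda\partial_\lambda v_\lambda)=\tau\,\frac{\dd}{\dd\lambda}B(u_\lambda,v_\lambda)=\tau\,\gamma\lambda^{\gamma-1}B(U,V),
\]
where the last equality uses the mass law \eqref{masslaw}. Since $B(U,V)>0$, when $\gamma\neq0$ we get $\tau=0$, hence $h=k=0$, and $\cA_\lambda$ is an isomorphism. When $\gamma=0$ the third row is automatically satisfied, and the kernel is exactly the span of $(\partial_\lambda u_\lambda,\partial_\lambda v_\lambda,1)$. In that case $\cA_\lambda$ is not injective, so it is not an isomorphism. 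Finally, \eqref{parameters} shows that $\gamma=0$ if and only if $\gamma_N(p,q)=0$.

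The main obstacle is the Fredholm step. One must justify compactness of the weighted coupling in the $H^1\to H^{-1}$ setting, and transfer nondegeneracy from the $L^2/L^\infty$ statement of Proposition~\ref{sp:strict-gap} to $H^1_{\rm rad}$ weak solutions. For the latter, a kernel element in $H^1$ is in $L^2$ and satisfies the resolvent identities used in that proposition, so this transfer is immediate.
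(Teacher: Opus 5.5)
Your proposal is correct and follows essentially the same route as the paper. Both arguments first invert the fixed-frequency block $\mathcal L_\lambda$ using radial nondegeneracy, the frequency scaling and a compact-perturbation Fredholm argument, and then handle the rank-one bordering through $\mathcal L_\lambda(\partial_\lambda u_\lambda,\partial_\lambda v_\lambda)=-(u_\lambda,v_\lambda)$ together with $\frac{d}{d\lambda}B(u_\lambda,v_\lambda)=\gamma a/\lambda$. The one difference is how you get surjectivity of $\cA_\lambda$: you use the index-zero property of the finite-rank bordering, whereas the paper solves $\cA_\lambda(h,k,\tau)=(f,c)$ explicitly via $(h,k)=\mathcal L_\lambda^{-1}f+\tau(\partial_\lambda u_\lambda,\partial_\lambda v_\lambda)$ and reads off $\tau$ from the last row.
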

\begin{proof}
	\textit{Step 1: Invertibility of the radial linearisation without
		the mass constraint.}
	
	Define
	\begin{equation}\label{L_lambda}
		\mathcal L_\lambda
		\begin{pmatrix}h\\k\end{pmatrix}
		=
		\begin{pmatrix}
			(-\Delta+\lambda)h-(p-1)v_\lambda^{p-2}k\\
			(-\Delta+\lambda)k-(q-1)u_\lambda^{q-2}h
		\end{pmatrix}\,.
	\end{equation}
	%
	Relying on the nondegeneracy result in Theorem \ref{thm:main}, we first prove that $\mathcal L_\lambda:(H^1_{\mathrm{rad}}(\R^N))^2\to(H^{-1}_{\mathrm{rad}}(\R^N))^2$ is an isomorphism. Indeed, $\ker\bigl(\mathcal L_1\bigr)=\{0\}$, and the scaling \eqref{introfrequencyscaling} conjugates the linearised equations at frequency $\lambda$ to those at frequency $1$, up to multiplication by nonzero constants. Since it preserves radiality, one infers $\ker\bigl(\mathcal L_\lambda\bigr)=\{0\}$ for every $\lambda>0$. Moreover, the coefficients $u_\lambda^{q-2}$ and
	$v_\lambda^{p-2}$ in \eqref{L_lambda} are bounded and tend to zero at infinity by Lemma \ref{profileregularity}. 
	Combining this decay with the compact
	embedding $H^1(B_R(0))\hookrightarrow L^2(B_R(0))$ shows that
	the corresponding multiplication operators from $H^1$ to
	$H^{-1}$ are compact. Thus $\mathcal L_\lambda$ is a
	compact perturbation of the isomorphism $(-\Delta+\lambda)I:(H^1_{\mathrm{rad}}(\R^N))^2\to(H^{-1}_{\mathrm{rad}}(\R^N))^2$.
	It is therefore Fredholm of index zero. The Fredholm alternative now implies that $\mathcal L_\lambda$
	is an isomorphism.
	
	\medskip
	\noindent
	\textit{Step 2: The augmented linearisation at fixed bilinear mass.}
	
	For a fixed $\lambda>0$, set
	\[
	\ell_\lambda(h,k)
	=\int(v_\lambda h+u_\lambda k)
	\]
	and define the augmented operator
	\[
	\cA_\lambda(h,k,\tau)
	=
	\begin{pmatrix}
		\mathcal L_\lambda(h,k)
		+\tau(u_\lambda,v_\lambda)\\
		\ell_\lambda(h,k)
	\end{pmatrix}.
	\]
	We aim at proving that $\cA_\lambda:(H^1_{\mathrm{rad}}(\R^N))^2\times\R
	\to(H^{-1}_{\mathrm{rad}}(\R^N))^2\times\R$ is an isomorphism if and only if
	$\gamma_N(p,q)\ne0$, and identify its kernel when
	$\gamma_N(p,q)=0$. Recall that the null sets of $\gamma_N(p,q)$ and $\gamma$ defined in \eqref{parameters} coincide. The map $\lambda\mapsto(u_\lambda,v_\lambda)$ is differentiable, and differentiating the two equations in \eqref{system} with respect to $\lambda$ gives
	\[
	\mathcal L_\lambda
	\begin{pmatrix}
		\partial_\lambda u_\lambda\\
		\partial_\lambda v_\lambda
	\end{pmatrix}
	=
	-\begin{pmatrix}u_\lambda\\v_\lambda\end{pmatrix}.
	\]
	At a frequency for which $B(u_\lambda,v_\lambda)=a$, differentiation
	of the scaling identity for $B$ also gives
	\[
	\ell_\lambda(\partial_\lambda u_\lambda,
	\partial_\lambda v_\lambda)
	=\frac\dd{\dd\lambda}B(u_\lambda,v_\lambda)
	=\frac{\gamma a}\lambda\,.
	\]
	Consequently,
	\[
	\cA_\lambda
	(\partial_\lambda u_\lambda,
	\partial_\lambda v_\lambda,1)
	=
	\left(0,0,\frac{\gamma a}{\lambda}\right).
	\]
	
	To establish the full isomorphism assertion, let
	$(f,c)\in(H^{-1}_{\mathrm{rad}}(\R^N))^2\times\R$ be arbitrary with
	$f=(f_1,f_2)$, and set $(z_1,z_2)=\mathcal L_\lambda^{-1}f$, which is well defined since $\cL$ is invertible. The first two components of the equation
	\[
	\cA_\lambda(h,k,\tau)=(f,c)
	\]
	then imply
	\[
	(h,k)=(z_1,z_2)
	+\tau(\partial_\lambda u_\lambda,
	\partial_\lambda v_\lambda)\,.
	\]
	The last component is therefore equivalent to
	\[
	\ell_\lambda(z_1,z_2)
	+\tau\frac{\gamma a}{\lambda}=c\,.
	\]
	If $\gamma\ne0$, this determines a unique $\tau$, and hence a
	unique $(h,k,\tau)$. The boundedness of the inverses involved
	also gives a bounded inverse for $\cA_\lambda$. If instead $\gamma=0$, the same identities show that
	$\ker\cA_\lambda
	=
	\operatorname{span}
	\left\{
	(\partial_\lambda u_\lambda,
	\partial_\lambda v_\lambda,1)
	\right\}.$
	
\end{proof}

\begin{proof}[Proof of Theorem~\ref{intropowercontinuation}]

	Define $\Phi:(2,2^*)^2\times\R^2\times(H^1_{\rm rad}(\R^N))^2\times\R\to(H^{-1}_{\rm rad}(\R^N))^2\times\R$ by
	\[
	\begin{aligned}
		\Phi(p,q,\varepsilon_1,\varepsilon_2,u,v,\lambda)
		=\begin{pmatrix}
			(-\Delta+\lambda)u
			-v_+^{p-1}-\varepsilon_1v_+^{\ell_1-1}\\
			(-\Delta+\lambda)v
			-u_+^{q-1}-\varepsilon_2u_+^{\ell_2-1}\\
			\displaystyle\int uv-a
		\end{pmatrix}.
	\end{aligned}
	\]
	By construction, $\Phi(p_0,q_0,0,0,u_0,v_0,\lambda_0)=0$. Moreover, arguing as in the proof of Lemma \ref{lem:cc-local-continuation}, by \eqref{22*} the map $\Phi$ is of class $C^1$ from an open neighbourhood of $(p_0,q_0,0,0,u_0,v_0,\lambda_0)$ into $(H^{-1}_{\mathrm{rad}}(\R^N))^2\times\R$. Indeed, on any compact exponent interval inside $(2,2^*)$, the Nemytskii map $(b,z)\mapsto z_+^{b-1}$ is $C^1$ from
	$\R\times H^1$ to $H^{-1}$.
	The derivative of $\Phi$ with respect to $(u,v,\lambda)$ at the
	base point is
	\[
	D_{(u,v,\lambda)}\Phi
	\begin{pmatrix}h\\k\\\tau\end{pmatrix}
	=
	\begin{pmatrix}
		(-\Delta+\lambda_0)h
		-(p_0-1)v_0^{p_0-2}k+\tau u_0\\
		(-\Delta+\lambda_0)k
		-(q_0-1)u_0^{q_0-2}h+\tau v_0\\
		\displaystyle\int_{\R^N}(v_0h+u_0k)
	\end{pmatrix}.
	\]
	This is precisely the augmented operator $\cA_\lambda$ defined in Lemma \ref{linearisation}, hence an isomorphism
	since $\gamma_N(p_0,q_0)\ne0$. The Banach-space implicit function theorem therefore yields
	neighbourhoods of $(p_0,q_0,0,0)$ and
	$(u_0,v_0,\lambda_0)$, and a unique $C^1$ map $(p,q,\varepsilon_1,\varepsilon_2)\to (u,v,\lambda)$ 
	whose values solve $\Phi=0$. In particular, this gives the
	claimed local branch and its local uniqueness in $(H^1_{\mathrm{rad}})^2\times\R$.
	Shrinking the parameter neighbourhood if necessary, continuity
	ensures that $\lambda>0$ and that neither component vanishes identically.
	
%
	For $\varepsilon_1,\varepsilon_2\ge0$, to show that $u,v\geq0$ it is sufficient to test the first equation with $u_-$, obtaining
	$$-\|\nabla u_-\|_2^2-\lambda\|u_-\|_2^2 =\int\bigl(v_+^{p-1}+\varepsilon_1v_+^{\ell_1-1}\bigr)u_-\ge0\,,$$
	since $\lambda>0$. This shows $u_-=0$ and therefore $u\ge0$. The same argument applied to the second equation yields $v\ge0$. 
	Since both components are nontrivial, the strong maximum principle gives the strict signs.
	
	Although the implicit function theorem was applied in the radial space, the equations in fact hold in the usual weak sense against arbitrary test functions. Indeed, the residuals are radial. If $\varphi\in H^1(\R^N)$ and $\varphi_{\rm rad}$ denotes its spherical average, then $\langle T,\varphi\rangle=\langle T,\varphi_{\rm rad}\rangle$ for every radial distribution $T$. Thus testing with radial functions is sufficient to recover the full weak formulation.

%
%
	Since all exponents in the nonlinearities are strictly
	subcritical, the standard elliptic bootstrap for the coupled
	system yields bounded classical solutions. In particular,
	$u,v$ are uniformly continuous. Their $L^2$ integrability then
	implies that $u(x)+v(x)\to0$ as $|x|\to+\infty$ and in turn the following tail estimate:
	for every sufficiently small $\eta>0$, there is	$R>0$ such that
	$$v^{p-1}+\varepsilon_1v^{\ell_1-1}\le\eta v\,,
	\quad\ 
	u^{q-1}+\varepsilon_2u^{\ell_2-1}\le\eta u\,,
	\quad\ \text{for }|x|\ge R\,.$$
	This permits to follow the argument used to deduce exponential decay in the proof of Lemma \ref{profileregularity}.
\end{proof}

\subsection{Critical mass response to additional powers}\label{Section_massresponse}
At criticality the mass constraint destroys invertibility, but the
fixed-frequency PDE linearisation remains invertible in radial spaces.
This allows for an explicit description of the initial mass change.

\begin{proof}[Proof of Proposition~\ref{introcriticalresponseNgeq3}]
	The existence of the unique local branch of solutions follows from the fact that $\cL_\lambda:(H^1_{\rm rad}(\R^N))^2\to(H^{-1}_{\rm rad(\R^N)})^2$ defined in \eqref{L_lambda} is an isomorphism, as in the proof of Lemma \ref{linearisation}. 
	In the bilinear mass critical case, the scaling law gives $B(u_\lambda,v_\lambda)=\lambda^\gamma a_*=a_*$, so the unperturbed mass is independent of $\lambda$. To compute the derivatives with respect to the perturbation
	parameters, introduce the operator
	\[
	\cH_\lambda(h,k)=
	\begin{pmatrix}
		(-\Delta+\lambda)k-(q-1)u_\lambda^{q-2}h\\
		(-\Delta+\lambda)h-(p-1)v_\lambda^{p-2}k
	\end{pmatrix},
	\]
	which is symmetric with respect to the natural $H^{-1}$--$H^1$ duality pairing. Differentiating the unperturbed equations in \eqref{system} with respect to $\lambda$ gives
	\[
	\cH_\lambda
	\begin{pmatrix}
		\partial_\lambda u_\lambda\\
		\partial_\lambda v_\lambda
	\end{pmatrix}
	=-\begin{pmatrix}v_\lambda\\u_\lambda\end{pmatrix}.
	\]
	For $i=1,2$, write
	\[
	(h_i,k_i)=
	\left.\partial_{\varepsilon_i}
	(u_{\lambda,\varepsilon},v_{\lambda,\varepsilon})
	\right|_{\varepsilon=0}.
	\]
	Differentiating \eqref{perturbedsystem} with respect to
	$\varepsilon_1$ and $\varepsilon_2$, respectively, yields
	\[
	\cH_\lambda(h_1,k_1)
	=\begin{pmatrix}0\\v_\lambda^{\ell_1-1}\end{pmatrix},
	\qquad
	\cH_\lambda(h_2,k_2)
	=\begin{pmatrix}u_\lambda^{\ell_2-1}\\0\end{pmatrix}.
	\]
	Hence, by the symmetry of $\cH_\lambda$ we obtain
	\begin{equation}
		\left.\partial_{\varepsilon_1}
		B(u_{\lambda,\varepsilon},v_{\lambda,\varepsilon})
		\right|_{\varepsilon=0}=\int(v_\lambda h_1+u_\lambda k_1)=-\int
		v_\lambda^{\ell_1-1}\partial_\lambda v_\lambda=-\frac1{\ell_1}\frac\dd{\dd\lambda}
		\int v_\lambda^{\ell_1}\,,
	\end{equation}
	and similarly
	$$\left.\partial_{\varepsilon_2}B(u_{\lambda,\varepsilon},v_{\lambda,\varepsilon})\right|_{\varepsilon=0}=-\frac1{\ell_2}\frac\dd{\dd\lambda}\int u_\lambda^{\ell_2}\,.$$
	Finally, the scaling \eqref{introfrequencyscaling} implies
	\[
	\int v_\lambda^\ell
	=\lambda^{\ell q/D-N/2}\int V^\ell\,,
	\qquad
	\int u_\lambda^\ell
	=\lambda^{\ell p/D-N/2}\int U^\ell\,,
	\]
	therefore,
	\[
	\frac\dd{\dd\lambda}\int v_\lambda^\ell
	=\frac{e_\ell^{(v)}}{\lambda}
	\int v_\lambda^\ell\,,
	\qquad
	\frac\dd{\dd\lambda}\int u_\lambda^\ell
	=\frac{e_\ell^{(u)}}{\lambda}
	\int u_\lambda^\ell\,.
	\]
	These identities give the two coefficients in
	\eqref{criticalexpansion}. The continuity of the derivatives
	of the local branch with respect to $\varepsilon$ yields the
	remainder $o(|\varepsilon|)$, locally uniformly for $\lambda$
	near $\lambda_0$. This concludes the proof.
\end{proof}

\section{Exponential Gagliardo--Nirenberg inequalities in the plane}\label{secTM}
Throughout this section $N=2$. In scalar estimates we use the local
abbreviations $M=\|w\|_2^2$ and $G=\|\nabla w\|_2^2$; the bilinear
quantities $A$ and $B$ retain their definitions from the introduction. The finite-power inequality does not
disappear in this dimension: \eqref{alphabeta} gives $\beta=1$ and
$\alpha=\sigma^{-1}-1$, so the preceding bilinear estimate reads
$F\le C_K A^{\sigma^{-1}-1}B$ on its stated admissible class.
Exponential growth raises a further question: which limiting estimate
keeps the dependence on mass that makes Gagliardo--Nirenberg inequalities
useful in a normalised problem? We use the mass-weighted
Adachi--Tanaka formulation and then subtract the quadratic term to
obtain an estimate for the nonlinear potential.

\begin{remark}[Growth beyond the scalar Trudinger--Moser threshold]
\label{supercriticalLorentzscope}
For $\rho>2$ and $\rho'=\rho/(\rho-1)\in(1,2)$, the complementary
growth scales at infinity
\[
 e^{\eta |u|^\rho},\qquad e^{\zeta |v|^{\rho'}},
 \qquad \eta,\zeta>0,
\]
fit the Lorentz--Sobolev framework of \cite{CassaniTarsi2015}, with
the additional hypotheses of its existence theorems. The first
outgrows $e^{c|u|^2}$ for every $c>0$. The corresponding product
setting is
\[
 W^1L^{2,\rho'}(\R^2)\times W^1L^{2,\rho}(\R^2),
\]
where $W^1L^{2,s}$ requires the function and its first derivatives
to belong to the Lorentz space $L^{2,s}$. Lorentz H\"older duality
gives
\[
 |A(u,v)|+|B(u,v)|\le C\bigl(
 \|\nabla u\|_{L^{2,\rho'}}\|\nabla v\|_{L^{2,\rho}}
 +\|u\|_{L^{2,\rho'}}\|v\|_{L^{2,\rho}}\bigr).
\]
Thus the crossed gradient and bilinear mass remain meaningful in
this setting, suggesting a natural extension of the normalised
problem.

In \eqref{TMpuresystem}, by contrast, the exponential powers are
both two, for every choice of $\vartheta_1,\vartheta_2>0$. Increasing
these coefficients does not change the growth order. The condition
$\vartheta K^2A<4\pi$ in Theorem~\ref{introexponentialGN} controls
the gradient size relative to a fixed coefficient; it does not
assert a universal restriction on Hamiltonian nonlinearities.
The present bilinear exponential GN estimate and constrained
compactness proofs are established in $H^1\times H^1$. Extending
them to the complementary growth scales above requires estimates
in the corresponding Lorentz norms and recovery of $B=a$ with
an unknown multiplier. The prescribed-potential existence result
does not by itself provide that prescribed-mass conclusion.
\end{remark}

\subsection{The scalar mass-weighted formulation}\label{secTMscalar}
Let
\begin{equation}\label{Rufconstant}
 C_{\rm R}:=\sup_{\|z\|_{H^1(\R^2)}^2\le1}
     \int_{\R^2}(e^{4\pi z^2}-1)<\infty,
 \qquad \|z\|_{H^1}^2=\|\nabla z\|_2^2+\|z\|_2^2,
\end{equation}
where finiteness is Ruf's theorem \cite{Ruf}. For a nonzero
$w\in H^1(\R^2)$ write $G=\|\nabla w\|_2^2$ and $M=\|w\|_2^2$.
For $\vartheta>0$, the full-norm estimate implies
\begin{equation}\label{scalarExpGN}
 \int_{\R^2}(e^{\vartheta w^2}-1)
 \le\frac{C_{\rm R}\vartheta M}{4\pi-\vartheta G},
 \qquad \vartheta G<4\pi.
\end{equation}
Indeed, set $s^2=\vartheta M/(4\pi-\vartheta G)$ and
$z(x)=\sqrt{\vartheta/(4\pi)}\,w(sx)$. Then
$\|z\|_{H^1}^2=1$ and the change of variables in
\eqref{Rufconstant} gives \eqref{scalarExpGN}. This is the rescaling
underlying the equivalence with the improved Adachi--Tanaka inequality
proved in \cite{CST}; the scalar result is recalled here as the starting
point of the bilinear formulation.

Equivalently, for $0<\theta<4\pi$,
\begin{equation}\label{scalarExpGNnormalised}
 \int_{\R^2}\left(e^{\theta w^2/G}-1\right)
 \le\frac{C_{\rm R}\theta}{4\pi-\theta}\frac MG.
\end{equation}
This form makes the analogy with
$\|w\|_r^r\le C_rG^{r/2-1}M$, $2<r<\infty$, explicit.
Under $w(x)\mapsto cw(tx)$, both sides of
\eqref{scalarExpGNnormalised} scale as $t^{-2}$, independently of
$c>0$. The exponential replaces the finite power, while the ratio
$M/G$ retains the mass and the correct spatial scaling.

\subsection{A bilinear exponential estimate and its sharp threshold}\label{secTMbilinear}
For $K\ge1$ define
\begin{equation}\label{TMclass}
 \mathcal E_K=\left\{(u,v)\in H^1(\R^2)^2:
 \begin{array}{l}
 A=\int\nabla u\cdot\nabla v>0,\quad B=\int uv>0,\\
 \|\nabla u\|_2^2,\|\nabla v\|_2^2\le K^2A,\\
 \|u\|_2^2,\|v\|_2^2\le K^2B
 \end{array}\right\}.
\end{equation}
These classes are invariant under common amplitude changes and spatial
dilations $(u,v)\mapsto(cu(t\,\cdot),cv(t\,\cdot))$, with $c,t>0$.
They are defined directly by quadratic bounds and are distinct from
the power-dependent classes $\mathcal M_K$ used above.

\phantomsection\label{proof:bilinearTM}
\begin{proof}[Proof of Theorem~\ref{introexponentialGN}: the bilinear estimate and its threshold]

Set
\[
 \theta=\frac{\vartheta K^2A}{4\pi}<1,\qquad
 s^2=\frac{\vartheta K^2B}{4\pi-\vartheta K^2A},\qquad
 z(x)=\sqrt{\frac\vartheta{4\pi}}\,u(sx).
\]
The planar scaling laws give
$\|\nabla z\|_2^2\le\theta$ and
$\|z\|_2^2\le1-\theta$. Applying \eqref{Rufconstant} and undoing
the change of variables yields
$\int(e^{\vartheta u^2}-1)\le C_{\rm R}s^2$.
The same argument for $v$ proves \eqref{introexponentialbound}.
This is the scalar full-norm-to-subcritical rescaling behind
\cite{CST}, applied with the component bounds in \eqref{TMclass}.

For sharpness take the radial Moser functions, with $n\ge2$,
\[
 \phi_n(x)=\frac1{\sqrt{2\pi}}
 \begin{cases}
 \sqrt{\log n},&|x|\le n^{-1},\\
 \displaystyle\frac{\log(1/|x|)}{\sqrt{\log n}},
       &n^{-1}<|x|<1,\\
 0,&|x|\ge1.
 \end{cases}
\]
Direct integration gives $\|\nabla\phi_n\|_2^2=1$ and
$b_n:=\|\phi_n\|_2^2\to0$. On the central disk,
\[
 \int_{|x|\le n^{-1}}(e^{4\pi\phi_n^2}-1)
 =\pi(1-n^{-2}).
\]
Put $t_n^2=A_0b_n/B_0$ and
$w_n(x)=\sqrt{A_0}\phi_n(t_nx)$. Then
$\|\nabla w_n\|_2^2=A_0$ and $\|w_n\|_2^2=B_0$.
The pair $(u_n,v_n)=(Kw_n,K^{-1}w_n)$ belongs to
$\mathcal E_K$ and has exactly $A=A_0$, $B=B_0$.
If $\vartheta K^2A_0\ge4\pi$, its first exponential integral satisfies
\[
 \int(e^{\vartheta u_n^2}-1)
 \ge t_n^{-2}\pi(1-n^{-2})\longrightarrow\infty.
\]
This proves failure of uniform boundedness at and above the threshold.
\end{proof}

Taking $\vartheta=\theta/A$ in \eqref{introexponentialbound} yields the normalised
exponential Gagliardo--Nirenberg form
\begin{equation}\label{bilinearExpGNnormalised}
 \int_{\R^2}\left[(e^{\theta u^2/A}-1)+(e^{\theta v^2/A}-1)\right]
 \le\frac{2C_{\rm R}\theta K^2}{4\pi-\theta K^2}\frac BA,
 \qquad 0<\theta<\frac{4\pi}{K^2}.
\end{equation}
For common amplitude changes and spatial dilations, $A$ becomes
$c^2A$ and $B$ becomes $c^2t^{-2}B$, so both sides again scale as
$t^{-2}$. The appearance of $B/A$ is thus the bilinear counterpart
of $M/G$ in \eqref{scalarExpGNnormalised}.

The component bounds remain essential. For a nonzero smooth compactly
supported $w$, reciprocal amplitudes $(cw,c^{-1}w)$ keep both $A$ and
$B$ fixed, while $\int(e^{\vartheta c^2w^2}-1)$ diverges as $c\to+\infty$.
At the full-norm endpoint, the different hypothesis
$\vartheta K^2(A+B)\le4\pi$ gives a bound $2C_{\rm R}$ directly from
\eqref{Rufconstant}. Thus a critical full-norm bound is compatible
with the failure at $\vartheta K^2A=4\pi$ when positive mass is fixed
separately.

\subsection{The truncated exponential and the quartic mass scale}\label{secTMtruncated}
The nonlinear potential associated with $t(e^{\vartheta t^2}-1)$
contains the truncated exponential
$\Psi(s)=e^s-1-s$. Removing the quadratic term in $t$ changes the
right-hand side from a mass bound to a product of mass and gradient
pairing, provided a fixed gap from the concentration threshold is kept.

\phantomsection\label{proof:truncatedTM}
\begin{proof}[Proof of Theorem~\ref{introexponentialGN}: the truncated estimate]

For $0\le a\le1$ and $s\ge0$, the exponential series gives
\[
 \Psi(as)=\sum_{j=2}^{\infty}\frac{a^js^j}{j!}
 \le a^2(e^s-1).
\]
Set $\eta=(1+\rho)/2$ and $\vartheta'=4\pi\eta/(K^2A)$.
Then $0<\vartheta/\vartheta'\le\rho/\eta<1$. Applying the preceding
inequality to both components and using \eqref{introexponentialbound} at
$\vartheta'$ gives
\begin{align*}
 \int\big[\Psi(\vartheta u^2)+\Psi(\vartheta v^2)\big]
 &\le\left(\frac\vartheta{\vartheta'}\right)^2
       \frac{2C_{\rm R}\vartheta'K^2B}{4\pi-\vartheta'K^2A}\\
 &=\frac{C_{\rm R}}{8\pi^2\eta(1-\eta)}\vartheta^2K^4AB.
\end{align*}
Since $4\eta(1-\eta)=1-\rho^2$, this is
\eqref{introtruncatedbound}.
\end{proof}

The subtraction is necessary for this scaling: the omitted term is
$\vartheta\int(u^2+v^2)$, which stays fixed under the mass-preserving
dilation below, whereas $AB$ tends to zero. It does not remove the
concentration obstruction. In the Moser sequence used above,
$\vartheta\int(u_n^2+v_n^2)=\vartheta(K^2+K^{-2})B_0$ is fixed.
The truncated exponential therefore still has unbounded supremum
at $\vartheta K^2A_0=4\pi$; no finite endpoint constant extends
\eqref{introtruncatedbound} to this threshold.

\phantomsection\label{proof:scalartruncatedrefinement}
\begin{proof}[Proof of Corollary~\ref{scalartruncatedrefinement}]
For $w\ne0$, $(w,w)\in\mathcal E_1$, so dividing
\eqref{introtruncatedbound} by two proves
\eqref{introscalartruncated}. For the sharper estimate put
$R_3(s)=e^s-1-s-s^2/2$. The exponential series gives
$R_3(\varepsilon s)\le\varepsilon^3(e^s-1)$ for
$0\le\varepsilon\le1$, $s\ge0$. The function
$f(x)=G^{-1/2}w(\sqrt{M/G}\,x)$ has squared mass and gradient
norm both equal to one. Ruf's estimate for $f/\sqrt2$ therefore gives
\[
 \int R_3(\vartheta w^2)
 \le\frac MG\left(\frac{\vartheta G}{2\pi}\right)^3C_{\rm R}
 =\frac{C_{\rm R}\vartheta^3}{8\pi^3}MG^2.
\]
Adding $(\vartheta^2/2)\int w^4$ and using the sharp cubic GN
inequality proves \eqref{introscalarquartic}. The zero function is
immediate. To check the leading coefficient, choose a scalar GN
optimiser $w$ with mass $M$ and spread it by $w_t(x)=tw(tx)$.
Its gradient norm squared is $t^2G$, its quartic integral is
$t^2\int w^4$, and the displayed remainder is $O(t^4)$.
Dividing by $\vartheta^2M\|\nabla w_t\|_2^2$ and letting
$t\downarrow0$ gives the limit $1/\|Q_4\|_2^2$.
\end{proof}

For the even exponential potential define
\begin{equation}\label{TMpotential}
 \mathcal P_\vartheta(u,v)=\frac1{2\vartheta}
   \int\big[\Psi(\vartheta u^2)+\Psi(\vartheta v^2)\big],\qquad
 J^{\rm exp}_\varepsilon(u,v)=A-\varepsilon\mathcal P_\vartheta(u,v).
\end{equation}
This prototype isolates the exponential part of the energy. The
positive-part potentials used below obey the same bound.

\phantomsection\label{proof:TMenergy}
\begin{proof}[Proof of Theorem~\ref{introexponentialGN}: the energy estimate]

Divide \eqref{introtruncatedbound} by $2\vartheta$, multiply by
$\varepsilon$ and use $B=a$ in \eqref{TMpotential}.
\end{proof}

The factor $AB$ has the planar quartic mass-critical scaling. To see
both its meaning and its limit, consider the common mass-preserving
dilation
\[
 \mathcal D_t(u,v)(x)=(tu(tx),tv(tx)),\qquad t>0.
\]
It preserves $\mathcal E_K$ and $B$, while
$A(\mathcal D_t(u,v))=t^2A(u,v)$. This is the dilation appropriate
to the quadratic class; it is distinct from the unequal-power
transformation used earlier. Monotone convergence gives
\begin{equation}\label{TMfibreseries}
 \mathcal P_\vartheta(\mathcal D_t(u,v))
 =\sum_{j=2}^{\infty}\frac{\vartheta^{j-1}}{2j!}\,
   t^{2j-2}\int\big(u^{2j}+v^{2j}\big),
\end{equation}
with the equality understood in $[0,\infty]$.
For each fixed $(u,v)\in\mathcal E_K$, Theorem~\ref{introexponentialGN}
makes this series finite at a sufficiently small $t_0>0$.
For $0<t\le t_0$ its terms with $j\ge3$ are bounded by
$(t/t_0)^4$ times their sum at $t_0$. Hence
\begin{equation}\label{TMfibreexpansion}
 J^{\rm exp}_\varepsilon(\mathcal D_t(u,v))
 =t^2\left[A-\frac{\varepsilon\vartheta}{4}
           \int(u^4+v^4)\right]+O(t^4),\qquad t\downarrow0.
\end{equation}
The quartic term scales exactly like $A$; every higher term has a
strictly larger power of $t$. Thus the truncated estimate is the
exponential analogue of the quartic $AB$ inequality in the region
below the concentration threshold. The full exponential energy has
no single homogeneous fibre exponent, and the finite-power energy
trichotomy does not transfer unchanged. Neither
\eqref{TMenergybound} nor the sharp integrability threshold supplies
compactness or a natural-constraint argument for the unrestricted
Hamiltonian problem.

\subsection{Energy barriers and radial compactness below the threshold}\label{Section_9.4}
The exponential estimate has a direct variational use. First, it gives
a positive barrier on the component-bounded part of the mass constraint.
It then supplies the uniform integrability needed to pass to the limit
in the exponential terms. We give both statements explicitly.

\phantomsection\label{proof:TMbarrier}
\begin{proof}[Proof of Theorem~\ref{introexponentialGN}: the barrier and negative fibres]
Set $\delta_\rho=1-C_\rho\varepsilon\vartheta K^4a/2>0$ and $R_\rho=4\pi\rho/(\vartheta K^2)$.

The energy bound \eqref{TMenergybound} gives $J^{\rm exp}_\varepsilon\ge\delta_\rho R_\rho$ on $A=R_\rho$. For the negative-energy assertion,
$\Psi(s)\ge s^3/6$ for $s\ge0$ gives
\[
 J^{\rm exp}_\varepsilon(\mathcal D_t(u,v))
 \le t^2A-\frac{\varepsilon\vartheta^2t^4}{12}\int(u^6+v^6)
 \longrightarrow-\infty.
\]
The integral is positive and each energy is finite for the stated pair.
\end{proof}
This is a barrier within $\mathcal E_K$. A continuous path from
$A<R_\rho$ to $A>R_\rho$ in this class must cross it, but a path in
the full mass constraint need not remain in this class.

For the pure system, fix $\vartheta_1,\vartheta_2>0$ and use
\begin{equation}\label{TMnonlinearity}
 h_\vartheta(t)=t_+(e^{\vartheta t_+^2}-1),\qquad
 H_\vartheta(t)=\frac{e^{\vartheta t_+^2}-1-\vartheta t_+^2}{2\vartheta},
 \qquad H_\vartheta'=h_\vartheta.
\end{equation}
Define
\begin{equation}\label{TMpurefunctionals}
 \begin{aligned}
 F_{\rm e}(u,v)&=\int[H_{\vartheta_2}(u)+H_{\vartheta_1}(v)],
 &J_{\rm e}&=A-F_{\rm e},\\
 S_{\rm e}(u,v)&=\int[u h_{\vartheta_2}(u)+v h_{\vartheta_1}(v)],
 &P_{\rm e}&=2A-S_{\rm e}+2F_{\rm e}.
 \end{aligned}
\end{equation}
Here $P_{\rm e}$ is the derivative of $J_{\rm e}(\mathcal D_t(u,v))$
at $t=1$ whenever differentiation is justified, and is the Poho\v zaev
quantity used below. Set
\[
 \Theta=\max\{\vartheta_1,\vartheta_2\},\qquad
 m_0=\|Q_4\|_2^2,\qquad \kappa_a=\frac{\Theta K^4a}{m_0}.
\]

\begin{lemma}[An energy-level bound for the bilinear gradient]
\label{TMlevelbound}
For $(u,v)\in\mathcal E_K$ with $B=a$ and finite $F_{\rm e},S_{\rm e}$,
\begin{equation}\label{TMlevelestimate}
 (1-\kappa_a)A\le2J_{\rm e}-\frac12P_{\rm e}.
\end{equation}
\end{lemma}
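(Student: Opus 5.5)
The plan is to reduce \eqref{TMlevelestimate} to a pointwise inequality for the primitives, plus the sharp quartic Gagliardo--Nirenberg inequality quoted in Corollary~\ref{scalartruncatedrefinement}. From \eqref{TMpurefunctionals}, $\tfrac12P_{\rm e}=A-\tfrac12S_{\rm e}+F_{\rm e}$. Hence
\[
 2J_{\rm e}-\tfrac12P_{\rm e}=A+\tfrac12S_{\rm e}-3F_{\rm e},
\]
and it suffices to show $3F_{\rm e}-\tfrac12S_{\rm e}\le\kappa_aA$. All quantities are finite by assumption, so this rearrangement is legitimate.

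\emph{Step 1: pointwise expansion.} For one component with coefficient $\vartheta$, put $s=\vartheta t_+^2\ge0$. Then $t\,h_\vartheta(t)=\vartheta^{-1}s(e^s-1)$ and $H_\vartheta(t)=\Psi(s)/(2\vartheta)$. Therefore
\[
 \tfrac12t\,h_\vartheta(t)-3H_\vartheta(t)=\frac1{2\vartheta}\bigl[s(e^s-1)-3\Psi(s)\bigr]
 =\frac1{2\vartheta}\sum_{k\ge2}\frac{(k-3)s^k}{k!}.
\]
The $k=2$ term is $-s^2/2$, the $k=3$ term vanishes, and every $k\ge4$ term is nonnegative. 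This gives
\[
 \tfrac12t\,h_\vartheta(t)-3H_\vartheta(t)\ge-\frac\vartheta4t_+^4 .
\]
Integrating for both components and using $\vartheta_1,\vartheta_2\le\Theta$ yields
\[
 3F_{\rm e}-\tfrac12S_{\rm e}\le\frac\Theta4\int(u^4+v^4).
\]

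\emph{Step 2: sharp quartic estimate in $\mathcal E_K$.} Apply the planar inequality $\int w^4\le(2/m_0)\|w\|_2^2\|\nabla w\|_2^2$ to $u$ and to $v$. Then use the component bounds of \eqref{TMclass} together with $B=a$:
\[
 \int u^4\le\frac2{m_0}K^2B\cdot K^2A=\frac{2K^4aA}{m_0},
\]
and the same bound holds for $v$. Summing the two and multiplying by $\Theta/4$ gives exactly $\kappa_aA$, which proves \eqref{TMlevelestimate}.

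The only point needing care is Step 1. One must see that the cubic coefficient cancels exactly, so that only the quartic term carries a negative sign. This cancellation is what makes the sharp constant $1/m_0$, rather than an exponential constant such as $C_{\rm R}$, control the threshold $\kappa_a<1$.
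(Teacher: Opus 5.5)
Your proof is correct and follows essentially the paper's argument. The paper expands into the integrated series terms $F_j=\frac1{2j!}\int[\vartheta_2^{j-1}u_+^{2j}+\vartheta_1^{j-1}v_+^{2j}]$ and writes $3F_{\rm e}-\tfrac12S_{\rm e}=\sum_{j\ge2}(3-j)F_j\le F_2$, which is your pointwise expansion integrated; it then bounds $F_2\le\kappa_aA$ by the sharp quartic inequality and the component bounds, exactly as you do (your pointwise version also sidesteps rearranging a mixed-sign series of integrals).
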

\begin{proof}
Expand the nonnegative exponential series and set
\[
 F_j=\frac1{2j!}\int[\vartheta_2^{j-1}u_+^{2j}
                      +\vartheta_1^{j-1}v_+^{2j}],\qquad j\ge2.
\]
Monotone convergence gives $F_{\rm e}=\sum_{j\ge2}F_j$ and
$S_{\rm e}=\sum_{j\ge2}2jF_j$. Consequently
\[
 A-2J_{\rm e}+\tfrac12P_{\rm e}
 =3F_{\rm e}-\tfrac12S_{\rm e}
 =\sum_{j\ge2}(3-j)F_j\le F_2.
\]
The sharp planar quartic inequality \cite{Weinstein} reads
$\|w\|_4^4\le(2/m_0)\|w\|_2^2\|\nabla w\|_2^2$.
Applied to the two positive parts, the component bounds imply
$F_2\le\Theta K^4aA/m_0$. Rearranging proves the claim.
\end{proof}

\phantomsection\label{proof:TMcompactness}
\begin{proof}[Proof of Theorem~\ref{introTMcompactness}]

Lemma~\ref{TMlevelbound} yields
\[
 \limsup_{n\to+\infty}A(u_n,v_n)\le\frac{2c}{1-\kappa_a}
                           <\frac{4\pi}{\Theta K^2}.
\]
The component bounds and $B=a$ therefore bound the sequence in
$H^1(\R^2)^2$ and give, after discarding finitely many terms,
$\Theta K^2A(u_n,v_n)\le4\pi\rho$ for some $\rho<1$.
Extract a weakly convergent subsequence with limit $(u,v)$.
Radial compactness gives strong convergence in $L^t(\R^2)^2$ for
every $2<t<\infty$ and almost everywhere. One may obtain the global
statement by combining local Rellich compactness with the radial bound
$|w(r)|\le C r^{-1/2}\|w\|_{H^1}$ and the uniform $L^2$ bound.

Choose $1<r<\min\{2,1/\rho\}$ and then $b>r\Theta$ sufficiently
close to $r\Theta$ that $bK^2A(u_n,v_n)<4\pi$ uniformly.
Theorem~\ref{introexponentialGN} bounds the integrals of
$e^{b u_n^2}-1$ and $e^{b v_n^2}-1$ uniformly. The strict inequality
$b>r\Theta$ absorbs all polynomial factors in the nonlinearities.
In particular, for $\vartheta\in\{\vartheta_1,\vartheta_2\}$,
\[
 \|h_\vartheta(u_n)\|_r+\|h_\vartheta'(u_n)\|_r
 +\|h_\vartheta(v_n)\|_r+\|h_\vartheta'(v_n)\|_r\le C.
\]
At zero the respective powers are $|t|^{3r}$ and $|t|^{2r}$,
so these bounds introduce no constant integrand on the infinite domain.
Fatou's lemma gives the same bounds for the limits.

The nonnegative derivative $h_\vartheta'$ is nondecreasing. The
mean-value inequality, H\"older and the planar Sobolev embeddings,
with $s=2r/(r-1)>2$, imply
\[
 \|h_\vartheta(u_n)-h_\vartheta(u)\|_{H^{-1}}
 \le C\big(\|h_\vartheta'(u_n)\|_r+
            \|h_\vartheta'(u)\|_r\big)\|u_n-u\|_s\to0.
\]
The same holds for $v_n$. Since $r'=r/(r-1)>2$, the analogous
mean-value bound for $H_\vartheta$, using strong $L^{r'}$ convergence,
gives $F_{\rm e}(u_n,v_n)\to F_{\rm e}(u,v)$.
Strong $H^{-1}$ convergence of the nonlinearities and weak $H^1$
convergence of the components also give
$S_{\rm e}(u_n,v_n)\to S_{\rm e}(u,v)$.

Test the first residual against $v_n$ and the second against $u_n$.
Boundedness in $H^1$ makes the residual errors $o(1)$, whence
\[
 2A(u_n,v_n)+2\lambda_na-S_{\rm e}(u_n,v_n)=o(1).
\]
Together with $P_{\rm e}(u_n,v_n)=o(1)$ this yields
$\lambda_na=F_{\rm e}(u_n,v_n)+o(1)$, so
$\lambda_n\to\lambda=F_{\rm e}(u,v)/a\ge0$.
If $F_{\rm e}(u,v)=0$, both positive parts of the limit vanish,
and hence $S_{\rm e}(u,v)=0$. The Poho\v zaev limit would then imply
$A(u_n,v_n)\to0$ and $J_{\rm e}(u_n,v_n)\to0$, contradicting $c>0$.
Thus $\lambda>0$.

The operator $-\Delta+\lambda:H^1\to H^{-1}$ is an isomorphism.
Apply its inverse to \eqref{TMcompactresiduals}, replacing
$\lambda_n$ by $\lambda$; the additional errors tend to zero in
$H^{-1}$. The already proved nonlinear convergence now gives strong
$H^1$ convergence of both components. In particular $B(u,v)=a$,
$J_{\rm e}(u,v)=c$, and the limiting equations hold.
Negative-part testing and the positive resolvent give strict positivity.
The nonlinearities belong to $L^r$ with $r>1$; elliptic regularity in
the plane gives boundedness, followed by classical regularity.
Uniform continuity and square integrability give decay to zero.
Since $h_\vartheta(t)/t\to0$ at zero, exterior comparison applied to
$u+v$ gives exponential decay, as in Theorem~\ref{introexponentialcontinuation}.
\end{proof}
The same proof works when the radial residuals are measured in
$H^{-1}_{\rm rad}$, since spherical averaging of a test function is a
contraction in $H^1$. The theorem does not assume a bounded sequence
or a bounded multiplier sequence: both bounds follow from its hypotheses.
Its use of the exponential GN estimate is essential at the uniform
integrability step.

\subsection{A tilde-map construction on the mass constraint}
\label{sectildelinking}
We apply the tilde-map construction of
Subsection~\ref{sectildepreliminaries} on $H=H^1_{\rm rad}(\R^2)$,
choosing
\begin{equation}\label{tildeweights}
 \vartheta_0=\sqrt{\vartheta_1\vartheta_2},\qquad
 b=(\vartheta_1/\vartheta_2)^{1/4},\qquad
 \ell=\min\{b^2,b^{-2}\}>0.
\end{equation}
This balances the exponential coefficients in the positive and negative
coordinates \eqref{tildecoordinates}. By Lemma~\ref{tildemaplemma},
\[
 \widetilde u=b^{-2}u,\qquad
 \mathcal T(u,v)=(b^2v,b^{-2}u).
\]
Retain $r_a(w)=(2a+\|w\|_2^2)^{1/2}$ and write
$\mathbb S=\{\zeta\in H:\|\zeta\|_2=1\}$.
The parametrisation \eqref{tildechart} identifies
$\mathbb S\times H$ with the radial mass constraint $B=a$.
We establish the linking geometry and uniform bounds for its levels
below. Subsection~\ref{sectildecompactness} proves the compactness
criterion, and Subsection~\ref{secfullfibre} constructs a sequence
satisfying its hypotheses using all negative directions.

We shall use $J_{\rm e}$ on this entire radial constraint, without
imposing the component inequalities defining $\mathcal E_K$.
It is a finite $C^1$ functional on $H^1(\R^2)^2$.
To justify this assertion beyond bounded base profiles, approximate any
$u\in H^1$ by a bounded smooth function $u_0$ in $H^1$.
For each fixed $L>0$, the estimate in Lemma~\ref{TMmapping} controls
$e^{L(u_0+h)^2}-1$ whenever $\|h\|_{H^1}$ is sufficiently small,
with a smallness threshold independent of $\|u_0\|_\infty$.
Choose $u_0$ sufficiently close to $u$ and apply the same estimates
on a neighbourhood of $u$. They give local $C^1$ regularity of
$h_\vartheta:H^1\to L^2$ and of the potential integral.
No uniform estimate on arbitrary bounded $H^1$ sets is claimed.

Let $W_m\subset H$ be any finite-dimensional subspace and define
\[
 \Sigma_{a,m}=\mathcal X_a(\mathbb S\times W_m).
\]
The positive variable is unrestricted; only the negative directions
are approximated. One can choose nested $W_m$ with dense union in $H$.
No discrete spectrum of the constant-potential whole-space operator
is assumed in making this choice.

\phantomsection\label{proof:tildelinking}
\begin{proof}[Proof of Theorem~\ref{introtildelinking}]
Use $R_0,\delta,\beta_0$ from \eqref{tildebarrierconstants}.
For a nonnegative bounded compactly supported radial $\phi\in H$
with $\|\phi\|_2=1$, and parameters to be chosen, put
\[
 \phi_t(x)=t\phi(tx),\qquad
 Q_m=[t_-,t_+]\times\{w\in W_m:\|w\|_{H^1}\le R\},\qquad
 \chi_m(t,w)=\mathcal X_a(\phi_t,w).
\]
Let $\Gamma_m$ be the continuous maps $\gamma:Q_m\to\Sigma_{a,m}$
that agree with $\chi_m$ on $\partial Q_m$. We will choose the
parameters independently of $W_m$ to obtain
\begin{equation}\label{tildeminimax}
 \sup_{\partial Q_m}J_{\rm e}(\chi_m)<\beta_0,
 \qquad \beta_0\le c_m:=\inf_{\gamma\in\Gamma_m}
 \max_{Q_m}J_{\rm e}(\gamma)\le\overline c<4\pi/\vartheta_0.
\end{equation}

We separate the barrier, the cylinder estimates, and the linking argument.

\emph{The positive-graph barrier.}
At $w=0$ write $h=z/\sqrt2$, so $\|h\|_2^2=a$, $A=\|\nabla h\|_2^2$,
and $(u,v)=(bh,b^{-1}h)$. The choice of $b$ gives
\begin{equation}\label{tildebalancedpotential}
 F_{\rm e}(u,v)=(b^2+b^{-2})\int H_{\vartheta_0}(h).
\end{equation}
Apply the truncated exponential estimate to $(h,h)\in\mathcal E_1$,
using $h_+$ in the potential and $h$ in its upper bound. For
$\vartheta_0A\le4\pi\rho$ it gives
\[
 \int H_{\vartheta_0}(h)\le\frac{C_\rho\vartheta_0}{4}aA.
\]
Since $(b^2+b^{-2})\vartheta_0=\vartheta_1+\vartheta_2$,
$J_{\rm e}(bh,b^{-1}h)\ge\delta A$. Hence the set
\[
 \mathcal S_{a,R_0}=\{(bh,b^{-1}h):h\in H,
                      \|h\|_2^2=a,\ \|\nabla h\|_2^2=R_0\}
\]
has energy at least $\beta_0$.

\emph{Bounds on the mass-preserving cylinder.}
Initially take any nonnegative bounded compactly supported $\phi\in H$
with unit $L^2$ norm. Put $g=\|\nabla\phi\|_2^2>0$ and
$I_j=\int\phi^j>0$ for $j=4,6$. Convexity of $H_{\vartheta_0}$ on
$\R$, including its zero extension on the negative half-line, gives
for the coordinates \eqref{tildecoordinates}
\begin{equation}\label{tildejensen}
 \begin{aligned}
 F_{\rm e}(u,v)
 &=\int\left[b^2H_{\vartheta_0}\left(\frac{z+w}{\sqrt2}\right)
       +b^{-2}H_{\vartheta_0}\left(\frac{z-w}{\sqrt2}\right)\right]\\
 &\ge 2\ell\int H_{\vartheta_0}(z/\sqrt2).
 \end{aligned}
\end{equation}
For $z=r_a(w)\phi_t$, let $x=r_a(w)^2=2a+\|w\|_2^2$ and
$G=\|\nabla w\|_2^2$. The quartic and sixth-order terms of the
potential respectively imply
\begin{align}
 J_{\rm e}(\mathcal X_a(\phi_t,w))
 &\le t^2\left(\frac{gx}{2}-\frac{\ell\vartheta_0I_4x^2}{8}\right)
                  -\frac G2
 \le\frac{g^2}{2\ell\vartheta_0I_4}t^2,\label{tildebottom}\\
 J_{\rm e}(\mathcal X_a(\phi_t,w))
 &\le\frac{gt^2x}{2}-\frac G2
                  -\frac{\ell\vartheta_0^2I_6t^4x^3}{48}.
 \label{tildetop}
\end{align}
Choose $t_-$ so small that \eqref{tildebottom} is less than
$\beta_0/2$ and $at_-^2g<R_0$. Since $x\ge2a$, choose $t_+$
so large that \eqref{tildetop} is nonpositive for all $w$ and
$at_+^2g>R_0$.
For $t\in[t_-,t_+]$, \eqref{tildetop} is bounded above by
\[
 \frac{gt_+^2x}{2}-\frac G2
             -\frac{\ell\vartheta_0^2I_6t_-^4x^3}{48}.
\]
This tends uniformly to $-\infty$ as
$\|w\|_{H^1}^2=(x-2a)+G\to+\infty$. Thus a radius $R$,
independent of $W_m$, makes the lateral boundary energy nonpositive.
This proves the first inequality in \eqref{tildeminimax}.

A more precise upper bound follows directly from \eqref{tildejensen}.
Put $s=r_a(w)/\sqrt2\ge\sqrt a$ and $\xi=st$. Then
\begin{equation}\label{tildeuniformupper}
 \begin{aligned}
 J_{\rm e}(\mathcal X_a(\phi_t,w))
 &\le \xi^2g-\frac{2\ell s^2}{\xi^2}
                    \int H_{\vartheta_0}(\xi\phi)-\frac G2\\
 &\le \mathcal U_{a,\phi}(\xi):=
       \xi^2g-\frac{2\ell a}{\xi^2}\int H_{\vartheta_0}(\xi\phi).
 \end{aligned}
\end{equation}
This last function tends to zero as $\xi\downarrow0$ and to $-\infty$
as $\xi\to+\infty$; the latter already follows from the sixth-order
term. Its maximum is therefore finite and independent of $m$.
We shall choose $\phi$ so that this maximum is below
$4\pi/\vartheta_0$, as proved in Lemma~\ref{tildemoser} below.
All the preceding choices can then be made for that fixed $\phi$.

\emph{Linking on the constraint.}
For $\gamma\in\Gamma_m$, denote its coordinates by
$(z_\gamma,w_\gamma)\in H\times W_m$ and consider
\[
 (t,w)\longmapsto
 \left(w_\gamma(t,w),\tfrac12\|\nabla z_\gamma(t,w)\|_2^2-R_0\right)
       \in W_m\times\R.
\]
On $\partial Q_m$ this agrees with
$(w,\frac12r_a(w)^2t^2g-R_0)$, which never vanishes there.
The latter map has precisely one zero inside $Q_m$:
$w=0$, $t=(R_0/(ag))^{1/2}$, with nonzero local degree.
Brouwer degree, unchanged by a homotopy fixed on the boundary, forces
an interior zero for every $\gamma$. Its image belongs to
$\mathcal S_{a,R_0}$, hence $\max J_{\rm e}(\gamma)\ge\beta_0$.
The reference map belongs to $\Gamma_m$ and
\eqref{tildeuniformupper} gives the uniform upper bound.

Finally $\Sigma_{a,m}$ is a closed smooth Hilbert submanifold of its
linear ambient space $E_m$: $B=a>0$ makes the mass differential
nonzero. It is complete for the induced length metric. The deformation
argument gives the stronger Cerami condition
\begin{equation}\label{tildeCeramisequence}
 J_{\rm e}(Z_{m,k})\to c_m,\qquad
 (1+\|Z_{m,k}\|_{H^1\times H^1})
 \|d(J_{\rm e}|_{\Sigma_{a,m}})(Z_{m,k})\|\to0.
\end{equation}
Indeed, otherwise the weighted derivative is bounded below on a small
energy strip about $c_m$. A tangent pseudo-gradient with norm at most
$2(1+\|Z\|_{H^1\times H^1})$ then decreases the energy at a fixed
positive rate on a narrower strip. Its linear growth and completeness
ensure that its flow exists for the finite time required to lower a
near-minimising image below $c_m$. The deformation fixes the boundary,
whose energy is strictly below $\beta_0\le c_m$, a contradiction.
Here $E_m$ is the space in \eqref{tildeGalerkinspace}, with the chosen
$W_m$; smoothness of its elements is unnecessary for this argument.
Orthogonal projection onto the normal of $B=a$ in $E_m$ gives real
multipliers $\lambda_{m,k}$ for which the norm of
$(J_{\rm e}'+\lambda_{m,k}B')|_{E_m}$ equals the constrained
derivative norm in \eqref{tildeCeramisequence}. Thus these sequences
also have the weighted Galerkin residual required below. This argument
does not determine the sign of $\lambda_{m,k}$.
\end{proof}

\begin{lemma}[A uniform Moser upper bound for the linking levels]
\label{tildemoser}
For every $a>0$ and $\vartheta_1,\vartheta_2>0$, there is a
nonnegative radial bounded compactly supported $\phi\in H$ with
$\|\phi\|_2=1$ such that
\begin{equation}\label{tildesharpupper}
 \sup_{\xi>0}\mathcal U_{a,\phi}(\xi)<\frac{4\pi}{\vartheta_0}\,.
\end{equation}
\end{lemma}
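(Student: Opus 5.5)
The plan is to choose $\phi$ as an $L^2$-normalised Moser function and compare the two terms of $\mathcal U_{a,\phi}$ in two frequency regimes. First note that dilation is useless. Indeed, for $\phi^t(x)=t\phi(tx)$ one has $\|\phi^t\|_2=1$, $\|\nabla\phi^t\|_2^2=t^2g$ and $\int H_{\vartheta_0}(\xi\phi^t)=t^{-2}\int H_{\vartheta_0}(\xi t\phi)$. Hence $\mathcal U_{a,\phi^t}(\xi)=\mathcal U_{a,\phi}(\xi t)$, and the supremum in \eqref{tildesharpupper} depends only on the shape of $\phi$. The gain must therefore come from concentration, as in the sharpness part of Theorem~\ref{introexponentialGN}.

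Take the Moser functions $\phi_n$ from that proof, with $\|\nabla\phi_n\|_2^2=1$ and $b_n=\|\phi_n\|_2^2$. A direct computation gives $b_n\le C/\log n$. Set $\phi=\phi_n/\sqrt{b_n}$, which is nonnegative, radial, bounded, compactly supported, has unit mass, and has $g=1/b_n$. Substituting $s^2=\xi^2/b_n$ gives
\[
\mathcal U_{a,\phi}(\xi)=s^2-\frac{2\ell a}{b_n}\,\frac{1}{s^2}\int H_{\vartheta_0}(s\phi_n)\,.
\]
Fix a gap $\delta_n\downarrow0$ to be chosen and split into two regimes.

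If $\vartheta_0s^2\le4\pi-\delta_n$, drop the nonnegative potential term. This gives $\mathcal U\le(4\pi-\delta_n)/\vartheta_0$, which is strictly below the threshold.

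If $\vartheta_0s^2\ge4\pi-\delta_n$, use the monotonicity of $t\mapsto H_{\vartheta_0}(t)/t^2$, which follows from the positive power series. Restricting the integral to the central disk $|x|\le n^{-1}$, where $\phi_n^2=\log n/(2\pi)$, yields
\[
\frac1{s^2}\int H_{\vartheta_0}(s\phi_n)\ \ge\ c\,n^{-2}\,e^{\vartheta_0s^2\log n/(2\pi)}\ \ge\ c\,n^{-\delta_n/(2\pi)}\,,
\]
after absorbing the subtracted lower-order terms. Dividing by $b_n\le C/\log n$, the potential contribution is at least $c\,a\ell\log n\,n^{-\delta_n/(2\pi)}$.

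Now choose $\delta_n=\pi\log\log n/\log n$, so that this lower bound is of order $\sqrt{\log n}\to\infty$. This makes $\mathcal U<0$ at the lower end of the second regime. For larger $s$ the exponential lower bound grows like $e^{cs^2\log n}$ while the positive term is only $s^2$, so $\mathcal U$ stays negative on the whole second regime once $n$ is large. Combining the two regimes gives $\sup_\xi\mathcal U_{a,\phi}\le(4\pi-\delta_n)/\vartheta_0<4\pi/\vartheta_0$ for all large $n$, and we fix such an $n$.

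The main obstacle is the uniformity of the second regime in $s$. We have to track the exact asymptotics of $b_n$ and the central-disk integral carefully, including the subtracted terms $1+\vartheta_0t^2$ in $H_{\vartheta_0}$. These are negligible against the exponential but must be controlled when $s$ is near $\sqrt{(4\pi-\delta_n)/\vartheta_0}$. The tuning of $\delta_n$ against $\log n$ is what makes $n^{-\delta_n/(2\pi)}\log n$ still diverge.
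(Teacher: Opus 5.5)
Your proposal is correct. Its second regime is essentially the paper's argument, but you place the split differently. In the paper's notation ($L=\log n$, $y=s^2$, $T=4\pi/\vartheta_0$) the proof splits exactly at $y=T$:
\begin{itemize}
\item on $[T,\infty)$ it shows the potential term exceeds $y$, using the same core-disk bound together with the fact that $y\mapsto e^{\vartheta_0L(y-T)/(2\pi)}/y^2$ is increasing there once $L>1$;
\item on $[0,T]$ it uses only $f_L(y)<y$ for $y>0$, $f_L(0)=0$, and continuity on a compact interval, which gives $\max_{[0,T]}f_L<T$.
\end{itemize}
So the paper needs no quantitative gap and no tuning. You instead move the split point to $T-\delta_n/\vartheta_0$. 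This costs the factor $n^{-\delta_n/(2\pi)}$ in the core-disk bound and forces the balance $\delta_n\log n\approx\log\log n$ against $b_n\sim 1/(4\log n)$. In return you get the explicit bound $\sup_\xi\mathcal U_{a,\phi}\le(4\pi-\delta_n)/\vartheta_0$, which the paper does not record. Your dilation-invariance remark $\mathcal U_{a,\phi^t}(\xi)=\mathcal U_{a,\phi}(\xi t)$ is correct and explains why concentration of the profile is essential. The paper does not state it.

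One step is only asserted and should be closed. Monotonicity of $H_{\vartheta_0}(t)/t^2$ bounds the potential term below by a constant on the second regime, and a constant cannot beat $s^2$ as $s\to\infty$. So ``$\mathcal U$ stays negative on the whole second regime'' does not yet follow. The clean fix is to use $t^4$ instead: for $t>0$,
\[
H_{\vartheta_0}(t)/t^4=\frac1{2\vartheta_0}\sum_{j\ge2}\vartheta_0^j t^{2j-4}/j!
\]
is nondecreasing. Hence
\[
\mathcal U_{a,\phi}=s^2\Bigl(1-\frac{2\ell a}{b_n}\,s^{-4}\int H_{\vartheta_0}(s\phi_n)\Bigr)
\]
has a nonincreasing bracket, and negativity at the lower endpoint of your second regime propagates to all larger $s$. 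Alternatively, invoke the paper's monotonicity of $e^{cy}/y^2$ for $cy>2$, which holds on your second regime for large $n$.
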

\begin{proof}
For $L>1$, take the standard radial Moser function
\[
 m_L(r)=\frac1{\sqrt{2\pi}}
 \begin{cases}
 \sqrt L,&0\le r\le e^{-L},\\
 L^{-1/2}\log(1/r),&e^{-L}<r<1,\\
 0,&r\ge1.
 \end{cases}
\]
Then $\|\nabla m_L\|_2^2=1$ and
$M_L:=\|m_L\|_2^2\sim1/(4L)$.
Set $\phi_L=m_L/\sqrt{M_L}$ and $y=\xi^2/M_L$. The upper-bound
function becomes
\[
 f_L(y)=y-\frac{2\ell a}{M_Ly}
                    \int H_{\vartheta_0}(\sqrt y\,m_L),\qquad y>0.
\]
Let $T=4\pi/\vartheta_0$. For $y\ge T$ the exponential argument
on the core disk is $\vartheta_0yL/(2\pi)\ge2L$.
For large $L$, $e^q-1-q\ge e^q/2$ throughout this range, so
\[
 \int H_{\vartheta_0}(\sqrt y\,m_L)
 \ge\frac{\pi}{4\vartheta_0}
       \exp\left(\frac{\vartheta_0L}{2\pi}(y-T)\right).
\]
For $L>1$ the function
$y\mapsto e^{\vartheta_0L(y-T)/(2\pi)}/y^2$ is increasing on
$[T,\infty)$. Since $M_L\to0$, choose $L$ so large that
$\ell a\pi/(2\vartheta_0M_LT^2)>1$.
The nonlinear term in $f_L(y)$ then exceeds $y$ for every $y\ge T$,
and $f_L(y)<0$ on this half-line.
For $0<y<T$, one has $f_L(y)<y<T$. The function extends continuously
to $f_L(0)=0$, so its maximum on $[0,T]$ is strictly less than $T$.
This proves \eqref{tildesharpupper} for the fixed choice $\phi=\phi_L$.
\end{proof}

\subsection{Uniform tilde estimates and compactness for the full system}
\label{sectildecompactness}
We now prove compactness at the same level as the Moser upper bound.
The estimates apply to the full mass constraint and require no
component bounds of the form $\mathcal E_K$. They also allow residuals
to be tested only in the Galerkin spaces. The sequence must, however,
have an approximate Poho\v zaev identity and nonnegative multipliers.
The full-fibre construction in Subsection~\ref{secfullfibre} will
produce full-space sequences with exact Poho\v zaev identity and positive
multipliers, verifying these hypotheses.

For $Z=(u,v)$ set
\[
 \mathcal T Z=(b^2v,b^{-2}u),\qquad
 M_b(Z)=b^{-2}\|u\|_2^2+b^2\|v\|_2^2,\qquad
 G_b(Z)=b^{-2}\|\nabla u\|_2^2+b^2\|\nabla v\|_2^2.
\]
Applying the mass-tangent correction \eqref{tildemasscorrection} to $J_{\rm e}$ gives
\begin{equation}\label{tildetangentidentity}
 \begin{aligned}
 Y_Z&=\mathcal T Z-\frac{M_b(Z)}{2a}Z,\qquad B'(Z)[Y_Z]=0,\\
 J_{\rm e}'(Z)[Y_Z]
 &=G_b(Z)-\int\mathcal C_b(u,v)
       -\frac{M_b(Z)}{2a}(2A-S_{\rm e}),\\
 \mathcal C_b(u,v)&=b^2v h_{\vartheta_2}(u)
                         +b^{-2}u h_{\vartheta_1}(v).
 \end{aligned}
\end{equation}
In the coordinates \eqref{tildecoordinates}, $\mathcal T$ sends
$(z,w)$ to $(z,-w)$. Both $Z$ and $\mathcal T Z$ therefore belong
to the same linear Galerkin space as $Z$.

\begin{lemma}[Pointwise tilde estimates]\label{tildepointwise}
For every $\varepsilon>0$ there is $C_\varepsilon<\infty$, depending
only on $\varepsilon,\vartheta_1,\vartheta_2$, such that, for all real
$u,v,t$,
\begin{align}
 \mathcal C_b(u,v)
 &\le(1+\varepsilon)
       [u h_{\vartheta_2}(u)+v h_{\vartheta_1}(v)]
       +C_\varepsilon[H_{\vartheta_2}(u)+H_{\vartheta_1}(v)],
       \label{tildecrossbound}\\
 H_\vartheta(t)&\le\varepsilon t h_\vartheta(t)+C_\varepsilon t^2,
 \qquad \vartheta\in\{\vartheta_1,\vartheta_2\}.
       \label{tildepotentialbound}
\end{align}
In particular, $\mathcal C_b(u,v)\le C[u h_{\vartheta_2}(u)
+v h_{\vartheta_1}(v)]$ with a fixed constant $C$.
\end{lemma}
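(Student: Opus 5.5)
The plan is a direct pointwise argument. It uses only the explicit forms of $h_\vartheta$ and $H_\vartheta$ in \eqref{TMnonlinearity} and the balancing relation $b^4=\vartheta_1/\vartheta_2$ from \eqref{tildeweights}. All quantities vanish unless the relevant arguments are positive. Indeed, $b^2v\,h_{\vartheta_2}(u)\le0$ whenever $v\le0$ or $u\le0$, and the same holds for the other cross term. So in \eqref{tildecrossbound} I would treat each cross term separately and only in the case $u,v>0$; the right-hand side is nonnegative in every case. Two series facts will be used throughout. First, $t\,h_\vartheta(t)=\sum_{j\ge2}\frac{2j\,\vartheta^{j-1}}{2\,j!}t_+^{2j}\ge4H_\vartheta(t)$. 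Second, $H_\vartheta(t)\ge\frac{\vartheta}{4}t_+^4$.

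For the term $b^2v\,h_{\vartheta_2}(u)$ with $u,v>0$, I would split according to the size of $b^2v$ relative to $u$.

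\emph{Case $b^2v\le(1+\varepsilon)u$.} The term is at most $(1+\varepsilon)u\,h_{\vartheta_2}(u)$.

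\emph{Case $u<b^2v/(1+\varepsilon)$.} Since $h_{\vartheta_2}$ is increasing on $(0,\infty)$, the term is bounded by
\[
 b^2v\,h_{\vartheta_2}\!\left(\frac{b^2v}{1+\varepsilon}\right)
 =\frac{b^4v^2}{1+\varepsilon}\left(e^{\vartheta_1v^2/(1+\varepsilon)^2}-1\right),
\]
where $\vartheta_2b^4=\vartheta_1$ has been used; this is exactly where the choice of $b$ matters. I would compare this with $v\,h_{\vartheta_1}(v)=v^2(e^{\vartheta_1v^2}-1)$. The ratio of the two tends to $0$ as $v\to\infty$, because $(1+\varepsilon)^{-2}<1$. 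Hence there is $R_\varepsilon$ such that the bound is at most $v\,h_{\vartheta_1}(v)$ for $v^2\ge R_\varepsilon$. For $v^2\le R_\varepsilon$, the bound is at most $C_\varepsilon v^4$, which is at most $C_\varepsilon'H_{\vartheta_1}(v)$ by the quartic lower bound above.

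The second cross term $b^{-2}u\,h_{\vartheta_1}(v)$ is handled symmetrically, exchanging roles and using $\vartheta_1b^{-4}=\vartheta_2$. Adding the two bounds gives \eqref{tildecrossbound}, after enlarging $C_\varepsilon$. The "in particular" claim then follows from $H_\vartheta(t)\le\frac14t\,h_\vartheta(t)$.

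For \eqref{tildepotentialbound}, note that both sides are zero for $t\le0$, so take $t>0$. As $t\to\infty$,
\[
 \frac{H_\vartheta(t)}{t\,h_\vartheta(t)}\sim\frac{1}{2\vartheta t^2}\to0.
\]
So there is $T_\varepsilon$ with $H_\vartheta\le\varepsilon t\,h_\vartheta$ for $t\ge T_\varepsilon$. On $(0,T_\varepsilon]$ the quotient $H_\vartheta(t)/t^2$ is continuous and tends to $0$ at the origin, so it is bounded by some $C_\varepsilon$. Every constant depends only on $\varepsilon,\vartheta_1,\vartheta_2$.

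The only delicate step is the large-$v$ comparison in the second case. There the gain $(1+\varepsilon)^{-2}$ in the exponent must beat the prefactor $b^4$, and it does so only asymptotically. For that reason the small-$v$ range has to be absorbed into $C_\varepsilon H$ rather than into $(1+\varepsilon)\,v\,h$, and that absorption relies on the quartic behaviour of $H_\vartheta$ at zero.
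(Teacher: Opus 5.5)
Your case split ($b^2v$ versus $(1+\varepsilon)u$), the monotonicity step and the use of $\vartheta_2b^4=\vartheta_1$ are the paper's argument written in the original variables; the paper passes to $x=u/b$, $y=bv$, where both exponentials have parameter $\vartheta_0$. Your treatment of \eqref{tildepotentialbound} and of the final assertion is fine. The gap is in the bookkeeping of the leading coefficient.

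In case 2 at large $v$ you bound $b^2v\,h_{\vartheta_2}(u)$ by $v\,h_{\vartheta_1}(v)$ with coefficient one, but that term is already used. Whenever $u<b^2v/(1+\varepsilon)$ one has $b^{-2}u<v$. So the second cross term is simultaneously in its case 1 and is charged $(1+\varepsilon)v\,h_{\vartheta_1}(v)$, or at best $(1+\varepsilon)^{-1}v\,h_{\vartheta_1}(v)$. Adding your two per-term bounds therefore gives only
\[
 \mathcal C_b(u,v)\le(2+\varepsilon)\bigl[u h_{\vartheta_2}(u)+v h_{\vartheta_1}(v)\bigr]+C_\varepsilon\bigl[H_{\vartheta_2}(u)+H_{\vartheta_1}(v)\bigr].
\]
Enlarging $C_\varepsilon$ cannot repair this, since $t h_\vartheta(t)/H_\vartheta(t)\to\infty$. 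What you have is just the crude ``in particular'' statement.

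The coefficient $1+\varepsilon$, with $\varepsilon$ arbitrary, is the real content of \eqref{tildecrossbound}. In the proof of Theorem~\ref{introtildecompactness} it yields $\limsup G_b(Z_n)\le2(1+\varepsilon)c$, which must match $G_b(Z_n)\ge2A_n\to2c$ in order to exclude $\lambda_n\to0$. It also gives \eqref{tildeconcentrationcross} after letting $\varepsilon\downarrow0$. With a coefficient near $2$, both steps fail.

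The missing observation is that your case-2 bound is small compared with $H_{\vartheta_1}(v)$ itself, not merely with $v\,h_{\vartheta_1}(v)$. Since $H_{\vartheta_1}(v)\sim e^{\vartheta_1v^2}/(2\vartheta_1)$,
\[
 \frac{b^4v^2\bigl(e^{\vartheta_1v^2/(1+\varepsilon)^2}-1\bigr)}{(1+\varepsilon)H_{\vartheta_1}(v)}\longrightarrow0\qquad(v\to\infty),
\]
while the same ratio tends to $4b^4/(1+\varepsilon)^3$ as $v\downarrow0$. Hence it is bounded on $(0,\infty)$ by a constant depending only on $\varepsilon,\vartheta_1,\vartheta_2$. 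All of case 2 then goes into $C_\varepsilon H_{\vartheta_1}(v)$, and no split at $R_\varepsilon$ is needed.

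With that change, the coefficient $1+\varepsilon$ in front of $u h_{\vartheta_2}(u)$ comes only from the first cross term, and the one in front of $v h_{\vartheta_1}(v)$ only from the second. Summing gives \eqref{tildecrossbound}. This is exactly the paper's inequality $s h_{\vartheta_0}(t)\le(1+\varepsilon)t h_{\vartheta_0}(t)+C_\varepsilon H_{\vartheta_0}(s)$, whose ratio is likewise compared with $H_{\vartheta_0}(s)$ at both ends. Alternatively, absorbing the large-$v$ range into $\varepsilon\,v\,h_{\vartheta_1}(v)$ and renaming $\varepsilon$ would also work.

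Your closing remark suggests you regarded $(1+\varepsilon)v\,h_{\vartheta_1}(v)$ as available to the first cross term. It is not, because the second cross term consumes it.
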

\begin{proof}
Write $x=u/b$, $y=bv$. Both exponentials then have parameter
$\vartheta_0$, and
\[
 \mathcal C_b=b^2y h_{\vartheta_0}(x)+b^{-2}x h_{\vartheta_0}(y),
 \quad
 u h_{\vartheta_2}(u)+v h_{\vartheta_1}(v)
 =b^2x h_{\vartheta_0}(x)+b^{-2}y h_{\vartheta_0}(y).
\]
For $s,t\ge0$,
\[
 s h_{\vartheta_0}(t)
 \le(1+\varepsilon)t h_{\vartheta_0}(t)
                  +C_\varepsilon H_{\vartheta_0}(s).
\]
Indeed, the first term suffices when $s\le(1+\varepsilon)t$.
Otherwise $t<s/(1+\varepsilon)$; the ratio
$s h_{\vartheta_0}(s/(1+\varepsilon))/H_{\vartheta_0}(s)$
is bounded at zero and tends to zero at infinity. If either variable
is nonpositive, the same inequality is immediate. Applying it twice
proves \eqref{tildecrossbound}, with the fixed weights absorbed into
$C_\varepsilon$. The ratio $H_\vartheta(t)/(t h_\vartheta(t))$
tends to zero as $t\to+\infty$, whereas $H_\vartheta(t)=O(t^4)$
at zero; this proves \eqref{tildepotentialbound}. Finally
$4H_\vartheta(t)\le t h_\vartheta(t)$ gives the last assertion.
\end{proof}

\begin{lemma}[Exponential vanishing with a quartic bound]
\label{tildequarticvanishing}
If $\alpha,L>0$ and $\alpha L<4\pi$, then
\begin{equation}\label{tildequarticestimate}
 \int f^2(e^{\alpha f^2}-1)\le C_{\alpha,L}\|f\|_4^4
 \qquad\text{whenever }f\in H^1(\R^2),\quad
                         \|\nabla f\|_2^2\le L.
\end{equation}
No bound on $\|f\|_2$ is needed. In particular, the left-hand side
tends to zero if $\|f\|_4\to0$ under a fixed strict gradient gap.
\end{lemma}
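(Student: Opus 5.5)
We prove \eqref{tildequarticestimate} by splitting $\R^2$ into the sets where $|f|\le1$ and where $|f|>1$.

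\emph{Small values.} On $\{|f|\le1\}$ the elementary bound $e^{s}-1\le s e^{s}$ gives
\[
f^2(e^{\alpha f^2}-1)\le \alpha e^{\alpha} f^4 .
\]
This part of the integral is therefore bounded by $\alpha e^\alpha\|f\|_4^4$.

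\emph{Large values.} On $\{|f|>1\}$ we work with the truncation $g=(|f|-\tfrac12)_+$. Since $g\le|f|$ pointwise,
\[
\|\nabla g\|_2^2\le\|\nabla f\|_2^2\le L ,
\]
and $g$ is supported in $\{|f|>\tfrac12\}$. By Chebyshev this set has measure at most $16\|f\|_4^4$. It follows that
\[
\|g\|_2^2\le\int_{\{|f|>1/2\}}f^2\le |\{|f|>\tfrac12\}|^{1/2}\|f\|_4^2\le 4\|f\|_4^4 .
\]
So $g$ has controlled gradient and an $L^2$ mass of quartic size.

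On $\{|f|>1\}$ we have $|f|\le 2g$, hence $f^2\le(1+\epsilon)g^2+C_\epsilon$ for $\epsilon>0$ small; the constant is finite because $|f|/g\le2$ there. We choose $\epsilon$ so that $\alpha'=\alpha(1+\epsilon)$ still satisfies $\alpha' L<4\pi$. Then
\[
f^2e^{\alpha f^2}\le C\,e^{\alpha' g^2}\cdot g^2 ,
\]
and absorbing the factor $g^2$ into a slightly larger exponent gives
\[
f^2 e^{\alpha f^2}\le C\bigl(e^{\alpha'' g^2}-1\bigr)\quad\text{on }\{|f|>1\},
\]
where $g\ge\tfrac12$ there and $\alpha''L<4\pi$. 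The key step is now the scalar mass-weighted estimate \eqref{scalarExpGN} applied to $g$ with $\vartheta=\alpha''$. It bounds $\int(e^{\alpha''g^2}-1)$ by $C_{\rm R}\alpha''\|g\|_2^2/(4\pi-\alpha''L)$, which is $\le C\|f\|_4^4$. Adding the two regions proves \eqref{tildequarticestimate}, and no bound on $\|f\|_2$ was used.

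\emph{Main obstacle.} The delicate point is the constant bookkeeping in the comparison $f^2\le(1+\epsilon)g^2+C_\epsilon$ on the large-value set. The shift by $\tfrac12$ must be handled so that the exponent strictly stays below $4\pi/L$. A shift by a fixed level does this: for $|f|>1$ one has $f^2\le(1+\epsilon)(|f|-\tfrac12)^2+C_\epsilon$ with $C_\epsilon\sim 1/\epsilon$. The factor $e^{\alpha C_\epsilon}$ is then a harmless constant. The vanishing statement follows immediately from \eqref{tildequarticestimate}.
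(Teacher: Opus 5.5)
Your proof is correct, but it takes a different route from the paper's on the large-value set. The paper truncates at level one, $g=(|f|-1)_+$, and absorbs $f^2(e^{\alpha f^2}-1)\le C e^{\beta g^2}$ on $E=\{|f|>1\}$ with $\alpha<\beta<4\pi/L$. It then uses Schwarz symmetrization to place $g$ in $H^1_0$ of a disk of area $|E|$, and applies Moser's bounded-domain inequality to get $\int_E e^{\beta g^2}\le C|E|\le C\|f\|_4^4$.

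You stay on $\R^2$ and use only the whole-space mass-weighted Ruf estimate \eqref{scalarExpGN}, which the paper has already established. Shifting by $\tfrac12$ rather than $1$ is what makes this work: it keeps $g\ge\tfrac12$ on $\{|f|>1\}$, so $e^{\alpha''g^2}$ is comparable to $e^{\alpha''g^2}-1$ there. Meanwhile $\|g\|_2^2$ is of quartic size; even more directly than via Chebyshev and H\"older, $f^2\le 4f^4$ on $\{|f|>\tfrac12\}$.

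So the two arguments measure the bad set differently. The paper uses $|E|$ and needs a bound on $\int e^{\beta g^2}$ itself, which only the bounded-domain inequality provides. You use $\|g\|_2^2$ and need only the subtracted form $\int(e^{\alpha''g^2}-1)$. Your route avoids rearrangement and the Moser inequality on disks, at the cost of a more careful choice of truncation level. Your constant is expressed through $C_{\rm R}$, $\epsilon$ and the gap $4\pi-\alpha''L$.
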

\begin{proof}
On $\{|f|\le1\}$ the integrand is bounded by a constant times
$|f|^4$. Put $E=\{|f|>1\}$ and $g=(|f|-1)_+$.
Choose $\alpha<\beta<4\pi/L$. Polynomial absorption and
$|f|=g+1$ on $E$ give
$f^2(e^{\alpha f^2}-1)\le C_{\alpha,\beta}e^{\beta g^2}$ there.
Schwarz rearrangement places $g$ in $H^1_0$ of a disk of area $|E|$
and does not increase its Dirichlet integral. The bounded-domain
Moser inequality \cite{Moser1971}, rescaled to that disk, yields
$\int_E e^{\beta g^2}\le C_{\beta,L}|E|$.
Since $|E|\le\|f\|_4^4$, the assertion follows.
\end{proof}

Choose nested finite-dimensional spaces
$W_n\subset C^\infty_{c,\rm rad}(\R^2)$ whose union is dense in $H$,
and denote the associated linear test spaces by
\begin{equation}\label{tildeGalerkinspace}
 E_n=\left\{\left(\frac b{\sqrt2}(\eta+\xi),
                   \frac1{b\sqrt2}(\eta-\xi)\right):
                   \eta\in H,\ \xi\in W_n\right\}.
\end{equation}
Their norms are inherited from $H^1(\R^2)^2$.

\phantomsection\label{proof:tildecompactness}
\begin{proof}[Proof of Theorem~\ref{introtildecompactness}]

Abbreviate $A(Z_n),F_{\rm e}(Z_n),S_{\rm e}(Z_n)$ by $A_n,F_n,S_n$.
The weighted residual condition in \eqref{tildeCPconditions} permits
testing against $Z_n$ and $\mathcal T Z_n$ before boundedness is known.
The first test, the energy and the Poho\v zaev assumption give
\begin{equation}\label{tildeARlimit}
 \lambda_n a=F_n+o(1),\qquad
 A_n=c+F_n+o(1),\qquad S_n=2c+4F_n+o(1).
\end{equation}
The second test and Lemma~\ref{tildepointwise} give
\begin{equation}\label{tildecoercivetest}
 G_b(Z_n)+\lambda_n M_b(Z_n)
          =\int\mathcal C_b(u_n,v_n)+o(1)\le C S_n+o(1).
\end{equation}

\emph{Bounds for the multiplier and the components.}
If a subsequence had $\lambda_n\to+\infty$, then $F_n\to+\infty$
and $S_n/F_n\to4$. Since $\lambda_n>0$ along that subsequence,
\eqref{tildecoercivetest} implies
$\|u_n\|_2^2+\|v_n\|_2^2\le C S_n/\lambda_n+o(1)/\lambda_n$.
Integration of \eqref{tildepotentialbound} would yield
\[
 F_n\le\left(\varepsilon+\frac{C_\varepsilon}{\lambda_n}\right)S_n
                         +\frac{o(1)}{\lambda_n}.
\]
Division by $F_n$ contradicts any choice $\varepsilon<1/4$.
Thus $\lambda_n$, $F_n$, $S_n$ and $G_b(Z_n)$ are bounded above.

Suppose next that $\lambda_n\to0$ along a subsequence. Then
$F_n\to0$, $A_n\to c$ and $S_n\to2c$. The refined estimate
\eqref{tildecrossbound} and $\lambda_n\ge0$ imply
$\limsup G_b(Z_n)\le2(1+\varepsilon)c$ for every $\varepsilon>0$.
Since $G_b(Z_n)\ge2A_n$, it follows that
\[
 G_b(Z_n)\to2c,\qquad
 \|\nabla(u_n/b-bv_n)\|_2\to0.
\]
Consequently
\[
 \vartheta_2\|\nabla u_n\|_2^2\to\vartheta_0c<4\pi,
 \qquad
 \vartheta_1\|\nabla v_n\|_2^2\to\vartheta_0c<4\pi.
\]
The quartic terms in $F_n$ show that
$\|(u_n)_+\|_4+\|(v_n)_+\|_4\to0$.
Lemma~\ref{tildequarticvanishing}, applied to the positive parts,
now gives $S_n\to0$, a contradiction. This proves a positive lower
bound for $\lambda_n$; \eqref{tildecoercivetest} then also bounds
$M_b(Z_n)$. We have therefore proved
\begin{equation}\label{tildeuniformbounds}
 0<\lambda_-\le\lambda_n\le\lambda_+<\infty,\qquad
 \|u_n\|_{H^1}+\|v_n\|_{H^1}\le C.
\end{equation}

\emph{The limiting equations and the mass.}
Extract a subsequence with $Z_n\rightharpoonup Z=(u,v)$ in the radial
product space and $\lambda_n\to\lambda>0$. Radial compactness gives
strong $L^p$ convergence for every $p>2$ and almost-everywhere
convergence. Boundedness of $S_n$ implies
\begin{equation}\label{tildeweaknonlinearlimits}
 \begin{gathered}
 H_{\vartheta_2}(u_n)\to H_{\vartheta_2}(u),\quad
 H_{\vartheta_1}(v_n)\to H_{\vartheta_1}(v)
                                      \quad\text{in }L^1(\R^2),\\
 h_{\vartheta_2}(u_n)\to h_{\vartheta_2}(u),\quad
 h_{\vartheta_1}(v_n)\to h_{\vartheta_1}(v)
                                      \quad\text{in }L^1_{\rm loc}.
 \end{gathered}
\end{equation}
Indeed, above amplitude $R$ the potential is at most
$o_R(1)t h_\vartheta(t)$, and the nonlinearity is at most
$R^{-1}t h_\vartheta(t)$. Below $R$ strong $L^4$ convergence controls
the potential, and dominated convergence on bounded sets controls the
nonlinearity. These truncations prove both assertions.

Test the residuals first with positive-coordinate smooth compactly
supported functions and with each fixed negative-coordinate function
in $\bigcup_m W_m$. For $n\ge m$ these are admissible tests, so
\eqref{tildeweaknonlinearlimits} passes their equations to the limit.
For any fixed $H^1$ function all finite exponential integrals are
finite, by bounded approximation and the scalar exponential estimate.
In particular $h_{\vartheta_2}(u),h_{\vartheta_1}(v)\in L^2$.
The limiting residuals are therefore continuous on $H^1$; density
gives both full radial equations. Spherical averaging gives the
equations for arbitrary test functions.

The positive resolvent implies $u,v\ge0$. Elliptic regularity gives
$u,v\in H^2\cap L^\infty$, followed by classical regularity and decay
to zero. Exterior comparison, using $h_\vartheta(t)=o(t)$ at zero,
gives exponential decay. The usual cutoff Poho\v zaev identity and
testing by the components thus yield
\begin{equation}\label{tildelimitidentities}
 \lambda B(Z)=F_{\rm e}(Z),\qquad
 S_{\rm e}(Z)=2A(Z)+2\lambda B(Z),\qquad
 J_{\rm e}(Z)=\tfrac12S_{\rm e}(Z)-2F_{\rm e}(Z)\ge0.
\end{equation}
By \eqref{tildeARlimit} and \eqref{tildeweaknonlinearlimits},
$F_{\rm e}(Z)=\lambda a$, so $B(Z)=a$. In particular the limit is
nonzero, and the positive resolvent gives strict positivity of both
components. This recovers the mass before strong convergence has
been proved.

\emph{Equality of the two possible concentration defects.}
Put $x_n=u_n/b$, $y_n=bv_n$, $x=u/b$ and $y=bv$.
After further extraction define
\[
 \begin{aligned}
 D_x&=\lim_n\|\nabla(x_n-x)\|_2^2,&
 D_y&=\lim_n\|\nabla(y_n-y)\|_2^2,\\
 D_M&=\lim_n(\|x_n-x\|_2^2+\|y_n-y\|_2^2),&
 D_A&=\lim_n(A_n-A(Z))=c-J_{\rm e}(Z).
 \end{aligned}
\]
The testing identities and $B(Z)=a$ show that
$\lim_n S_n-S_{\rm e}(Z)=2D_A$; Fatou's lemma gives $D_A\ge0$.
We claim
\begin{equation}\label{tildeconcentrationcross}
 \limsup_n\left(\int\mathcal C_b(u_n,v_n)
                          -\int\mathcal C_b(u,v)\right)\le2D_A.
\end{equation}
To see this, use a continuous cutoff in
$\max\{|u_n|,|v_n|\}$, equal to zero below $R$ and one above $2R$.
The integrals of the complementary truncated cross term and of the
truncated $S$ integrand converge by strong $L^4$ convergence; on bounded
amplitudes these integrands are bounded by
$C_R(|u_n|+|v_n|)^4$. On the high-amplitude part,
\eqref{tildecrossbound} applies. Strong $L^1$ convergence of the
potentials in \eqref{tildeweaknonlinearlimits} makes its
$C_\varepsilon F$ contribution tend to zero as $R\to+\infty$,
uniformly in $n$. Subtract the limiting cross term, let $n\to+\infty$,
then $R\to+\infty$, and finally $\varepsilon\downarrow0$.
This proves \eqref{tildeconcentrationcross}.

Subtract the limiting tilde test from \eqref{tildecoercivetest}.
Using \eqref{tildeconcentrationcross} gives
\[
 D_x+D_y+\lambda D_M\le2D_A.
\]
On the other hand, weak convergence and Cauchy--Schwarz give
$|D_A|\le\sqrt{D_xD_y}\le(D_x+D_y)/2$. Hence
\begin{equation}\label{tildedefectsynchronisation}
 D_M=0,\qquad D_x=D_y=D_A,
 \qquad 0\le D_A\le c<\frac{4\pi}{\vartheta_0}.
\end{equation}
Thus no $L^2$ defect survives, and any two Dirichlet defects coincide
after balancing the exponential coefficients. This argument uses only
the two admissible global tests; no multiplication of a Galerkin
function by a spatial cutoff is required.

\emph{Exclusion of the remaining gradient defect.}
Equation~\eqref{tildedefectsynchronisation} gives
\[
 \lim_n\vartheta_2\|\nabla(u_n-u)\|_2^2
 =\lim_n\vartheta_1\|\nabla(v_n-v)\|_2^2
 =\vartheta_0D_A<4\pi.
\]
Choose $q>1$ and $\eta>0$ so that both inequalities remain strict
after multiplication by $q(1+\eta)$. Apply the scalar exponential
GN estimate to the remainders, using
$u_n^2\le(1+\eta)(u_n-u)^2+(1+\eta^{-1})u^2$ and the analogous
inequality for $v_n$. The limits are bounded and have every finite
exponential integral. It follows that
\[
 \sup_n\int\big(e^{q\vartheta_2u_n^2}-1
                       +e^{q\vartheta_1v_n^2}-1\big)<\infty.
\]
For $1<r<\min\{q,2\}$, polynomial absorption gives uniformly
vanishing high-amplitude tails of $h_{\vartheta_2}(u_n)$ and
$h_{\vartheta_1}(v_n)$ in $L^r$. On bounded amplitudes, strong
$L^{3r}$ convergence and the cubic behaviour at zero give convergence
in $L^r$. Thus the nonlinearities converge strongly in $L^r$ and in
$H^{-1}$. Passing to the limit in the global tilde test now yields
\[
 G_b(Z_n)+\lambda_nM_b(Z_n)\longrightarrow G_b(Z)+\lambda M_b(Z).
\]
Since $\lambda>0$, weak convergence and convergence of this Hilbert
norm imply strong convergence in $H^1(\R^2)^2$. Energy and mass pass
to the limit, completing the proof.
\end{proof}

\begin{corollary}[Compactness at the level threshold away from zero frequency]
\label{tildeendpoint}
The conclusion of Theorem~\ref{introtildecompactness} remains valid at
$c=4\pi/\vartheta_0$ if $\liminf_n\lambda_n>0$ is assumed.
For sequences satisfying the other hypotheses at this level,
failure of compactness can therefore occur only along a subsequence
with $\lambda_n\to0$.
\end{corollary}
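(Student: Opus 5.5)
The plan is to rerun the proof of Theorem~\ref{introtildecompactness} with $c=4\pi/\vartheta_0$ and the extra hypothesis $\liminf_n\lambda_n>0$. That proof uses the strict inequality $c<4\pi/\vartheta_0$ at only two points. The first is the exclusion of $\lambda_n\to0$, which the new hypothesis now supplies directly. The second is the last step, where the synchronised gradient defect $\vartheta_0D_A$ must lie strictly below $4\pi$. Everything else carries over word for word:
\begin{itemize}
\item the identities \eqref{tildeARlimit};
\item the upper bound on $\lambda_n$ obtained from \eqref{tildecoercivetest} together with \eqref{tildepotentialbound};
\item the $H^1$ bound \eqref{tildeuniformbounds}, since $\lambda_n$ is bounded below;
\item the weak limits \eqref{tildeweaknonlinearlimits};
\item the limiting equations, positivity and the Poho\v zaev identities \eqref{tildelimitidentities};
\item the recovery of the mass $B(Z)=a$, which follows from $F_{\rm e}(Z)=\lambda a$ with $\lambda>0$;
\item the defect synchronisation \eqref{tildedefectsynchronisation}.
\end{itemize}

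The key new step is to show that $D_A<c$ strictly. From \eqref{tildelimitidentities} one has $D_A=c-J_{\rm e}(Z)$ and $J_{\rm e}(Z)=\tfrac12S_{\rm e}(Z)-2F_{\rm e}(Z)$. The limit $Z$ is a positive solution, so I need $J_{\rm e}(Z)>0$. Expanding the series as in Lemma~\ref{TMlevelbound}, with $F_j$ the homogeneous pieces, gives
\[
 \tfrac12S_{\rm e}-2F_{\rm e}=\sum_{j\ge2}(j-2)F_j\ge F_3>0,
\]
because both components of $Z$ are strictly positive. Hence $J_{\rm e}(Z)>0$ and
\[
 \vartheta_0D_A=\vartheta_0\bigl(c-J_{\rm e}(Z)\bigr)<\vartheta_0c=4\pi .
\]
This is the sense in which ``the positive weak limit consumes a strictly positive amount of energy''.

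Once this subthreshold bound holds, the final step of the proof of Theorem~\ref{introtildecompactness} applies without change. I choose $q>1$ and $\eta>0$ so that $q(1+\eta)\vartheta_0D_A<4\pi$. The scalar exponential GN estimate \eqref{scalarExpGN}, applied to the remainders $u_n-u$ and $v_n-v$, then gives uniform bounds on $\int(e^{q\vartheta_2u_n^2}-1)$ and $\int(e^{q\vartheta_1v_n^2}-1)$. From these the nonlinearities converge strongly in $L^r$ and in $H^{-1}$, the tilde norm converges, and $\lambda>0$ yields strong $H^1$ convergence.

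The second assertion of the corollary is the contrapositive. If every subsequence had $\liminf\lambda_n>0$, compactness would follow from the first part. So any failure of compactness must occur along a subsequence with $\lambda_n\to0$.

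The main point to check is that the limit $Z$ is nontrivial before strong convergence is known. Here this is automatic: $\lambda>0$ and $F_{\rm e}(Z)=\lambda a>0$, which is where the hypothesis on the multipliers is used a second time.
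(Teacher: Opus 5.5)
Your proposal is correct and follows the paper's proof essentially step for step: the assumed lower bound on $\lambda_n$ replaces the zero-frequency exclusion, the weak limit is a positive solution with $B(Z)=a$, and the strict positivity $J_{\rm e}(Z)=\tfrac12S_{\rm e}(Z)-2F_{\rm e}(Z)>0$ puts the synchronised defect $D_A=4\pi/\vartheta_0-J_{\rm e}(Z)$ strictly below threshold, after which the final uniform-integrability step applies unchanged (your series bound $\sum_{j\ge2}(j-2)F_j\ge F_3$ is the paper's pointwise inequality $th_\vartheta(t)>4H_\vartheta(t)$ in integrated form). The second assertion is obtained, as in the paper, by applying the first part to every subsequence whose frequencies stay bounded away from zero.
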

\begin{proof}
The upper-frequency estimate in Theorem~\ref{introtildecompactness} uses
only finiteness of $c$, so it still applies. The assumed lower bound
and the tilde test give uniform $H^1$ bounds. The limiting equations,
potential convergence and mass recovery in that proof also do not
use the strict level inequality. Thus the weak limit $Z$ is a
positive solution with $B(Z)=a$ and $\lambda>0$.

For a nonzero positive profile, the strict inequality
$t h_\vartheta(t)>4H_\vartheta(t)$ for $t>0$ gives
\[
 J_{\rm e}(Z)=\tfrac12S_{\rm e}(Z)-2F_{\rm e}(Z)>0.
\]
The defect comparison therefore yields
\[
 D_M=0,\qquad D_x=D_y=D_A
    =\frac{4\pi}{\vartheta_0}-J_{\rm e}(Z)
    <\frac{4\pi}{\vartheta_0}.
\]
The remainder, rather than the total sequence, is strictly below
the scalar exponential threshold. The final uniform-integrability
argument of Theorem~\ref{introtildecompactness} applies unchanged and
gives strong convergence. The last assertion follows by applying
this conclusion to every subsequence on which the frequencies have
a positive lower bound.
\end{proof}

\subsection{Full negative fibres below the cubic limiting mass}
\label{secfullfibre}
The finite-dimensional linking construction can be completed at the
level of the full negative space. The useful change is to incorporate
mass normalisation into the functional before maximising over a fibre.
In the plane, ordinary spatial dilation leaves the Dirichlet pairing
unchanged. This gives a smooth quotient functional even though the
dilation action itself need not be differentiable on $H^1$.
The reduction is related to the generalised Nehari method
\cite[Chapter 4]{SzulkinWeth2010}; the mass quotient and the curvature
estimate needed here are established below.

On the open set $\{Z\in H\times H:B(Z)>0\}$ define
\begin{equation}\label{massquotient}
 \Phi_a(Z)=A(Z)-a\frac{F_{\rm e}(Z)}{B(Z)},\qquad
 \tau_Z=\sqrt{\frac{B(Z)}a},\qquad
 \mathcal N_aZ(x)=Z(\tau_Zx).
\end{equation}
Then $B(\mathcal N_aZ)=a$ and
$\Phi_a(Z)=J_{\rm e}(\mathcal N_aZ)$.
The explicit quotient is $C^2$: local $C^1$ regularity of
$h_\vartheta:H^1\to L^2$, proved above, makes $F_{\rm e}$ a $C^2$
functional. We differentiate this quotient, not the dilation map.
Write $\mathcal L(z,w)=(b(z+w)/\sqrt2,b^{-1}(z-w)/\sqrt2)$ and set
\[
 \mathbb S_+=\{\zeta\in H:\|\zeta\|_2=1,\ \zeta_+\not\equiv0\},
 \qquad
 \mathcal K_\zeta
 =\{\mathcal L(r\zeta,w):r>0,\ w\in H,\ \|w\|_2<r\}.
\]
All negative directions belong to this fibre. Its normalised image is
precisely
\begin{equation}\label{fullfibreimage}
 \mathcal N_a(\mathcal K_\zeta)
       =\{\mathcal X_a(\zeta_t,w):t>0,\ w\in H\},
 \qquad \zeta_t(x)=t\zeta(tx).
\end{equation}

\begin{lemma}[A quartic barrier up to the cubic limiting mass]
\label{cubicbarrier}
Set $m_0=\|Q_4\|_2^2$, $\vartheta_0=\sqrt{\vartheta_1\vartheta_2}$,
$a_{\rm c}=m_0/\vartheta_0$ and
\[
 b_{\rm c}=(\vartheta_1/\vartheta_2)^{1/8},\quad
 \alpha_u=\vartheta_2b_{\rm c}^2,\quad
 \alpha_v=\vartheta_1b_{\rm c}^{-2},\quad
 G_0=\frac{2\pi}{\max\{\alpha_u,\alpha_v\}}.
\]
There is an explicit constant
\[
 C_* =\frac{C_{\rm R}}{16\pi^3}
       \left(\frac{\alpha_u^3}{\vartheta_2}
                    +\frac{\alpha_v^3}{\vartheta_1}\right)>0
\]
such that, if $\|h\|_2^2=a$ and $0<G:=\|\nabla h\|_2^2\le G_0$,
then
\begin{equation}\label{quarticbarrierestimate}
 J_{\rm e}(b_{\rm c}h,b_{\rm c}^{-1}h)
       \ge \left(1-\frac a{a_{\rm c}}-C_*aG\right)G.
\end{equation}
Consequently, for every $0<a<a_{\rm c}$ there are $R_a,\beta_a>0$
such that each full fibre $\mathcal N_a(\mathcal K_\zeta)$,
$\zeta\in\mathbb S_+$, contains a pair of energy at least $\beta_a$.
One may take
\begin{equation}\label{cubicbarrierconstants}
 \delta_a=1-a/a_{\rm c},\qquad
 R_a=\min\{G_0,\delta_a/(2C_*a)\},\qquad
 \beta_a=\tfrac12\delta_a R_a.
\end{equation}
\end{lemma}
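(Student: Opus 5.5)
The plan is to split the exponential potential into its quartic part and a cubic-order remainder. The balancing exponent $b_{\rm c}$ makes the quartic part exactly a scalar cubic Gagliardo--Nirenberg term with coefficient $\vartheta_0$. The sharp constant $2/m_0$ then produces the factor $a/a_{\rm c}$, and the remainder is controlled by the Ruf-type estimate already used in the proof of Corollary~\ref{scalartruncatedrefinement}.

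\emph{Step 1: splitting the potential.} Write $R_3(s)=e^s-1-s-s^2/2$, so that
\[
H_\vartheta(t)=\frac{\vartheta}{4}t_+^4+\frac{1}{2\vartheta}R_3(\vartheta t_+^2).
\]
For $(u,v)=(b_{\rm c}h,b_{\rm c}^{-1}h)$ the quartic contribution to $F_{\rm e}$ is
\[
\frac14\bigl(\vartheta_2b_{\rm c}^4+\vartheta_1b_{\rm c}^{-4}\bigr)\int h_+^4 .
\]
Since $b_{\rm c}^4=(\vartheta_1/\vartheta_2)^{1/2}$, both coefficients equal $\vartheta_0$, so this term equals $\frac{\vartheta_0}{2}\int h_+^4$. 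The sharp quartic inequality $\|h\|_4^4\le(2/m_0)\|h\|_2^2\|\nabla h\|_2^2$ then bounds it by $\vartheta_0aG/m_0=aG/a_{\rm c}$. Meanwhile $A(b_{\rm c}h,b_{\rm c}^{-1}h)=G$.

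\emph{Step 2: the remainder.} The remainder terms are
\[
\frac1{2\vartheta_2}\int R_3(\alpha_uh_+^2)+\frac1{2\vartheta_1}\int R_3(\alpha_vh_+^2).
\]
I will apply the bound from the proof of Corollary~\ref{scalartruncatedrefinement},
\[
\int R_3(\vartheta w^2)\le\frac{C_{\rm R}\vartheta^3}{8\pi^3}MG^2,
\]
with $\vartheta=\alpha_u,\alpha_v$. That bound comes from $R_3(\varepsilon s)\le\varepsilon^3(e^s-1)$ with $\varepsilon=\vartheta G/(2\pi)\le1$ and Ruf's estimate for the normalised profile. Its hypothesis $\vartheta G\le2\pi$ is exactly $G\le G_0$, and $h_+\le|h|$ only decreases the integrals. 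Summing the two terms gives at most $C_*aG^2$. Subtracting Steps 1 and 2 from $A=G$ yields \eqref{quarticbarrierestimate}.

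\emph{Step 3: placing the barrier pair in every full fibre.} This is the point that needs care, because the fibre chart uses $b=(\vartheta_1/\vartheta_2)^{1/4}$ rather than $b_{\rm c}$. In the coordinates \eqref{tildecoordinates} with this $b$, the pair $(b_{\rm c}h,b_{\rm c}^{-1}h)$ has
\[
z=\frac{b^{-1}b_{\rm c}+bb_{\rm c}^{-1}}{\sqrt2}\,h,\qquad w=\frac{b^{-1}b_{\rm c}-bb_{\rm c}^{-1}}{\sqrt2}\,h .
\]
Both coordinates are multiples of $h$, and the coefficient of $z$ is positive. I take $h=\sqrt a\,\zeta_t$. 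Then $z/\|z\|_2=\zeta_t$, $w\in H$, and $B=\|h\|_2^2=a$. Since $\mathcal X_a$ parametrises the whole constraint $B=a$, the pair equals $\mathcal X_a(\zeta_t,w)$, and by \eqref{fullfibreimage} it lies in $\mathcal N_a(\mathcal K_\zeta)$.

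Because $\|\nabla\zeta_t\|_2^2=t^2\|\nabla\zeta\|_2^2$ ranges over $(0,\infty)$, I can choose $t$ with $G=R_a$. With $R_a$ as in \eqref{cubicbarrierconstants} we have $C_*aR_a\le\delta_a/2$ and $R_a\le G_0$. Hence the energy of this pair is at least $\tfrac12\delta_aR_a=\beta_a$, which is positive precisely because $a<a_{\rm c}$.
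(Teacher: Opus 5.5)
Your proposal is correct and follows the paper's proof essentially step for step. You split off the exact quartic term, which the choice of $b_{\rm c}$ balances to $\frac{\vartheta_0}{2}\int h_+^4$, and bound it by the sharp cubic Gagliardo--Nirenberg inequality. You bound the $R_3$ remainder by the Ruf-type sixth-order estimate from Corollary~\ref{scalartruncatedrefinement}; you reach it by passing from $h_+$ to $h$ through monotonicity of $R_3$, whereas the paper applies the estimate to $h_+$ directly, which makes no difference. Finally, you place the tilted pair in the full fibre through the $b$-coordinates exactly as the paper does.
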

\begin{proof}
We retain the exact quartic term. The scalar remainder estimate
proved in Corollary~\ref{scalartruncatedrefinement}, applied to
$h_+$ and using its gradient norm at most $G$, gives
\begin{equation}\label{sixthremainder}
 \int R_3(\alpha h_+^2)
 \le\frac{C_{\rm R}\alpha^3}{8\pi^3}aG^2,
 \qquad \alpha G\le2\pi,
\end{equation}
where $R_3(s)=e^s-1-s-s^2/2$.
Since $\vartheta_2b_{\rm c}^4
=\vartheta_1b_{\rm c}^{-4}=\vartheta_0$, expansion of the two
primitives and \eqref{sixthremainder} yield
\[
 F_{\rm e}(b_{\rm c}h,b_{\rm c}^{-1}h)
 \le\frac{\vartheta_0}{2}\int h_+^4+C_*aG^2
 \le\frac{\vartheta_0 a}{m_0}G+C_*aG^2.
\]
The last step is the sharp planar cubic GN inequality
$\|h\|_4^4\le(2/m_0)\|h\|_2^2\|\nabla h\|_2^2$
\cite{Weinstein}. This proves \eqref{quarticbarrierestimate}.

Keep the original exponential-balanced tilde coordinates, with
$b=(\vartheta_1/\vartheta_2)^{1/4}$; there is no change to the
negative space or to the compactness theorem. For a given $\zeta$
choose $t>0$ so that $h=\sqrt a\,\zeta_t$ has gradient integral
$R_a$. In the original coordinates the pair
$(b_{\rm c}h,b_{\rm c}^{-1}h)$ has
\[
 z=\frac{b_{\rm c}/b+b/b_{\rm c}}{\sqrt2}\,h,
 \qquad
 w=\frac{b_{\rm c}/b-b/b_{\rm c}}{\sqrt2}\,h.
\]
It has $B=a$ and its positive coordinate is a positive multiple
of $\zeta_t$, so it belongs to \eqref{fullfibreimage}. Its energy
is at least $\beta_a$ by \eqref{quarticbarrierestimate}.
These trial pairs generally have $w\ne0$. This is precisely why
the full negative fibre permits a larger mass range than the
positive-graph estimate \eqref{tildebarrierconstants} alone.
\end{proof}

\begin{lemma}[Curvature of the exponential primitive]\label{expcurvature}
For $t>0$ and $\vartheta>0$,
\begin{equation}\label{expcurvaturepointwise}
 h_\vartheta'(t)>
       \frac{h_\vartheta(t)}t+
       \frac{h_\vartheta(t)^2}{2H_\vartheta(t)}.
\end{equation}
Consequently, for any $Z=(u,v)$ with $F=F_{\rm e}(Z)>0$ and
$S=S_{\rm e}(Z)$,
\begin{equation}\label{expcurvatureintegral}
 I(Z):=\int\left[
       \frac{h_{\vartheta_2}(u)^2}{h_{\vartheta_2}'(u)}+
       \frac{h_{\vartheta_1}(v)^2}{h_{\vartheta_1}'(v)}\right]
       <\frac{2FS}{S+2F}.
\end{equation}
The ratios are defined to be zero on the nonpositive half-line.
For every variation $V$ one has
\[
 F_{\rm e}''(Z)[V,V]\ge F_{\rm e}'(Z)[V]^2/I(Z).
\]
\end{lemma}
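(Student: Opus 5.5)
The proof has three parts. First comes a pointwise computation in the single variable $s=\vartheta t^2$. Then a concavity (superadditivity) argument converts the pointwise bound into the integral bound \eqref{expcurvatureintegral}. Finally, a Cauchy--Schwarz argument gives the lower bound for $F_{\rm e}''$.

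\emph{Pointwise inequality.} For $t>0$ put $s=\vartheta t^2>0$ and $x=e^s$. Then
\[
h_\vartheta(t)=t(x-1),\qquad \frac{h_\vartheta(t)}t=x-1,\qquad h_\vartheta'(t)=x-1+2sx,\qquad 2H_\vartheta(t)=\frac{x-1-s}{\vartheta}.
\]
It follows that
\[
\frac{h_\vartheta(t)^2}{2H_\vartheta(t)}=\frac{s(x-1)^2}{x-1-s}.
\]
Since $x-1-s>0$, \eqref{expcurvaturepointwise} is therefore equivalent to
\[
2x(x-1-s)>(x-1)^2 .
\]
This rearranges to $x^2-1>2sx$, that is, $e^s-e^{-s}>2s$, or $\sinh s>s$, which holds for every $s>0$. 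I expect no difficulty here; the only work is to keep the algebra exact.

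\emph{Integral inequality.} I rewrite \eqref{expcurvaturepointwise} as
\[
h'>h\,\frac{th+2H}{2tH}.
\]
Since $h'>0$ for $t>0$, this gives the pointwise bound
\[
\frac{h^2}{h'}<\Lambda(th,H),\qquad \Lambda(\xi,\eta):=\frac{2\xi\eta}{\xi+2\eta},\qquad t>0 .
\]
The function $\Lambda$ is, up to constants, a harmonic mean. It is positively $1$-homogeneous and concave on $(0,\infty)^2$, hence superadditive, and by Jensen's inequality for homogeneous concave functions
\[
\int\Lambda(\xi,\eta)\le\Lambda\Big(\int\xi,\int\eta\Big)
\]
for nonnegative integrable $\xi,\eta$. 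I apply this to the concatenated integrand: $\xi=u h_{\vartheta_2}(u)$ and $\eta=H_{\vartheta_2}(u)$ on one copy of $\R^2$, and the analogous quantities with $v,\vartheta_1$ on a second copy. On the nonpositive half-line all terms vanish, so
\[
\int\xi=S,\qquad \int\eta=F,\qquad I(Z)\le\Lambda(S,F)=\frac{2FS}{S+2F}.
\]
Strictness comes from the strict pointwise inequality: $F>0$ forces $\{u>0\}\cup\{v>0\}$ to have positive measure. All integrals are taken under the standing finiteness of $F_{\rm e}$ and $S_{\rm e}$, since $I\le S$ follows from the pointwise bound $h^2/h'<th$.

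\emph{Second variation.} For $V=(V_1,V_2)$ we have
\[
F_{\rm e}'(Z)[V]=\int\big[h_{\vartheta_2}(u)V_1+h_{\vartheta_1}(v)V_2\big],\qquad
F_{\rm e}''(Z)[V,V]=\int\big[h_{\vartheta_2}'(u)V_1^2+h_{\vartheta_1}'(v)V_2^2\big],
\]
using the local $C^2$ regularity of $F_{\rm e}$ established above. On the nonpositive set both $h$ and $h'$ vanish. Elsewhere I write $hV=(h/\sqrt{h'})(\sqrt{h'}V)$, and the Cauchy--Schwarz inequality gives
\[
F_{\rm e}'(Z)[V]^2\le I(Z)\,F_{\rm e}''(Z)[V,V].
\]
Dividing by $I(Z)>0$ yields the claim; $I(Z)>0$ holds because $F>0$.

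The only genuinely delicate step is the Jensen step for $\Lambda$. There I would verify concavity directly from the representation
\[
\frac1\Lambda=\frac1{2\xi}+\frac1\eta ,
\]
since a reciprocal of a sum of reciprocals is concave.
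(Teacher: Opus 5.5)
Your proposal is correct and follows the paper's proof essentially step for step: the pointwise inequality is reduced to $\sinh s>s$, the concavity and degree-one homogeneity of $\Lambda(\xi,\eta)=2\xi\eta/(\xi+2\eta)$ (the paper's $XY/(X+Y)$ with $X=th$, $Y=2H$) is used to integrate over the two components, and a weighted Cauchy--Schwarz inequality handles the second variation. One harmless slip: the reciprocal is $1/\Lambda=1/\xi+1/(2\eta)$, not $1/(2\xi)+1/\eta$, and this does not affect the concavity argument.
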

\begin{proof}
With $x=\vartheta t^2$, direct calculation gives
\[
 \left(h_\vartheta'(t)-\frac{h_\vartheta(t)}t\right)H_\vartheta(t)
                   -\frac12h_\vartheta(t)^2
       =\frac{x e^x}{\vartheta}(\sinh x-x)>0.
\]
Thus $h^2/h'<2H(th)/(th+2H)$. The function
$(X,Y)\mapsto XY/(X+Y)$ is concave and homogeneous of degree one
on the nonnegative quadrant. Integrating over the two components
therefore gives \eqref{expcurvatureintegral}, with strictness on the
set where a component is positive. The final assertion is the
weighted Cauchy--Schwarz inequality.
\end{proof}

\begin{proposition}[A unique full-fibre maximum]\label{fullfibremaximum}
Assume $0<a<a_{\rm c}$ and use $\beta_a$ from \eqref{cubicbarrierconstants}. For every $\zeta\in\mathbb S_+$,
$\Phi_a|_{\mathcal K_\zeta}$ has a unique critical point
$X_\zeta=\mathcal L(r_\zeta\zeta,w_\zeta)$.
It is its global maximum, its fibre Hessian is negative definite and
coercive, and $(r_\zeta,w_\zeta)$ depends $C^1$ on $\zeta$.
In particular
\begin{equation}\label{fullreducedenergy}
 m_a(\zeta):=\Phi_a(X_\zeta)
       =\max_{t>0,\,w\in H}J_{\rm e}(\mathcal X_a(\zeta_t,w))
       \ge\beta_a
\end{equation}
is $C^1$. If $U_\zeta=\mathcal N_aX_\zeta$, then
\begin{equation}\label{exactfibreequations}
 \begin{gathered}
 B(U_\zeta)=a,\qquad P_{\rm e}(U_\zeta)=0,\qquad
 \lambda_\zeta:=\frac{F_{\rm e}(U_\zeta)}a>0,\\
 (J_{\rm e}'(U_\zeta)+\lambda_\zeta B'(U_\zeta))[V]=0
 \quad\text{for }V\in\operatorname{span}\{U_\zeta\}
                                      +\mathcal L(0,H).
 \end{gathered}
\end{equation}
\end{proposition}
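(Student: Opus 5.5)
Fix $\zeta\in\mathbb S_+$ and work in the chart $(r,w)\mapsto\mathcal L(r\zeta,w)$ on $\{r>0,\ \|w\|_2<r\}$. On this fibre $B=\tfrac12(r^2-\|w\|_2^2)$, and $\Phi_a$ is invariant under the planar dilation that rescales $B$. So $\Phi_a$ on $\mathcal K_\zeta$ reproduces exactly the values $J_{\rm e}(\mathcal X_a(\zeta_t,w))$ in \eqref{fullfibreimage}, which gives the identity in \eqref{fullreducedenergy} as soon as a maximum exists.

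\emph{Existence of a maximiser.} The plan is to combine three facts.
\begin{enumerate}
\item Lemma~\ref{cubicbarrier} gives $\sup_{\mathcal K_\zeta}\Phi_a\ge\beta_a>0$.
\item The estimates \eqref{tildejensen}--\eqref{tildetop}, stated uniformly in $w\in H$, give $\Phi_a\to-\infty$ or $\Phi_a\le\beta_a/2$ near the boundary of the fibre. This covers $t\to0$, $t\to\infty$ and $\|w\|_{H^1}\to\infty$ in the normalised picture.
\item The negative direction enters $J_{\rm e}$ through $-\tfrac12\|\nabla w\|_2^2$ plus a convex potential, which gives weak upper semicontinuity in $w$ via radial compactness, as in the proof of Theorem~\ref{introtildecompactness}.
\end{enumerate}
From these, a maximising sequence is bounded and its weak limit is a maximiser $X_\zeta$ in the interior, with $\Phi_a(X_\zeta)\ge\beta_a$.

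\emph{Uniqueness and nondegeneracy.} This is the main obstacle, and I would handle it with the curvature Lemma~\ref{expcurvature}. At any critical point $X$ of $\Phi_a|_{\mathcal K_\zeta}$, compute the second variation along a tangent direction $V=\mathcal L(s\zeta,k)$. Since $A$ and $B$ are quadratic, the quotient gives
\[
 \Phi_a''[V,V]=A(V)-\frac{a}{B}F_{\rm e}''[V,V]+\frac{2a}{B^2}F_{\rm e}'[V]B'[V]-\frac{aF_{\rm e}}{B^3}\bigl(B'[V]^2-B\cdot 2B(V)\bigr).
\]
The criticality relations in the directions $X$ and $\mathcal L(0,k)$ express $A(X)$ and $F_{\rm e}'$ in terms of $F,S$. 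Next I bound $F_{\rm e}''[V,V]\ge F_{\rm e}'[V]^2/I$ and use $I<2FS/(S+2F)$. The intent is that a quadratic form in $(s,\ \text{$B'$-component},\ \nabla k)$ becomes negative definite: the $-\|\nabla k\|_2^2$ part supplies coercivity in the $H$-directions, and the strict curvature inequality handles the radial and mass directions. Critical points are therefore strict nondegenerate local maxima.

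To pass from this to global uniqueness I would use a mountain-pass/degree argument on the connected open fibre. If there were two strict local maxima, the mountain-pass theorem (compactness being available from the coercive bounds above) would produce a critical point that is not a local maximum, which contradicts what was just shown. Alternatively, I would try to show directly that the map $r\mapsto\max_w$ is strictly concave after a suitable reparametrisation. $C^1$ dependence of $(r_\zeta,w_\zeta)$ on $\zeta$ then follows from the implicit function theorem, using the invertible fibre Hessian and the $C^2$ regularity of $\Phi_a$; hence $m_a$ is $C^1$.

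\emph{The identities \eqref{exactfibreequations}.} Set $U_\zeta=\mathcal N_aX_\zeta$. Then $B(U_\zeta)=a$. Derivative zero along the dilation direction inside the fibre, which is generated by the $r$- and $w$-scaling combined with the normalisation, gives $P_{\rm e}(U_\zeta)=0$. Vanishing of the derivative of $\Phi_a$ in the directions $U_\zeta$ and $\mathcal L(0,H)$ translates, since $B'[U]=2a$, into $(J_{\rm e}'+\lambda B')[V]=0$ with $\lambda=F_{\rm e}(U_\zeta)/a$. Here $F_{\rm e}(U_\zeta)>0$, because otherwise $J_{\rm e}(U_\zeta)=A\le0<\beta_a$ would follow from $P_{\rm e}=0$ and $S_{\rm e}\ge4F_{\rm e}$.
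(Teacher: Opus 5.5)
Your outline follows the paper's architecture: superlevel bounds plus the cubic barrier, Lemma~\ref{expcurvature} for nondegeneracy, a mountain pass for uniqueness, the implicit function theorem, and the quotient derivative for \eqref{exactfibreequations}, which you handle correctly. However, the two decisive steps are only asserted.

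\emph{Nondegeneracy.} Write $k=a/B(X)$, $\Lambda=F_{\rm e}(X)/B(X)$, $F=F_{\rm e}(X)$, $S=S_{\rm e}(X)$, $f=F_{\rm e}'(X)[V]$ and $d=B'(X)[V]$. The quadratic part $2A(V)+2k\Lambda B(V)$ of $\Phi_a''(X)[V,V]$ is \emph{positive} in the $\zeta$-direction. Moreover, the $\zeta$-direction and the negative directions are coupled through $f$ and $d$. So you cannot let $-\|\nabla\cdot\|_2^2$ handle the $H$-directions and the curvature inequality handle ``the radial and mass directions'' separately. The missing idea is a reverse Cauchy--Schwarz inequality. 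The frozen form $Q=A+k\Lambda B$ has exactly one positive direction on $\mathcal L(\R\zeta,H)$. Criticality gives $Q'(X)=kF_{\rm e}'(X)$ on fibre variations and $Q(X)=kS/2>0$. Hence $Q''[V,V]=kf^2/S-\mathcal D_Q(V)$, with $\mathcal D_Q\ge0$ coercive on the $Q$-orthogonal complement of $X$. Then $\Phi_a''=Q''-kF_{\rm e}''+\frac{2k}{B}d(f-\Lambda d)$. The bound $F_{\rm e}''\ge f^2/I$ and completion of the square in $d$ give \eqref{fibreHessian}, whose $f^2$-coefficient $-k(1/I-1/S-1/(2F))$ is negative precisely because $I<2FS/(S+2F)$.

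The constants are not cosmetic. Your second-variation formula has $A(V)$ where $A''[V,V]=2A(V)$ belongs, and it halves the term $-2aF\,d^2/B^3$. With that coefficient halved, completing the square would require $1/I>1/S+1/F$. Lemma~\ref{expcurvature} does not give this: both sides of \eqref{expcurvaturepointwise} agree to leading order as $t\downarrow0$, so there is no slack.

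\emph{Uniqueness.} ``Compactness from the coercive bounds'' is not available. Those bounds hold only on superlevels $\{\Phi_a\ge c\}$ with $c>0$. Also, boundedness is not the Palais--Smale condition: the $L^2$ norm of the negative coordinate enters $B$, and $H^1_{\rm rad}(\R^2)\hookrightarrow L^2$ is not compact. The paper obtains Palais--Smale on positive levels by writing the negative-coordinate derivative of $\Phi_a$ as $-E_j'$, for a uniformly strongly monotone convex functional $E_j$.

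More importantly, a path in the convex cone $\mathcal K_\zeta$ joining two maxima may dip to $\Phi_a\le0$. At level zero compactness genuinely fails: $\mathcal L(r_j\zeta,0)$ with $r_j\to0$ is a Palais--Smale sequence leaving the fibre. So you must show that the mountain-pass level of $-\Phi_a$ is strictly negative. This requires path-connectedness of $\{\Phi_a>0\}$ on the fibre, which the paper proves in the coordinates $X=\mathcal L(r\zeta,r\eta)$. Positive values occur for some amplitude if and only if $A_\eta,B_\eta>0$ and $\sqrt{A_\eta B_\eta}>\sqrt{aF_2(\zeta,\eta)}$, which is a convex condition on $\eta$. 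For each admissible $\eta$, the positive amplitudes form an interval $(0,\rho(\eta))$.

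\emph{Existence.} Your upper semicontinuity argument is not justified in the normalised chart. The term $\tfrac12t^2g\|w\|_2^2$ in $A$ and the factor $r_a(w)$ are not weakly continuous. The property does hold in the original chart $X=\mathcal L(r\zeta,\omega)$: $A$ is weakly u.s.c., $F_{\rm e}$ is weakly l.s.c.\ by Fatou, and $B(X)\ge\limsup_jB(X_j)>0$. That gives a maximiser more directly than the paper's Palais--Smale-plus-Ekeland route. Palais--Smale is still needed for the uniqueness step.
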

\begin{proof}
\emph{Geometry and compactness on one fibre.}
Fix $\zeta\in\mathbb S_+$ and put $g=\|\nabla\zeta\|_2^2>0$.
In \eqref{fullfibreimage}, write
$R=(2a+\|w\|_2^2)^{1/2}$ for the positive amplitude.
The convexity estimate \eqref{tildejensen}, using the fourth and
sixth powers in the exponential series, gives constants $d_4,d_6>0$
depending on $\zeta$ such that
\begin{equation}\label{fibrescoercivity}
 J_{\rm e}(\mathcal X_a(\zeta_t,w))
 \le t^2\left(\frac{gR^2}{2}-d_4R^4\right)
                   -d_6t^4R^6-\frac12\|\nabla w\|_2^2.
\end{equation}
Here $\int\zeta_+^4,\int\zeta_+^6>0$ ensure $d_4,d_6>0$;
nonnegativity of $\zeta$ is unnecessary. The right side is at most
$Ct^2$, and it is nonpositive for all sufficiently large $t$, since
$R^2\ge2a$. Thus a superlevel $\Phi_a\ge d>0$ has
\[
 0<t_d\le t\le T_d,\qquad
 R+\|w\|_{H^1}\le C_d.
\]
In the original variables $X=\mathcal L(r\zeta,\omega)$ these bounds
give $r+\|\omega\|_{H^1}\le C_d$ and
$0<c_d\le B(X)=at^2\le C_d$.
These superlevels are complete closed subsets of the fibre coordinate
space, separated from the boundary of its cone. Lemma~\ref{cubicbarrier}
supplies a point with value at least $\beta_a$ on this full fibre; the supremum
is finite by \eqref{fibrescoercivity}.

We verify the Palais--Smale condition on every positive level of this
fixed fibre. Let $X_j=\mathcal L(r_j\zeta,\omega_j)$ be such a
sequence. The preceding bounds apply. Set
\[
 k_j=\frac a{B(X_j)},\qquad
 \Lambda_j=\frac{F_{\rm e}(X_j)}{B(X_j)}.
\]
Both $k_j$ and $\Lambda_j$ are bounded above and away from zero.
For the latter assertion, its upper bound follows from
$k_jF_{\rm e}(X_j)=A(X_j)-\Phi_a(X_j)$, and its lower bound follows
from \eqref{tildejensen} and the positive lower bound for $r_j$.
After extraction $r_j\to r>0$, $k_j\to k>0$ and
$\Lambda_j\to\Lambda>0$. The negative-coordinate derivative of
$\Phi_a$ is minus the derivative at $\omega_j$ of the convex functional
\[
 E_j(\omega)=\frac12\|\nabla\omega\|_2^2
       +\frac{k_j\Lambda_j}{2}\|\omega\|_2^2
       +k_j F_{\rm e}(\mathcal L(r_j\zeta,\omega)).
\]
Its derivative is uniformly strongly monotone on $H$.
The limiting functional $E$ has a unique minimiser $\omega_*$ by
coercivity, convexity and weak lower semicontinuity. Local continuity
of the nonlinear derivative at this fixed function gives
$E_j'(\omega_*)\to E'(\omega_*)=0$.
Since $E_j'(\omega_j)\to0$, strong monotonicity yields
$\|\omega_j-\omega_*\|_{H^1}\to0$. This proves the claimed
Palais--Smale condition. Applying Ekeland's principle to a maximising
sequence in a positive superlevel now proves attainment.

\emph{Every fibre critical point is a nondegenerate strict maximum.}
Let $X$ be any fibre critical point and abbreviate
$F=F_{\rm e}(X)$, $S=S_{\rm e}(X)$, $B=B(X)$,
$k=a/B$ and $\Lambda=F/B$. Testing with $X$ gives
\[
 2A-kS+2kF=0,\qquad
 \Phi_a(X)=k(S/2-2F)>0.
\]
Strict positivity follows from $S>4F>0$ for the pure exponential.
Put $Q=A+k\Lambda B$, treating $k,\Lambda$ as fixed when
differentiating. On the linear space
$\mathcal L(\R\zeta,H)$, $Q$ has one positive direction and a
coercive negative part. At $X$,
$Q(X)=kS/2$ and $Q'(X)[V]=kF_{\rm e}'(X)[V]$ for fibre variations.
Write $f=F_{\rm e}'(X)[V]$ and $d=B'(X)[V]$.
The reverse Cauchy--Schwarz inequality for this quadratic form gives
\[
 Q''[V,V]=\frac{kf^2}{S}-\mathcal D_Q(V),\qquad
 \mathcal D_Q(V)\ge0,
\]
where $\mathcal D_Q$ vanishes only on $\R X$ and is coercive on
the $Q$-orthogonal complement of $X$. Differentiating the quotient
twice, and using fibre stationarity, gives exactly
\[
 \Phi_a''(X)[V,V]
 =Q''[V,V]-kF_{\rm e}''(X)[V,V]
                         +\frac{2k}{B}d(f-\Lambda d).
\]
Lemma~\ref{expcurvature} and completion of the square imply
\begin{equation}\label{fibreHessian}
 \begin{aligned}
 \Phi_a''(X)[V,V]
 \le{}&-\mathcal D_Q(V)
       -k\left(\frac1{I(X)}-\frac1S-\frac1{2F}\right)f^2\\
      &-\frac{2k\Lambda}{B}
                    \left(d-\frac f{2\Lambda}\right)^2.
 \end{aligned}
\end{equation}
The middle coefficient is strictly positive. The first two terms
control the whole fibre norm, since $F_{\rm e}'(X)[X]=S>0$.
Thus the fibre Hessian is negative and coercive.

\emph{Uniqueness.}
The positive set of $\Phi_a$ on the fibre is path connected.
To check this, write $X=\mathcal L(r\zeta,r\eta)$, $\|\eta\|_2<1$,
and denote by $F_j(\zeta,\eta)$ the homogeneous term of degree $2j$
in $F_{\rm e}(\mathcal L(\zeta,\eta))$. With
\[
 A_\eta=\tfrac12(g-\|\nabla\eta\|_2^2),\qquad
 B_\eta=\tfrac12(1-\|\eta\|_2^2),
\]
the quotient is
\[
 \Phi_a(\mathcal L(r\zeta,r\eta))
 =r^2\left(A_\eta-a\frac{F_2(\zeta,\eta)}{B_\eta}\right)
       -\frac a{B_\eta}\sum_{j\ge3}r^{2j-2}F_j(\zeta,\eta).
\]
A positive interval of amplitudes exists precisely when
$A_\eta,B_\eta>0$ and
$\sqrt{A_\eta B_\eta}>\sqrt{aF_2(\zeta,\eta)}$.
This set of $\eta$ is convex: the left side is concave, whereas
$\sqrt{F_2(\zeta,\eta)}$ is convex, being a weighted $L^4$ norm
squared of positive parts of affine functions. For each such $\eta$,
the positive amplitudes form an interval starting at zero. Two points
can consequently be joined by first decreasing their amplitudes,
interpolating their $\eta$ coordinates at a sufficiently small common
amplitude, and then increasing the amplitude.

If two fibre maxima existed, join them by such a positive path and
apply the mountain-pass deformation argument to $-\Phi_a$ on the
positive set. Its minimax value stays strictly below zero; the
Palais--Smale property on positive levels proved above applies.
It would give a critical point of mountain-pass type, which cannot
be a nondegenerate strict local minimum of $-\Phi_a$. This contradicts
\eqref{fibreHessian}; see the deformation formulation in
\cite[Chapter 1]{Willem1996}. Thus there is exactly one fibre critical
point. The implicit function theorem for the coercive fibre Hessian
gives its local $C^1$ dependence on $\zeta$; uniqueness joins these
local maps. This proves \eqref{fullreducedenergy}.

Finally, the exact quotient derivative is
\[
 \Phi_a'(X)=A'(X)-\frac a{B(X)}F_{\rm e}'(X)
                         +\frac{aF_{\rm e}(X)}{B(X)^2}B'(X).
\]
After the fixed change of variables $x\mapsto\tau_Xx$, its vanishing
on the fibre becomes the last identity in
\eqref{exactfibreequations}. The radial test is exactly
$P_{\rm e}(\mathcal N_aX)=0$ and the resulting frequency is
$F_{\rm e}(X)/B(X)=F_{\rm e}(\mathcal N_aX)/a>0$.
\end{proof}

\begin{proof}[Proof of Theorem~\ref{introfullfibreexistence}]
Define the reduced level by
\begin{equation}\label{fullfibrelevel}
 c_a:=\inf_{\zeta\in\mathbb S_+}m_a(\zeta).
\end{equation}
We first prove $0<\beta_a\le c_a\le\overline c<4\pi/\vartheta_0$.

The lower bound for $c_a$ follows from \eqref{fullreducedenergy}. The estimates \eqref{tildeuniformupper}
and \eqref{tildesharpupper} hold for every negative $w\in H$.
Applying them to the fixed Moser direction $\phi$ therefore yields
$c_a\le m_a(\phi)\le\overline c<4\pi/\vartheta_0$.

We next produce full residuals, keeping the two identities in
\eqref{exactfibreequations} exact. Ordinary spatial dilation leaves
$\Phi_a$ unchanged, so
\begin{equation}\label{reduceddilationinvariance}
 m_a(\zeta_t)=m_a(\zeta),\qquad \zeta_t(x)=t\zeta(tx).
\end{equation}
Choose a minimising sequence $\zeta_n^0\in\mathbb S_+$ and use this
invariance to arrange $\|\nabla\zeta_n^0\|_2=1$.
Extend $m_a$ by $+\infty$ to
$\mathbb S\setminus\mathbb S_+$, where $\mathbb S=\{\|\zeta\|_2=1\}$.
This extension is lower semicontinuous in the $H^1$ metric.
Indeed, if $\zeta_n\to\zeta\le0$ in $H^1$, then for each fixed $r>0$
\[
 m_a(\zeta_n)\ge\Phi_a(\mathcal L(r\zeta_n,0))
                    \longrightarrow\frac{r^2}2\|\nabla\zeta\|_2^2.
\]
Since $\|\zeta\|_2=1$, its Dirichlet integral is positive, and
letting $r\to+\infty$ proves the assertion at the boundary.

The sphere is complete for the ambient $H^1$ distance. Ekeland's
principle, starting with sufficiently accurate $\zeta_n^0$, gives
$\zeta_n\in\mathbb S_+$ such that
\begin{equation}\label{reducedPSsequence}
 m_a(\zeta_n)\to c_a,\qquad
 \|\zeta_n-\zeta_n^0\|_{H^1}\to0,\qquad
 \|dm_a(\zeta_n)\|_{(T_{\zeta_n}\mathbb S)^*}\to0.
\end{equation}
In particular $\|\nabla\zeta_n\|_2\to1$ and the base sequence is
bounded in $H^1$. Write
\[
 X_n=\mathcal L(r_n\zeta_n,w_n),\qquad
 U_n=\mathcal N_aX_n,\qquad
 \tau_n=\sqrt{B(X_n)/a},\qquad
 \lambda_n=F_{\rm e}(U_n)/a>0.
\]
By \eqref{exactfibreequations}, $P_{\rm e}(U_n)=0$, and the
residual vanishes on $U_n$ and $\mathcal T U_n$: the latter differs
from $U_n$ by a negative-coordinate direction.
The bound-producing part of the proof of
Theorem~\ref{introtildecompactness} uses only these two tests, the energy,
the Poho\v zaev identity and the sign of the multiplier. It applies
here with zero testing errors and gives
\begin{equation}\label{fullfibrebounds}
 \|U_n\|_{H^1\times H^1}\le C,\qquad
 0<\lambda_-\le\lambda_n\le\lambda_+.
\end{equation}
This step does not presuppose the full residual estimate to be proved.

Let $z_n$ denote the positive coordinate of $U_n$. Planar dilation
and the mass identity imply
\[
 \|\nabla z_n\|_2^2=r_n^2\|\nabla\zeta_n\|_2^2,
 \qquad \|z_n\|_2=r_n/\tau_n,\qquad
 \|z_n\|_2^2\ge2a.
\]
Moreover $\|\nabla z_n\|_2^2\ge2A(U_n)
\ge2J_{\rm e}(U_n)\ge\beta_a$ for large $n$.
Together with \eqref{fullfibrebounds} and
$\|\nabla\zeta_n\|_2\to1$, this bounds both $r_n$ and $\tau_n$
above and away from zero.

Differentiating the reduced functional in a tangent direction $\eta$
gives the envelope identity
\[
 dm_a(\zeta_n)[\eta]
      =r_n\Phi_a'(X_n)[\mathcal L(\eta,0)],
 \qquad \int\zeta_n\eta=0.
\]
The derivative of $\Phi_a$ already vanishes on
$\mathcal L(\zeta_n,0)$ and on all negative directions. The projection
$\eta\mapsto\eta-(\int\zeta_n\eta)\zeta_n$ has uniformly bounded
$H^1$ norm because $\zeta_n$ is bounded. Thus
\eqref{reducedPSsequence} and the lower bound for $r_n$ imply
$\|\Phi_a'(X_n)\|_{(H\times H)^*}\to0$.
For any test $V$, the quotient formula gives
\[
 \Phi_a'(X_n)[V]
 =\big(J_{\rm e}'(U_n)+\lambda_n B'(U_n)\big)
                                      [V(\tau_n\,\cdot)].
\]
The bounds on $\tau_n$ make these changes of variables uniformly
bounded isomorphisms of $H^1$. Hence the full radial residual tends
to zero in the product dual space. By \eqref{fullfibrebounds} it also
satisfies the weighted Cerami estimate.

The full-space version of Theorem~\ref{introtildecompactness} now yields
a strongly convergent subsequence and the stated solution, with mass
$a$ and energy $c_a$. Finally, any positive radial solution of mass
$a$ satisfies the Poho\v zaev identity and $\lambda a=F_{\rm e}$.
It is consequently a fibre critical point of $\Phi_a$, hence its
unique fibre maximum. Its energy is at least $c_a$, which proves
the asserted least-energy property.
\end{proof}

\begin{remark}[Relation with the Galerkin linking levels]
\label{tildeselectiongap}
The full-fibre construction enforces $P_{\rm e}=0$ and
$\lambda=F_{\rm e}/a>0$ before selecting a minimising sequence.
It therefore removes the sequence-selection hypothesis from the
existence result below the cubic limiting mass. The level $c_a$ is defined by
\eqref{fullfibrelevel}; no identification with a limit of the finite
Galerkin levels $c_m$ is needed or asserted. The finite linking
construction and the full reduction share the Moser upper bound,
while the latter includes every negative direction from the outset.
Its tilted trial pairs give the sharper barrier of
Lemma~\ref{cubicbarrier}, which covers every $0<a<a_{\rm c}$.
Thus the two previously separate existence intervals are contained
in a single interval. Existence or nonexistence at and above
$a_{\rm c}$ is not decided by this argument.
\end{remark}

\section{Normalised planar systems with exponential nonlinearities}
\label{secTMsolutions}
We now establish a global comparison of exponential components and
then turn to local solution construction. The pure exponential branch
is treated before perturbations of a power system. The fixed-frequency
compactness, positivity and decay theory of Cassani and Zhang
\cite{CassaniZhang2017} provides an important antecedent for the
Hamiltonian problem. Qin, Tang and Zhang \cite{QTZ} also study critical
exponential growth with a prescribed potential. For gradient systems,
Cassani, Tavares and Zhang \cite{CassaniTavaresZhang2020} treat
Bose-fluid interactions on bounded planar domains with fixed linear
coefficients.
Chen, Xie and Zhang \cite{CXZ} obtain normalised exponential ground
states for a gradient system with two separate $L^2$ constraints.
Here we retain the crossed equations, a single common frequency and
the bilinear constraint $B=a$. The scalar theorem of
Chang, Liu and Yan \cite{ChangLiuYan2023} assumes, in particular,
$f(t)=o(|t|^3)$ at zero and a sufficiently large lower-growth
coefficient. Our nonlinearity satisfies
$h_\vartheta(t)/t^3\to\vartheta>0$ as $t\downarrow0$, so that
theorem does not directly supply the present mass interval.

\subsection{Global comparison, synchronisation and the nodal extension}
\begin{proposition}[Global comparison of positive exponential components]
\label{exponentialcomparison}
Let $(u,v)$ be a positive bounded $H^1$ solution of the pure
exponential system with $\lambda>0$. If
$\vartheta_1\ge\vartheta_2$ and
$c=(\vartheta_1/\vartheta_2)^{1/2}$, then
\begin{equation}\label{exponentialordering}
 v\le u\le cv,\qquad
 a\le\|u\|_2^2\le ca,\qquad
 a/c\le\|v\|_2^2\le a,
 \qquad a=\int uv.
\end{equation}
For $\vartheta_1>\vartheta_2$ both pointwise inequalities are
strict. For $\vartheta_1=\vartheta_2=\vartheta$, every such pair
is synchronised: $u=v=w$, where
\[
 -\Delta w+\lambda w=w(e^{\vartheta w^2}-1),\qquad \|w\|_2^2=a.
\]
The antisymmetric part of the linearisation at $(w,w)$ is coercive:
\begin{equation}\label{antisymmetricexponential}
 L_{\rm a}=-\Delta+\lambda+h_\vartheta'(w),\qquad
 \langle L_{\rm a}\phi,\phi\rangle
 \ge\|\nabla\phi\|_2^2+\lambda\|\phi\|_2^2.
\end{equation}
Thus every kernel element of the full fixed-frequency linearisation
is symmetric and corresponds to a kernel element of
$-\Delta+\lambda-h_\vartheta'(w)$.
\end{proposition}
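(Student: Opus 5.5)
Both pointwise inequalities will come from maximum-principle comparisons applied to differences of the two equations, once the exponential nonlinearity is rewritten in balanced form. The system reads $-\Delta u+\lambda u=h_{\vartheta_1}(v)$ and $-\Delta v+\lambda v=h_{\vartheta_2}(u)$.

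\emph{Lower bound $v\le u$.} Set $d=u-v$. Subtracting the equations gives
\[
(-\Delta+\lambda)d=h_{\vartheta_1}(v)-h_{\vartheta_2}(u).
\]
I will test this identity with $d_-=(v-u)_+\in H^1$. On the set $\{v>u>0\}$ we have $h_{\vartheta_1}(v)\ge h_{\vartheta_2}(v)>h_{\vartheta_2}(u)$, using $\vartheta_1\ge\vartheta_2$ and the monotonicity of $h_{\vartheta}$. Hence the right-hand side is positive there, and the test yields
\[
-\|\nabla d_-\|_2^2-\lambda\|d_-\|_2^2=\int\bigl(h_{\vartheta_1}(v)-h_{\vartheta_2}(u)\bigr)d_-\ge0,
\]
which forces $d_-=0$.

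\emph{Upper bound $u\le cv$.} Rescale by writing $x=\sqrt{\vartheta_2}\,u$ and $y=\sqrt{\vartheta_1}\,v$, so that
\[
h_{\vartheta_1}(v)=\vartheta_1^{-1/2}\,y(e^{y^2}-1),\qquad h_{\vartheta_2}(u)=\vartheta_2^{-1/2}\,x(e^{x^2}-1).
\]
Then $(-\Delta+\lambda)x=\sqrt{\vartheta_2/\vartheta_1}\,g(y)$ and $(-\Delta+\lambda)y=\sqrt{\vartheta_1/\vartheta_2}\,g(x)$, with $g(t)=t(e^{t^2}-1)$. The claim $u\le cv$ is equivalent to $x\le y$. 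Test the difference of the two equations with $(x-y)_+$. On $\{x>y\}$,
\[
\sqrt{\vartheta_2/\vartheta_1}\,g(y)-\sqrt{\vartheta_1/\vartheta_2}\,g(x)<0,
\]
because $g(x)>g(y)$ and $\sqrt{\vartheta_1/\vartheta_2}\ge1\ge\sqrt{\vartheta_2/\vartheta_1}$. The same argument therefore gives $(x-y)_+=0$.

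\emph{Strictness and integral bounds.} When $\vartheta_1>\vartheta_2$, the nonnegative functions $u-v$ and $cv-u$ satisfy $(-\Delta+\lambda)(\cdot)\ge0$ with a nontrivial positive right-hand side, since $h_{\vartheta_1}(v)>h_{\vartheta_2}(v)$ wherever $v>0$. The strong maximum principle, or equivalently positivity of the resolvent, then gives both strict inequalities. The integral bounds follow by multiplying the pointwise inequalities by $u$ or by $v$ and integrating. For instance, $v\le u$ gives $a=\int uv\le\int u^2$, and $u\le cv$ gives $\int u^2\le c\int uv=ca$. The bounds on $\|v\|_2^2$ are obtained in the same way.

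\emph{Equal parameters.} If $\vartheta_1=\vartheta_2$, then $c=1$, so the two inequalities collapse to $u=v=w$. Then $\|w\|_2^2=a$ and $w$ solves the scalar equation. At $(w,w)$ the fixed-frequency linearisation is
\[
(-\Delta+\lambda)\phi-h'(w)\psi,\qquad (-\Delta+\lambda)\psi-h'(w)\phi .
\]
Passing to $\phi\pm\psi$ decouples it into $-\Delta+\lambda\mp h'(w)$. The antisymmetric block $L_{\rm a}=-\Delta+\lambda+h'(w)$ has $h'\ge0$, so the coercivity estimate \eqref{antisymmetricexponential} holds directly and $L_{\rm a}$ is injective. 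Consequently any kernel element has vanishing antisymmetric part $\phi-\psi$, and its symmetric part lies in the kernel of $-\Delta+\lambda-h'(w)$.

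\emph{Main obstacle.} The only delicate step is justifying the tests with $(v-u)_+$ and $(x-y)_+$. These need the exponential terms to be integrable against $H^1$ functions, and this follows from the assumption that $u,v$ are bounded.
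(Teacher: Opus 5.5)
Your weak comparisons are correct and follow the paper's route. Testing with $(v-u)_+$ and $(x-y)_+$ is the paper's test with $d_-$ and $e_+$, and your rescaling $x=\sqrt{\vartheta_2}\,u$, $y=\sqrt{\vartheta_1}\,v$ encodes the identity $h_{\vartheta_2}(cv)=c\,h_{\vartheta_1}(v)$ that the paper uses; indeed $x-y=\sqrt{\vartheta_2}\,(u-cv)$. Using monotonicity of $h$ on the support of the test function instead of a divided difference is a harmless simplification at that stage. The mass bounds, the synchronisation for $\vartheta_1=\vartheta_2$, the coercivity of $L_{\rm a}$ and the kernel statement are also fine.

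The strictness step, however, does not work as written. You assert $(-\Delta+\lambda)(u-v)\ge0$ and $(-\Delta+\lambda)(cv-u)\ge0$ on the grounds that $h_{\vartheta_1}(v)>h_{\vartheta_2}(v)$. The actual right-hand sides are $h_{\vartheta_1}(v)-h_{\vartheta_2}(u)$ and $c\,h_{\vartheta_2}(u)-h_{\vartheta_1}(v)$, and the inequality you cite bounds neither from below. Since $u\ge v$, the monotonicity $h_{\vartheta_2}(u)\ge h_{\vartheta_2}(v)$ goes the wrong way for the first. At points where $u$ is close to $cv$, the first is close to $(1-c)h_{\vartheta_1}(v)<0$. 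At points where $u$ is close to $v$, the second is close to $c\,h_{\vartheta_2}(v)-h_{\vartheta_1}(v)<0$ by convexity of $s\mapsto e^s-1$. Nothing in your argument excludes such sign changes, so neither positivity of $(-\Delta+\lambda)^{-1}$ nor the strong maximum principle for $-\Delta+\lambda$ applies.

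The missing step is to move the difference of nonlinearities to the left as a potential. Use the bounded nonnegative divided differences $k=(h_{\vartheta_2}(u)-h_{\vartheta_2}(v))/(u-v)$ and $\widetilde k$, the divided difference of $h_{\vartheta_2}$ between $u$ and $cv$. Then
\[
(-\Delta+\lambda+k)(u-v)=h_{\vartheta_1}(v)-h_{\vartheta_2}(v)>0,\qquad
(-\Delta+\lambda+c\widetilde k)(cv-u)=(c^2-1)h_{\vartheta_1}(v)>0 .
\]
The strong maximum principle for an operator with bounded nonnegative zeroth-order coefficient, or positivity of $(-\Delta+\lambda+k)^{-1}$, now gives both strict inequalities, exactly as in the paper. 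In your rescaled variables the second identity reads $(-\Delta+\lambda+c\kappa)(y-x)=(c-c^{-1})g(y)$, where $\kappa$ is the divided difference of $g$ between $x$ and $y$.
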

\begin{proof}
The functions $h_\vartheta$ are increasing on the positive half-line.
Subtract the equations and write $d=u-v$. With the nonnegative
bounded divided difference
$k=(h_{\vartheta_2}(u)-h_{\vartheta_2}(v))/(u-v)$,
continuously extended where $u=v$, one obtains
\[
 (-\Delta+\lambda+k)d
       =h_{\vartheta_1}(v)-h_{\vartheta_2}(v)\ge0.
\]
Testing with $d_-$ gives $d\ge0$. Next put $e=u-cv$ and
use the divided difference $\widetilde k$ of $h_{\vartheta_2}$
between $u$ and $cv$. Since
$h_{\vartheta_2}(cv)=c h_{\vartheta_1}(v)$,
\[
 (-\Delta+\lambda+c\widetilde k)e
       =(1-c^2)h_{\vartheta_1}(v)\le0.
\]
Testing with $e_+$ gives $e\le0$. The strong maximum principle
gives strictness for unequal parameters. Multiplication by the
positive components and integration give the stated mass bounds.
For equal parameters the pointwise bounds force $u=v$.
Adding and subtracting the linearised equations then gives the
symmetric and antisymmetric operators and
\eqref{antisymmetricexponential}.
\end{proof}

This comparison holds at every positive frequency and does not
require proximity to the small-frequency branch. It is a global
reduction when the parameters coincide, not a proof of global
scalar uniqueness or of injectivity of the scalar mass--frequency
map. For example, with
$f_\lambda(s)=s(e^{\vartheta s^2}-1-\lambda)$ and
$x=\vartheta s^2>\log(1+\lambda)$, its logarithmic derivative is
\[
 \frac{s f_\lambda'(s)}{f_\lambda(s)}
  =1+\frac{2xe^x}{e^x-1-\lambda},\qquad
 \frac{d}{dx}\left(1+\frac{2xe^x}{e^x-1-\lambda}\right)
  =\frac{2e^x[e^x-(1+\lambda)(1+x)]}
         {(e^x-1-\lambda)^2}.
\]
The derivative is negative just above the positive zero and
positive for large $x$. A monotonicity argument for this ratio is
therefore unavailable. Exponential uniqueness theorems such as
\cite{PucciSerrin1998} require their specified growth hypotheses;
we do not invoke them for the present scalar nonlinearity.

\begin{proposition}[Sign-changing profiles and the choice of extension]
\label{nodalsynchronisation}
For the positive-part exponential nonlinearities $h_\vartheta$ used
in the variational construction and $\lambda>0$, every nonzero
$H^1$ solution is positive. In particular this extension has no
sign-changing solutions.

For the odd extension $g_\vartheta(t)=t(e^{\vartheta t^2}-1)$,
if the parameters coincide, every bounded real $H^1$ solution
is of the form $(w,w)$, including sign-changing solutions.
The same conclusion holds for the power system when $p=q=m$.
Consequently, if $W$ is any sign-changing unit-frequency scalar
profile for the power $m$, its diagonal frequency family has mass
\[
 B(w_\lambda,w_\lambda)
 =\lambda^{2/(m-2)-N/2}\|W\|_2^2.
\]
Off the scalar critical power, each such scalar profile yields a
normalised nodal pair for every $a>0$. At criticality its mass
is independent of frequency. This is a correspondence, not a
classification of scalar nodal profiles.
\end{proposition}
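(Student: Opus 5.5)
The proof splits into three parts, taken in the order they appear in the statement.

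\emph{Positivity for $h_\vartheta$.} Let $(u,v)$ be a nonzero $H^1$ solution of $-\Delta u+\lambda u=h_{\vartheta_1}(v)$, $-\Delta v+\lambda v=h_{\vartheta_2}(u)$ with $\lambda>0$. The right-hand sides are nonnegative because $h_\vartheta(t)=t_+(e^{\vartheta t_+^2}-1)\ge0$. Testing the first equation with $u_-$ gives
\[
-\|\nabla u_-\|_2^2-\lambda\|u_-\|_2^2=\int h_{\vartheta_1}(v)u_-\ge0,
\]
so $u\ge0$, and likewise $v\ge0$. If $u\equiv0$, the first equation forces $h_{\vartheta_1}(v)=0$, so $v\le0$, hence $v\equiv0$; symmetrically in the other component. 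Thus both components are nonzero and nonnegative. The positive resolvent representation $u=(-\Delta+\lambda)^{-1}h_{\vartheta_1}(v)$, together with the strong maximum principle, then gives $u,v>0$, so no sign-changing solution exists.

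\emph{Synchronisation for the odd extension.} Take equal parameters and the odd extension $g_\vartheta$. Subtracting the equations gives
\[
(-\Delta+\lambda)(u-v)=g_\vartheta(v)-g_\vartheta(u).
\]
Since $g_\vartheta$ is increasing on $\R$ ($g_\vartheta'(t)=e^{\vartheta t^2}-1+2\vartheta t^2e^{\vartheta t^2}\ge0$), testing with $u-v$ yields
\[
\|\nabla(u-v)\|_2^2+\lambda\|u-v\|_2^2=-\int\bigl(g_\vartheta(u)-g_\vartheta(v)\bigr)(u-v)\le0.
\]
Hence $u=v$, regardless of sign. Boundedness of $(u,v)$ makes the integral finite and justifies the testing. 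The power system with $p=q=m$ is handled identically, using monotonicity of $t\mapsto|t|^{m-2}t$; this is the computation of the diagonal remark, now without any sign assumption.

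\emph{Mass formula and nodal correspondence.} Given a sign-changing unit-frequency profile $W$ of $-\Delta W+W=|W|^{m-2}W$, apply the frequency scaling \eqref{introfrequencyscaling} with $p=q=m$. Here $p/D=q/D=1/(m-2)$, so $w_\lambda(x)=\lambda^{1/(m-2)}W(\sqrt\lambda x)$, and a change of variables gives
\[
B(w_\lambda,w_\lambda)=\lambda^{2/(m-2)-N/2}\|W\|_2^2.
\]
If $m\ne2+4/N$, the exponent is nonzero, so $\lambda\mapsto B$ is a bijection of $(0,\infty)$ and every $a>0$ is attained by a nodal pair $(w_\lambda,w_\lambda)$. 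At $m=2+4/N$ the exponent vanishes, so the mass is independent of $\lambda$.

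The only delicate point is the regularity needed to test the odd exponential equation, namely integrability of $g_\vartheta(u)(u-v)$. This is immediate from the boundedness hypothesis, or more generally from exponential integrability of $H^1$ functions via \eqref{scalarExpGN}.
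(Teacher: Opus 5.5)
Your proposal is correct and follows the paper's proof essentially step for step: you use negative-part testing plus resolvent positivity for $h_\vartheta$, subtraction and testing with $u-v$ using monotonicity of the odd nonlinearity (exponential or power), and the diagonal frequency scaling for the mass law. The only imprecision is your closing aside: \eqref{scalarExpGN} by itself requires $\vartheta\|\nabla w\|_2^2<4\pi$, so finiteness of all exponential integrals for an arbitrary $H^1$ function needs the bounded-plus-small-remainder splitting of Lemma~\ref{TMmapping}; the boundedness hypothesis makes this unnecessary here anyway.
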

\begin{proof}
For the positive-part extension, testing the first equation with
$u_-$ gives
$-\|\nabla u_-\|_2^2-\lambda\|u_-\|_2^2
=\int h_{\vartheta_1}(v)u_-\ge0$.
Hence $u_-=0$, and likewise $v_-=0$. Resolvent positivity and the
strong maximum principle give strict positivity of a nonzero pair.

For either odd scalar nonlinearity $g$ in the statement, $g$ is
increasing on $\R$. Subtraction and testing by $u-v$ give
\[
 \|\nabla(u-v)\|_2^2+\lambda\|u-v\|_2^2
       =-\int(g(u)-g(v))(u-v)\le0.
\]
Thus $u=v$. Conversely every scalar solution gives a diagonal pair.
The mass formula follows by the usual scalar frequency scaling,
which preserves the nodal pattern.
\end{proof}

\subsection{Pure exponential systems at small frequency}\label{Section_11.2}
A pure exponential problem can be treated without adding a power term
or making the exponential coefficients small. Its small-frequency
scaling reveals a critical cubic limit, and the next-order term resolves
the mass degeneracy of that limit.

\begin{proof}[Proof of Theorem~\ref{intropureexponential}]
In addition to $d,\tau,a_{\rm c},\kappa$ from the statement, set
\begin{equation}\label{TMpureconstants}
 c_u=(\vartheta_1\vartheta_2^3)^{-1/8},\qquad
 c_v=(\vartheta_1^3\vartheta_2)^{-1/8},\qquad c_uc_v=d.
\end{equation}
Seek a branch through $(U_0,V_0)=(Q,Q)$ in the rescaled variables
\begin{equation}\label{TMpurescaling}
 u_\lambda(x)=c_u\sqrt\lambda\,U_\lambda(\sqrt\lambda x),\qquad
 v_\lambda(x)=c_v\sqrt\lambda\,V_\lambda(\sqrt\lambda x).
\end{equation}

To retain a nonnegative right-hand side before positivity is known,
define for real $s$ near zero
\[
 g_s(t)=t_+^3\int_0^1e^{srt_+^2}\,dr.
\]
Thus $g_0(t)=t_+^3$, and for $s\ne0$,
$g_s(t)=t_+(e^{st_+^2}-1)/s$.
Substitution of \eqref{TMpurescaling} into the positive-part version
of \eqref{TMpuresystem} gives
\begin{equation}\label{TMpurerescaled}
 -\Delta U+U=g_{\lambda\tau}(V),\qquad
 -\Delta V+V=g_{\lambda/\tau}(U).
\end{equation}
The choices of $c_u,c_v$ give
$\vartheta_1c_v^3/c_u=\vartheta_2c_u^3/c_v=1$ and
$\vartheta_1c_v^2=\tau$, $\vartheta_2c_u^2=\tau^{-1}$.

The residual in \eqref{TMpurerescaled} is $C^1$ from a neighbourhood
of $(0,Q,Q)$ in $\R\times(H^2_{\rm rad})^2$ to $(L^2_{\rm rad})^2$.
Indeed $H^2(\R^2)$ embeds into $L^\infty$, and on bounded amplitude
intervals the integral defining $g_s$ and its first derivatives are
jointly continuous, with uniformly bounded second derivatives locally
in $s$. Taylor's formula bounds the function-variable remainder in
$L^2$ by $C\|h\|_\infty\|h\|_2$; its parameter derivatives have
polynomial vanishing at zero and are controlled in $L^2$ by the same
embedding. These estimates also give continuity of the derivatives.

At $(0,Q,Q)$, the derivative in $(U,V)$ is diagonalised by the sum
and difference into
\[
 L_+=-\Delta+1-3Q^2,\qquad L_-=-\Delta+1+3Q^2.
\]
These are the symmetric and antisymmetric parts of
\eqref{L_lambda} at $p=q=4$ and $\lambda=1$. The radial
isomorphism established in the proof of Lemma~\ref{linearisation}
and elliptic regularity give an isomorphism from
$(H^2_{\rm rad})^2$ to $(L^2_{\rm rad})^2$ for the system
linearisation. Equivalently, both $L_+$ and $L_-$ are isomorphisms
between these scalar spaces.
The implicit function theorem gives a unique $C^1$ branch through
$(Q,Q)$, including signed parameters near zero. For $\lambda>0$
the right-hand sides in \eqref{TMpurerescaled} are nonnegative.
Resolvent positivity gives $U_\lambda,V_\lambda>0$.
Boundedness, elliptic regularity and exterior comparison give
classicality and exponential decay. Rescaling proves
\eqref{TMpuresystem}.

It remains to resolve the critical mass. Write dots for derivatives
at zero and $S=\dot U_0+\dot V_0$. Since
$\partial_sg_s(Q)|_{s=0}=Q^5/2$, differentiation of
\eqref{TMpurerescaled} gives
\[
 L_+S=\frac{\tau+\tau^{-1}}2Q^5.
\]
Planar scaling in \eqref{TMpurescaling} gives
$a(\lambda)=d\int U_\lambda V_\lambda$.
Let $\Lambda Q=Q+x\cdot\nabla Q$. Differentiating the scalar cubic
frequency family yields $L_+\Lambda Q=-2Q$. Self-adjointness and
exponential decay therefore give
\[
 \begin{aligned}
 a'(0)&=d\int QS
 =-\frac{d(\tau+\tau^{-1})}{4}\int\Lambda Q\,Q^5\\
 &=-\frac{d(\tau+\tau^{-1})}{6}\int Q^6=-\kappa.
 \end{aligned}
\]
Here $\int(x\cdot\nabla Q)Q^5=-\frac13\int Q^6$ in dimension two.
This proves \eqref{TMpuremass}. Continuity of the derivative makes
$a(\lambda)$ strictly decreasing after reducing $\lambda_*$.
Its inverse gives every mass in a left neighbourhood of $a_{\rm c}$
and the asserted frequency asymptotic. The local uniqueness statement
is that of the rescaled implicit function theorem and this scalar inversion.
\end{proof}
When $\vartheta_1=\vartheta_2=\vartheta$, symmetry and local uniqueness
give $u_\lambda=v_\lambda$, $a_{\rm c}=m_0/\vartheta$ and
$\kappa=(3\vartheta)^{-1}\int Q^6$.
For unequal parameters the two cubic scaling amplitudes differ.
The number $a_{\rm c}$ is a limiting mass of this branch; no global
nonexistence statement at or above it follows. The proof uses local
smoothness after rescaling. The distinct variational role of the
exponential GN inequality is the barrier and compactness analysis in
Section~\ref{secTM}.

\subsection{Local normalised solutions with exponential terms}\label{Section_11.3}
We use the positive-part nonlinearities defined in
\eqref{TMnonlinearity}.
On positive arguments $h_\vartheta$ has critical exponential growth:
$h_\vartheta(t)e^{-\alpha t^2}$ tends to zero for $\alpha>\vartheta$
and to infinity for $0<\alpha<\vartheta$ as $t\to+\infty$.
The following local mapping fact allows this growth to enter the
normalisation argument without requiring a small gradient norm of
the base profile.

\begin{lemma}[Local differentiability near bounded profiles]\label{TMmapping}
Let $w\in H^1(\R^2)\cap L^\infty(\R^2)$ and $\vartheta>0$.
On a sufficiently small $H^1$ ball about $w$, the map
$z\mapsto h_\vartheta(z)$ is $C^1$ into $L^2(\R^2)$, with derivative
$\eta\mapsto h_\vartheta'(z)\eta$.
\end{lemma}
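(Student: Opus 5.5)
We write $z=w+h$ with $\|h\|_{H^1}\le\delta$ small, and prove two things: that $h_\vartheta(z)\in L^2$, and that the Nemytskii map is Fréchet differentiable with continuous derivative. Since $w$ is bounded, the exponential growth only enters through $h$.

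\textbf{Step 1: a pointwise splitting.} For every $\varepsilon>0$ we have
\[
(w+h)^2\le(1+\varepsilon^{-1})\|w\|_\infty^2+(1+\varepsilon)h^2 ,
\]
and hence, with $L=\|w\|_\infty$,
\[
e^{\beta z^2}\le e^{\beta(1+\varepsilon^{-1})L^2}\,e^{\beta(1+\varepsilon)h^2}.
\]

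\textbf{Step 2: uniform exponential integrability on a small ball.} By the scalar mass-weighted estimate \eqref{scalarExpGN}, $\int(e^{\gamma h^2}-1)$ is bounded uniformly whenever $\gamma\|\nabla h\|_2^2\le2\pi$. So for any fixed exponent $\gamma$ we can choose $\delta$ small (depending on $\gamma$, and for $w$ only through $L$) so that $\sup_{\|h\|_{H^1}\le\delta}\int(e^{\gamma h^2}-1)\le C$.

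\textbf{Step 3: $L^2$ bounds for the nonlinearity and its derivative.} We have
\[
|h_\vartheta(t)|+|h_\vartheta'(t)t|\le C|t|^3e^{\vartheta t^2}\quad\text{for }|t|\le1,
\]
and these quantities are polynomial times $e^{\vartheta t^2}$ for $|t|\ge1$. Absorbing the polynomial factors into a slightly larger exponent, Steps 1–2 with $\gamma=4\vartheta(1+\varepsilon)^2$ show that
\[
h_\vartheta(z),\qquad h_\vartheta'(z)\in L^4+L^\infty ,
\]
with $h_\vartheta'(z)$ uniformly bounded in $L^4+L^\infty$ on the ball. Near zero, $|h_\vartheta'(t)|\le C t^2$ and $z\in L^8$ give the $L^4$ part. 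On $\{|z|>1\}$, which has finite measure, we use $e^{c z^2}\in L^1$. Since $H^1\hookrightarrow L^4$, the map $\eta\mapsto h_\vartheta'(z)\eta$ is bounded $H^1\to L^2$. Concretely, we use Hölder in the form $\|h'(z)\eta\|_2\le\|h'(z)\|_{4}\|\eta\|_4$ with the exponential part estimated in $L^4$.

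\textbf{Step 4: differentiability.} We write
\[
h_\vartheta(z+\eta)-h_\vartheta(z)-h_\vartheta'(z)\eta=\int_0^1\bigl[h_\vartheta'(z+s\eta)-h_\vartheta'(z)\bigr]\eta\,ds .
\]
It then suffices to prove that $z\mapsto h_\vartheta'(z)$ is continuous from the ball into $L^4$, in the form $h_\vartheta'(z_n)\to h_\vartheta'(z)$ in $L^4(\R^2)$ whenever $z_n\to z$ in $H^1$. This continuity yields both Fréchet differentiability and continuity of the derivative. To get it, pass to an a.e. convergent subsequence and apply Vitali's theorem. Uniform integrability of $|h'(z_n)|^4$ comes from the uniform bound on a slightly larger exponential moment in Step 2. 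Tightness at infinity comes from $|h'(t)|^4\le Ct^8$ for small $t$ together with strong $L^8$ convergence. Since the limit is independent of the subsequence, the full sequence converges.

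\textbf{Expected main obstacle.} The delicate point is Step 2: the radius $\delta$ must be chosen so that the enlarged exponents (the factor $(1+\varepsilon)$, the powers used in Hölder, and the margin for Vitali) still stay below the Ruf threshold $4\pi$ for $\|\nabla h\|_2^2$. Choosing $\varepsilon=1$ and $\delta^2<\pi/(16\vartheta)$ with some margin should suffice. The factor $e^{2\vartheta L^2}$ contributes only constants, so there is no gradient restriction on $w$ itself.
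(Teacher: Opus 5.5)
Your proof is correct, but it obtains the $C^1$ property by a different mechanism from the paper's.

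\textbf{The paper's route.} The paper controls $\int(e^{Lz^2}-1)$ over the whole plane through the additive splitting $e^{Lz^2}-1\le e^{2L\|w\|_\infty^2}(e^{2Lh^2}-1)+(e^{2Lw^2}-1)$ together with Ruf's inequality. It then uses the second-derivative bound $|h_\vartheta''(t)|\le C(|t|+|t|^5)e^{\vartheta t^2}$ to put $h_\vartheta''(z)$ uniformly in $L^8$ on the ball. The mean-value formula then makes $z\mapsto h_\vartheta'(z)$ Lipschitz into $L^4$, that is $\|h_\vartheta'(z)-h_\vartheta'(y)\|_4\le C\|z-y\|_{H^1}$. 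The Taylor remainder is consequently $O(\|z-y\|_{H^1}^2)$.

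\textbf{Your route.} You split multiplicatively on the finite-measure set $\{|z|>1\}$ and polynomially on $\{|z|\le1\}$. You then get only continuity of $h_\vartheta'$ into $L^4$, via Vitali: the exponential moment with margin gives uniform integrability, and strong convergence gives tightness.

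\textbf{Comparison.} The paper's argument is quantitative (the map is locally $C^{1,1}$ with explicit constants) but requires $h_\vartheta\in C^2$. Yours uses only $h_\vartheta\in C^1$ and growth bounds on $h_\vartheta$ and $h_\vartheta'$, so it transfers verbatim to any $C^1$ nonlinearity with this growth. The price is a qualitative $o(\|\eta\|_{H^1})$ remainder, which is all that $C^1$ needs and all the paper later uses. Both arguments give a radius depending only on $\vartheta$, not on $\|w\|_\infty$; the paper relies on this when passing to unbounded $H^1$ base points.

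\textbf{Two small points to tidy.} First, $h_\vartheta(z)\in L^4+L^\infty$ does not imply $h_\vartheta(z)\in L^2$, so state the latter directly. It follows from $|h_\vartheta(t)|^2\le Ct^6$ near zero together with your exponential moment on $\{|z|>1\}$. Second, in the Vitali step, tightness of the large-amplitude part does not come from the small-$t$ bound. It comes from the uniform $L^{1+\epsilon}$ bound combined with $|\{|z_n|>1\}\cap\{|x|>R\}|\le\int_{|x|>R}z_n^2$ and the strong $L^2$ convergence of $z_n$.
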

\begin{proof}
For any fixed $L>0$, write $z=w+h$. The elementary estimate
\[
 e^{Lz^2}-1
 \le e^{2L\|w\|_\infty^2}(e^{2Lh^2}-1)
       +(e^{2Lw^2}-1)
\]
and \eqref{Rufconstant} give a uniform bound for
$\int(e^{Lz^2}-1)$ when $\|h\|_{H^1}<\rho$ and
$2L\rho^2\le4\pi$. The last term is integrable since $w$ is bounded
and square integrable. By choosing $L$ sufficiently large and then
$\rho$ small, H\"older's inequality and the embeddings
$H^1(\R^2)\hookrightarrow L^r(\R^2)$, $2\le r<\infty$, give
uniform bounds for
$h_\vartheta(z)$ in $L^2$, $h_\vartheta'(z)$ in $L^4$, and
$h_\vartheta''(z)$ in $L^8$ on this ball. Indeed $h_\vartheta\in C^2(\R)$
and, with constants depending on $\vartheta$,
\[
 |h_\vartheta(t)|\le C|t|^3e^{\vartheta t^2},\quad
 |h_\vartheta'(t)|\le C|t|^2e^{\vartheta t^2},\quad
 |h_\vartheta''(t)|\le C(|t|+|t|^5)e^{\vartheta t^2}.
\]
Polynomial factors are controlled by Sobolev embedding, while the
exponential minus one is controlled by the preceding bound with
a larger $L$.
For $y,z$ in the ball, the mean-value formula and H\"older give
\[
 \|h_\vartheta'(z)-h_\vartheta'(y)\|_4
 \le C\|z-y\|_8\le C\|z-y\|_{H^1}.
\]
Multiplication $L^4\times L^4\to L^2$ proves continuity of the
claimed derivative in operator norm. The integral Taylor formula
then bounds the $L^2$ remainder by $C\|z-y\|_{H^1}^2$ and proves
Fr\'echet differentiability.
\end{proof}

\begin{proof}[Proof of Theorem~\ref{introexponentialcontinuation}]

Define the two PDE residuals and the mass residual on
$(H^1_{\rm rad})^2\times\R$, with values in
$(H^{-1}_{\rm rad})^2\times\R$.
Lemma~\ref{TMmapping} applies to both bounded base components
$u_0,v_0$. Together with the power-parameter estimates in
the proof of Theorem~\ref{intropowercontinuation}, it makes the residual map
$C^1$ near the base point. At $\varepsilon_1=\varepsilon_2=0$
its derivative in $(u,v,\lambda)$ is precisely
$\mathcal A_{\lambda_0}$ in \eqref{augmentedoperator}.
Lemma~\ref{linearisation} makes it an isomorphism at the assumed
noncritical exponents.
The implicit function theorem supplies the branch and its local
completeness. Continuity keeps $\lambda>0$ and both components
nonzero.

For nonnegative coefficients, negative-part testing gives $u,v\ge0$.
Every finite power of $u,v$ occurring on the right-hand side is in
$L^2$ by the planar Sobolev embeddings, and the exponential terms
are in $L^2$ by Lemma~\ref{TMmapping}. The positive-frequency
resolvent therefore gives $u,v\in W^{2,2}(\R^2)$, hence boundedness
and uniform continuity. Local elliptic regularity and the strong
maximum principle give classicality and strict positivity.
Square integrability and uniform continuity imply decay at infinity.
Since $h_\vartheta(t)/t\to0$ as $t\downarrow0$ and $p,q>2$,
the equation for $u+v$ satisfies, outside a large ball,
$-\Delta(u+v)+(\lambda/2)(u+v)\le0$.
Exterior comparison gives exponential decay, and local elliptic
estimates give the same conclusion for first derivatives.
\end{proof}

For positive coefficients, \eqref{TMsystem} thus has exponential
growth in the corresponding equations, although the coefficients are
perturbatively small. It is the constrained Euler--Lagrange system of
\[
 J_\varepsilon(u,v)=A-\frac1q\int u_+^q-\frac1p\int v_+^p
 -\varepsilon_1\int H_{\vartheta_1}(v)
 -\varepsilon_2\int H_{\vartheta_2}(u)
\]
on $B=a$ in the neighbourhood under consideration.
The exponential estimate gives uniform control on $\mathcal E_K$
below its threshold, whereas the local construction uses boundedness
of the fixed base components and smallness of their $H^1$ perturbations.
In particular Theorem~\ref{introexponentialcontinuation} requires no inequality
$\vartheta_1\|\nabla v_0\|_2^2<4\pi$ or
$\vartheta_2\|\nabla u_0\|_2^2<4\pi$ and no auxiliary
membership in $\mathcal E_K$.

\subsection{Exponential mass response on the planar critical curve}\label{Section_exp_response}
On $1/p+1/q=1/2$, the power family has constant bilinear mass
$a_*$, including when its two components differ. We use the
fixed-frequency isomorphism in Lemma~\ref{linearisation} and the
symmetric crossed operator from Subsection~\ref{Section_massresponse}
to compute the response to exponential perturbations. No diagonal
reduction is needed.

\begin{proof}[Proof of Proposition~\ref{introcriticalresponse}]
Fix $\lambda_0>0$ and keep $p,q$ fixed on the critical curve.
The power profiles $u_{\lambda_0},v_{\lambda_0}$ are bounded.
Lemma~\ref{TMmapping}, together with the power mapping estimates
in Section~\ref{Section_branches_powers}, makes the first two
residuals in \eqref{TMsystem} a $C^1$ map, locally from
$\R^3\times(H^1_{\rm rad})^2$ to $(H^{-1}_{\rm rad})^2$.
Its derivative in $(u,v)$ is $\mathcal L_{\lambda_0}$ from
\eqref{L_lambda}, which is an isomorphism by the first step of the
proof of Lemma~\ref{linearisation}, also at criticality.
The implicit function theorem therefore gives the unique local
$C^1$ branch. At $\varepsilon=0$ it coincides with the scaled
power family. For nonnegative coefficients, positivity, regularity
and exponential decay follow exactly as in the proof of
Theorem~\ref{introexponentialcontinuation}.

To compute the mass derivatives, write
$(h_i,k_i)=\partial_{\varepsilon_i}
(u_{\lambda,\varepsilon},v_{\lambda,\varepsilon})|_{\varepsilon=0}$
and introduce the symmetric operator
\[
 \mathcal H_\lambda(h,k)=
 \begin{pmatrix}
 (-\Delta+\lambda)k-(q-1)u_\lambda^{q-2}h\\
 (-\Delta+\lambda)h-(p-1)v_\lambda^{p-2}k
 \end{pmatrix}.
\]
Differentiating the equations with respect to frequency and to the
two perturbation parameters gives
\[
 \begin{aligned}
 \mathcal H_\lambda(\partial_\lambda u_\lambda,
                    \partial_\lambda v_\lambda)&=-(v_\lambda,u_\lambda),\\
 \mathcal H_\lambda(h_1,k_1)&=(0,h_{\vartheta_1}(v_\lambda)),\\
 \mathcal H_\lambda(h_2,k_2)&=(h_{\vartheta_2}(u_\lambda),0).
 \end{aligned}
\]
Symmetry in the $H^{-1}$--$H^1$ duality pairing consequently yields
\begin{equation}\label{TMmassderivative}
 \begin{aligned}
 \left.\partial_{\varepsilon_1}B\right|_{\varepsilon=0}
 &=\int(v_\lambda h_1+u_\lambda k_1)\\
 &=-\int h_{\vartheta_1}(v_\lambda)\partial_\lambda v_\lambda
   =-\frac{d}{d\lambda}\int H_{\vartheta_1}(v_\lambda),\\
 \left.\partial_{\varepsilon_2}B\right|_{\varepsilon=0}
 &=-\frac{d}{d\lambda}\int H_{\vartheta_2}(u_\lambda).
 \end{aligned}
\end{equation}
These are the exponential counterparts of the two identities in
the proof of Proposition~\ref{introcriticalresponseNgeq3}.

For either component $w_\lambda(x)=\lambda^sW(\sqrt\lambda x)$,
planar scaling gives
\[
 \int H_\vartheta(w_\lambda)
 =\frac1{2\vartheta\lambda}
       \int\Psi(\vartheta\lambda^{2s}W^2).
\]
Differentiation gives precisely $D_{\vartheta,s}[W](\lambda)$ in
\eqref{TMcoefficient}. Boundedness and exponential decay of $W$
justify differentiation under the integral on compact positive
frequency intervals: the integrands and their derivatives are
bounded by a constant times $W^4$ there.
Since $B(u_\lambda,v_\lambda)=a_*$, equations
\eqref{TMmassderivative} give \eqref{TMexpansion}.
The joint continuity of the branch derivatives makes the remainder
locally uniform in $\lambda$.

The sign is seen from the absolutely convergent series
\begin{equation}\label{TMresponseseries}
 D_{\vartheta,s}[W](\lambda)
 =\sum_{j=2}^{\infty}
   \frac{\vartheta^{j-1}}{2j!}(2sj-1)
      \lambda^{2sj-2}\int W^{2j}.
\end{equation}
Each coefficient is nonnegative if $s\ge1/4$, and the terms with
$j\ge3$ are strictly positive for a nonzero positive profile.
On the critical curve, $s_v=2/p$ and $s_u=2/q$. Thus both mass
coefficients are positive when $\max\{p,q\}\le8$, including the
endpoints of this segment. Their positive lower bounds on a small
compact frequency interval, together with the uniform remainder,
give $B<a_*$ for every sufficiently small nonzero nonnegative
$\varepsilon$. Local completeness of the implicit-function branch
excludes any other nearby radial solution of mass $a_*$ for these
parameters.

For $0<s<1/4$, the same series gives
\begin{equation}\label{TMresponselowfrequency}
 D_{\vartheta,s}[W](\lambda)
 =\frac{\vartheta(4s-1)}4\lambda^{4s-2}\int W^4
       +O(\lambda^{6s-2}),\qquad\lambda\downarrow0.
\end{equation}
The coefficient is therefore negative at sufficiently small
positive frequencies. Applying this to $s_v$ when $p>8$, or to
$s_u$ when $q>8$, proves the remaining assertions.
\end{proof}

On the diagonal $p=q=4$, one has $U=V=Q_4$ and $s_u=s_v=1/2$.
Formula~\eqref{TMcoefficient} then reduces to
\[
 D_{\vartheta,1/2}[Q_4](\lambda)
 =\frac1{2\vartheta\lambda^2}
   \int\big[(\vartheta\lambda Q_4^2-1)
                   e^{\vartheta\lambda Q_4^2}+1\big]>0,
\]
recovering the cubic response. Off the diagonal, the factor
$2sj-1$ in \eqref{TMresponseseries} is exactly the planar
power-response exponent in Proposition~\ref{introcriticalresponseNgeq3}
for the perturbing power $\ell=2j$. Thus the exponential response
combines all the even-power responses with their exponential-series
weights. If $p>8$, for example, at a sufficiently small fixed
$\lambda>0$ the branch with $\varepsilon_1>0$, $\varepsilon_2=0$
has mass greater than $a_*$ for small $\varepsilon_1$.
The segment $\max\{p,q\}\le8$ is consequently the largest part of
the critical curve on which every nonzero nonnegative perturbation
has a strictly negative first-order mass response at every positive
frequency. These are local statements at fixed positive frequency;
the allowed perturbation neighbourhood needs not remain uniform as
$\lambda\downarrow0$.

\section*{Acknowledgments}
\noindent The authors thank Professor Xu Zhang (Central South University) for fruitful discussions. G. Romani is partially supported by the INdAM-GNAMPA 2026 project \textit{Structural degeneracy and criticality in (sub)elliptic PDEs} (E53C25002010001).

\providecommand{\bysame}{\leavevmode\hbox to3em{\hrulefill}\thinspace}
\providecommand{\MR}{\relax\ifhmode\unskip\space\fi MR }
\providecommand{\MRhref}[2]{%
  \href{http://www.ams.org/mathscinet-getitem?mr=#1}{#2}
}
\providecommand{\href}[2]{#2}


\begin{thebibliography}{10}
	
\bibitem{AdTa}
Shinji Adachi and Kazunaga Tanaka, \emph{Trudinger type inequalities in $\mathbb{R}^N$ and their best exponents}, Proc. Amer. Math. Soc., \textbf{128} (2000), 2051--2057.

\bibitem{ACS}
Yulian An, Jann-Long Chern, and Junping Shi, \emph{Uniqueness of positive
  solutions to some coupled cooperative variational elliptic systems}, Trans.
  Amer. Math. Soc. \textbf{370} (2018), 5209--5243,
  \href{https://doi.org/10.1090/tran/7207}{doi:10.1090/tran/7207}.

\bibitem{BariczPonnusamy}
 \'Arp\'ad Baricz and Saminathan Ponnusamy,
\emph{On Tur\'an type inequalities for modified Bessel functions},
Proc. Amer. Math. Soc. \textbf{141} (2013), 523--532.

\bibitem{BartschJeanjean}
Thomas Bartsch and Louis Jeanjean, \emph{Normalized solutions for nonlinear
  {Schr\"odinger} systems}, Proc. Roy. Soc. Edinburgh Sect. A \textbf{148}
  (2018), 225--242,
  \href{https://doi.org/10.1017/S0308210517000087}{doi:10.1017/S0308210517000087}.

\bibitem{BJS}
Thomas Bartsch, Louis Jeanjean, and Nicola Soave, \emph{Normalized solutions
  for a system of coupled cubic {Schr\"odinger} equations on {$\R^3$}},
  J. Math. Pures Appl. \textbf{106} (2016), 583--614,
  \href{https://doi.org/10.1016/j.matpur.2016.03.004}{doi:10.1016/j.matpur.2016.03.004}.

\bibitem{BS}
Thomas Bartsch and Nicola Soave, \emph{A natural constraint approach to
  normalized solutions of nonlinear {Schr\"odinger} equations and systems}, J.
  Funct. Anal. \textbf{272} (2017), 4998--5037,
  \href{https://doi.org/10.1016/j.jfa.2017.01.025}{doi:10.1016/j.jfa.2017.01.025}.

\bibitem{BScorrection}
\bysame, \emph{Correction to: ``{A} natural constraint approach to normalized
  solutions of nonlinear {Schr\"odinger} equations and systems''}, J. Funct.
  Anal. \textbf{275} (2018), 516--521,
  \href{https://doi.org/10.1016/j.jfa.2018.02.007}{doi:10.1016/j.jfa.2018.02.007}.

\bibitem{BMR}
Denis Bonheure, Ederson Moreira~dos Santos, and Miguel Ramos, \emph{Ground
  state and non-ground state solutions of some strongly coupled elliptic
  systems}, Trans. Amer. Math. Soc. \textbf{364} (2012), 447--491,
  \href{https://doi.org/10.1090/S0002-9947-2011-05452-8}{doi:10.1090/S0002-9947-2011-05452-8}.

\bibitem{BMT}
Denis Bonheure, Ederson Moreira~dos Santos, and Hugo Tavares, \emph{Hamiltonian
  elliptic systems: a guide to variational frameworks}, Portugal. Math.
  \textbf{71} (2014), 301--395,
  \href{https://doi.org/10.4171/PM/1954}{doi:10.4171/PM/1954}.

\bibitem{Brezis}
Haim Brezis, \emph{Functional analysis, sobolev spaces and partial differential
  equations}, Universitext, Springer, New York, 2011,
  \href{https://doi.org/10.1007/978-0-387-70914-7}{doi:10.1007/978-0-387-70914-7}.

\bibitem{BuscaSirakov}
J\'er\^ome Busca and Boyan Sirakov, \emph{Symmetry results for semilinear
  elliptic systems in the whole space}, J. Differential Equations \textbf{163}
  (2000), 41--56,
  \href{https://doi.org/10.1006/jdeq.1999.3701}{doi:10.1006/jdeq.1999.3701}.

\bibitem{CST}
Daniele Cassani, Federica Sani, and Cristina Tarsi, \emph{Equivalent {Moser}
  type inequalities in {$\R^2$} and the zero mass case}, J. Funct. Anal.
  \textbf{267} (2014), no.~11, 4236--4263,
  \href{https://doi.org/10.1016/j.jfa.2014.09.022}{doi:10.1016/j.jfa.2014.09.022}.

\bibitem{CassaniTarsi2015}
Daniele Cassani and Cristina Tarsi, \emph{Existence of solitary waves for
  supercritical {Schr\"odinger} systems in dimension two}, Calc. Var.
  Partial Differential Equations \textbf{54} (2015), no.~2, 1673--1704,
  \href{https://doi.org/10.1007/s00526-015-0840-3}{doi:10.1007/s00526-015-0840-3}.

\bibitem{CassaniTavaresZhang2020}
Daniele Cassani, Hugo Tavares, and Jianjun Zhang,
\emph{Bose fluids and positive solutions to weakly coupled systems with
critical growth in dimension two}, J. Differential Equations
\textbf{269} (2020), no.~3, 2328--2385,
\href{https://doi.org/10.1016/j.jde.2020.01.036}{doi:10.1016/j.jde.2020.01.036}.

\bibitem{CassaniZhang2017}
Daniele Cassani and Jianjun Zhang,
\emph{A priori estimates and positivity for semiclassical ground states
for systems of critical {Schr\"odinger} equations in dimension two},
Comm. Partial Differential Equations \textbf{42} (2017), no.~5, 655--702,
\href{https://doi.org/10.1080/03605302.2017.1295062}{doi:10.1080/03605302.2017.1295062}.

\bibitem{CassaniXuZhang}
Daniele~Cassani and Xu~Zhang, \emph{Uniqueness and nondegeneracy for critical fractional {Lane--Emden}
  systems}, Preprint 2026.
  
\bibitem{Cazenave}
Thierry Cazenave, \emph{Semilinear {Schr\"odinger} equations}, Courant Lecture
  Notes in Mathematics, vol.~10, New York University, Courant Institute of
  Mathematical Sciences and American Mathematical Society, 2003,
  \href{https://doi.org/10.1090/cln/010}{doi:10.1090/cln/010}.

\bibitem{CazenaveLions1982}
Thierry Cazenave and Pierre-Louis Lions,
\emph{Orbital stability of standing waves for some nonlinear
{Schr\"odinger} equations}, Comm. Math. Phys. \textbf{85} (1982),
no.~4, 549--561,
\href{https://doi.org/10.1007/BF01403504}{doi:10.1007/BF01403504}.

\bibitem{CGNT}
Shu-Ming Chang, Stephen Gustafson, Kenji Nakanishi, and Tai-Peng Tsai,
  \emph{Spectra of linearized operators for {NLS} solitary waves}, SIAM J.
  Math. Anal. \textbf{39} (2007), no.~4, 1070--1111,
  \href{https://doi.org/10.1137/050648389}{doi:10.1137/050648389}.

\bibitem{ChangLiuYan2023}
Xiaojun Chang, Manting Liu, and Duokui Yan,
\emph{Normalized ground state solutions of nonlinear {Schr\"odinger}
equations involving exponential critical growth},
J. Geom. Anal. \textbf{33} (2023), article 83,
\href{https://doi.org/10.1007/s12220-022-01130-8}{doi:10.1007/s12220-022-01130-8}.

\bibitem{CXZ}
Jing Chen, Zheng Xie, and Xinghua Zhang, \emph{Normalized ground state
  solutions for the {Schr\"odinger} systems with critical exponential growth in
  {$\R^2$}}, J. Math. Anal. Appl. \textbf{540} (2024), no.~1, 128628,
  \href{https://doi.org/10.1016/j.jmaa.2024.128628}{doi:10.1016/j.jmaa.2024.128628}.

\bibitem{CZZ}
Zhen Chen, Xuexiu Zhong, and Wenming Zou, \emph{Normalized solutions for
  nonlinear {Schr\"odinger} systems with special mass-mixed terms: {The} linear
  couple case}, 2021,
  \href{https://arxiv.org/abs/2107.12564}{arXiv:2107.12564}.

\bibitem{CCST}
Zhi-You Chen, Jann-Long Chern, Junping Shi, and Yong-Li Tang, \emph{On the
  uniqueness and structure of solutions to a coupled elliptic system}, J.
  Differential Equations \textbf{249} (2010), no.~12, 3419--3442,
  \href{https://doi.org/10.1016/j.jde.2010.09.001}{doi:10.1016/j.jde.2010.09.001}.

\bibitem{ChiaoGarmireTownes1964}
Raymond~Y. Chiao, Elsa Garmire, and Charles~H. Townes,
\emph{Self-trapping of optical beams}, Phys. Rev. Lett. \textbf{13}
(1964), 479--482,
\href{https://doi.org/10.1103/PhysRevLett.13.479}{doi:10.1103/PhysRevLett.13.479}.

\bibitem{dF}
Djairo G.~De~Figueiredo, \emph{Semilinear elliptic systems: existence,
  multiplicity, symmetry of solutions}, Handbook of Differential Equations:
  Stationary Partial Differential Equations, vol.~5, Elsevier, New York, 2008,
  pp.~1--48.

\bibitem{FigueiredoDoORuf2005}
Djairo~G. de Figueiredo, Jo\~ao Marcos do \'O, and Bernhard Ruf,
\emph{An Orlicz-space approach to superlinear elliptic systems},
J. Funct. Anal. \textbf{224} (2005), no.~2, 471--496,
\href{https://doi.org/10.1016/j.jfa.2004.09.008}{doi:10.1016/j.jfa.2004.09.008}.

\bibitem{FY}
Djairo~G. de~Figueiredo and Jianfu Yang, \emph{Decay, symmetry and existence of
  solutions of semilinear elliptic systems}, Nonlinear Anal. \textbf{33}
  (1998), 211--234,
  \href{https://doi.org/10.1016/S0362-546X(97)00548-8}{doi:10.1016/S0362-546X(97)00548-8}.

\bibitem{DerezinskiRichard}
Jan Derezi\'nski and Serge Richard,
\emph{On Schr\"odinger operators with inverse square potentials on the half-line},
Ann. Henri Poincar\'e \textbf{18} (2017), 869--928.

\bibitem{FFRS}
Alberto Fiorenza, Maria~Rosaria Formica, Tom\'a\v{s} Roskovec, and Filip
  Soudsk\'y, \emph{Detailed proof of classical {Gagliardo--Nirenberg}
  interpolation inequality with historical remarks}, Z. Anal. Anwend.
  \textbf{40} (2021), no.~2, 217--236,
  \href{https://doi.org/10.4171/ZAA/1681}{doi:10.4171/ZAA/1681}.

\bibitem{FrankKimPistoia}
Rupert~L. Frank, Seunghyeok Kim, and Angela Pistoia, \emph{Non-degeneracy for
  the critical {Lane}--{Emden} system}, Proceedings of the American
  Mathematical Society \textbf{149} (2021), no.~1, 265--278,
  \href{https://doi.org/10.1090/proc/15217}{doi:10.1090/proc/15217}.

\bibitem{GilbargTrudinger}
David Gilbarg and Neil~S. Trudinger, \emph{Elliptic partial differential
  equations of second order}, 2 ed., Classics in Mathematics, Springer, Berlin,
  2001,
  \href{https://doi.org/10.1007/978-3-642-61798-0}{doi:10.1007/978-3-642-61798-0}.

\bibitem{GJ}
Tianxiang Gou and Louis Jeanjean, \emph{Multiple positive normalized solutions
  for nonlinear {Schr\"odinger} systems}, Nonlinearity \textbf{31} (2018),
  no.~5, 2319--2345,
  \href{https://doi.org/10.1088/1361-6544/aab0bf}{doi:10.1088/1361-6544/aab0bf}.

\bibitem{GrillakisShatahStrauss1987}
Manoussos Grillakis, Jalal Shatah, and Walter Strauss,
\emph{Stability theory of solitary waves in the presence of symmetry, I},
J. Funct. Anal. \textbf{74} (1987), no.~1, 160--197,
\href{https://doi.org/10.1016/0022-1236(87)90044-9}{doi:10.1016/0022-1236(87)90044-9}.

\bibitem{Jeanjean}
Louis Jeanjean, \emph{Existence of solutions with prescribed norm for
  semilinear elliptic equations}, Nonlinear Anal. \textbf{28} (1997),
  1633--1659,
  \href{https://doi.org/10.1016/S0362-546X(96)00021-1}{doi:10.1016/S0362-546X(96)00021-1}.

\bibitem{KevrekidisFrantzeskakis2016}
P.~G. Kevrekidis and D.~J. Frantzeskakis, \emph{Solitons in coupled nonlinear
  {Schr\"odinger} models: A survey of recent developments}, Reviews in Physics
  \textbf{1} (2016), 140--153,
  \href{https://doi.org/10.1016/j.revip.2016.07.002}{doi:10.1016/j.revip.2016.07.002}.

\bibitem{Kwong}
Man~Kam Kwong, \emph{Uniqueness of positive solutions of {$\Delta u-u+u^p=0$}
  in {$\R^n$}}, Arch. Rational Mech. Anal. \textbf{105} (1989),
  243--266, \href{https://doi.org/10.1007/BF00251502}{doi:10.1007/BF00251502}.

\bibitem{LiebLoss}
Elliott~H. Lieb and Michael Loss, \emph{Analysis}, 2 ed., Graduate Studies in
  Mathematics, vol.~14, American Mathematical Society, Providence, RI, 2001,
  \href{https://bookstore.ams.org/view?ProductCode=GSM%2F14.R}{Publisher
  record}.

\bibitem{Lions1}
Pierre-Louis Lions, \emph{The concentration-compactness principle in the
  calculus of variations. the locally compact case, part 1}, Ann. Inst. H.
  Poincar\'e Anal. Non Lin\'eaire \textbf{1} (1984), 109--145,
  \href{https://www.numdam.org/item/AIHPC_1984__1_2_109_0/}{Numdam full text}.

\bibitem{Lions2}
\bysame, \emph{The concentration-compactness principle in the calculus of
  variations. the locally compact case, part 2}, Ann. Inst. H. Poincar\'e Anal.
  Non Lin\'eaire \textbf{1} (1984), 223--283,
  \href{https://www.numdam.org/item/AIHPC_1984__1_4_223_0/}{Numdam full text}.

\bibitem{MC}
Li~Ma and Dezhong Chen, \emph{Radial symmetry and uniqueness for positive
  solutions of a {Schr\"odinger} type system}, Math. Comput. Modelling
  \textbf{49} (2009), 379--385,
  \href{https://doi.org/10.1016/j.mcm.2008.06.010}{doi:10.1016/j.mcm.2008.06.010}.

\bibitem{MZ}
Li~Ma and Lin Zhao, \emph{Uniqueness of ground states of some coupled nonlinear
  {Schr\"odinger} systems and their application}, Journal of Differential
  Equations \textbf{245} (2008), no.~9, 2551--2565,
  \href{https://doi.org/10.1016/j.jde.2008.04.008}{doi:10.1016/j.jde.2008.04.008}.

\bibitem{MaiaMiyagaki}
Braulio B.~V. Maia and Ol\'{i}mpio~H. Miyagaki, \emph{Normalized solutions for
  a {Hamiltonian} elliptic system on {$\R^N$}}, Proc. Roy. Soc.
  Edinburgh Sect. A (2026), 1--22, First View, published online 27 March 2026.
  \href{https://doi.org/10.1017/prm.2026.10143}{doi:10.1017/prm.2026.10143}.

\bibitem{Merle1992}
Frank Merle,
\emph{On uniqueness and continuation properties after blow-up time of
self-similar solutions of nonlinear {Schr\"odinger} equation with
critical exponent and critical mass}, Comm. Pure Appl. Math.
\textbf{45} (1992), no.~2, 203--254,
\href{https://doi.org/10.1002/cpa.3160450204}{doi:10.1002/cpa.3160450204}.

\bibitem{MerleRaphael2005}
Frank Merle and Pierre Rapha\"el,
\emph{The blow-up dynamic and upper bound on the blow-up rate for
critical nonlinear {Schr\"odinger} equation}, Ann. of Math. (2)
\textbf{161} (2005), no.~1, 157--222,
\href{https://doi.org/10.4007/annals.2005.161.157}{doi:10.4007/annals.2005.161.157}.

\bibitem{Mitidieri1993}
Enzo Mitidieri, \emph{A {Rellich} type identity and applications},
  Comm. Partial Differential Equations \textbf{18} (1993), no.~1--2,
  125--151,
  \href{https://doi.org/10.1080/03605309308820923}{doi:10.1080/03605309308820923}.

\bibitem{Mitidieri}
Enzo Mitidieri, \emph{Non-existence of positive solutions of semilinear
  elliptic systems in {$\R^N$}}, Differential Integral Equations
  \textbf{9} (1996), no.~3, 465--479,
  \href{https://doi.org/10.57262/die/1367969966}{doi:10.57262/die/1367969966}.

\bibitem{Moser1971}
J\"urgen Moser, \emph{A sharp form of an inequality by N. Trudinger},
Indiana Univ. Math. J. \textbf{20} (1971), no.~11, 1077--1092,
\href{https://iumj.org/article/2133/}{Journal record}.

\bibitem{NIST}
{National Institute of Standards and Technology}, \emph{{Digital Library of
  Mathematical Functions}}, Chapter 10, Bessel functions, in particular
  Sections 10.25, 10.28, 10.30 and 10.40.
  \href{https://dlmf.nist.gov/10}{dlmf.nist.gov/10}; accessed 2 October 2026.

\bibitem{N}
Louis Nirenberg, \emph{On elliptic partial differential equations}, Annali
  della Scuola Normale Superiore di Pisa - Scienze Fisiche e Matematiche, Serie
  3 \textbf{13} (1959), no.~2, 115--162,
  \href{https://www.numdam.org/item/ASNSP_1959_3_13_2_115_0/}{Numdam full
  text}.

\bibitem{PucciSerrin1998}
Patrizia Pucci and James Serrin,
\emph{Uniqueness of ground states for quasilinear elliptic equations
in the exponential case}, Indiana Univ. Math. J. \textbf{47}
(1998), no.~2, 529--539,
\href{https://doi.org/10.1512/iumj.1998.47.2045}{doi:10.1512/iumj.1998.47.2045}.

\bibitem{QTZ}
Dongdong Qin, Xianhua Tang, and Jian Zhang, \emph{Ground states for planar
  {Hamiltonian} elliptic systems with critical exponential growth}, J.
  Differential Equations \textbf{308} (2022), 130--159,
  \href{https://doi.org/10.1016/j.jde.2021.10.063}{doi:10.1016/j.jde.2021.10.063}.

\bibitem{Ruf}
Bernhard Ruf, \emph{A sharp {Trudinger--Moser} type inequality for unbounded
  domains in {$\R^2$}}, J. Funct. Anal. \textbf{219} (2005), no.~2,
  340--367,
  \href{https://doi.org/10.1016/j.jfa.2004.06.013}{doi:10.1016/j.jfa.2004.06.013}.

\bibitem{ShakedShanthikumar}
Moshe Shaked and J. George Shanthikumar,
\emph{Stochastic Orders},
Springer Series in Statistics,
Springer, New York, 2007.

\bibitem{Sirakov}
Boyan Sirakov, \emph{On the existence of solutions of {Hamiltonian} elliptic
  systems in {$\R^N$}}, Adv. Differential Equations \textbf{5} (2000),
  1445--1464,
  \href{https://doi.org/10.57262/ade/1356651229}{doi:10.57262/ade/1356651229}.


\bibitem{SzulkinWeth2010}
Andrzej Szulkin and Tobias Weth, \emph{The method of Nehari manifold},
in \emph{Handbook of Nonconvex Analysis and Applications},
D.~Y. Gao and D. Motreanu (eds.), International Press, Somerville, MA,
2010, 597--632,
\href{https://staff.math.su.se/andrzejs/publications/Nehari.pdf}{Author manuscript}.

\bibitem{Weinstein}
Michael~I. Weinstein, \emph{Nonlinear {Schr\"odinger} equations and sharp
  interpolation estimates}, Comm. Math. Phys. \textbf{87} (1983), 567--576,
  \href{https://doi.org/10.1007/BF01208265}{doi:10.1007/BF01208265}.

\bibitem{Weinstein1986}
Michael~I. Weinstein,
\emph{Lyapunov stability of ground states of nonlinear dispersive
 evolution equations}, Comm. Pure Appl. Math. \textbf{39} (1986),
no.~1, 51--67,
\href{https://doi.org/10.1002/cpa.3160390103}{doi:10.1002/cpa.3160390103}.

\bibitem{Willem1996}
Michel Willem, \emph{Minimax Theorems}, Progress in Nonlinear
Differential Equations and their Applications, vol.~24,
Birkh\"auser, Boston, 1996,
\href{https://doi.org/10.1007/978-1-4612-4146-1}{doi:10.1007/978-1-4612-4146-1}.

\bibitem{ZhangYuZhang}
Hui Zhang, Yuanyang Yu, and Jianjun Zhang, \emph{Normalized solutions for
  {Hamiltonian} elliptic systems with general nonlinearities}, 2026, Discrete
  Contin. Dyn. Syst. Ser. S, early access, published online 29 June 2026.
  \href{https://doi.org/10.3934/dcdss.2026178}{doi:10.3934/dcdss.2026178}.

\end{thebibliography}
\end{document}